\newcommand\cyr{%
\renewcommand\rmdefault{wncyr}%
\renewcommand\sfdefault{wncyss}%
\renewcommand\encodingdefault{OT2}%
\normalfont
\selectfont}
\DeclareTextFontCommand{\textcyr}{\cyr}
\newcommand{\Mod}[1]{\ (\text{mod}\ #1)}
\newtheorem{thm}{Theorem}[section]
\newtheorem{cor}[thm]{Corollary}
\newtheorem{lem}[thm]{Lemma}
\newtheorem{prop}[thm]{Proposition}
\newtheorem{conj}[thm]{Conjecture}
\theoremstyle{definition}
\newtheorem{defn}[thm]{Definition}
\newtheorem{assu}[thm]{Assumption}
\newtheorem{rem}[thm]{Remark}
\begin{document}
\title[Anticyclotomic Mazur-Tate conjecture]{An anticyclotomic Mazur-Tate conjecture for modular forms}
\author[C-.H. Kim]{Chan-Ho Kim}
\address{School of Mathematics, Korea institute for Advanced Study (KIAS), 85 Hoegiro, Dongdaemun-gu, Seoul 02455, Republic of Korea}
\email{chanho.math@gmail.com}
\date{\today}
\subjclass[2010]{Primary 11R23; Secondary 11G05, 11F33, 11F67}
\keywords{refined Iwasawa theory, Mazur-Tate conjectures, Heegner points, Euler systems, congruences, modular forms, Shimura curves}
\begin{abstract}
Under certain assumptions, we prove an anticyclotomic analogue of the ``weak main conjecture" \`{a} la Mazur and Tate for modular forms over a large class of cyclic ring class extensions.
\end{abstract}
%
 
\maketitle
\setcounter{tocdepth}{1}

\section{Introduction}
\subsection{Overview}
The purpose of this article is to prove a Mazur-Tate type conjecture for modular forms at a good ordinary prime over cyclic ring class extensions of an imaginary quadratic field. It is a refinement of the Euler system divisibility of the main conjecture of Iwasawa theory for modular forms over anticyclotomic $\mathbb{Z}_p$-extensions in \cite{bertolini-darmon-imc-2005}. Here, the \emph{refinement} means that we look at Fitting ideals over (more general) finite abelian extensions, not characteristic ideals over the infinite $\mathbb{Z}_p$-extension. Thus, we would have a chance to understand the \emph{structure} of Selmer groups rather than the \emph{size} of Selmer groups (c.f.\cite[$\S$10]{kurihara-fitting} and \cite{kurihara-iwasawa-2012}) in near future.

It is known that the Euler system divisibility of the Iwasawa main conjecture implies the corresponding Mazur-Tate conjecture for $p$-stabilized newforms for cyclic extensions of $p$-power degree if one has the exact control theorem and the non-existence of a non-trivial finite submodule over the Iwasawa algebra (\cite[$\S$2]{kim-kurihara}).

We extend the strategy to \emph{general} cyclic ring class extensions. In order to do this, we first prove the Euler system divisibility of the \emph{generalized} anticyclotomic Iwasawa main conjecture allowing prime-to-$p$ cyclic ring class extensions. Then we apply the exact control theorem and the non-existence of a non-trivial finite submodule over the generalized Iwasawa algebra to this generalized Euler system divisibility. Here, the \emph{generalized} means that the corresponding Galois group is isomorphic to $\mathbb{Z}_p \times \mathbb{Z}/m'\mathbb{Z}$ with $(p,m')=1$.

Also, we compare the Bertolini-Darmon elements arising from newforms and their $p$-stabilizations to obtain the Mazur-Tate type conjecture result for non-$p$-stabilized forms. In the process, we also observe a different type of the exceptional zero phenomenon arising from prime-to-$p$ extensions ($\S$\ref{sec:tame_exceptional_zeros}).

The proof of this generalized Euler system divisibility \emph{reveals} the significance of the Euler system argument \`{a} la Bertolini-Darmon \cite[$\S$4]{bertolini-darmon-imc-2005} \emph{again}. Throughout more than a decade, the formalism of the Euler system argument is extensively used in various settings. The list includes \cite[$\S$4]{bertolini-darmon-imc-2005}, \cite[$\S$5]{darmon-iovita}, \cite[$\S$4.4]{pw-mu}, \cite[$\S$7.6]{longo-hilbert-modular-case},  \cite[$\S$7]{vanorder-dihedral}, \cite[$\S$2.3]{gomez-thesis}, \cite[$\S$6]{chida-hsieh-main-conj}, and \cite[$\S$7.3]{haining-thesis}. The Euler system argument consists of two parts. The main part is the ``character by character" divisibilities over discrete valuation rings. Here, the discrete valuations rings come from the evaluation of characters on the Iwasawa algebra. The other part is the lifting argument from the divisibilities over discrete valuations rings to the desired divisibility over the Iwasawa algebra.
As a by-product of the Euler system argument, one obtains the Euler system divisibility of the main conjecture, which gives an upper bound of the corresponding Selmer groups. 

The key observation for the generalization is that the main body of argument for the ``character by character" divisibilities over discrete valuation rings is (almost) independent of the shape of the (infinite) extension (modulo some detail, of course) ($\S$\ref{sec:euler_systems}).
This is because the main part of the argument takes place over discrete valuation rings after specialization via a character.
This idea can also be observed in \cite{longo-vigni-vanishing}, \cite{nekovar-hilbert} and \cite{gomez-thesis}.

We also observe that the serious use of the theory of Iwasawa modules is only made in the lifting argument (\cite[Proposition 3.1]{bertolini-darmon-imc-2005}) from the ``character by character" divisibilities to the bona fide divisibility.
We also generalize the lifting argument to allow prime-to-$p$ extensions (Proposition \ref{prop:generalized_divisibility}).

However, it turns out that the backbone and detail of the Euler system argument depends heavily on the properties of $p$-extensions. Several details of the Euler system argument are not valid anymore in the general cyclic case. For example, all the arguments using the properties of local rings (e.g.~Nakayama's lemma with the Iwasawa algebra) simply do not work since the the generalized Iwasawa algebra is only semi-local. In order to overcome this issue, we improve the Chebotarev density type result (Theorem \ref{thm:chebotarev}) by allowing more general base fields. Using this upgraded tool, we make the reduction process to the base field in the Euler system argument more ``relative".

In the reduction process, another key ingredient is the control theorem of compact Selmer groups with respect to quotients (Corollary \ref{cor:control}), which is based on the freeness theorem of compact Selmer groups (Theorem \ref{thm:freeness}). The original proof of the freeness theorem also depends on the properties of $p$-extensions. In order to generalize the freeness theorem, we generalize the concept of $n$-admissible sets (Definition \ref{defn:n-admissible-set}) allowing more general base fields as before. Also, we use the isotypic decomposition with respect to quotients ($\S$\ref{subsec:enhanced_decomposition}) to recycle the original approach.

Using these ideas, we carefully examine the generalized Euler system argument over discrete valuations rings and apply the generalized lifting argument. To sum up, we do not only use the Euler system argument but also refine the argument itself.
We also remark that there is a close similarity between the enhanced isotypic decomposition and Bertolini-Darmon's Euler system argument because both use the specialization via characters.

As a by-product of the generalization, we deduce the Euler system divisibility of the generalized main conjecture and a Mazur-Tate type conjecture for modular forms under cyclic ring class extensions. As a consequence, we obtain more refined and general arithmetic applications than those of the main conjecture. It seems that all the proven Euler system divisibilities mentioned in the above list can be upgraded as ours without introducing new difficulty, at least to some extent. Also, we can obtain the upper bound of $M$-Selmer groups of elliptic curves over cyclic ring class extensions for some $M$ which is not necessarily a prime power under a certain normalization assumption ($\S$\ref{sec:speculation}).

We also give a detailed exposition of the mod $p^n$ level raising at two $n$-admissible primes ($\S$\ref{subsec:level_raising_at_two_prime}) and reveal the importance of ``Condition Ihara" (Assumption \ref{assu:ihara}), which is not emphasized in previous papers, in the non-ordinary setting ($\S$\ref{subsec:pm-main-conj}). 

This work can be regarded as a part of the refined Iwasawa theory \`{a} la Kurihara \cite[$\S$10, $\S$11]{kurihara-fitting}, \cite[Remark 1, $\S$1]{kurihara-iwasawa-2012} for the anticyclotomic setting. Note that our approach is completely different from the current approaches to the cyclotomic Mazur-Tate conjecture (for examples, \cite{ota-thesis}, \cite{kim-kurihara}, \cite{epw2}) mainly due to the significant difference between Euler systems arising from Heegner points and Kato's Euler systems arising from Siegel units.


\begin{rem}[on the generality and the convention]
As we already mentioned, there are various settings where the Euler system argument is used. We adapt and refine the setting of \cite{pw-mu}, namely ordinary modular forms of weight 2 with arbitrary Fourier coefficients. However, we adapt the language of \cite{chida-hsieh-main-conj} mostly.
\end{rem}

%

\subsection{Setting the stage}
\subsubsection{Modular setup} \label{subsubsec:modular_setup}
Let $p > 3$ be a prime and $N$ be an integer $\geq 1$ with $(N,p)=1$. 
Fix embeddings $\iota_p : \overline{\mathbb{Q}} \hookrightarrow \overline{\mathbb{Q}}_p$ and $\iota_\infty : \overline{\mathbb{Q}} \hookrightarrow \mathbb{C}$.
Let $f = f(\tau) \in S_2 (\Gamma_0(N))^{\mathrm{new}}$ be a $p$-ordinary newform. Let $E$ be the Hecke field of $f$ over $\mathbb{Q}_p$, which is compatible with $\iota_p$, and $\mathcal{O}$ the ring of integers of $E$. Let $\varpi$ be a uniformizer of $\mathcal{O}$, $\mathbb{F} := \mathcal{O} / \varpi \mathcal{O}$ the residue field, and $\mathcal{O}_n := \mathcal{O} / \varpi^n$.
For a field $F$, denote the absolute Galois group of $F$ by $G_F$. 
Let 
$$\rho : G_\mathbb{Q} \to \mathrm{GL}_2(E) = \mathrm{GL}(V_f)$$
 be the Galois representation associated to $f$.
Let $T_f \subset V_f$ be a Galois-stable $\mathcal{O}$-lattice of $f$.
We assume that the residual representation 
$$\overline{\rho} : G_\mathbb{Q} \to \mathrm{GL}_2(\mathbb{F}) = \mathrm{GL}(T_f/\varpi T_f)$$
 of $f$ is absolutely irreducible. Then $\overline{\rho}$ is independent of the choice of $T_f$.
Let $T_{f,n} := T_f / \varpi^n T_f$, $A_f := V_f / T_f$, and $A_{f,n} := A_f[\varpi^n]$. 
It is known that $A_{f,n} \simeq T_{f,n}$ and $A_{f,n} \simeq \mathrm{Hom}_{\mathcal{O}} (T_{f,n}, E/\mathcal{O}(1))$ as $G_\mathbb{Q}$-modules via the self-duality.
By the ordinary assumption, we define the $p$-stabilized newform 
$$f_\alpha(\tau) := f(\tau) - \frac{1}{\alpha} f(p\tau) $$
of $f$ with the unit $U_p$-eigenvalue $\alpha = \alpha_p(f)$.
\subsubsection{Anticyclotomic setup} \label{subsubsec:anticyclotomic_setup}
Fix an imaginary quadratic field $K$ with $(\mathrm{disc}(K), Np)=1$.
Let $N(\overline{\rho})$ be the prime-to-$p$ conductor of $\overline{\rho}$.
Then we have decompositions
\[
\xymatrix{
N  = N^+N^- , & N(\overline{\rho})  = N(\overline{\rho})^+N(\overline{\rho})^-
}
\]
where the $(+)$-part is the product of the primes which split in $K$ and the $(-)$-part is the product of the primes which are inert in $K$. Note that $N(\overline{\rho})^\pm$ divides $N^\pm$, respectively.
\begin{assu}[Parity]
In the decomposition, $N^-$ is the square-free product of an \emph{odd} number of primes.
\end{assu}
\begin{rem}
The parity assumption implies that we deal with the opposite case to \cite[Conjecture 2.1 and Theorem 2.4]{darmon-refined-bsd}.
See also \cite[Theorem 1.1 and Conjecture 5.1]{longo-vigni-refined} for the higher weight case.
\end{rem}
Let $m = m' p^r$ be a non-negative integer with $(m',p)=1$ such that $(m,N)=1$.
For $m  = \prod \ell^{r_i}_i$, write $m_0 = \prod \ell^{r_i + 1}_i$ where $\ell$ runs over primes.
Let $H(m_0)$ be the ring class field of $K$ of conductor $m_0$, $G_{m_0} := \mathrm{Gal}(H(m_0)/K)$.
For any prime $\ell$, let $K(\ell^r) \subset H(\ell^{r+1})$ be the maximal $\ell$-subextension of $H(\ell^{r+1})$ over $K$.
Let $K(m)$ be the compositum of the $K(\ell^{r_i}_i)$'s. Then $K(m)$ is a cyclic extension of $K$ and is contained in $H(m_0)$.
\begin{assu} \label{assu:exact_degree}
Throughout this article, we assume that $H(1) \cap K(m) = K$. In other words, every prime $\ell$ dividing $m$ is totally ramified in $K(\ell^{r})/K$.
\end{assu}
\begin{rem}
If the class number of $K$ is prime to $m$, then Assumption \ref{assu:exact_degree} is always satisfied.
If $\ell$ is totally ramified in $K(\ell^r)/K$, then the extension degree of $K(\ell^r)$ over $K$ is $\ell^r$.
The index $[H(\ell^{r+1}):K(\ell^r)]$ is $\ell +1$/$\ell -1$ if  $\ell$ is inert/splits in $K/\mathbb{Q}$, respectively.
\end{rem}
Let $\Gamma_m :=\mathrm{Gal}(K(m)/K)$ and then $\Gamma_m$ is a cyclic group of order $m$. We fix a generator $\gamma_m \in \Gamma_m$ and isomorphism
$\mathcal{O}[ \Gamma_m ] \simeq \mathcal{O}[ T ]/( (1+T)^{m} -1 )$ defined by $\gamma_m \mapsto 1+T$.

Let $K(p^\infty) := \bigcup_{r=1}^{\infty} K(p^r)$ be the anticyclotomic $\mathbb{Z}_p$-extension and $\Gamma_{p^\infty} :=\mathrm{Gal}(K(p^\infty)/K)$. Let $\Lambda_{p^\infty} = \mathcal{O} \llbracket \Gamma_{p^\infty} \rrbracket$ be the Iwasawa algebra.

Similarly, let $K(m'p^\infty) := \bigcup_{r=1}^{\infty} K(m'p^r)$ and $\Lambda_{m'p^\infty} := \mathcal{O} \llbracket \Gamma_{m'p^\infty} \rrbracket \simeq \mathcal{O} [\Gamma_{m'}]\llbracket \Gamma_{p^\infty} \rrbracket$ be the generalized Iwasawa algebra.
\subsection{Anticyclotomic Mazur-Tate conjectures for modular forms}
We state an anticyclotomic analogue of the Mazur-Tate conjecture \cite{mazur-tate}, which we prove under certain conditions.
Let $L_p(K(m), f) \in \mathcal{O}[ \mathrm{Gal}(K(m)/K) ]$ be the Bertolini-Darmon element attached to $f$ and $K(m)$ defined in $\S$\ref{sec:theta_elements}, which is an analogue of Mazur-Tate elements.
Observing the statement of the Iwasawa main conjecture, we can easily state the following conjecture.
\begin{conj}[``Weak main conjecture" \`{a} la Mazur-Tate: the minimal Selmer case] \label{conj:mazur-tate-minimal}
$$L_p(K(m), f) \in \mathrm{Fitt}_{\mathcal{O}[ \mathrm{Gal}(K(m)/K)  ]} \left( \mathrm{Sel}(K(m), A_f)^\vee \right)$$
where $\mathrm{Sel}(K(m), A_f)$ is the minimal Selmer group as in $\S$\ref{subsec:minimal_selmer}.
\end{conj}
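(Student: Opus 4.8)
The plan is to deduce Conjecture~\ref{conj:mazur-tate-minimal} from the Euler system divisibility of a \emph{generalized} anticyclotomic Iwasawa main conjecture over the semi-local algebra $\Lambda_{m'p^\infty}$, followed by a control-theoretic descent to finite level and a comparison of Bertolini--Darmon elements of $f$ and of its $p$-stabilization $f_\alpha$. Since $K(m) = K(m'p^r)$ is a finite subextension of the generalized tower $K(m'p^\infty)/K$, the first step is to prove the generalized Euler system divisibility
$$
L_p(K(m'p^\infty), f_\alpha)\ \in\ \mathrm{Fitt}_{\Lambda_{m'p^\infty}}\!\left( \mathrm{Sel}(K(m'p^\infty), A_f)^\vee \right),
$$
where $L_p(K(m'p^\infty), f_\alpha) := \varprojlim_r L_p(K(m'p^r), f_\alpha)$ is the $\Lambda_{m'p^\infty}$-adic Bertolini--Darmon element. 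This is carried out by the Bertolini--Darmon Euler system argument of \cite[$\S$4]{bertolini-darmon-imc-2005}, refined as follows. The heart of the matter is the ``character-by-character'' divisibility over discrete valuation rings obtained by specializing $\Lambda_{m'p^\infty}$ along its characters; as explained in $\S$\ref{sec:euler_systems}, this part is essentially insensitive to the shape of the extension, so the level-raising congruences and the explicit reciprocity laws go through after specialization. The passage from these divisibilities back to a divisibility over $\Lambda_{m'p^\infty}$ itself is the generalized lifting argument of Proposition~\ref{prop:generalized_divisibility}. Because $\Lambda_{m'p^\infty} \simeq \mathcal{O}[\Gamma_{m'}]\llbracket\Gamma_{p^\infty}\rrbracket$ is only semi-local, the reduction-to-the-base-field step must be made ``relative''; for this I would use the upgraded Chebotarev density theorem over general base fields (Theorem~\ref{thm:chebotarev}), the generalized notion of $n$-admissible set (Definition~\ref{defn:n-admissible-set}), the freeness of compact Selmer groups (Theorem~\ref{thm:freeness}) with the attendant control theorem with respect to quotients (Corollary~\ref{cor:control}), and the isotypic decomposition with respect to quotients ($\S$\ref{subsec:enhanced_decomposition}).

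Next, write $\pi\colon \Lambda_{m'p^\infty}\twoheadrightarrow \mathcal{O}[\mathrm{Gal}(K(m'p^r)/K)]$ for the natural quotient. The compatibility of Bertolini--Darmon elements gives $\pi\bigl(L_p(K(m'p^\infty), f_\alpha)\bigr) = L_p(K(m'p^r), f_\alpha)$ up to the appropriate normalization, and the exact control theorem (cf.\ Corollary~\ref{cor:control}) together with the non-existence of a nontrivial finite $\Lambda_{m'p^\infty}$-submodule yields the exact specialization
$$
\mathrm{Sel}(K(m'p^r), A_f)^\vee\ \simeq\ \mathrm{Sel}(K(m'p^\infty), A_f)^\vee \otimes_{\Lambda_{m'p^\infty}} \mathcal{O}[\mathrm{Gal}(K(m'p^r)/K)].
$$
Since the formation of Fitting ideals commutes with base change --- $\mathrm{Fitt}_{R/I}(M/IM)$ is the image of $\mathrm{Fitt}_R(M)$ in $R/I$ --- applying $\pi$ to the first step gives
$$
L_p(K(m'p^r), f_\alpha)\ \in\ \mathrm{Fitt}_{\mathcal{O}[\mathrm{Gal}(K(m'p^r)/K)]}\!\left( \mathrm{Sel}(K(m'p^r), A_f)^\vee \right).
$$

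Finally, since the minimal Selmer group $\mathrm{Sel}(K(m), A_f)$ depends only on $f$ and $K(m) = K(m'p^r)$, it remains to compare $L_p(K(m), f)$ with $L_p(K(m'p^r), f_\alpha)$. By the explicit comparison of the two constructions in $\S$\ref{sec:theta_elements} --- which involves a tame Euler factor at $p$ when $r = 0$, responsible for the exceptional-zero phenomenon of $\S$\ref{sec:tame_exceptional_zeros} --- the divisibility just obtained for $f_\alpha$ implies the desired membership $L_p(K(m), f) \in \mathrm{Fitt}_{\mathcal{O}[\mathrm{Gal}(K(m)/K)]}\!\left(\mathrm{Sel}(K(m), A_f)^\vee\right)$, completing the proof. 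The main obstacle is the first step: the Bertolini--Darmon machinery was designed for $p$-extensions and uses the local structure of the Iwasawa algebra pervasively (Nakayama's lemma, minimal generating sets, local-ring arguments), all of which fail over the semi-local $\Lambda_{m'p^\infty}$. Making the Euler system argument run ``relatively'' over a general base --- through Theorem~\ref{thm:chebotarev}, the generalized admissible sets, and the enhanced isotypic decomposition --- is where essentially all the new work lies; the control-theoretic descent and the $f$-versus-$f_\alpha$ comparison are comparatively formal.
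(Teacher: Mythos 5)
Your overall architecture (generalized Euler system divisibility over $\Lambda_{m'p^\infty}$, lifting via Proposition \ref{prop:generalized_divisibility}, control-theoretic descent to finite level, then the $f$ versus $f_\alpha$ comparison) is the paper's architecture for the \emph{$N^-$-ordinary} statement, Conjecture \ref{conj:mazur-tate-ordinary}. But the statement you were asked to prove is the \emph{minimal} Selmer case, and there your proposal has a genuine gap: you treat the minimal Selmer group as if the whole machine applies to it directly, whereas the paper is explicit that ``the practical Euler system argument \ldots works only well with the $N^-$-ordinary Selmer group.'' The Euler system bounds the dual of the $\Delta$-ordinary $S$-relaxed Selmer groups, whose local conditions at primes dividing $N^+$ and $N^-$ are the unramified/ordinary ones, not the strict ones defining $\mathrm{Sel}(K(m),A_f)$. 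Since $\mathrm{Sel}_{N^-}(K(m),A_f)\subseteq\mathrm{Sel}(K(m),A_f)$, dualizing gives a surjection $\mathrm{Sel}(K(m),A_f)^\vee\twoheadrightarrow\mathrm{Sel}_{N^-}(K(m),A_f)^\vee$ and hence $\mathrm{Fitt}(\mathrm{Sel}^\vee)\subseteq\mathrm{Fitt}(\mathrm{Sel}_{N^-}^\vee)$; so membership of $L_p$ in the larger Fitting ideal does \emph{not} imply membership in the smaller one, and the minimal case does not follow formally from the ordinary case.

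Two specific repairs are needed. First, your displayed ``exact specialization'' $\mathrm{Sel}(K(m'p^r),A_f)^\vee\simeq\mathrm{Sel}(K(m'p^\infty),A_f)^\vee\otimes_{\Lambda_{m'p^\infty}}\mathcal{O}[\Gamma_{m'p^r}]$ is not what the control theorem (Lemma \ref{lem:control_galois_selmer}) gives: that lemma controls the $\Delta$-ordinary Selmer groups, so the descent of Theorem \ref{thm:main_theorem_1} lands on $\mathrm{Sel}_{N^-}(K(m'p^r),A_f)^\vee$ (this is exactly Proposition \ref{prop:mainconj_to_mazurtate}). Over the infinite tower the minimal and $N^-$-ordinary groups coincide under $N^+$-minimality, but at finite level they differ at primes dividing $N^-$ by local terms. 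Second, to pass from the $N^-$-ordinary conclusion at finite level to the minimal one you must add the hypothesis that $f$ is $N^-$-minimal (hence $N$-minimal), under which $\mathrm{Sel}(K(m),A_f)=\mathrm{Sel}_{N^-}(K(m),A_f)$ as in $\S$\ref{subsec:minimal_selmer} and Remark \ref{rem:comparision_of_selmer}; this hypothesis appears nowhere in your proposal, yet it is precisely the condition under which Theorem \ref{thm:main_theorem} asserts Conjecture \ref{conj:mazur-tate-minimal}. Without it the conjecture remains open even granting everything else you wrote.
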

It seems that there is no direct way to attack this minimal Selmer case. The practical Euler system argument in all the literatures works only well with the $N^-$-ordinary Selmer group $\mathrm{Sel}_{N^-}(K(m), A_{f,n})$ (Definition \ref{defn:selmer}). Thus, we consider the following formulation.
\begin{conj}[``Weak main conjecture" \`{a} la Mazur-Tate: the $N^-$-ordinary Selmer case] \label{conj:mazur-tate-ordinary}
$$L_p(K(m), f) \in \mathrm{Fitt}_{\mathcal{O}[ \mathrm{Gal}(K(m)/K)  ]} \left( \mathrm{Sel}_{N^-}(K(m), A_{f})^\vee \right)$$
where $\mathrm{Sel}_{N^-}(K(m), A_{f}) := \varinjlim_n \mathrm{Sel}_{N^-}(K(m), A_{f,n})$ and the transition map with respect to $n$ is induced from the embedding $A_{f,n} \hookrightarrow A_{f,n+1}$.
\end{conj}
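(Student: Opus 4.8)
The plan is to deduce Conjecture \ref{conj:mazur-tate-ordinary} from the Euler system divisibility of the \emph{generalized} anticyclotomic Iwasawa main conjecture over the semi-local algebra $\Lambda_{m'p^\infty}$, followed by descent to the finite layer $K(m)$. Concretely, one first builds a norm-compatible system $\{ L_p(K(m'p^r), f_\alpha) \}_r$ of Bertolini--Darmon elements attached to the $p$-stabilization $f_\alpha$ (the $U_p$-eigenvalue normalization being exactly what makes the system norm-coherent), with projective limit $\theta_\infty \in \Lambda_{m'p^\infty}$, and writes $\mathcal{X}_\infty := \mathrm{Sel}_{N^-}(K(m'p^\infty), A_f)^\vee$ for the Pontryagin dual of the $N^-$-ordinary Selmer group over $K(m'p^\infty) = \bigcup_r K(m'p^r)$. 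The core step is the generalized Euler system divisibility
$$\theta_\infty \in \mathrm{Fitt}_{\Lambda_{m'p^\infty}}(\mathcal{X}_\infty).$$
Granting this, I would project along the quotient $\Lambda_{m'p^\infty} \twoheadrightarrow \mathcal{O}[\mathrm{Gal}(K(m)/K)]$, under which $\theta_\infty \mapsto L_p(K(m), f_\alpha)$, and use that Fitting ideals commute with base change to land in $\mathrm{Fitt}_{\mathcal{O}[\mathrm{Gal}(K(m)/K)]}\big( \mathcal{X}_\infty \otimes_{\Lambda_{m'p^\infty}} \mathcal{O}[\mathrm{Gal}(K(m)/K)] \big)$; the exact control theorem for compact Selmer groups with respect to quotients (Corollary \ref{cor:control}), which applies because $\mathcal{X}_\infty$ has no non-trivial finite $\Lambda_{m'p^\infty}$-submodule, identifies this last module with $\mathrm{Sel}_{N^-}(K(m), A_f)^\vee$. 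Finally, the comparison of the Bertolini--Darmon elements of $f$ and of $f_\alpha$ (where the tame exceptional-zero phenomenon of $\S$\ref{sec:tame_exceptional_zeros} enters) converts the statement for $f_\alpha$ into the statement for $f$.

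For the core divisibility I would follow and refine the Euler system argument of \cite[$\S$4]{bertolini-darmon-imc-2005}. Since $\Lambda_{m'p^\infty} \simeq \mathcal{O}[\Gamma_{m'}]\llbracket \Gamma_{p^\infty} \rrbracket$ is only semi-local, I first split the analysis along the idempotents of $\mathcal{O}[\Gamma_{m'}]$ (after a harmless finite extension of $\mathcal{O}$), so that each factor is a regular local ring, a power series ring in one variable over a discrete valuation ring; this is the enhanced isotypic decomposition of $\S$\ref{subsec:enhanced_decomposition}. On each factor the argument has two halves. The main half is the \emph{``character by character"} divisibility of lengths over the discrete valuation rings obtained by evaluating characters of $\Gamma_{p^\infty}$: here one uses mod $\varpi^n$ level raising at (pairs of) generalized $n$-admissible primes (Definition \ref{defn:n-admissible-set}, $\S$\ref{subsec:level_raising_at_two_prime}) to produce quaternionic eigenforms on Shimura curves congruent to $f$ modulo $\varpi^n$, the explicit reciprocity laws relating their theta elements to Heegner-point cohomology classes, and a Kolyvagin-style bounding of the Selmer group by these classes; the freeness theorem for compact Selmer groups (Theorem \ref{thm:freeness}) and the upgraded Chebotarev density theorem over general base fields (Theorem \ref{thm:chebotarev}) are the tools that make the reduction to the base field work in this generalized setting. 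The second half is the lifting from these divisibilities over discrete valuation rings to the divisibility over $\Lambda_{m'p^\infty}$, which I would handle via the generalized lifting result (Proposition \ref{prop:generalized_divisibility}).

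The hard part is precisely that the local-ring techniques underpinning the classical $p$-extension argument fail in the generalized setting: Nakayama's lemma over the Iwasawa algebra, which is used pervasively both in the reduction to the base field and in the proof of the freeness theorem, has no analogue over the semi-local $\Lambda_{m'p^\infty}$. My remedy is to make these steps \emph{relative}: the upgraded Chebotarev result allows the choice of admissible primes over an arbitrary base layer rather than only over $K$, and the isotypic decomposition with respect to quotients recovers the structure of the compact Selmer groups component by component, so that the original $p$-adic arguments can be recycled on each regular local factor. The remaining delicate point, which must be executed with care, is the bookkeeping of normalizations of the theta elements — under the norm maps along the tower and under the passage between $f$ and $f_\alpha$ — so that the control theorem and the absence of finite submodules can be invoked cleanly at the descent step.
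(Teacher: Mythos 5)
Your proposal follows essentially the same route as the paper: the generalized Euler system divisibility over $\Lambda_{m'p^\infty}$ (obtained via the enhanced isotypic decomposition, the upgraded Chebotarev argument over $K(m')$, the freeness theorem, the two explicit reciprocity laws, and the lifting criterion of Proposition \ref{prop:generalized_divisibility}), followed by descent to $\mathcal{O}[\Gamma_{m'p^r}]$ via the exact control theorem and base change of Fitting ideals, and finally the passage from $f_\alpha$ to $f$ using the invertibility of the $p$-adic multiplier under Condition PO. The only slight imprecision is that the descent step itself rests on the exact control theorem for discrete Selmer groups (Lemma \ref{lem:control_galois_selmer}) rather than on Corollary \ref{cor:control}, and the non-existence of finite submodules enters earlier, in converting characteristic-ideal containments to Fitting-ideal containments within Proposition \ref{prop:generalized_divisibility}; this does not affect the correctness of the plan.
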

\begin{rem}
In the very original formulation of the Mazur-Tate conjecture, the coefficient ring is a subring of a global field.
In our formulation, we assume that the coefficient ring $\mathcal{O}$ is $p$-adic. See $\S$\ref{sec:speculation} for this global aspect.
\end{rem}
\subsection{Running hypotheses}
We make the list of assumptions we need for our main theorem and point out where the conditions are used throughout this article.
\begin{assu}[$N^{\pm}$-minimality]  \label{assu:minimality}
The form $f \in S_2(\Gamma_0(N))^{\mathrm{new}}$ is \textbf{$N^{\pm}$-minimal} if $N^{\pm} = N(\overline{\rho})^{\pm}$, respectively. 
\end{assu}
Condition CR is a relaxation of $N^-$-minimality but still ensures a mod $p$ multiplicity one result for the localized Hecke modules we deal with.
\begin{assu}[Condition CR: {\cite[Definition 6.4]{helm-israel}}, {\cite[Definition 1.6]{kim-summary}}]  \label{assu:conditionCR}
Let  $\overline{r}$ be a residual modular Galois representation with residual characteristic $\geq 5$.
Let $M$ be a square-free product of an odd number of primes and is divisible by $N(\overline{r})^-$.
The pair $(\overline{r}, M)$ satisfies \textbf{condition CR} if
\begin{itemize}
\item $\overline{r}$ is irreducible, and
\item if $q \mid M$ and $q \equiv \pm 1 \pmod{p}$, then $\overline{\rho}$ is ramified at $q$, i.e.~$q \mid N(\overline{r})^-$.
\end{itemize}
\end{assu}
We impose the following certain local condition at $p$, which is slightly weaker than Hypothesis (PO) in \cite{chida-hsieh-main-conj}.
Let $\alpha_p(f)$ be the unit root of the Hecke polynomial $X^2 - a_p(f) X + p$ of $f$ at $p$.
If $p$ splits in $K$, we decompose $p = \mathfrak{p} \overline{\mathfrak{p}}$ in $K$.
Let  $\mathrm{Fr}_\mathfrak{p}$ and $\mathrm{Fr}_{\overline{\mathfrak{p}}}$ be the Frobenii in $\Gamma_{m'}$ under the global reciprocity map with geometric normalization.
\begin{assu}[Condition PO] \label{assu:conditionPO}
The triple $(f, K(m')/K, p)$ satisfies \textbf{condition PO} if 
\begin{enumerate}
\item $p$ is inert in $K$ and $$a_p(f) \not\equiv \pm 1 \pmod{\varpi},$$ or
\item $p$ splits in $K$ and $$\omega(\mathrm{Fr}_\mathfrak{p}) \not\equiv \alpha_p(f) \pmod{\varpi_\omega} \textrm{ and }\omega(\mathrm{Fr}_{\overline{\mathfrak{p}}}) \not\equiv \alpha_p(f) \pmod{\varpi_\omega}$$ 
for any character $\omega : \Gamma_{m'} \to \overline{\mathbb{Q}}^\times_p$
where $\varpi_\omega$ is a uniformizer of the extension of $\mathcal{O}$ adjoining the values of $\omega$.
\end{enumerate}
\end{assu}

\begin{defn}[Big image] \label{defn:bigimage}
The residual representation $\overline{\rho}$ of $f$ \textbf{has big image} if the image of $\overline{\rho}$ contains a conjugate of $\mathrm{GL}_2(\mathbb{F}_p)$ or $p>5$.
\end{defn}

\begin{rem}
\begin{enumerate}
\item
The $N^+$-minimality assumption is essentially used in control theorems (Lemma \ref{lem:control_galois_selmer} and Lemma \ref{lem:control_mod_pn_selmer}), and these results play important roles in the proof of Theorem \ref{thm:freeness}.
Without the $N^+$-minimality assumption, Theorem \ref{thm:freeness} may fail. See \cite{kim-pollack-weston} for this issue.
Also, note that the $N^+$-minimality assumption corresponds to \cite[Assumption 2.15]{bertolini-darmon-derived-heights-1994} for primes dividing $N^+$. 
\item
Condition CR is broadly used in $\S$\ref{sec:level_raising} to obtain the uniqueness of mod $p^n$ modular forms under level raising.
\item
Condition PO corresponds to \cite[Assumption 2.15]{bertolini-darmon-derived-heights-1994} for primes dividing $p$ via \cite[Proposition 2.16]{bertolini-darmon-derived-heights-1994} and is also used in the exact control theorem. See Lemma \ref{lem:local_condition_at_p}, Lemma \ref{lem:control_galois_selmer}, and $\S$\ref{sec:tame_exceptional_zeros}.
Condition PO is slightly weaker than Hypothesis (PO) in \cite{chida-hsieh-main-conj} if $m' = 1$.
\item
The big image assumption is required for Theorem \ref{thm:chebotarev} and, indeed, is satisfied for most cases by \cite[Main Theorem (0.1).2]{ribet-img-1} and
The absolute irreducibility of $\overline{\rho}$, $p> 5$, and Condition CR imply that the image is large enough to establish Theorem \ref{thm:chebotarev} without presumably knowing $\mathrm{GL}_2(\mathbb{F}_p) \subset \overline{\rho} \left( G_\mathbb{Q} \right)$ up to conjugation. See \cite[Lemma 6.1 and Lemma 6.2]{chida-hsieh-main-conj} (with help of \cite{ribet-pacific}) for detail. 
\end{enumerate}
\end{rem}

\subsection{Main theorems and arithmetic consequences} \label{subsec:main_theorem_consequences}
We state the main theorems and their arithmetic consequences.
\begin{thm}[Main theorem I: generalized Iwasawa main conjecture] \label{thm:main_theorem_1}
Assume the following conditions:
\begin{enumerate}
\item $(\overline{\rho}, N^-)$ satisfies Condition CR;
\item $(f, K(m')/K, p)$ satisfies Condition PO;
\item $\overline{\rho}$ has big image;
\item $f$ is $N^+$-minimal;
\item $m'$ is prime to $Np$.
\end{enumerate}
Then 
$$L_p(K(m'p^\infty), f_\alpha) \in \mathrm{Fitt}_{\Lambda_{m'p^\infty}} \left( \mathrm{Sel}(K(m'p^\infty), A_{f})^\vee \right)$$
holds.
\end{thm}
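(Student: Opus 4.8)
The plan is to follow the Bertolini--Darmon Euler system strategy of \cite[\S4]{bertolini-darmon-imc-2005}, but in the generalized setting where the relevant Galois group is $\Gamma_{m'p^\infty} \simeq \mathbb{Z}/m'\mathbb{Z} \times \mathbb{Z}_p$ and the coefficient ring is the semi-local algebra $\Lambda_{m'p^\infty}$. The argument splits, as in the classical case, into two halves: first, a collection of ``character by character'' divisibilities over discrete valuation rings obtained by specializing $\Lambda_{m'p^\infty}$ at the characters of the finite part $\Gamma_{m'}$ (and, on the infinite part, evaluating on the Iwasawa algebra in the usual way); and second, a lifting step that reassembles these into the asserted divisibility of Fitting ideals over $\Lambda_{m'p^\infty}$ itself. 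For the first half I would run the level-raising machinery of $\S$\ref{sec:level_raising}: given a class $\kappa \in \mathrm{Sel}_{N^-}(K(m'p^\infty), A_{f,n})$ one produces $n$-admissible primes $\ell_1, \ell_2$ (in the generalized sense of Definition \ref{defn:n-admissible-set}, allowing the base field to vary), raises the level of $f$ modulo $p^n$ at $\ell_1\ell_2$ using Condition CR to get mod $p^n$ multiplicity one, and thereby relates the value of the Bertolini--Darmon element $L_p(K(m'p^\infty), f_\alpha)$ to the image of $\kappa$ under the finite/singular comparison maps. The improved Chebotarev statement (Theorem \ref{thm:chebotarev}), which now tolerates more general base fields thanks to the big image hypothesis, is what guarantees the existence of admissible primes over the intermediate fields $K(m'p^r)$; the freeness theorem (Theorem \ref{thm:freeness}) and the control theorem with respect to quotients (Corollary \ref{cor:control}) are what make the reduction to the base field work ``relatively'' rather than via Nakayama over a local ring.

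The second half is the generalized lifting argument, Proposition \ref{prop:generalized_divisibility}: having established that for each height-one prime / each character $\omega$ of $\Gamma_{m'}$ the specialized element lies in the specialized Fitting ideal, one concludes the divisibility over $\Lambda_{m'p^\infty}$. Here one uses that $\Lambda_{m'p^\infty} \simeq \mathcal{O}[\Gamma_{m'}]\llbracket \Gamma_{p^\infty}\rrbracket$ decomposes, after possibly extending scalars, into a product of power series rings $\mathcal{O}_\omega \llbracket \Gamma_{p^\infty}\rrbracket$ over the characters $\omega$ of the tame part, each of which is a regular local (indeed, two-dimensional Cohen--Macaulay) ring; Fitting ideals behave well under this decomposition and under the base change, so the classical lifting argument of \cite[Proposition 3.1]{bertolini-darmon-imc-2005} applies componentwise. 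One must check that the Selmer group $\mathrm{Sel}(K(m'p^\infty), A_f)^\vee$ is finitely generated and torsion over $\Lambda_{m'p^\infty}$ (so that its Fitting ideal is non-trivial and the characteristic-ideal comparison is meaningful), that it has no non-trivial pseudo-null submodule supported away from the relevant primes, and that Condition PO controls the local behaviour at $p$ (via Lemma \ref{lem:local_condition_at_p}), in particular so that the $N^-$-ordinary Selmer group one works with in the Euler system coincides, at the infinite level, with the minimal one appearing in the statement.

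I expect the main obstacle to be precisely the loss of the local-ring structure: every place in the original Bertolini--Darmon argument where one invokes Nakayama's lemma, the uniqueness of a maximal ideal, or ``a generator of a cyclic module'' over $\Lambda_{p^\infty}$ must be re-examined, since $\Lambda_{m'p^\infty}$ is only semi-local. The device for handling this is the enhanced isotypic decomposition of $\S$\ref{subsec:enhanced_decomposition}: one decomposes all the Hecke and Selmer modules according to the characters of $\Gamma_{m'}$ (equivalently, according to the idempotents of $\mathcal{O}[\Gamma_{m'}]$ after extending $\mathcal{O}$), reducing many statements to the component rings which \emph{are} local, and then one must check that the Euler system classes, the level-raising congruences, and the Chebotarev choices can all be made compatibly with this decomposition. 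A secondary technical point is bookkeeping around Assumption \ref{assu:exact_degree} (total ramification of $\ell \mid m$ in $K(\ell^r)/K$) and the compatibility of the Bertolini--Darmon elements $L_p(K(m'p^r), f_\alpha)$ under norm maps as $r$ varies, which is needed to pass to the inverse limit defining $L_p(K(m'p^\infty), f_\alpha)$; this requires the $p$-stabilization $f_\alpha$ (as opposed to $f$) so that the Euler factors at $p$ are trivial, and it is here rather than in the core Euler system argument that the choice of $f_\alpha$ is forced.
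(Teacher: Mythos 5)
Your proposal follows essentially the same route as the paper: the ``character by character'' divisibilities over discrete valuation rings via the Euler system induction (level raising at admissible primes, the two explicit reciprocity laws, the Chebotarev theorem over $K(m')$, and the freeness and control theorems for compact Selmer groups), followed by the generalized lifting criterion of Proposition \ref{prop:generalized_divisibility} using the enhanced isotypic decomposition of $\Lambda_{m'p^\infty}$ into the local rings $\mathcal{O}_\omega\llbracket\Gamma_{p^\infty}\rrbracket$ and the non-existence of non-trivial finite submodules to pass from characteristic to Fitting ideals. You also correctly identify the remaining bookkeeping (lifting from mod $\varpi^n$ admissible forms to characteristic zero, the comparison of $N^-$-ordinary with minimal Selmer groups under $N^+$-minimality, and the norm compatibility forced by $p$-stabilization), so the plan matches the paper's proof.
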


\begin{thm}[Main theorem II: Mazur-Tate conjecture] \label{thm:main_theorem}
Assume the following conditions:
\begin{enumerate}
\item $(\overline{\rho}, N^-)$ satisfies Condition CR;
\item $(f, K(m')/K, p)$ satisfies Condition PO;
\item $\overline{\rho}$ has big image;
\item $f$ is $N^+$-minimal;
\item $m$ is prime to $N$.
\end{enumerate}
Then Conjecture \ref{conj:mazur-tate-ordinary}
$$L_p(K(m), f) \in \mathrm{Fitt}_{\mathcal{O}[ \mathrm{Gal}(K(m)/K)  ]} \left( \mathrm{Sel}_{N^-}(K(m), A_{f})^\vee \right)$$
holds.
If we further assume that $f$ is also $N^-$-minimal (so $N$-minimal), then Conjecture \ref{conj:mazur-tate-minimal}
$$L_p(K(m), f) \in \mathrm{Fitt}_{\mathcal{O}[ \mathrm{Gal}(K(m)/K)  ]} \left( \mathrm{Sel}(K(m), A_{f})^\vee \right)$$
holds.
\end{thm}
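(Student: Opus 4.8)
The plan is to deduce Theorem~\ref{thm:main_theorem} from the generalized Iwasawa main conjecture divisibility of Theorem~\ref{thm:main_theorem_1}, applied to the $p$-stabilization $f_\alpha$, by a descent argument in the spirit of \cite[$\S$2]{kim-kurihara}. Write $m = m'p^{r}$ with $(m',p)=1$ (so $(m',Np)=1$ by hypothesis, and Theorem~\ref{thm:main_theorem_1} applies), put $\Gamma = \mathrm{Gal}(K(m'p^\infty)/K(m))$, and let $\pi \colon \Lambda_{m'p^\infty} \twoheadrightarrow \mathcal{O}[\mathrm{Gal}(K(m)/K)]$ be the quotient by the relative augmentation ideal $I_\Gamma$. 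I would first recall the two standard facts about Fitting ideals: they commute with base change, so $\pi\!\left(\mathrm{Fitt}_{\Lambda_{m'p^\infty}}(M)\right) = \mathrm{Fitt}_{\mathcal{O}[\mathrm{Gal}(K(m)/K)]}(M/I_\Gamma M)$ for any finitely generated $\Lambda_{m'p^\infty}$-module $M$; and $\mathrm{Fitt}(\,\cdot\,)$ only grows along surjections of modules over a fixed ring. Applying $\pi$ to the containment of Theorem~\ref{thm:main_theorem_1} and invoking the norm-compatibility of the Bertolini--Darmon elements of $\S$\ref{sec:theta_elements} (so that $\pi$ carries $L_p(K(m'p^\infty), f_\alpha)$ to a $p$-adic unit multiple, a power of $\alpha_p(f)$, of the finite-layer element $L_p(K(m), f_\alpha)$), I obtain
\[
L_p(K(m), f_\alpha) \in \mathrm{Fitt}_{\mathcal{O}[\mathrm{Gal}(K(m)/K)]}\!\left( \bigl(\mathrm{Sel}(K(m'p^\infty), A_f)^\vee\bigr)/I_\Gamma \right).
\]

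The next step is to identify the coinvariants on the right. Dualizing, $\bigl(\mathrm{Sel}(K(m'p^\infty), A_f)^\vee\bigr)/I_\Gamma = \bigl(\mathrm{Sel}(K(m'p^\infty), A_f)^{\Gamma}\bigr)^\vee$, and the control theorem with respect to quotients (Corollary~\ref{cor:control}) --- which rests on the freeness theorem for compact Selmer groups (Theorem~\ref{thm:freeness}), on the non-existence of a non-trivial finite $\Lambda_{m'p^\infty}$-submodule of the Selmer group, and, crucially, on Condition PO at $p$ together with $N^+$-minimality away from $p$ --- identifies $\mathrm{Sel}(K(m'p^\infty), A_f)^{\Gamma}$ with $\mathrm{Sel}_{N^-}(K(m), A_f)$; here the reason the minimal Selmer group over the infinite tower becomes the $N^-$-ordinary Selmer group at the finite layer is precisely the change of the local conditions at the primes dividing $N^-$ upon descent. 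Hence $\bigl(\mathrm{Sel}(K(m'p^\infty), A_f)^\vee\bigr)/I_\Gamma \cong \mathrm{Sel}_{N^-}(K(m), A_f)^\vee$, and the displayed containment becomes $L_p(K(m), f_\alpha) \in \mathrm{Fitt}_{\mathcal{O}[\mathrm{Gal}(K(m)/K)]}\bigl(\mathrm{Sel}_{N^-}(K(m), A_f)^\vee\bigr)$.

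It remains to pass from $f_\alpha$ to the newform $f$. Since $A_{f}$ is literally the same Galois module for $f$ and $f_\alpha$, the Selmer groups coincide, so I only need $L_p(K(m), f)$ to lie in the same ideal. The newform element $L_p(K(m), f)$ and the $p$-stabilized element $L_p(K(m), f_\alpha)$ are related by an explicit comparison formula ($\S$\ref{sec:theta_elements}, $\S$\ref{sec:tame_exceptional_zeros}) whose correction factors at $p$ are units by Condition PO (in the split case this is exactly the requirement that $\omega(\mathrm{Fr}_\mathfrak{p})$ and $\omega(\mathrm{Fr}_{\overline{\mathfrak{p}}})$ avoid $\alpha_p(f)$ modulo $\varpi_\omega$ for every character $\omega$ of $\Gamma_{m'}$), and whose correction factors at the tame primes dividing $m'$ --- where a novel exceptional-zero phenomenon can appear --- are shown not to obstruct the transfer of the Fitting-ideal membership. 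Consequently $L_p(K(m), f) \in \mathrm{Fitt}_{\mathcal{O}[\mathrm{Gal}(K(m)/K)]}\bigl(\mathrm{Sel}_{N^-}(K(m), A_f)^\vee\bigr)$, which is Conjecture~\ref{conj:mazur-tate-ordinary}. Finally, if $f$ is in addition $N^-$-minimal, then $\overline{\rho}$ is ramified at every prime dividing $N^-$, and for such primes the $N^-$-ordinary local condition coincides with the minimal (strict) one; thus $\mathrm{Sel}_{N^-}(K(m), A_f) = \mathrm{Sel}(K(m), A_f)$ and Conjecture~\ref{conj:mazur-tate-minimal} follows at once.

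I expect the main obstacle to be the second step: establishing the exact control theorem over the merely semi-local ring $\Lambda_{m'p^\infty}$, where the Nakayama-type arguments available in the pure $p$-extension case break down, and matching the local conditions so that the $\Gamma$-invariants of the minimal Selmer group over the tower are exactly the $N^-$-ordinary Selmer group at finite level with no spurious kernel or cokernel. This is where the isotypic decomposition with respect to quotients ($\S$\ref{subsec:enhanced_decomposition}) and the generalized notion of $n$-admissible sets (Definition~\ref{defn:n-admissible-set}) enter, and it is also the place where Condition PO, $N^+$-minimality, and Condition CR are all genuinely used.
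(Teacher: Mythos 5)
Your proposal follows the paper's own route: Proposition \ref{prop:mainconj_to_mazurtate} descends Theorem \ref{thm:main_theorem_1} to the finite layer via Fitting-ideal base change and the exact control theorem (Lemma \ref{lem:control_galois_selmer} with $S=1$, for the \emph{discrete} $N^-$-ordinary Selmer groups, rather than Corollary \ref{cor:control}, which concerns the compact ones), and \S\ref{sec:tame_exceptional_zeros} then transfers the membership from $f_\alpha$ to $f$ using that Condition PO makes the $p$-adic multiplier $e_p(f,K) \in \mathcal{O}[\Gamma_{m'}]$ a unit. The only other nuance is that the identification of the coinvariants with $\mathrm{Sel}_{N^-}(K(m),A_f)^\vee$ goes through the equality $\mathrm{Sel}(K(m'p^\infty),A_f)=\mathrm{Sel}_{N^-}(K(m'p^\infty),A_f)$ under $N^+$-minimality (\S\ref{subsec:minimal_selmer}) followed by control for the $N^-$-ordinary groups at both levels, not a change of local conditions at $N^-$ upon descent; these are cosmetic points and your argument is essentially the paper's proof.
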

\begin{rem}
See \cite[Corollary 2, 3, 4, and 5, Introduction]{bertolini-darmon-imc-2005} for the arithmetic applications of the Euler system divisibility. See $\S$\ref{sec:mainconj} for the current status of the main conjecture.
\end{rem}

Before stating refined arithmetic applications of the main theorems, we briefly review the relevant background on the group ring $\mathcal{O}[\Gamma_m]$. See \cite[(1.5)]{mazur-tate} for detail.

Let $\chi : \mathrm{Gal}(K(m)/K) \to \mathcal{O}^\times_{\chi}$ be a ring class character of finite order where $\mathcal{O}_\chi$ is the $\mathcal{O}$-algebra generated by the values of $\chi$. We identify $\chi$ with the $\mathcal{O}$-algebra homomorphism
$\chi : \mathcal{O}[\Gamma_m] \to \mathcal{O}_\chi$ defined by $\chi: \gamma_m \mapsto \chi(\gamma_m)$.
Let $I_\chi := \mathrm{ker}(\chi) \subseteq \mathcal{O}[\Gamma_m]$ be the augmentation ideal at $\chi$.
\begin{defn}[Vanishing order]
Let $\theta  \in \mathcal{O}[\Gamma_m]$. We say $\theta$ \textbf{vanishes to infinite order at $\chi$} if $\theta$ is contained in any power of $I_\chi$. We say $\theta$ \textbf{vanishes to order $r$ at $\chi$} if $\theta \in I^r_\chi \setminus I^{r+1}_\chi$.
\end{defn}

From now on, assume the Hecke field of $f$ is $\mathbb{Q}$.
Let $E_f$ be an elliptic curve over $\mathbb{Q}$ corresponding to $f$ via \cite[Theorem A]{bcdt}. We also assume the $N$-minimality of $f$ to have the coincidence of the minimal Selmer group of $f$, the $N^-$-ordinary Selmer group of $f$, and the classical Selmer group of $E_f$. 

The following corollary of Theorem \ref{thm:main_theorem} follows.
\begin{cor}[``Weak vanishing conjecture" \`{a} la Mazur-Tate; {\cite[Conjecture 1]{mazur-tate}}]
Under the same conditions of Theorem \ref{thm:main_theorem} and the $N^-$-minimality of $f$,
the rank $$\mathrm{rank}_{\mathbb{Z}_{p,\chi}} \left(  E_f(K(m))_\chi \right)$$ is equal to or less than the vanishing order of $L_p(K(m), f)$ at $\chi$ where 
$$E_f(K(m))_\chi := E_f(K(m)) \otimes_{\mathbb{Z}[\Gamma_m],\chi} \mathbb{Z}_{p,\chi}$$
 is the $\chi$-isotypic quotient of Mordell-Weil group $E_f(K(m))$.
 \end{cor}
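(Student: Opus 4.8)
The plan is to combine the Fitting-ideal statement of Theorem~\ref{thm:main_theorem} with the standard Mazur--Tate formalism (\cite{mazur-tate}) that turns a Fitting-ideal membership into a lower bound for the order of vanishing at a character, together with a descent argument. First, under the hypotheses at hand the form $f$ is $N$-minimal (it is $N^+$-minimal by hypothesis~(4) of Theorem~\ref{thm:main_theorem} and $N^-$-minimal by the extra assumption), so the minimal Selmer group $\mathrm{Sel}(K(m), A_f)$ coincides with $\mathrm{Sel}_{N^-}(K(m), A_f)$ and with the classical $p^\infty$-Selmer group of $E_f$ over $K(m)$, as recalled above. Writing $R := \mathbb{Z}_p[\Gamma_m]$ (recall $\mathcal{O} = \mathbb{Z}_p$ since the Hecke field is $\mathbb{Q}$), Theorem~\ref{thm:main_theorem} gives $L_p(K(m), f) \in \mathrm{Fitt}_R\big( \mathrm{Sel}(K(m), A_f)^\vee \big)$. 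The Kummer map provides a $\Gamma_m$-equivariant injection $E_f(K(m)) \otimes \mathbb{Q}_p/\mathbb{Z}_p \hookrightarrow \mathrm{Sel}(K(m), A_f)$; dualizing yields a surjection of $R$-modules $\mathrm{Sel}(K(m), A_f)^\vee \twoheadrightarrow D$ with $D := \big( E_f(K(m)) \otimes \mathbb{Q}_p/\mathbb{Z}_p \big)^\vee$, and since the $0$-th Fitting ideal can only grow along surjections, $L_p(K(m), f) \in \mathrm{Fitt}_R(D)$.

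Next comes the Mazur--Tate inequality: for a finitely generated $R$-module $M$, any $\theta \in \mathrm{Fitt}_R(M)$, and any ring class character $\chi$ of $\Gamma_m$, one has $\mathrm{ord}_\chi(\theta) \geq \dim_{\mathbb{Q}_{p,\chi}}\big( M \otimes_{R,\chi} \mathbb{Q}_{p,\chi} \big)$, where $\mathbb{Q}_{p,\chi} := \mathbb{Z}_{p,\chi}[1/p]$. Indeed, let $r$ denote the right-hand side; since $\mathbb{Z}_{p,\chi} = R/I_\chi$ is a discrete valuation ring, the finitely generated $\mathbb{Z}_{p,\chi}$-module $M/I_\chi M$ has free rank exactly $r$ (its free rank equals the $\mathbb{Q}_{p,\chi}$-dimension of $M \otimes_{R,\chi} \mathbb{Q}_{p,\chi}$), hence admits a surjection onto $\mathbb{Z}_{p,\chi}^{\oplus r}$, which is automatically $R$-linear. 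Composing with $M \twoheadrightarrow M/I_\chi M$ and using once more that Fitting ideals grow along surjections, $\mathrm{Fitt}_R(M) \subseteq \mathrm{Fitt}_R\big( \mathbb{Z}_{p,\chi}^{\oplus r} \big) = \mathrm{Fitt}_R(R/I_\chi)^r = I_\chi^r$, which is the claim. Applying this with $M = D$ yields $\mathrm{ord}_\chi\big( L_p(K(m), f) \big) \geq \dim_{\mathbb{Q}_{p,\chi}}\big( D \otimes_{R,\chi} \mathbb{Q}_{p,\chi} \big)$.

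It remains to identify this dimension with $\mathrm{rank}_{\mathbb{Z}_{p,\chi}} E_f(K(m))_\chi$. After inverting $p$, $D \otimes_{\mathbb{Z}_p} \mathbb{Q}_p$ is the $\mathbb{Q}_p$-linear dual of $V := E_f(K(m)) \otimes_{\mathbb{Z}} \mathbb{Q}_p$ with the contragredient $\Gamma_m$-action, so $D \otimes_{R,\chi} \mathbb{Q}_{p,\chi}$ is $\mathbb{Q}_{p,\chi}$-dual to the $\chi^{-1}$-isotypic component of $V$; hence $\dim_{\mathbb{Q}_{p,\chi}}\big( D \otimes_{R,\chi} \mathbb{Q}_{p,\chi} \big) = \mathrm{rank}_{\mathbb{Z}_{p,\chi}} E_f(K(m))_{\chi^{-1}}$. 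Finally, $K(m)/\mathbb{Q}$ is generalized dihedral --- complex conjugation acts on $\Gamma_m$ by inversion --- and $E_f$ is defined over $\mathbb{Q}$, so complex conjugation induces an isomorphism $E_f(K(m))_{\chi} \cong E_f(K(m))_{\chi^{-1}}$ of $\mathbb{Z}_{p,\chi}$-modules (note $\mathbb{Z}_{p,\chi} = \mathbb{Z}_{p,\chi^{-1}}$); the two ranks agree and the corollary follows. If $L_p(K(m), f)$ vanishes to infinite order at $\chi$ the asserted inequality is vacuous.

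The substance of the argument lies entirely in Theorem~\ref{thm:main_theorem}; the remaining steps are the routine Mazur--Tate passage from Fitting ideals to orders of vanishing. The two points that require genuine care --- and where I would expect to have to be careful --- are the coincidence of the three Selmer groups under $N$-minimality (needed to insert the classical Selmer group of $E_f$ into the conclusion of Theorem~\ref{thm:main_theorem}) and the bookkeeping $\chi \leftrightarrow \chi^{-1}$ forced by Pontryagin duality, which is absorbed by the dihedral symmetry of $K(m)/\mathbb{Q}$.
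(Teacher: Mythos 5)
Your argument is correct and is essentially the paper's own proof: the paper simply cites \cite[Proposition 3]{mazur-tate}, and what you have written out — pass from the Fitting ideal of the dual Selmer group to that of the dual Mordell--Weil group via the Kummer map, observe that membership in $\mathrm{Fitt}_R(M)$ forces membership in $I_\chi^r$ with $r$ the free rank of $M/I_\chi M$, and absorb the $\chi\leftrightarrow\chi^{-1}$ flip from Pontryagin duality using the dihedral action of complex conjugation — is precisely that argument, combined with the Selmer-group comparisons under $N$-minimality already recorded in $\S$\ref{subsec:minimal_selmer}. No gaps.
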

\begin{rem}
See \cite[Proposition 3]{mazur-tate} for proof. It is a refinement of \cite[Corollary 2, Introduction]{bertolini-darmon-imc-2005}.
\end{rem}

In the case of vanishing order zero, we obtain a more refined formula from Theorem \ref{thm:main_theorem}.
\begin{cor}[Vanishing order zero case] \label{cor:vanishing_order_zero}
Assume the same conditions of Theorem \ref{thm:main_theorem} and the $N^-$-minimality of $f$.
If $L_p(K(m), f)$ has vanishing order zero at $\chi$, then
$$\mathrm{ord}_p\left( \# \vert E_f(K(m))_\chi \vert \right) + \mathrm{ord}_p \left( \# \vert \textrm{{\cyr SH}}(E_f/K(m))[p^\infty]_\chi \vert \right) \leq \mathrm{ord}_p \left( \chi \left( L_p(K(m), f) \right) \right) $$
where $\textrm{{\cyr SH}}(E_f/K(m))[p^\infty]_\chi$ is the $\chi$-isotypic quotient of the $p$-part of the Shafarevich-Tate group of $E_f$ over $K(m)$.
\end{cor}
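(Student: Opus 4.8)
The plan is to specialize the Fitting-ideal inclusion of Theorem \ref{thm:main_theorem} at $\chi$ and then analyze the resulting bound over the discrete valuation ring $\mathcal{O}_\chi$. Since $f$ is $N$-minimal, Theorem \ref{thm:main_theorem} gives $L_p(K(m),f)\in\mathrm{Fitt}_{\mathcal{O}[\Gamma_m]}(\mathrm{Sel}(K(m),A_f)^\vee)$; as Fitting ideals commute with base change, applying the $\mathcal{O}$-algebra homomorphism $\chi\colon\mathcal{O}[\Gamma_m]\to\mathcal{O}_\chi$ yields
$$\chi\bigl(L_p(K(m),f)\bigr)\in\mathrm{Fitt}_{\mathcal{O}_\chi}\bigl(\mathrm{Sel}(K(m),A_f)^\vee\otimes_{\mathcal{O}[\Gamma_m],\chi}\mathcal{O}_\chi\bigr).$$
Vanishing order zero of $L_p(K(m),f)$ at $\chi$ means exactly that $\chi(L_p(K(m),f))\notin I_\chi$, i.e.\ that it is a nonzero element of $\mathcal{O}_\chi$; since $\mathcal{O}_\chi$ is finite over $\mathcal{O}$, hence a discrete valuation ring, the module $\mathrm{Sel}(K(m),A_f)^\vee\otimes_{\mathcal{O}[\Gamma_m],\chi}\mathcal{O}_\chi$ is then $\mathcal{O}_\chi$-torsion and therefore finite, and the structure theory over a discrete valuation ring (the length of a finitely generated torsion module equals the valuation of a generator of its Fitting ideal) gives
$$\mathrm{ord}_p\Bigl(\#\bigl(\mathrm{Sel}(K(m),A_f)^\vee\otimes_{\mathcal{O}[\Gamma_m],\chi}\mathcal{O}_\chi\bigr)\Bigr)\ \le\ \mathrm{ord}_p\bigl(\chi(L_p(K(m),f))\bigr)$$
with $\mathrm{ord}_p$ suitably normalized.

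It then remains to identify the left-hand side with $\mathrm{ord}_p(\#|E_f(K(m))_\chi|)+\mathrm{ord}_p(\#|\textrm{{\cyr SH}}(E_f/K(m))[p^\infty]_\chi|)$. Here I would use $N$-minimality once more: as noted before the statement, under it the minimal Selmer group $\mathrm{Sel}(K(m),A_f)$ coincides with the classical $p^\infty$-Selmer group $\mathrm{Sel}(E_f/K(m),p^\infty)$, which sits in the descent exact sequence
$$0\to E_f(K(m))\otimes\mathbb{Q}_p/\mathbb{Z}_p\to\mathrm{Sel}(E_f/K(m),p^\infty)\to\textrm{{\cyr SH}}(E_f/K(m))[p^\infty]\to 0$$
of $\mathbb{Z}_p[\Gamma_m]$-modules. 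The preceding corollary, applied with vanishing order zero, forces $\mathrm{rank}_{\mathbb{Z}_{p,\chi}}E_f(K(m))_\chi=0$, so that after taking Pontryagin duals and tensoring with $\mathcal{O}_\chi$ over $\mathcal{O}[\Gamma_m]$ via $\chi$ the $\chi$-specialization of every term is finite; a cardinality count across the specialized sequence then gives
$$\mathrm{ord}_p\Bigl(\#\bigl(\mathrm{Sel}(K(m),A_f)^\vee\otimes_{\mathcal{O}[\Gamma_m],\chi}\mathcal{O}_\chi\bigr)\Bigr)=\mathrm{ord}_p\bigl(\#|E_f(K(m))_\chi|\bigr)+\mathrm{ord}_p\bigl(\#|\textrm{{\cyr SH}}(E_f/K(m))[p^\infty]_\chi|\bigr),$$
which combined with the inequality above completes the argument.

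The main obstacle is this last cardinality count. Because $p\mid|\Gamma_m|$, the group ring $\mathcal{O}[\Gamma_m]$ is only semilocal and $\mathcal{O}_\chi$ is not flat over it, so $-\otimes_{\mathcal{O}[\Gamma_m],\chi}\mathcal{O}_\chi$ is merely right exact; the dualized descent sequence on its own only yields the reverse estimate $\#(\mathrm{Sel}(K(m),A_f)^\vee\otimes_{\mathcal{O}[\Gamma_m],\chi}\mathcal{O}_\chi)\le\#|E_f(K(m))_\chi|\cdot\#|\textrm{{\cyr SH}}(E_f/K(m))[p^\infty]_\chi|$ (up to the identifications above), so one must show that the relevant $\mathrm{Tor}_1^{\mathcal{O}[\Gamma_m]}(-,\mathcal{O}_\chi)$ vanishes. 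This is where the rank-zero input and the Gorenstein structure of $\mathcal{O}[\Gamma_m]$ --- which forces $\#(M/I_\chi M)=\#(M[I_\chi])$ for the finite modules in play --- have to be used with care; one must also track the contragredient twist in $(E_f(K(m))\otimes\mathbb{Q}_p/\mathbb{Z}_p)^\vee$, that is, the distinction between $\chi$ and $\chi^{-1}$, for which the functional equation of $L_p(K(m),f)$ (or a direct symmetry argument) is the natural device. The remaining point --- fixing the normalization of $\mathrm{ord}_p$ on the possibly ramified extension $\mathcal{O}_\chi$ so that the length estimate over the discrete valuation ring becomes the stated inequality of $p$-adic valuations --- is routine bookkeeping.
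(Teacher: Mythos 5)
Your overall route is the paper's: you specialize the Fitting-ideal inclusion of Theorem \ref{thm:main_theorem} at $\chi$ via Lemma \ref{lem:fitting-ideals-base-change}, note that vanishing order zero means exactly $L_p(K(m),f)\notin I_\chi$, i.e.\ $\chi(L_p(K(m),f))\neq 0$, and then use the structure theory of finitely generated torsion modules over the discrete valuation ring $\mathcal{O}_\chi$ to bound the length of $\mathrm{Sel}(K(m),A_f)^\vee\otimes_{\chi}\mathcal{O}_\chi$ by $\mathrm{ord}_p(\chi(L_p(K(m),f)))$. This half is complete and correct, and it is precisely what the paper does; the paper does not write out the remaining comparison, deferring instead to \cite[Proposition 3]{mazur-tate} and the analogy with \cite[Corollary 4, Introduction]{bertolini-darmon-imc-2005}.

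The second half --- identifying the length of $\mathrm{Sel}(K(m),A_f)^\vee\otimes_\chi\mathcal{O}_\chi$ with $\mathrm{ord}_p(\#\vert E_f(K(m))_\chi\vert)$ plus the Shafarevich--Tate contribution --- is where your write-up stops, and the difficulty you flag is real: right-exactness of $-\otimes_\chi\mathcal{O}_\chi$ applied to the dualized descent sequence yields only an \emph{upper} bound on $\#(\mathrm{Sel}^\vee\otimes_\chi\mathcal{O}_\chi)$, whereas the corollary requires a \emph{lower} bound by the product of the two $\chi$-isotypic quotients. As written this is a genuine gap, not ``routine bookkeeping''. The tools you name do suffice, but they must actually be deployed: (i) $\Gamma_m$ is cyclic, so after extending scalars $I_\chi$ is generated by the single element $\gamma_m-\chi(\gamma_m)$, and for any finite module the kernel and cokernel of this endomorphism have equal order, which is how one passes between isotypic quotients of a module and of its Pontryagin dual and controls the offending $\mathrm{Tor}_1$; (ii) the rank-zero conclusion of the preceding corollary, together with $E_f(K(m))[p]=0$ (irreducibility of $\overline{\rho}$ and $K(m)/\mathbb{Q}$ Galois, as in Lemma \ref{lem:control_galois}), makes every relevant isotypic piece finite and removes the torsion discrepancy between $E_f(K(m))\otimes\mathbb{Z}_p$ and $(E_f(K(m))\otimes\mathbb{Q}_p/\mathbb{Z}_p)^\vee$; and (iii) the contragredient twist $\chi\leftrightarrow\chi^{-1}$ is harmless because $L_p(K(m),f)=\theta_m(f)\cdot\theta^*_m(f)$ is invariant under the involution $\gamma\mapsto\gamma^{-1}$, so the bound one proves for the $\chi^{-1}$-part is the stated bound for the $\chi$-part. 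Until these points are executed (or replaced by an explicit appeal to \cite[Proposition 3]{mazur-tate} adapted to the order-zero case), the argument is not finished.
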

\begin{rem}
It is the weak vanishing conjecture at $\chi$ with vanishing order zero and also a refinement of \cite[Corollary 4, Introduction]{bertolini-darmon-imc-2005}. This corollary gives a partial answer to the question raised in \cite[Remark 1 to Corollary 4, Introduction]{bertolini-darmon-imc-2005}. See $\S$\ref{sec:speculation} for another aspect of the question.
Note that the proof of this corollary itself does not appeal to any interpolation formula \`{a} la Gross-Walspurger-Zhang at all. However, the nonvanishing of $L_p(K(m), f)$ at almost all $\chi$ requires \cite[Theorem C and $\S$6.1]{hung-nonvanishing} generalizing \cite[Theorem 1.4]{vatsal-nonvanishing}.
\end{rem}

We generalize our main theorem to $M$-Selmer groups where $M$ is not necessarily a power of prime.
See $\S$\ref{sec:speculation} for detail.
Let $M = \prod_i p^{r_i}_i$ be an integer where $p_i >3$ is a prime such that
\begin{enumerate}
\item each $p_i$ is good ordinary for $E$;
\item each residual representation $\overline{\rho}_{E, p_i}$ is surjective;
\item condition PO holds for $(E, K(m'_i)/K, p_i)$ for each $p_i$ where $m'_i$ is the prime-to-$p_i$ part of $m$;
\item $N = N(\overline{\rho}_{E, p_i}) $ for each $p_i$.
\end{enumerate} 
Let $f$ be the newform corresponding to $E$ and assume that the Jacquet-Langlands correspondence ensures the integral normalization of the corresponding quaternionic form for all $p_i$. See Assumption \ref{assu:normalization} for detail.
Let $L_M(K(m), E)$ be the partially adelic anticyclotomic $L$-function defined in $\S$\ref{sec:speculation} under Assumption \ref{assu:normalization}.
\begin{thm}[Theorem \ref{thm:main_theorem} for $M$-Selmer groups]
Under Assumption \ref{assu:normalization}, we have
$$L_M(K(m), E) \in \mathrm{Fitt}_{\mathbb{Z}/M\mathbb{Z}[\Gamma_m]} \left(\mathrm{Sel}(K(m),E[M])^\vee \right)$$
in $(\mathbb{Z}/M\mathbb{Z})[\Gamma_m]$ where $(-)^\vee = \mathrm{Hom}(-, \mathbb{Q}/\mathbb{Z})$ here.
\end{thm}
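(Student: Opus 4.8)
The plan is to deduce the theorem from Theorem \ref{thm:main_theorem} one prime at a time via the Chinese remainder theorem. Since the $p_i$ are pairwise coprime, $\mathbb{Z}/M\mathbb{Z} \simeq \prod_i \mathbb{Z}/p_i^{r_i}\mathbb{Z}$ gives
$$(\mathbb{Z}/M\mathbb{Z})[\Gamma_m] \simeq \prod_i (\mathbb{Z}/p_i^{r_i}\mathbb{Z})[\Gamma_m], \qquad E[M] \simeq \bigoplus_i E[p_i^{r_i}]$$
where the second isomorphism is one of $G_K$-modules. Consequently $\mathrm{Sel}(K(m),E[M]) \simeq \bigoplus_i \mathrm{Sel}(K(m),E[p_i^{r_i}])$ as $(\mathbb{Z}/M\mathbb{Z})[\Gamma_m]$-modules, the $i$-th summand being supported on the factor $(\mathbb{Z}/p_i^{r_i}\mathbb{Z})[\Gamma_m]$, and its Pontryagin dual decomposes compatibly. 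Since Fitting ideals over a finite product of rings are the product of the componentwise Fitting ideals, and $L_M(K(m),E)$ is by construction the tuple whose $i$-th entry records the $p_i$-adic behaviour, it suffices to show for each $i$ that the $i$-th component of $L_M(K(m),E)$ lies in $\mathrm{Fitt}_{(\mathbb{Z}/p_i^{r_i}\mathbb{Z})[\Gamma_m]}\left(\mathrm{Sel}(K(m),E[p_i^{r_i}])^\vee\right)$.

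Next I would compare the $i$-th component of $L_M(K(m),E)$ with $L_{p_i}(K(m),f)\bmod p_i^{r_i}$: this is precisely what Assumption \ref{assu:normalization} is set up to provide, namely that the single adelic normalization used to define $L_M(K(m),E)$ differs at $p_i$ from the quaternionic normalization underlying $L_{p_i}(K(m),f)$ by a $p_i$-adic unit. Multiplication by a unit does not affect membership in a Fitting ideal, so we are reduced to proving $L_{p_i}(K(m),f)\bmod p_i^{r_i} \in \mathrm{Fitt}_{(\mathbb{Z}/p_i^{r_i}\mathbb{Z})[\Gamma_m]}\left(\mathrm{Sel}(K(m),E[p_i^{r_i}])^\vee\right)$. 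The hypotheses imposed on $M$ in $\S$\ref{sec:speculation} are exactly the running hypotheses of Theorem \ref{thm:main_theorem} for the prime $p_i$: the equality $N = N(\overline{\rho}_{E,p_i})$ gives $N^{\pm}$-minimality (hence Condition CR, and $N$-minimality, so that the minimal and $N^-$-ordinary Selmer groups coincide with the classical Selmer group of $E$), surjectivity of $\overline{\rho}_{E,p_i}$ gives big image, Condition PO for $(E,K(m_i')/K,p_i)$ is assumed, and $m$ is prime to $N$. Theorem \ref{thm:main_theorem} therefore applies at $p_i$ and yields $L_{p_i}(K(m),f)\in \mathrm{Fitt}_{\mathbb{Z}_{p_i}[\Gamma_m]}\left(\mathrm{Sel}(K(m),E[p_i^\infty])^\vee\right)$.

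It remains to descend from $p_i^\infty$ to $p_i^{r_i}$. Since the formation of Fitting ideals commutes with base change, the image of the ideal above in $(\mathbb{Z}/p_i^{r_i}\mathbb{Z})[\Gamma_m]$ equals $\mathrm{Fitt}_{(\mathbb{Z}/p_i^{r_i}\mathbb{Z})[\Gamma_m]}\left(\mathrm{Sel}(K(m),E[p_i^\infty])^\vee \otimes \mathbb{Z}/p_i^{r_i}\mathbb{Z}\right)$, and the module appearing there is $\left(\mathrm{Sel}(K(m),E[p_i^\infty])[p_i^{r_i}]\right)^\vee$. Under the running hypotheses the natural map $\mathrm{Sel}(K(m),E[p_i^{r_i}]) \to \mathrm{Sel}(K(m),E[p_i^\infty])[p_i^{r_i}]$ is an isomorphism — the defining local conditions are compatible with $E[p_i^{r_i}]\hookrightarrow E[p_i^\infty]$, and the relevant global and local $H^0$'s vanish since $\overline{\rho}_{E,p_i}$ is surjective — so the two Fitting ideals agree and the desired membership follows for each $i$. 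Reassembling the components via the Chinese remainder theorem proves the theorem. (Equivalently, one may skip the descent and instead run the Euler system argument proving Theorem \ref{thm:main_theorem} at the finite level $n = r_i$, which produces the mod $p_i^{r_i}$ statement directly.) The main obstacle is the normalization comparison in the second step: one must verify that a single adelic/quaternionic normalization exists that is simultaneously $p_i$-integral for every $i$ and whose discrepancy with each individual normalization is a $p_i$-adic \emph{unit} rather than merely a nonzero $p_i$-adic number; controlling this rests on the precise behaviour of the Jacquet-Langlands correspondence and of the period normalizations, which is the content of Assumption \ref{assu:normalization}, together with the finite-versus-divisible Selmer comparison, where the mod $p$ multiplicity-one and control results of the earlier sections enter.
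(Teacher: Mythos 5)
Your proposal is correct and follows essentially the same route as the paper: apply Theorem \ref{thm:main_theorem} at each prime $p_i$ of $M$, use the integral normalization of Assumption \ref{assu:normalization} to identify $L_M(K(m),E)\bmod p_i^{r_i}$ with $L_{p_i}(K(m),\iota_{p_i}\circ f)\bmod p_i^{r_i}$, descend via base change of Fitting ideals (Lemma \ref{lem:fitting-ideals-base-change}.(2)) together with the comparison $\mathrm{Sel}(K(m),E[M])[p_i^{r_i}]\simeq\mathrm{Sel}(K(m),E[p_i^{r_i}])$, and reassemble by the Chinese remainder theorem. The extra detail you supply on the $p_i^\infty$-to-$p_i^{r_i}$ descent and the Selmer comparison is exactly what the paper records in the discussion preceding the theorem.
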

In the same manner as in Corollary \ref{cor:vanishing_order_zero}, we obtain
\begin{cor}[Corollary \ref{cor:global_consequence}; Corollary \ref{cor:vanishing_order_zero} for $M$-Selmer groups]
Suppose that Assumption \ref{assu:normalization} holds.
Let $\chi : \Gamma_m \to \overline{\mathbb{Q}}^{\times}$ be a character and assume that $\chi(L_M(K(m), f)) \neq 0$.
Then
$$\sum_{p \vert M}  \left( \mathrm{ord}_{p} \left( \# \vert E(K(m))_\chi \vert \right) + \mathrm{ord}_{p} \left( \# \vert \textrm{{\cyr SH}}(E/K(m))[M]_\chi \vert \right) \right) \leq \sum_{p \vert M} \mathrm{ord}_{p} \left( \chi \left( L_M(K(m), f) \right) \right) $$
where $\textrm{{\cyr SH}}(E/K(m))[M]_\chi$ is the $\chi$-isotypic quotient of the $M$-torsion subgroup of the Shafarevich-Tate group of $E$ over $K(m)$.
\end{cor}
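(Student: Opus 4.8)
The plan is to reduce the assertion to its $p$-adic prototype, Corollary \ref{cor:vanishing_order_zero}, one rational prime $p \mid M$ at a time. Write $M = \prod_i p_i^{r_i}$; for $p = p_i$ put $r = r_i = v_p(M)$. By the Chinese Remainder Theorem the $p$-primary component of $\chi(L_M(K(m), f))$ is the reduction $\chi(L_p(K(m), f)) \bmod p^r$, where $L_p(K(m), f)$ is the Bertolini-Darmon element of $\S$\ref{sec:theta_elements} (recall that $f$ corresponds to the elliptic curve $E/\mathbb{Q}$, so at each $p_i$ the coefficient ring is $\mathbb{Z}_{p_i}$ and the uniformizer is $p_i$, and $L_{p_i}(K(m),E) \in \mathbb{Z}_{p_i}[\Gamma_m]$). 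The first thing I would check is that the four hypotheses imposed on $M$ specialize, for each $p = p_i$, to the full hypothesis list of Theorem \ref{thm:main_theorem}/Corollary \ref{cor:vanishing_order_zero} with $p$ taken to be $p_i$: surjectivity of $\overline{\rho}_{E, p_i}$ yields the big-image condition, $N = N(\overline{\rho}_{E, p_i})$ yields $N^{\pm}$-minimality and hence Condition CR for $(\overline{\rho}_{E,p_i}, N^-)$, Condition PO for $(E, K(m'_i)/K, p_i)$ is part of the hypotheses, and $m$ is prime to $N$. So Corollary \ref{cor:vanishing_order_zero} is available verbatim at each $p_i$.

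Fix $p = p_i$, and suppose (as the statement intends) that the $p$-primary component of $\chi(L_M(K(m), f))$ is non-zero; equivalently $\mathrm{ord}_p(\chi(L_p(K(m), f))) < r$, so that $\mathrm{ord}_p(\chi(L_p(K(m), f))) = \mathrm{ord}_p(\chi(L_M(K(m), f)))$ and $L_p(K(m), f)$ has vanishing order zero at $\chi$. Corollary \ref{cor:vanishing_order_zero} at $p$ then gives
$$\mathrm{ord}_p\left( \# \vert E(K(m))_\chi \vert \right) + \mathrm{ord}_p\left( \# \vert \textrm{{\cyr SH}}(E/K(m))[p^\infty]_\chi \vert \right) \ \leq\ \mathrm{ord}_p\left( \chi(L_M(K(m), f)) \right).$$
Since the right-hand side is $< r = v_p(M)$, both groups on the left are killed by $p^{v_p(M)}$; in particular $\textrm{{\cyr SH}}(E/K(m))[p^\infty]_\chi = \textrm{{\cyr SH}}(E/K(m))[p^{v_p(M)}]_\chi$, and replacing the former by the latter costs nothing. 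Summing over the primes $p \mid M$ and using $\textrm{{\cyr SH}}(E/K(m))[M]_\chi = \bigoplus_{p \mid M} \textrm{{\cyr SH}}(E/K(m))[p^{\,v_p(M)}]_\chi$ then produces exactly the displayed inequality.

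Once Corollary \ref{cor:vanishing_order_zero} is granted, the steps above are bookkeeping, so the real content — and the step I expect to be the main obstacle — is Assumption \ref{assu:normalization}. It is what makes the left-hand side meaningful: one must know that the Jacquet-Langlands transfers of $f$ to the definite quaternion algebras relevant to $N^-$ can be $p_i$-integrally normalized \emph{simultaneously} for all $i$, so that the family $\{L_{p_i}(K(m), E)\}_i$ glues, through the Chinese Remainder Theorem, to a single element $L_M(K(m), E) \in (\mathbb{Z}/M\mathbb{Z})[\Gamma_m]$ for which $\chi(L_M(K(m), f))$ and its valuation at each $p \mid M$ are well defined; this is arranged in $\S$\ref{sec:speculation}. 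Two lesser points also deserve care: the translation of the hypotheses on $M$ to those of Theorem \ref{thm:main_theorem} at each $p_i$ (checking in particular that $N = N(\overline{\rho}_{E,p_i})$ together with surjectivity of $\overline{\rho}_{E,p_i}$ secures both Condition CR and the big-image hypothesis there), and the convention at any prime $p \mid M$ where the $p$-primary component of $\chi(L_M(K(m), f))$ happens to vanish — at such a prime the Fitting-ideal containment of the preceding $M$-Selmer theorem conveys no information over $\mathbb{Z}_{p,\chi}/p^{v_p(M)}$, so one reads $\mathrm{ord}_p$ of the zero component with the appropriate convention. An alternative route, avoiding the reduction to the $p$-adic case, is to run the descent directly over $(\mathbb{Z}/M\mathbb{Z})[\Gamma_m]$ starting from the preceding $M$-Selmer theorem and passing to $\chi$-isotypic quotients; that variant pays instead for the fact that the $\chi$-isotypic-quotient functor is exact only away from the rational primes dividing $\# \Gamma_m = m$.
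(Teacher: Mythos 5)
Your proof is correct in substance but organized differently from the paper's. The paper obtains this corollary by specializing the immediately preceding $M$-Selmer theorem --- the single containment $L_M(K(m),E) \in \mathrm{Fitt}_{(\mathbb{Z}/M\mathbb{Z})[\Gamma_m]}\left(\mathrm{Sel}(K(m),E[M])^\vee\right)$ --- at $\chi$ and running the Mazur--Tate descent ``in the same manner as'' Corollary \ref{cor:vanishing_order_zero}; that is precisely the route you relegate to an ``alternative'' at the end. You instead apply the characteristic-zero Corollary \ref{cor:vanishing_order_zero} at each $p_i \mid M$ and sum, after verifying (correctly) that the hypotheses on $M$ specialize to those of Theorem \ref{thm:main_theorem} at each $p_i$. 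Your route actually buys something you partly obscure: since $L_M(K(m),f)$ lies in $\mathbb{Z}[\Gamma_m]$ and, under Assumption \ref{assu:normalization}, agrees with $L_{p_i}(K(m),f)$ up to a $p_i$-adic unit, the hypothesis $\chi(L_M(K(m),f)) \neq 0$ in $\overline{\mathbb{Q}}$ already forces $\chi(L_{p_i}(K(m),f)) \neq 0$ in $\overline{\mathbb{Q}}_{p_i}$ for every $i$, so Corollary \ref{cor:vanishing_order_zero} applies at every $p_i$ with no case distinction; your supposition that ``the $p$-primary component of $\chi(L_M)$ is non-zero,'' i.e. $\mathrm{ord}_p(\chi(L_M)) < v_p(M)$, is unnecessary and should be dropped, and the ``convention for a vanishing component'' you mention is a genuine issue only for the mod-$M$ route, where the containment becomes vacuous over $\mathbb{Z}_{p,\chi}/p^{v_p(M)}$. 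The one step still to justify cleanly on your route is the comparison $\# \vert \textrm{{\cyr SH}}(E/K(m))[p^{v_p(M)}]_\chi \vert \leq \# \vert \textrm{{\cyr SH}}(E/K(m))[p^\infty]_\chi \vert$: isotypic quotients are only right exact, so this does not follow formally from the inclusion of torsion subgroups; it is safer to invoke $\mathrm{Sel}(K(m),E[M])[p^r] \simeq \mathrm{Sel}(K(m),E[p^r])$ together with Lemma \ref{lem:control_mod_pn_selmer} as in $\S$\ref{sec:speculation}, or simply to run the descent with $E[p^{v_p(M)}]$-coefficients throughout.
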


\subsection{The strategy of proof and the structure of this article}
\subsubsection{The strategy}
We follow the strategy of \cite{bertolini-darmon-imc-2005} and the notation of \cite{chida-hsieh-main-conj} very closely, but we will explicitly point out the differences when we encounter them.

The following diagram shows the strategy of proof and the detail is given in Proposition \ref{prop:mainconj_to_mazurtate}.
\[
\xymatrix{
{ \substack{ \textrm{The Euler system divisibility of} \\ \textrm{anticyclotomic Iwasawa main conjecture} \\ \textrm{over the Iwasawa algebra} } } 
 \ar@{=>}[d]^-{\nexists\textrm{non-trivial finite Iwasawa submodule}}_{\textrm{exact control theorem}} \\
 { \substack{ 
 \textrm{anticyclotomic Mazur-Tate conjecture} \\ \textrm{ over $p$-cyclic group rings with $p$-stabilized forms} 
  } }
}
\]
Modelling the above strategy, we extend the result to general cyclic ring class extensions as follows:
\[
\xymatrix{
{\substack{
\textrm{The Euler system divisibility of }\\ \textrm{ anticyclotomic Iwasawa main conjecture } \\ \textrm{over the} \textbf{ generalized } \textrm{Iwasawa algebra}
}}
 \ar@{=>}[d]^-{\nexists\textrm{non-trivial finite \textbf{generalized} Iwasawa submodule } (\S\ref{sec:reduction_to_main_theorem}) }_{ \textrm{exact control theorem (Lemma \ref{lem:control_galois_selmer})} } \\
{\substack{
\textrm{anticyclotomic Mazur-Tate conjecture} \\ \textrm{over}  \textbf{ general } \textrm{cyclic group rings with $p$-stabilized forms}
}}
 \ar@{=>}[d]^-{\nexists\textrm{\textbf{tame exceptional zero} ($\S$\ref{sec:tame_exceptional_zeros})}} \\
{\substack{
\textrm{anticyclotomic Mazur-Tate conjecture} \\ \textrm{over} \textbf{ general } \textrm{cyclic group rings} 
}}
}
\]
The reduction of the Euler system divisibility to the Mazur-Tate conjecture for the $p$-stabilized forms is explained in $\S$\ref{sec:reduction_to_main_theorem} and its reduction to the conjecture for the non-$p$-stabilized forms is given in $\S$\ref{sec:tame_exceptional_zeros}.
Here, the \emph{generalized} Iwasawa algebra means that we allow finite prime-to-$p$ extensions in the infinite Galois extension. In other words, it is isomorphic to $\mathbb{Z}_p [\Gamma_{m'}]\llbracket T \rrbracket$ where $\Gamma_{m'}$ is the cyclic group of order $m'$ with $(m',p) = 1$.
\[
\xymatrix{
{
\substack{
\textrm{The ``character by character" divisibilities}\\ \textrm{over discrete valuation rings}
}
}
 \ar@{=>}[d]^-{\textrm{Lifting to the \textbf{generalized} Iwasawa algebra } (\S\ref{sec:generalized_divisibility_criterion})   } \\
{\substack{
\textrm{The Euler system divisibility of} \\ \textrm{anticyclotomic Iwasawa main conjecture} \\ \textrm{over the \textbf{generalized} Iwasawa algebra} \\ \textrm{($\S$\ref{sec:completion_proof})}
}}
}
\]
The generalized Euler system divisibility comes from the character by character divisibilities and the lifting argument, which is given in $\S$\ref{sec:generalized_divisibility_criterion}.

In order to obtain the character by character divisibilities, the following diagram shows the strategy.
\[
\xymatrix{
{\substack{
\textrm{Chebotarev density type argument} \textbf{ over $K(m')$ } \\ \textrm{ ($\S$\ref{sec:chebotarev}) } 
}}
\ar[d] \ar[dr] & 
{\substack{
\textrm{Two explicit reciprocity laws} \\ \textrm{ ($\S$\ref{sec:explicit_reciprocity}) } 
}}
\ar[d] \\
{\substack{
\textrm{Freeness of compact Selmer groups} \textbf{ over $\Lambda_{m'p^\infty}$} \\ \textrm{(Theorem \ref{thm:freeness}) } 
}}
\ar@{=>}[d] \ar[r] &
{\substack{
\textrm{The ``character by character" divisibilities} \\ \textrm{over discrete valuation rings} \\  \textrm{ (\S\ref{sec:euler_systems}) } 
}}
 \\
{\substack{
\textrm{Control theorem of compact Selmer groups} \\ \textbf{over $\Lambda_{m'p^\infty}$ } \textrm{with respect to quotients} \\  \textrm{ (Corollary \ref{cor:control})) } 
}}
\ar[ur]
}
\]
In the above diagram, the Chebotarev density type argument over $K(m')$ and 
the freeness of compact Selmer groups over $\Lambda_{m'p^\infty}$ require some work for the generalization. The $m'=1$ case is only covered before. The following diagram points out how the generalizations are made.
\[
{ \scriptsize
\xymatrix{
\textrm{Chebotarev density type argument over $K$}  \ar@{~>}[rr]^-{\textrm{explicit Galois theory}}_-{\textrm{Theorem \ref{thm:chebotarev}}} & & \textrm{Chebotarev density type argument over $K(m')$} \ar@{~>}[d]_-{\textrm{new def'n of $n$-admissible sets}  }^-{\textrm{ Definition \ref{defn:n-admissible-set} }} \\
\textrm{Freeness of compact Selmer groups over $\Lambda_{p^\infty}$} \ar@{~>}[rr]^-{\textrm{enhanced isotypic decomposition}}_-{\S\ref{subsec:enhanced_decomposition}} & & \textrm{Freeness of compact Selmer groups over $\Lambda_{m'p^\infty}$}
}
}
\]
\subsubsection{The structure}
We summarize the content of each section.

In $\S$\ref{sec:mainconj}, we briefly review the current state of art of the main conjecture.

In $\S$\ref{sec:theta_elements}, we quickly review modular forms of weight two in the quaternionic setting and the construction of the Bertolini-Darmon elements, which is the anticyclotomic analogue of Mazur-Tate elements.

In $\S$\ref{sec:fitting}, we quickly review the Fitting ideals, fix the convention on various characters, and 
also review the enhanced isotypic decomposition, which plays an important role in $\S$\ref{subsec:freeness}. This is one of the key ingredients.

In $\S$\ref{sec:selmer}, we recall $\Delta$-ordinary $S$-relaxed Selmer groups and collect relevant facts on local and global Galois cohomology. The concept of $n$-admissible primes is introduced.

In $\S$\ref{sec:generalized_divisibility_criterion}, we discuss the generalized divisibility criterion, which lifts the character by character result to the main result. This is another ingredient for our main result. The non-existence of nontrivial finite $\mathcal{O}\llbracket \Gamma_{m'p^\infty}\rrbracket$-submodules plays an important role here.

In $\S$\ref{sec:reduction_to_main_theorem}, we explain how to deduce the Mazur-Tate conjecture for $p$-stabilized forms from the generalized Euler system divisibility.

In $\S$\ref{sec:tame_exceptional_zeros}, we explain how to deduce the Mazur-Tate conjecture for non-$p$-stabilized forms from the Mazur-Tate conjecture for $p$-stabilized forms under Assumption \ref{assu:conditionPO}.

In $\S$\ref{sec:shimura_curves}, we recall the basic notion of Shimura curves and CM points.

In $\S$\ref{sec:level_raising}, we study two kinds of mod $\varpi^n$ level raising results: level raisings at one $n$-admissible prime and at two $n$-admissible primes. We give a detailed and explicit argument for the latter one, which requires some work of Ihara. It turns out that Condition Ihara (Assumption \ref{assu:ihara}) is required for the argument in the non-ordinary setting.

In $\S$\ref{sec:construction_cohomology}, we construct the cohomology classes arising from Heegner points and mod $\varpi^n$ congruences of Hecke eigensystems. We also state the local properties of the cohomology classes.

In $\S$\ref{sec:explicit_reciprocity}, we introduce a mod $\varpi^n$ version of theta elements attached to mod $\varpi^n$ modular forms. Then we describe the first and second explicit reciprocity laws.

In $\S$\ref{sec:chebotarev}, we generalize the existence of infinitely many $n$-admissible primes allowing more general base fields, namely $K(m')$, via the Chebotarev density type argument and also generalize the concept of $n$-admissible sets over $K(m')$. This is also one of the key ingredients.

In $\S$\ref{sec:control_freeness}, we discuss control theorems and give a detailed proof of the freeness of $\Delta$-ordinary $S$-relaxed compact Selmer groups over $K(m)$ following \cite{bertolini-darmon-derived-heights-1994}. For the generalization to $K(m)$, we uses the enhanced isotypic decomposition introduced in $\S$\ref{subsec:enhanced_decomposition}.

In $\S$\ref{sec:euler_systems}, we proceed the Euler system argument (via induction).
We examine the Euler system argument over $K(m)$. Two parts in the original proof depends heavily on the properties of $p$-extensions ($\S$\ref{subsec:first_step} and $\S$\ref{subsec:appeal_to_second}). We overcome these issues by using the generalization of the infinitude of $n$-admissible primes over $K(m')$ in $\S$\ref{sec:chebotarev}.
Here, we obtain the character by character result.

In $\S$\ref{sec:completion_proof}, we complete the proof of the main theorem with help of $\S$\ref{sec:fitting}.

In $\S$\ref{sec:speculation}, we investigate a more global aspect of the Mazur-Tate type conjecture. In fact, it is more coincident with the spirit of the original Mazur-Tate conjecture. See \cite[Introduction]{mazur-tate}.

\section{Remarks on the main conjecture} \label{sec:mainconj}
We briefly review the state of art of the anticyclotomic main conjecture for modular forms of weight 2 up to now and explain how the main conjecture and the exact control theorem imply the Mazur-Tate conjecture for anticyclotomic extensions of $p$-power conductor.

\subsection{Ordinary case}
The Euler system method yields the following statement.
\begin{thm}[\cite{bertolini-darmon-imc-2005}, \cite{pw-mu}, and \cite{chida-hsieh-main-conj}] \label{thm:mainconj_minimal}
Assume the following conditions:
\begin{enumerate}
\item $(\overline{\rho}, N^-)$ satisfies Condition CR.
\item $a_p(f) \not\equiv \pm 1 \pmod{\varpi}$. (Condition PO)
\item $f$ is $N^+$-minimal.
\item $\overline{\rho}$ has big image.
\end{enumerate}
Then $L_p(K(p^\infty), f_\alpha) \in \mathrm{char}_\Lambda \left( \mathrm{Sel}(K(p^\infty), A_f)^\vee  \right) $.
\end{thm}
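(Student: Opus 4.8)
This divisibility is the Euler system input to the anticyclotomic main conjecture, due to Bertolini--Darmon \cite{bertolini-darmon-imc-2005} in the case of rational Hecke field and $N^-$-minimal $f$, extended by Pollack--Weston \cite{pw-mu} to arbitrary Hecke eigenvalues (by working $\varpi$-adically rather than $p$-adically throughout) and by Chida--Hsieh \cite{chida-hsieh-main-conj} to Condition CR in place of $N^-$-minimality and to the present big image hypothesis. I would organize its proof as follows. First, transfer $f$ via Jacquet--Langlands to the definite quaternion algebra $B/\mathbb{Q}$ of discriminant $N^-$ (which has an odd number of prime factors); Condition CR supplies the mod $\varpi$ multiplicity one needed to fix a $\varpi$-integrally normalized quaternionic eigenform, from which the Bertolini--Darmon theta elements $\theta_n \in \mathcal{O}[\Gamma_{p^n}]$ and their inverse limit $L_p(K(p^\infty), f_\alpha) \in \Lambda$ are assembled as twisted sums over Gross points ($\S$\ref{sec:theta_elements}).

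Next, I would build the Euler system of cohomology classes: for each $n \geq 1$ and each squarefree product of $n$-admissible primes, level raising mod $\varpi^n$ ($\S$\ref{sec:level_raising}) together with CM points on the attached Shimura curve (of discriminant $N^-$ times the chosen primes) produces classes in $H^1(K, T_{f,n})$ ($\S$\ref{sec:construction_cohomology}) satisfying the two explicit reciprocity laws ($\S$\ref{sec:explicit_reciprocity}): the singular part of the localization of a one-prime class recovers, up to a unit, the reduction of $\theta_n$ mod $\varpi^n$, while the finite part of the localization of such a class at a further admissible prime recovers, up to a unit, a theta element at the enlarged level. The second reciprocity law is where level raising at two admissible primes, and hence an Ihara-type input, is used.

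The heart of the argument is then the descent induction over $K(p^n)$ ($\S$\ref{sec:euler_systems}): one inducts on the $\mathcal{O}$-length of the relevant finite-level Selmer group $\mathrm{Sel}(K(p^n), A_{f,n})$. In the base case (length zero) the first reciprocity law together with a Chebotarev argument produces an $n$-admissible prime at which the localization of a Selmer class is forced to be trivial, yielding the desired bound; in the inductive step one invokes the Chebotarev density theorem (Theorem \ref{thm:chebotarev}, where the big image hypothesis enters) to choose an $n$-admissible prime $\ell$ whose localization map surjects onto a chosen rank-one quotient of the Selmer group, so that imposing the new local condition at $\ell$ strictly lowers the length, and then global duality (the sum of local invariants of a global class vanishes) combined with the second reciprocity law transports the inductive hypothesis back. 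The freeness of the compact Selmer modules (Theorem \ref{thm:freeness}) and the control theorems govern the length bookkeeping, and Condition PO rules out exceptional zeros at $p$ in the control step. Finally, the lifting lemma \cite[Proposition 3.1]{bertolini-darmon-imc-2005} promotes the resulting character-by-character divisibilities to the asserted containment in $\mathrm{char}_\Lambda(\mathrm{Sel}(K(p^\infty), A_f)^\vee)$, the degenerate case $L_p(K(p^\infty), f_\alpha)=0$ being handled by showing it forces the Selmer group to be non-$\Lambda$-torsion. I expect the main obstacle to be running the Chebotarev selection of admissible primes, together with the attendant level-raising and mod $\varpi$ multiplicity one statements, under only Condition CR rather than surjectivity of $\overline{\rho}$ --- precisely the role of \cite[Lemma 6.1, Lemma 6.2]{chida-hsieh-main-conj} (with help of \cite{ribet-pacific}) --- while keeping every normalization $\varpi$-integral in the absence of a rational Hecke field.
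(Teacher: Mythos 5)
The paper does not reprove this theorem; it cites it and then generalizes the argument to $K(m'p^\infty)$ in \S\ref{sec:chebotarev}--\S\ref{sec:completion_proof}, whose $m'=1$ case recovers the statement. Your overall architecture --- Jacquet--Langlands and theta elements, classes from level raising mod $\varpi^n$ and CM points, the two explicit reciprocity laws, Chebotarev selection of admissible primes, freeness and control of compact Selmer groups, and character-by-character divisibilities lifted via \cite[Proposition 3.1]{bertolini-darmon-imc-2005} --- is indeed the Bertolini--Darmon/Chida--Hsieh route the paper follows. However, your description of the descent induction is not the one that runs. The induction parameter is not the $\mathcal{O}$-length of $\mathrm{Sel}(K(p^n),A_{f,n})$ but the valuation $t_{\mathcal{D}}=\mathrm{ord}_{\varpi_\varphi}\left(\varphi\left(\theta(\mathcal{D})\right)\right)$ of the specialized theta element attached to the current $n$-admissible form $\mathcal{D}=(\Delta,g)$ (Remark \ref{rem:induction_parameter}). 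The base case is $t_{\mathcal{D}}=0$, and its content is that the specialized Selmer group then \emph{vanishes} (Proposition \ref{prop:first_step}); your base case ``Selmer length zero'' is vacuous, since the Fitting ideal is then the unit ideal and there is nothing to prove, and it gives no foothold for relating the Selmer group to the $L$-function. In the inductive step the quantity that strictly decreases is again analytic: one picks $\ell_1$ with $e_{\mathcal{D}}(\ell_1)$ minimal, proves $e_{\mathrm{min}}<t_{\mathcal{D}}$ (Lemma \ref{lem:strict-inequality}), picks $\ell_2$ by Chebotarev so that $v_{\ell_2}$ preserves the valuation of the class, and the second reciprocity law produces an admissible form $\mathcal{D}^{\ell_1\ell_2}$ of level $\Delta\ell_1\ell_2$ with $t_{\mathcal{D}^{\ell_1\ell_2}}=e_{\mathrm{min}}<t_{\mathcal{D}}$; the inductive hypothesis for $\mathcal{D}^{\ell_1\ell_2}$ is then transported back through the two Poitou--Tate sequences comparing $\mathrm{Sel}_{\Delta}$ and $\mathrm{Sel}_{\Delta\ell_1\ell_2}$ together with the multiplicativity of Fitting ideals in exact sequences (Lemma \ref{lem:fitting_exact}). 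As written, your induction neither has a meaningful base case nor a terminating decreasing quantity.

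A second, smaller gap: the Euler system bounds the $N^-$-ordinary Selmer group $\mathrm{Sel}_{N^-}(K(p^\infty),A_f)=\varinjlim_n\mathrm{Sel}_{N^-}(K(p^\infty),A_{f,n})$, whereas the theorem is stated for the minimal Selmer group $\mathrm{Sel}(K(p^\infty),A_f)$. Passing between the two requires the comparison of \S\ref{subsec:minimal_selmer}, where the $N^+$-minimality hypothesis is used to show the relevant inclusion is an equality; your sketch never distinguishes the two Selmer groups. Finally, the ``degenerate case $L_p=0$'' needs no separate treatment: $0$ lies in every ideal, and nonvanishing is in any case supplied by Vatsal-type results, not by the Euler system argument itself.
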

\begin{rem}[on the $N^+$-minimality]
In the original proof, the freeness of compact $\Delta$-ordinary $S$-relaxed Selmer groups over $K(p^\infty)$ is obtained from the freeness of compact $\Delta$-ordinary $S$-relaxed Selmer groups  over $K(p^r)$ by taking the inverse limit. Here, we need the $N^+$-minimality assumption.
\end{rem}
\begin{rem}[Condition CR vs.~Condition CR$^+$]
Condition CR$^+$, which is more restrictive than condition CR, is imposed in \cite{chida-hsieh-main-conj}, but it can be replaced by condition CR for the case of weight 2. The reason why we need such conditions is to have a mod $p$ multiplicity one result of the localized Hecke module we deal with. Condition CR comes from the arithmetic of Jacobians of Shimura curves and the Jacquet-Langlands correspondence, and Condition CR$^+$ comes from the patching argument \`{a} la Taylor-Wiles, Diamond, and Fujiwara. One advantage of Condition CR$^+$ makes it possible to deal with the higher weight case, but a certain ramification condition at primes dividing $N^+$ is imposed.
\end{rem}
Using the vanishing of algebraic $\mu$-invariant of the $N^+$-minimal case via Theorem \ref{thm:mainconj_minimal} and the Iwasawa-theoretic argument in \cite{kim-pollack-weston}, one can obtain the following statement. 
\begin{prop}[Theorem \ref{thm:freeness} over $K(p^\infty)$, \cite{kim-pollack-weston}] \label{prop:kpw}
The freeness of compact $\Delta$-ordinary $S$-relaxed Selmer groups over $K(p^\infty)$  holds without assuming the $N^+$-minimality.
\end{prop}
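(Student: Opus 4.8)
The plan is to decouple the freeness statement from the inverse-limit-over-finite-levels argument of \cite{bertolini-darmon-imc-2005}, where $N^+$-minimality is unavoidable, and instead deduce it from the single global input that the algebraic $\mu$-invariant $\mu^{\mathrm{alg}}\big(\mathrm{Sel}(K(p^\infty),A_f)^\vee\big)$ vanishes. Concretely I would proceed in three steps: (i) prove $\mu^{\mathrm{alg}}=0$ in the $N^+$-minimal case; (ii) propagate $\mu^{\mathrm{alg}}=0$ to the general case along a congruence to an $N^+$-minimal form; and (iii) re-derive freeness over $\Lambda$ from $\mu^{\mathrm{alg}}=0$ alone.

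\emph{Step (i).} If $f$ is $N^+$-minimal, Theorem \ref{thm:mainconj_minimal} gives $L_p(K(p^\infty),f_\alpha)\in\mathrm{char}_\Lambda\big(\mathrm{Sel}(K(p^\infty),A_f)^\vee\big)$, hence $\mu^{\mathrm{alg}}\le\mu^{\mathrm{an}}$, where $\mu^{\mathrm{an}}$ denotes the $\mu$-invariant of the theta element. Under Condition CR together with $N^+$-minimality, $L_p(K(p^\infty),f_\alpha)$ is not divisible by $\varpi$, i.e. $\mu^{\mathrm{an}}=0$; this is the anticyclotomic $\mu$-invariant computation of \cite{pw-mu} (see also \cite{chida-hsieh-main-conj}), which rests on the absence of level-raising congruences at primes dividing $N^+$ and on a mod-$p$ multiplicity one result. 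Therefore $\mu^{\mathrm{alg}}=0$.

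\emph{Steps (ii) and (iii).} For general $f$ satisfying Condition CR, big image, and Condition PO, choose by level lowering (Ribet) an $N^+$-minimal newform $g$ with $\overline{\rho}_g\simeq\overline{\rho}$ and with the same $N^-$; Step (i) applies to $g$, so $\mu^{\mathrm{alg}}\big(\mathrm{Sel}(K(p^\infty),A_g)^\vee\big)=0$. The Iwasawa-theoretic comparison of \cite{kim-pollack-weston}, in the spirit of Greenberg--Vatsal, then transfers this: the algebraic $\mu$-invariant depends only on $\overline{\rho}$ together with the local conditions, and for primes dividing $N^+$ the latter are pinned down by the residual representation thanks to Condition CR, big image, and mod-$p$ multiplicity one; hence $\mu^{\mathrm{alg}}(f)=\mu^{\mathrm{alg}}(g)=0$. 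With $\mu^{\mathrm{alg}}=0$ in hand unconditionally, I would re-run the freeness argument (Theorem \ref{thm:freeness} over $K(p^\infty)$) for the compact $\Delta$-ordinary $S$-relaxed Selmer module $X$ over $\Lambda\cong\mathcal{O}\llbracket T\rrbracket$: absolute irreducibility of $\overline{\rho}$ makes $X$ torsion-free over $\Lambda$, and the defect of control at primes $q\mid N^+$ with $q\equiv\pm1\pmod p$ (the only place the $N^+$-minimality entered) is supported in codimension $\ge 2$; the hypothesis $\mu^{\mathrm{alg}}=0$ rules out a pseudo-null submodule of $X$ and forces its finite-level reductions to be $\varpi$-torsion-free, which by a Nakayama-type criterion over the two-dimensional regular local ring $\Lambda$ upgrades torsion-freeness of $X$ to freeness. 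This last mechanism is exactly \cite{kim-pollack-weston}, so in the body of the paper this proposition is quoted rather than reproved.

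The main obstacle is Step (ii): one must check that dropping $N^+$-minimality perturbs the Selmer module only by pseudo-null pieces and that the $\mu$-invariant genuinely survives the congruence $\overline{\rho}_f\simeq\overline{\rho}_g$. This is where mod-$p$ multiplicity one — hence Condition CR — is indispensable, and it is precisely the technical heart of \cite{kim-pollack-weston}.
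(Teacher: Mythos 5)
Your proposal follows exactly the route the paper intends: it states no proof beyond the one-line remark that the result follows from the vanishing of the algebraic $\mu$-invariant in the $N^{+}$-minimal case (via Theorem \ref{thm:mainconj_minimal} together with the analytic $\mu=0$ of \cite{pw-mu}) combined with the Iwasawa-theoretic transfer argument of \cite{kim-pollack-weston}, and your Steps (i)--(iii) are a faithful expansion of precisely that sketch. Since the proposition is quoted rather than reproved in the paper, your reconstruction is as complete as the source allows and identifies the correct technical heart (the congruence/multiplicity-one input of \cite{kim-pollack-weston}).
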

Recycling the original argument with Proposition \ref{prop:kpw}, we can remove $N^+$-minimality in Theorem \ref{thm:mainconj_minimal}.
\begin{cor} \label{cor:main_conj_nonminimal}
Assume the following conditions:
\begin{enumerate}
\item $(\overline{\rho}, N^-)$ satisfies Condition CR.
\item $f$ satisfies Condition PO.
\item $\overline{\rho}$ has big image.
\end{enumerate}
Then $L_p(K(p^\infty), f_\alpha) \in \mathrm{char}_\Lambda \left( \mathrm{Sel}(K(p^\infty), A_f)^\vee  \right) $.
\end{cor}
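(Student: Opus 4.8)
The plan is to follow the proof of Theorem \ref{thm:mainconj_minimal} step by step and to observe that the hypothesis of $N^+$-minimality is used at exactly one point, which Proposition \ref{prop:kpw} now allows us to bypass.

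First I would recall the anatomy of the Euler system argument of \cite[$\S$4]{bertolini-darmon-imc-2005} (in the form adapted to arbitrary Hecke coefficients in \cite{pw-mu}): one constructs, via mod $\varpi^n$ level raising at $n$-admissible primes, cohomology classes coming from Heegner points on Shimura curves; one records their local behaviour through the first and second explicit reciprocity laws; and then, invoking a Chebotarev density argument together with the \emph{freeness} of the compact $\Delta$-ordinary $S$-relaxed Selmer groups over $K(p^\infty)$, one runs the inductive bounding of the Selmer group to obtain the ``character by character'' divisibilities over the discrete valuation rings $\mathcal{O}[\chi]$; finally the lifting criterion \cite[Proposition 3.1]{bertolini-darmon-imc-2005} upgrades these to the divisibility over $\Lambda$. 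Of these ingredients, the only one whose proof appeals to $N^+$-minimality is the freeness statement: in \cite{bertolini-darmon-imc-2005} it is deduced from the freeness over each finite layer $K(p^r)$ by passing to the inverse limit, and the control theorems that make this limit behave well (the analogues of Lemma \ref{lem:control_galois_selmer} and Lemma \ref{lem:control_mod_pn_selmer}) are precisely where $N^+ = N(\overline{\rho})^+$ is needed.

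Next I would quote Proposition \ref{prop:kpw}: the freeness of the compact $\Delta$-ordinary $S$-relaxed Selmer groups over $K(p^\infty)$ holds with no minimality hypothesis, its proof being the Iwasawa-theoretic argument of \cite{kim-pollack-weston}, which rests on the vanishing of the algebraic $\mu$-invariant (known in the $N^+$-minimal situation by Theorem \ref{thm:mainconj_minimal} and propagated to the general case by the $\mu$-invariant comparison of \cite{kim-pollack-weston}). Granting this, I would re-run the Euler system argument of \cite[$\S$4]{bertolini-darmon-imc-2005} verbatim with Proposition \ref{prop:kpw} substituted for the freeness input. Since every other step --- level raising under Condition CR, the construction of the Heegner cohomology classes, the two explicit reciprocity laws, the Chebotarev density argument, the DVR-level divisibilities, and the lifting criterion --- is insensitive to $N^+$-minimality, the conclusion $L_p(K(p^\infty), f_\alpha) \in \mathrm{char}_\Lambda\left(\mathrm{Sel}(K(p^\infty), A_f)^\vee\right)$ comes out unchanged.

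The part requiring care is the bookkeeping itself: one must genuinely verify that freeness over $K(p^\infty)$ is the \emph{sole} place $N^+$-minimality enters the proof of Theorem \ref{thm:mainconj_minimal}. In particular one should check that the passage between the minimal Selmer group in the statement and the $N^-$-ordinary Selmer group on which the Euler system operates, as well as the precise local hypotheses feeding the level-raising and reciprocity inputs, are all available in a form that does not smuggle in $N^+$-minimality. Once Proposition \ref{prop:kpw} is in hand this audit is routine, but it is the only thing standing between the two statements, so it should be carried out explicitly.
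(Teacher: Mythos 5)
Your proposal matches the paper's argument: the paper likewise observes (in the remark following Theorem \ref{thm:mainconj_minimal}) that $N^+$-minimality enters only through the inverse-limit construction of the freeness of compact $\Delta$-ordinary $S$-relaxed Selmer groups over $K(p^\infty)$, and then obtains the corollary by ``recycling the original argument'' with Proposition \ref{prop:kpw} substituted for that input. Your audit of the remaining steps and the caveat about the minimal versus $N^-$-ordinary Selmer groups are consistent with how the paper handles the deduction.
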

On the other side, the Eisenstein congruence method \`{a} la Ribet-Skinner-Urban yields the opposite divisibility.
\begin{thm}[{\cite[Theorem 3.37]{skinner-urban}}]
Assume the following conditions:
\begin{enumerate}
\item $p$ splits in $K$.
\item $\overline{\rho}$ is $N^-$-minimal.
\item the image of $\overline{\rho}$ contains a conjugate of $\mathrm{GL}_2(\mathbb{F}_p)$.
\end{enumerate}
Then 
$ \mathrm{char}_\Lambda \left( \mathrm{Sel}(K(p^\infty), A_f)^\vee  \right)  \subseteq \left( L_p(K(p^\infty), f_\alpha) \right) $.
\end{thm}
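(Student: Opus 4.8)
This is \cite[Theorem 3.37]{skinner-urban}, so the plan is to reproduce the Eisenstein congruence argument of Skinner--Urban; I only sketch the architecture. The idea is to realize the divisibility $\mathrm{char}_\Lambda(\mathrm{Sel}(K(p^\infty), A_f)^\vee) \subseteq (L_p(K(p^\infty), f_\alpha))$ as the shadow of a congruence between a Klingen--Eisenstein family and a family of cusp forms on a rank-two unitary group attached to $K$. Since $p$ splits in $K$, the group $\mathrm{GU}(2,2)_{/\mathbb{Q}}$ associated to $K$ is quasi-split at $p$, so Hida's theory of ordinary $\Lambda$-adic automorphic forms applies there, and the base change of the $p$-ordinary form $f$ to $K$ is a cuspidal automorphic representation $\pi$ of $\mathrm{GL}_2/K$ (cuspidal because $\overline{\rho}$, a fortiori $\rho$, is absolutely irreducible) which one transfers to the Levi factor $\mathrm{GU}(1,1)$ of the Klingen parabolic $P \subset \mathrm{GU}(2,2)$. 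First I would build, out of $\pi$ and the universal anticyclotomic character of $\Gamma_{p^\infty}$, a $\Lambda$-adic Klingen--Eisenstein family $\mathbf{E}$ on $\mathrm{GU}(2,2)$ and compute its constant term along $P$: by the doubling/pullback method and explicit evaluation of the local zeta integrals (at $p$, at $\infty$, and at the primes dividing $N$) this constant term equals, up to a unit and explicit interpolation factors, the anticyclotomic $p$-adic $L$-function $L_p(K(p^\infty), f_\alpha)$ (after matching period normalizations with the Bertolini--Darmon element of $\S$\ref{sec:theta_elements}).

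The second step is to upgrade ``the constant term of $\mathbf{E}$ lies in $(L_p)$'' to ``$\mathbf{E}$ is congruent modulo $(L_p)$ to a genuine $\Lambda$-adic cusp form $\mathbf{G}$.'' Since the Siegel $\Phi$-operator annihilates $\mathbf{E} \bmod (L_p)$, the reduced form is cuspidal-looking; one then shows that the Eisenstein ideal $\mathcal{E} \subseteq \mathbb{T}$ of the ordinary Hecke algebra, which measures congruences between $\mathbf{E}$ and the cuspidal part, contains $(L_p)$ after localization at the maximal ideal cut out by $\overline{\rho}$. This uses the constant-term formula together with control of all the remaining Fourier--Jacobi coefficients of $\mathbf{E}$ and of the non-tempered/endoscopic part of the spectrum; it is here that $N^-$-minimality and $\mathrm{GL}_2(\mathbb{F}_p) \subseteq \mathrm{im}\,\overline{\rho}$ enter, to guarantee mod $p$ multiplicity one and that the relevant Galois deformation problem is unobstructed at the bad primes, so that this bookkeeping is clean.

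Third, for each height-one prime $\mathfrak{P} \mid L_p$ of $\Lambda$, I would specialize the congruence to get a cusp form on $\mathrm{GU}(2,2)$ congruent to $\mathbf{E}$ modulo $\mathfrak{P}$, hence a Galois representation over $\Lambda/\mathfrak{P}$ whose residual semisimplification is the reducible Galois parameter of $\mathbf{E}$, built from $\rho|_{G_K}$ and an anticyclotomic twist. Choosing an optimal Galois-stable lattice \`{a} la Ribet and comparing with $\overline{\rho}$ (where absolute irreducibility of $\overline{\rho}$ is essential) produces a nonzero class in an appropriate Greenberg--Selmer group over $K(p^\infty)$ annihilated by $\mathfrak{P}$; running this over all such $\mathfrak{P}$ and counting multiplicities shows that the characteristic ideal of the dual Selmer group divides $L_p$. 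One finally identifies the Greenberg--Selmer group coming out of the construction with $\mathrm{Sel}(K(p^\infty), A_f)$ from $\S$\ref{subsec:minimal_selmer}; this comparison of local conditions uses the $p$-ordinarity of $f$, that $p$ splits in $K$, and the minimality hypotheses.

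The technical heart, and the step I expect to be the main obstacle, is the second one: showing the Eisenstein congruence is genuinely \emph{cuspidal}, i.e.\ that $\mathcal{E} \supseteq (L_p)$ and not merely that $\mathbf{E}$ is congruent to other Eisenstein series, which forces one to analyze the full Fourier--Jacobi expansion and the endoscopic contributions rather than just the constant term. The constant-term computation of the first step (delicate archimedean and non-split local zeta integrals, especially at the primes dividing $N^-$ and at $p$, together with the period comparison) is the other substantial input; by contrast the lattice-to-Selmer passage of the third step is by now formal, given the irreducibility of $\overline{\rho}$.
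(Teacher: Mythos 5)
The paper offers no proof of this statement at all---it is imported directly from Skinner--Urban \cite[Theorem 3.37]{skinner-urban}---and your sketch is a faithful outline of precisely that Eisenstein-congruence argument (Klingen--Eisenstein family on $\mathrm{GU}(2,2)$, constant term matched to the $p$-adic $L$-function via the pullback formula, Eisenstein ideal containing $(L_p)$ controlled through Fourier--Jacobi coefficients, and the Ribet-style lattice construction producing Selmer classes), so you are following the same route as the cited source. The one structural point your sketch glosses over is that Skinner--Urban construct their family and prove the divisibility over the full two-variable Iwasawa algebra of the $\mathbb{Z}_p^2$-extension of $K$ and only then specialize to the anticyclotomic line, which requires a control/specialization argument for characteristic ideals rather than working over the anticyclotomic $\Lambda$ from the outset as you propose.
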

Combining the above two statements, we have the following statement.
\begin{cor}[The main conjecture]
Assume the following conditions:
\begin{enumerate}
\item $p$ splits in $K$.
\item $\overline{\rho}$ is $N^-$-minimal.
\item $f$ satisfies Condition PO.
\item the image of $\overline{\rho}$ contains a conjugate of $\mathrm{GL}_2(\mathbb{F}_p)$.
\end{enumerate}
Then $ \mathrm{char}_\Lambda \left( \mathrm{Sel}(K(p^\infty), A_f)^\vee  \right) = \left( L_p(K(p^\infty), f_\alpha) \right) $.
\end{cor}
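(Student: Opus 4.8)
The plan is to obtain the equality by sandwiching $\mathrm{char}_\Lambda\!\left(\mathrm{Sel}(K(p^\infty), A_f)^\vee\right)$ between $\left(L_p(K(p^\infty), f_\alpha)\right)$ on both sides, using the two one-sided divisibilities just recorded; nothing new beyond matching hypotheses is required. First I would invoke the Euler system (Bertolini--Darmon) divisibility, Corollary~\ref{cor:main_conj_nonminimal}, to get
$$\left(L_p(K(p^\infty), f_\alpha)\right) \subseteq \mathrm{char}_\Lambda\!\left(\mathrm{Sel}(K(p^\infty), A_f)^\vee\right).$$
Its hypotheses follow from those of the present corollary: if $\overline{\rho}$ is $N^-$-minimal then $N^- = N(\overline{\rho})^-$, so every prime $q \mid N^-$ divides $N(\overline{\rho})^-$ and hence $\overline{\rho}$ is ramified at $q$, which is exactly what Condition~CR for $(\overline{\rho}, N^-)$ asks; Condition~PO is assumed directly; and the hypothesis that the image of $\overline{\rho}$ contains a conjugate of $\mathrm{GL}_2(\mathbb{F}_p)$ is one of the two alternatives in Definition~\ref{defn:bigimage}, so $\overline{\rho}$ has big image.

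Next I would invoke the Eisenstein-congruence (Skinner--Urban) divisibility, \cite[Theorem~3.37]{skinner-urban}, whose three hypotheses---$p$ split in $K$, $N^-$-minimality of $\overline{\rho}$, and image of $\overline{\rho}$ containing a conjugate of $\mathrm{GL}_2(\mathbb{F}_p)$---are all among the hypotheses of the present corollary (Condition~PO is not needed for this direction). This yields
$$\mathrm{char}_\Lambda\!\left(\mathrm{Sel}(K(p^\infty), A_f)^\vee\right) \subseteq \left(L_p(K(p^\infty), f_\alpha)\right),$$
and combining the two containments gives the claimed equality of ideals in $\Lambda_{p^\infty} \cong \mathcal{O}\llbracket T \rrbracket$. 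Since this ring is a regular local ring of dimension two, hence factorial, the characteristic ideal of a finitely generated torsion module over it is principal; in particular the sandwiched ideal is principal and the equality is a genuine equality of principal ideals. Moreover it is nonzero, since $L_p(K(p^\infty), f_\alpha) \neq 0$ by the nonvanishing theorem of Vatsal \cite{vatsal-nonvanishing}, which retroactively confirms that $\mathrm{Sel}(K(p^\infty), A_f)^\vee$ is indeed $\Lambda_{p^\infty}$-torsion.

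I do not expect any genuine obstacle here: both inputs are quoted theorems, and the work is entirely in verifying that the single hypothesis list of the corollary feeds the Bertolini--Darmon side and the Skinner--Urban side simultaneously. The one point a careful reader might want spelled out is the compatibility of normalizations---namely that the $p$-adic $L$-function $L_p(K(p^\infty), f_\alpha)$ built from the theta elements of \S\ref{sec:theta_elements} agrees, up to a unit of $\Lambda_{p^\infty}$, with the anticyclotomic $p$-adic $L$-function used in \cite{skinner-urban}; granting that identification, the two-sided divisibility pins the characteristic ideal down unambiguously.
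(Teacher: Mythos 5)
Your proposal is correct and matches the paper's own (essentially one-line) argument: the corollary is obtained by combining the Euler system divisibility of Corollary~\ref{cor:main_conj_nonminimal} with the Skinner--Urban divisibility, after checking that $N^-$-minimality implies Condition CR and that containing a conjugate of $\mathrm{GL}_2(\mathbb{F}_p)$ gives big image. Your additional remarks on normalization compatibility and nonvanishing are sensible but not part of the paper's proof.
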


\subsection{Non-ordinary case} \label{subsec:pm-main-conj}
For this subsection, assume that the modular form $f$ is non-ordinary at $p$.
One can obtain a similar result using the $\pm$-theory \`{a} la Kobayashi-Pollack (\cite{kobayashi-thesis}, \cite{pollack-thesis})  under the assumption $a_p(f)=0$.
We need the following condition for the non-ordinary case.
\begin{assu}[Condition Ihara] \label{assu:ihara}
We say that $N^+$ satisfies \textbf{Condition Ihara} if there exists an integer $q$ dividing $N^+$ such that $q \geq 4$ and $p$ does not divide its Euler totient $\varphi(q)$.
\end{assu}
\begin{thm}[{\cite{darmon-iovita}, \cite{pw-mu}, and \cite{kim-pollack-weston}}]
Assume the following conditions:
\begin{enumerate}
\item $p$ splits in $K$.
\item $(\overline{\rho}, N^-)$ satisfies Condition CR.
\item $a_p(f) =0$.
\item $\overline{\rho}$ has big image.
\item $K(p^\infty)/K$ is totally ramified.
\item $N^+$ satisfies Condition Ihara.
\end{enumerate}
Then 
$L^\pm_p(K(p^\infty), f) \in \mathrm{char}_\Lambda \left( \mathrm{Sel}^\pm(K(p^\infty), A_f)^\vee  \right)$,
respectively.
\end{thm}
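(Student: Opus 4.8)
The plan is to run the Bertolini--Darmon Euler system machine exactly as in the ordinary case (Theorem \ref{thm:mainconj_minimal}), feeding in Kobayashi--Pollack's $\pm$-theory wherever the supersingularity of $f$ obstructs the ordinary arguments. First I would set up the $\pm$-objects on both sides. On the analytic side, the Gross-point theta elements $\theta_r \in \mathcal{O}[\mathrm{Gal}(K(p^r)/K)]$ satisfy, because $a_p(f) = 0$, a two-step norm relation modulo the wild inertia at $p$; this is exactly what allows one to extract the $\pm$-theta elements and define $L^\pm_p(K(p^\infty), f)$ as their inverse limit (Darmon--Iovita's construction). On the algebraic side, since $p = \mathfrak{p}\overline{\mathfrak{p}}$ splits and $K(p^\infty)/K$ is totally ramified, the local towers at $\mathfrak{p}$ and $\overline{\mathfrak{p}}$ are the totally ramified $\mathbb{Z}_p$-extensions to which Kobayashi's construction applies, so one imposes his $\pm$-local conditions there to define $\mathrm{Sel}^\pm(K(p^\infty), A_f)$, and then one verifies the $\pm$-control theorem relating $\mathrm{Sel}^\pm(K(p^r), A_{f,n})$ to $\mathrm{Sel}^\pm(K(p^\infty), A_f)$.

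Next I would carry out the Euler system argument over the finite layers. An inductive Chebotarev argument, using the big image hypothesis, produces $n$-admissible primes $\ell$, and mod $\varpi^n$ level raising --- where Condition CR delivers the mod $\varpi$ multiplicity one needed to pin down the relevant congruence module, and \textbf{Condition Ihara} is what makes Ihara's lemma available for the level raising at \emph{two} admissible primes once $U_p$ is no longer a unit ($\S$\ref{subsec:level_raising_at_two_prime}) --- produces cohomology classes $\kappa(\ell)$ and $\kappa(\ell_1 \ell_2)$ out of Heegner points on the relevant Shimura and definite quaternionic curves. The first explicit reciprocity law identifies the local class $\mathrm{loc}_\ell \kappa(\ell)$ with the reduction of the ($\pm$-)theta element, and the second identifies $\mathrm{loc}_{\ell_2} \kappa(\ell_1 \ell_2)$ with the theta element of raised level; plugging these into the Bertolini--Darmon bounding argument yields, for each finite-order character $\chi$ of $\Gamma_{p^\infty}$, a divisibility over the discrete valuation ring $\mathcal{O}_\chi$ that bounds the $\chi$-localized $\pm$-Selmer group in terms of $\chi\!\left( L^\pm_p(K(p^\infty), f) \right)$.

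Finally, to lift from this character-by-character statement to the divisibility over $\Lambda$ I would invoke the freeness of the compact $\pm$-Selmer group over $\Lambda$, equivalently the absence of a nontrivial finite $\Lambda$-submodule of $\mathrm{Sel}^\pm(K(p^\infty), A_f)^\vee$, which is the content of \cite{kim-pollack-weston}; combined with the vanishing of the algebraic $\mu$-invariant from \cite{pw-mu} and the lifting criterion \cite[Proposition 3.1]{bertolini-darmon-imc-2005}, the divisibilities at all $\chi$ assemble into $L^\pm_p(K(p^\infty), f) \in \mathrm{char}_\Lambda\big( \mathrm{Sel}^\pm(K(p^\infty), A_f)^\vee \big)$.

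I expect the main difficulty to be the non-ordinary local bookkeeping: establishing that the $\pm$-theta elements satisfy the correct norm-compatibility (where $a_p(f) = 0$ enters essentially), that the $\pm$-Selmer conditions stay compatible with the level-raising deformations up the tower, and --- the subtlest point --- that Ihara's lemma holds in the precise form needed for two-prime level raising when $U_p$ is not a unit, which is exactly why Condition Ihara must be imposed in this setting.
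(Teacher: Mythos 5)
The paper does not prove this theorem itself: it is quoted from \cite{darmon-iovita}, \cite{pw-mu}, and \cite{kim-pollack-weston}, with the paper's only original contribution being the remark that Condition Ihara must be added to make the second explicit reciprocity law (two-prime level raising) work when $p$ is not in the level. Your sketch follows exactly the strategy of those references and is consistent with the paper's remarks on where each hypothesis enters; the one small imprecision is that the local theory at the split primes above $p$ in the totally ramified anticyclotomic tower is not Kobayashi's cyclotomic construction per se but its Lubin--Tate analogue following \cite{iovita-pollack}, which is precisely why hypotheses (1) and (5) are imposed.
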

\begin{rem}
Condition (1) and (5) are required to use the local Iwasawa theory for Lubin-Tate extensions following \cite{iovita-pollack}.
\end{rem}
\begin{rem}
Note that Condition Ihara is missing in previous papers. In order to obtain the second explicit reciprocity law in this setting, Condition Ihara is required since $p$ is not in the level for the non-ordinary setting.
Condition Ihara is used for mod $\varpi^n$ level raising at two $n$-admissible primes ($\S$\ref{subsec:level_raising_at_two_prime}). This condition with the $\Gamma_1(q)$-level structure with $q \geq 4$ makes the arithmetic subgroup defining Shimura curves torsion-free. In some sense, this condition removes the analogue of elliptic points for Shimura curves. The exactly same phenomenon is also observed in the definite case as in \cite[$\S$4]{buzzard-families}.
Note that $p$ is in the level in all the former literatures except \cite{darmon-iovita} and \cite[$\S$2.4 and the latter part of Theorem 4.1]{pw-mu}. On the other hand, all the former literatures also need to remove $p$ in the level of Hecke modules localized at the maximal ideal using Condition PO to invoke Ihara's lemma. See \cite[Lemma 5.3]{chida-hsieh-main-conj} for detail. If $N^+$ is divisible by 4, then Condition Ihara always holds.
\end{rem}

\section{Modular forms and the Bertolini-Darmon elements} \label{sec:theta_elements}
We review modular forms of weight two in the quaternionic setting and construct the Bertolini-Darmon elements, which is an anticyclotomic analogue of Mazur-Tate elements, defined in \cite[$\S$2.7]{bertolini-darmon-mumford-tate-1996}. See \cite[$\S$2.3]{kim-summary} for a more detailed construction.

\subsection{Modular forms of weight two} \label{subsec:modular_forms}
Let $f \in S_2(\Gamma_0(N))^{\mathrm{new}}$ be a normalized newform.
Let $B$ be the definite quaternion algebra over $\mathbb{Q}$ of discriminant $N^-$ and $R \subseteq B$ be an oriented Eichler order of level $N^+$. For any $\mathbb{Z}$-module $M$, $\widehat{M} := M \otimes_{\mathbb{Z}} \widehat{\mathbb{Z}}$.
Using an integral version of Jacquet-Langlands correspondence \cite[Theorem 2.1 and Remark 2.2]{kim-summary}, we regard $f$ as an normalized $\mathcal{O}$-valued modular form on $B^\times \backslash \widehat{B}^\times / \widehat{R}^\times$, which is defined up to multiplication by $\mathcal{O}^\times$.
We define the space of $\mathcal{O}$-valued (quaternionic) modular forms by
$$M^{N^-}_2(N^+) := M^{N^-}_2(N^+, \mathcal{O}) = \lbrace f: B^{\times} \backslash \widehat{B}^\times / \widehat{R}^\times \to \mathcal{O} \rbrace$$
and the space of $\mathcal{O}$-valued (quaternionic) cuspforms by
$$S^{N^-}_2(N^+) := S^{N^-}_2(N^+, \mathcal{O}) = \lbrace f: B^{\times} \backslash \widehat{B}^\times / \widehat{R}^\times \to \mathcal{O} \rbrace \textrm{ modulo the constant function}$$
and let $\mathbb{T}^{N^-}(N^+)$ be the Hecke algebra faithfully acting on $S^{N^-}_2(N^+)$.
The integral Jacquet-Langlands correspondence identifies
\begin{itemize}
\item $\mathbb{T}^{N^-}(N^+)$ and the $N^-$-new quotient of the classical Hecke algebra $\mathbb{T}(\Gamma_0(N))$, and
\item $S^{N^-}_2(N^+)$ and $S_2(\Gamma_0(N))^{N^-\textrm{-new}}$ as Hecke modules.
\end{itemize}
For each $b \in \widehat{B}^\times$, we write $[b]_U \in B^{\times} \backslash \widehat{B}^\times / \widehat{R}^\times$ represented by $b$ where $U = \widehat{R}^\times$.
We have a canonical Hecke-equivariant identification 
$$M^{N^-}_2(N^+, \mathbb{Z}) \simeq \mathbb{Z}[B^{\times} \backslash \widehat{B}^\times / \widehat{R}^\times]$$
where $\mathbb{Z}[B^{\times} \backslash \widehat{B}^\times / \widehat{R}^\times]$ is the divisor group which is a Hecke module via Picard functoriality.
Let $\tau \in \widehat{B}^\times$ be the Atkin-Lehner involution defined by
$\tau_q = \left( \begin{smallmatrix} 0 & 1 \\ N^+ & 0 \end{smallmatrix} \right)$ if $q \mid N^+$ and $\tau_q = 1$ otherwise.
Then we can explicitly define the canonical identification by $[b]_U \mapsto f_b$
where
$$f_b (b') :=  \left \lbrace
    \begin{array}{ll}
     \# \left\vert  \left( B^\times \cap b\widehat{R}^\times b^{-1} \right) /\mathbb{Q}^\times \right\vert  & \textrm{ if } [b]_U = [b']_U \\ 
     0  & \textrm{ if } [b]_U \neq [b']_U
    \end{array}
    \right.$$
We define the perfect pairing
$$\langle -,- \rangle_U : M^{N^-}_2(N^+,A) \times M^{N^-}_2(N^+,A) \to A$$
where $A$ is any $p$-adic ring by
$$\langle f_1, f_2 \rangle_U = \sum_{[b]_U} f_1 (b)\cdot f_2(b\tau) \cdot \# \left\vert  \left( B^\times \cap b\widehat{R}^\times b^{-1} \right) /\mathbb{Q}^\times \right\vert^{-1}.$$

\begin{rem} \label{rem:normalization_modular_forms}
If $p >3$, $\# \left\vert  \left( B^\times \cap b\widehat{R}^\times b^{-1} \right) /\mathbb{Q}^\times \right\vert$ is always invertible in $A$.
If 
$$R^\times_q \subseteq \left\lbrace A \in \mathrm{GL}_2(\mathbb{Z}_q) : A \equiv \left(  \begin{matrix}
* & * \\ 0 & 1
\end{matrix}  \right) \pmod{q}\right\rbrace $$
where $R^\times_q$ is the $q$-part of $\widehat{R}^\times$ for some prime $q \geq 4$, then $\# \left\vert  \left( B^\times \cap b\widehat{R}^\times b^{-1} \right) /\mathbb{Q}^\times \right\vert =1$.
\end{rem}

Then the action of $\mathbb{T}^{N^-}(N^+)$ on $M^{N^-}_2(N^+)$ is self-adjoint with respect to the paring $\langle -,- \rangle_U$; we have $\langle tf_1,f_2 \rangle_U  = \langle f_1,tf_2 \rangle_U$  for all $t \in  \mathbb{T}^{N^-}(N^+)$.
By the explicit correspondence, we can also define the pairing
$$\langle -,- \rangle_U : M^{N^-}_2(N^+,A) \times A[B^{\times} \backslash \widehat{B}^\times / \widehat{R}^\times] \to A$$
with the same notation and compare with the evaluation as follows:
$$\langle f, b\tau \rangle_U = \langle f, f_{b\tau} \rangle_U = f(b) .$$
\subsection{Construction of the Bertolini-Darmon elements} \label{subsec:theta_elements}
Let $\mathrm{rec}_K : K^\times \backslash \widehat{K}^\times \to \mathrm{Gal}(K^{\mathrm{ab}}/K)$ be the reciprocity map in class field theory with geometric normalization.
Let $\mathcal{O}_{K,m_0} = \mathbb{Z} + m_0\mathcal{O}_K$ be the order of $K$ of conductor $m_0$ for $m_0 \geq 1$.
Fix an oriented optimal embedding $\psi : K \to B$. Then $\psi$ induces a family of maps
$$\psi_{m_0}: K^\times \backslash \widehat{K}^\times / \widehat{\mathcal{O}}^\times_{K,m_0}\widehat{\mathbb{Q}}^\times \to B^\times \backslash \widehat{B}^\times / \widehat{R}^\times$$
for $m_0 \geq 1$ as in \cite[$\S$4.3.1]{chida-hsieh-main-conj} (with slight generalization), and the reciprocity map $\mathrm{rec}_K$ induces an isomorphism
$$[-]_{m_0}: K^\times \backslash \widehat{K}^\times / \widehat{\mathcal{O}}^\times_{K,m_0}\widehat{\mathbb{Q}}^\times  \overset{\simeq}{\to} G_{m_0} = \mathrm{Gal}(H(m_0)/K).$$
Considering the following commutative diagram
\[
\xymatrix{
K^\times \backslash \widehat{K}^\times \ar[r]^-{\mathrm{rec}_K} \ar@{->>}[d]  & \mathrm{Gal}(K^{\mathrm{ab}}/K) \ar@{->>}[d] \\
K^\times \backslash \widehat{K}^\times / \widehat{\mathcal{O}}^\times_{K,m_0}\widehat{\mathbb{Q}}^\times  \ar[r]^-{[-]_{m_0}}_-{\simeq} \ar[d]^-{\psi_{m_0}} & G_{m_0} = \mathrm{Gal}(H(m_0)/K) \\
B^\times \backslash \widehat{B}^\times / \widehat{R}^\times \ar[r]^-{f} & \mathcal{O},
}
\]
we define an element in group ring $\mathcal{O}[G_{m_0}]$ depending on $f$ and $\psi$ by
$$ \widetilde{\theta}_{m_0}(f) := \sum_{ a  } f(\psi_{m_0}(a)) [a]_{m_0} \in \mathcal{O}[G_{m_0}]$$
where $a$ runs over $K^\times \backslash \widehat{K}^\times / \widehat{\mathcal{O}}^\times_{K,m_0}\widehat{\mathbb{Q}}^\times$.
\begin{rem}
The map $x_{m_0}$ in \cite[$\S$4.3.1]{chida-hsieh-main-conj} corresponds to $\psi_{p^{m_0}}$ in our setting.
The image $\psi_{m_0}(a)$ is called a Gross points of conductor $m_0$. If $K(p^\infty)/K$ is totally ramified, then the map $\psi_{p^r}$ becomes much simpler as in \cite[$\S$2.2]{darmon-iovita}.
\end{rem}

For $m  = \prod q^{r_i}_i$, write $m_0 = \prod q^{r_i + 1}_i$.
We define the \textbf{theta element $\theta_{m}(f)$ attached to $f$ and $\psi$} by the image of $\widetilde{\theta}_{m_0}(f)$ of the natural projection $\mathcal{O}[G_{m_0}] \to \mathcal{O}[\Gamma_m]$ 
and \textbf{its involution $\theta^*_{m}(f)$} by the image of $\theta_{m}(f)$ of the involution map $\mathcal{O}[\Gamma_m] \to \mathcal{O}[\Gamma_m]$ defined by $\gamma \mapsto \gamma^{-1}$ for $\gamma \in \Gamma_m$.
\begin{rem}
Theta elements $\theta_{m}(f)$ and $\theta^*_{m}(f)$ are well-defined only up to multiplication by $\Gamma_m$ and a unit in $\mathcal{O}$.
\end{rem}
We define the \textbf{Bertolini-Darmon element $L_p(K(m), f)$} by
$$L_p(K(m), f):=\theta_{m}(f) \cdot \theta^*_{m}(f)  \in \mathcal{O}[\Gamma_m],$$
and it is well-defined up to a unit in $\mathcal{O}$.
\begin{rem}
See \cite[Theorem A]{hung-nonvanishing} for the interpolation formula of $L_p(K(m), f)$ for ring class characters of general conductors.
\end{rem}

Let $f_\alpha$ be the $p$-stabilization of $f$ with the unit $U_p$-eigenvalue $\alpha = \alpha_p(f)$.
Let 
$$\mathrm{cores}^{r}_{r-1} : \mathcal{O}[\Gamma_{m'p^{r-1}}] \to \mathcal{O}[\Gamma_{m'p^{r}}]$$
 be the corestriction map defined by
$g \mapsto \sum_h hg$ where $h$ runs over $\Gamma_{m'p^{r}} / \Gamma_{m'p^{r-1}}$ for all $r \geq 1$.
We define the \textbf{$p$-stabilized theta element $\theta_{m'p^r}(f_\alpha)$} by
$$\theta_{m'p^r}(f_\alpha) := \frac{1}{\alpha^r} \left( \theta_{m'p^r}(f) - \frac{1}{\alpha} \mathrm{cores}^{r}_{r-1} \theta_{m'p^{r-1}}(f) \right)$$ 
for $r \geq 1$.
Then $\left\lbrace \theta_{m'p^r}(f_\alpha) \right\rbrace_{r}$ forms a norm-compatible sequence and
the inverse limit $\theta_{m'p^\infty}(f_\alpha)$ with respect to $r$
is an element in $\mathcal{O}\llbracket\Gamma_{m'p^\infty}\rrbracket$.
In the same manner, we define the \textbf{generalized $p$-adic $L$-function $L_p(K(m'p^\infty), f_\alpha)$} by
$$L_p(K(m'p^\infty), f_\alpha) = \theta_{m'p^\infty} (f_\alpha) \cdot \theta^*_{m'p^\infty}(f_\alpha) \in \mathcal{O}\llbracket\Gamma_{m'p^\infty}\rrbracket .$$
\begin{rem} \label{rem:lift_cores}
Note that $\mathrm{cores}^{r}_{r-1} (a) = \Phi_{p^r}(\gamma_{p^r}) \cdot a \in \mathcal{O}[\Gamma_{m'p^{r}}]$ for $a \in \mathcal{O}[\Gamma_{m'p^{r-1}}]$. Moreover,
$\Phi_{p^r}(\gamma_{p^r}) \cdot a = \Phi_{p^r}(\gamma_{p^r}) \cdot \widetilde{a}$ for any lift $\widetilde{a}$ of $a$ to $\mathcal{O}[\Gamma_{m'p^{r}}]$.
\end{rem}

\section{Fitting ideals, character convention, and the enhanced isotypic decomposition} \label{sec:fitting}
\subsection{Fitting ideals}
Let $A$ be a Noetherian ring.
Let $X$ be a finitely generated $A$-module, so it is of finitely presentation.
Fix a presentation
\[
\xymatrix{
A^s \ar[r]^-{h} & A^r \ar[r] & X \ar[r] & 0 
}
\]
where $h \in \mathrm{M}_{r \times s}(A)$.
\begin{defn}[Fitting ideals]
The \textbf{Fitting ideal $\mathrm{Fitt}_A(X)$ of $X$ over $A$} is the ideal of $A$ generated by the determinant of the $r \times r$-minors of the matrix $h$.
\end{defn}
It is well-known that the Fitting ideal is independent of the choice of a presentation of $X$ (so $h$).

\begin{lem}[Exact sequences; {\cite[9, Appendix]{mazur-wiles-main-conj}}] \label{lem:fitting_exact}
Let $0 \to X_1 \to X_2 \to X_3 \to 0$ be an exact sequence of $A$-modules.
Then $\mathrm{Fitt}_{A} (X_1) \cdot \mathrm{Fitt}_{A} (X_3) \subseteq \mathrm{Fitt}_{A} (X_2)$.
\end{lem}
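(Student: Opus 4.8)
The plan is to build an explicit finite presentation of $X_2$ out of presentations of $X_1$ and $X_3$ and then compare minors. First I would fix finite presentations $A^{s_1} \xrightarrow{h_1} A^{r_1} \xrightarrow{\phi_1} X_1 \to 0$ and $A^{s_3} \xrightarrow{h_3} A^{r_3} \xrightarrow{\phi_3} X_3 \to 0$. Since $A^{r_3}$ is free and the map $X_2 \to X_3$ is surjective, I can lift $\phi_3$ to a map $\lambda : A^{r_3} \to X_2$; together with the composite $A^{r_1} \xrightarrow{\phi_1} X_1 \hookrightarrow X_2$ this produces a surjection $\pi : A^{r_1} \oplus A^{r_3} \twoheadrightarrow X_2$.

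Next I would identify the relations. For each basis vector $e$ of $A^{s_3}$ the element $\lambda(h_3(e))$ dies in $X_3$, hence lies in $X_1 \subseteq X_2$, so it equals the image of some $B(e) \in A^{r_1}$; this defines a map $B : A^{s_3} \to A^{r_1}$. A short diagram chase then shows that $\ker \pi$ is generated by the columns of the block upper-triangular matrix
$$H \;=\; \begin{pmatrix} h_1 & -B \\ 0 & h_3 \end{pmatrix} \;:\; A^{s_1} \oplus A^{s_3} \longrightarrow A^{r_1} \oplus A^{r_3},$$
the point being that every element of $\ker \pi$ has second coordinate in $\mathrm{im}(h_3)$, and subtracting the appropriate column of the right-hand block reduces the first coordinate into $\mathrm{im}(h_1)$. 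Thus $H$ is a presentation matrix for $X_2$, so $\mathrm{Fitt}_A(X_2)$ is the ideal generated by the $(r_1+r_3)\times(r_1+r_3)$ minors of $H$.

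Finally comes the comparison. A typical generator of $\mathrm{Fitt}_A(X_1)\cdot\mathrm{Fitt}_A(X_3)$ is a product $\mu_1\mu_3$, where $\mu_1$ is the determinant of an $r_1 \times r_1$ submatrix of $h_1$ obtained by selecting a set $I_1$ of columns, and $\mu_3$ that of an $r_3 \times r_3$ submatrix of $h_3$ obtained by selecting columns $I_3$. Taking the columns $I_1$ from the first block of $H$ together with the columns $I_3$ from the second block yields an $(r_1+r_3)$-square submatrix of $H$ which is again block upper-triangular with diagonal blocks $(h_1)_{I_1}$ and $(h_3)_{I_3}$; its determinant is therefore exactly $\mu_1\mu_3$, which is hence a minor appearing among the generators of $\mathrm{Fitt}_A(X_2)$. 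Since such products $\mu_1\mu_3$ generate $\mathrm{Fitt}_A(X_1)\cdot\mathrm{Fitt}_A(X_3)$, the inclusion follows.

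I expect the only genuine obstacle to be the verification that $H$ really presents $X_2$ — i.e.\ that the exhibited relations span all of $\ker\pi$, together with checking that everything degenerates correctly when some $r_i=0$ (in which case the corresponding $X_i$ vanishes, $\mathrm{Fitt}_A(X_i)=A$, and the empty-minor convention makes the statement trivial on that factor). The determinant identity for a block-triangular matrix with square diagonal blocks is routine and needs no detailed computation.
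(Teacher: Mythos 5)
Your proof is correct and is essentially the standard argument cited in the paper (the paper itself gives no proof beyond the reference to the appendix of Mazur--Wiles): splice the two presentations into the block upper-triangular presentation matrix $\left(\begin{smallmatrix} h_1 & -B \\ 0 & h_3\end{smallmatrix}\right)$ of $X_2$ and observe that each product of maximal minors of $h_1$ and $h_3$ appears as a maximal minor of this matrix. The diagram chase identifying $\ker\pi$ with the image of $H$ and the degenerate cases are handled correctly.
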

\begin{lem}[Projective limit; {\cite[10, Appendix]{mazur-wiles-main-conj}}] \label{lem:fitting_lim}
Let $A$ be a complete Noetherian local ring, $X$ an $A$-module of finite presentation, which can be defined by the projective limit of quotient $R$-modules:
$$X \simeq \varprojlim_n X_n $$
for $n \in \mathbb{N}$.
Then $\mathrm{Fitt}_{A}(X) = \bigcap^\infty_{n=1} \mathrm{Fitt}_{A}(X_n)$.
\end{lem}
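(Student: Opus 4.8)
The plan is to prove both inclusions, the inclusion $\mathrm{Fitt}_A(X)\subseteq\bigcap_n\mathrm{Fitt}_A(X_n)$ being formal and the reverse one using completeness of $A$ in an essential way. For the easy direction, fix a finite presentation $A^s\xrightarrow{\,h\,}A^r\to X\to 0$ (available since $A$ is Noetherian) and set $W=\operatorname{im}(h)\subseteq A^r$, so that $\mathrm{Fitt}_A(X)=I_r(W)$, the ideal generated by the $r\times r$ minors of $h$ (equivalently, the image of $\bigwedge^r W\to\bigwedge^r A^r\cong A$). Unwinding $X\simeq\varprojlim_n X_n$, we may assume $X_n=X/K_n$ for a decreasing sequence of submodules $K_n\subseteq X$ with $\bigcap_n K_n=0$; then $X_n\cong A^r/W_n$ with $W\subseteq W_n$, whence $I_r(W)\subseteq I_r(W_n)$, i.e. $\mathrm{Fitt}_A(X)\subseteq\mathrm{Fitt}_A(X_n)$ for every $n$. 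Intersecting over $n$ gives one containment.

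For the reverse containment, observe that $\bigcap_n W_n=W$, i.e. the decreasing sequence $\{W_n/W\}$ of submodules of the finitely generated $A$-module $X=A^r/W$ has zero intersection. Here I would invoke Chevalley's theorem: over a complete Noetherian local ring, a decreasing sequence of submodules of a finitely generated module with zero intersection is cofinal with the $\mathfrak{m}$-adic filtration. Thus for each $j\ge 1$ there is an $n$ with $W_n\subseteq W+\mathfrak{m}^j A^r$. For such an $n$, multilinearity of the determinant gives $I_r(W+\mathfrak{m}^j A^r)\subseteq I_r(W)+\mathfrak{m}^j$: expanding a minor whose columns are $w_i+m_i$ with $w_i\in W$, $m_i\in\mathfrak{m}^j A^r$, the all-$w$ term lies in $I_r(W)$, and every remaining term, having some column in $\mathfrak{m}^j A^r$, lies in $\mathfrak{m}^j$ after Laplace expansion along that column. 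Hence $\mathrm{Fitt}_A(X_n)=I_r(W_n)\subseteq\mathrm{Fitt}_A(X)+\mathfrak{m}^j$.

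Consequently, if $x\in\bigcap_n\mathrm{Fitt}_A(X_n)$, then $x\in\mathrm{Fitt}_A(X)+\mathfrak{m}^j$ for every $j$. Applying Krull's intersection theorem to the Noetherian local ring $A/\mathrm{Fitt}_A(X)$ yields $\bigcap_j\bigl(\mathrm{Fitt}_A(X)+\mathfrak{m}^j\bigr)=\mathrm{Fitt}_A(X)$, so $x\in\mathrm{Fitt}_A(X)$, which completes the argument.

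The crux, and the only place where completeness is indispensable, is the passage from the ``vertical'' datum $\bigcap_n W_n=W$ to the ``horizontal'' comparison $W_n\subseteq W+\mathfrak{m}^j A^r$; this fails over a general Noetherian local ring and is exactly the content of Chevalley's theorem. Everything else — monotonicity of $I_r$ under enlarging the relation module, the determinant estimate, and Krull intersection — is routine. If one prefers to avoid quoting Chevalley, one can first establish the lemma for the canonical tower $X_n=X/\mathfrak{m}^n X$, where the required cofinality is tautological, and then compare a general tower with a cofinal refinement of it; but that comparison again reduces to the same completeness input.
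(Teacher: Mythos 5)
Your proof is correct. The paper itself gives no argument for this lemma --- it is quoted verbatim from the appendix of Mazur--Wiles --- so there is nothing internal to compare against; your write-up is essentially the standard proof of the cited fact. Both halves are sound: the easy containment via monotonicity of $I_r(W)$ in the relation module $W$, and the hard one via Chevalley's theorem (decreasing filtrations with zero intersection on a finitely generated module over a complete Noetherian local ring refine the $\mathfrak{m}$-adic one), the multilinear estimate $I_r(W+\mathfrak{m}^jA^r)\subseteq I_r(W)+\mathfrak{m}^j$, and Krull's intersection theorem in $A/\mathrm{Fitt}_A(X)$. You correctly identify completeness as entering only through Chevalley, and you implicitly (and reasonably) read the hypothesis $X\simeq\varprojlim X_n$ as saying the isomorphism is induced by the quotient maps $X\to X_n=X/K_n$, which is what yields $\bigcap_nK_n=0$.
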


\begin{lem}[Base change; {\cite[$\S$3.1.5]{skinner-urban}}]  \label{lem:fitting-ideals-base-change} $ $
\begin{enumerate}
\item For any Noetherian $A$-algebra $B$,
 $\mathrm{Fitt}_{B}(X \otimes_{A} B) = \mathrm{Fitt}_A(X)B$
 in $B$.
\item In particular, if $I \subset A$ is an ideal, then 
 $\mathrm{Fitt}_{A/I}(X /IX) = \mathrm{Fitt}_A(X) \Mod{I}$
in $A/I$.
\end{enumerate}
\end{lem}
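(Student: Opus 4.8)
The plan is to argue directly from a finite presentation, since Fitting ideals are defined presentation-theoretically and behave well under right-exact functors. First I would fix a finite presentation
\[
A^s \xrightarrow{\ h\ } A^r \longrightarrow X \longrightarrow 0,
\]
which exists because $X$ is finitely presented over the Noetherian ring $A$. For part (1), I would tensor this exact sequence with $B$ over $A$; since $-\otimes_A B$ is right exact, I obtain a presentation
\[
B^s \xrightarrow{\ h\otimes 1\ } B^r \longrightarrow X\otimes_A B \longrightarrow 0
\]
of the $B$-module $X\otimes_A B$, where the matrix $h\otimes 1$ is simply $h$ with entries viewed in $B$ via the structure map $A\to B$. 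By the definition of the Fitting ideal, $\mathrm{Fitt}_B(X\otimes_A B)$ is generated by the $r\times r$ minors of $h\otimes 1$; each such minor is the image under $A\to B$ of the corresponding $r\times r$ minor of $h$, because the determinant is a polynomial in the matrix entries and ring homomorphisms commute with polynomial expressions. Hence $\mathrm{Fitt}_B(X\otimes_A B)$ is generated, as a $B$-ideal, by the images of the generators of $\mathrm{Fitt}_A(X)$, i.e.\ it equals $\mathrm{Fitt}_A(X)B$. Along the way I would invoke the well-known independence of the Fitting ideal from the choice of presentation (stated in the excerpt) to know the left-hand side is well defined regardless of which presentation of $X\otimes_A B$ one uses.

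For part (2), I would specialize to $B = A/I$ with the quotient map as structure morphism. Then $X\otimes_A (A/I) \cong X/IX$ canonically, so part (1) gives $\mathrm{Fitt}_{A/I}(X/IX) = \mathrm{Fitt}_A(X)\cdot(A/I)$, and the image of the ideal $\mathrm{Fitt}_A(X)$ in $A/I$ is precisely $\mathrm{Fitt}_A(X) \Mod{I}$, which is the asserted identity.

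I do not anticipate a genuine obstacle here: the only points needing care are (i) verifying that applying $-\otimes_A B$ really does produce a presentation of $X\otimes_A B$ — this is just right-exactness of the tensor product — and (ii) the compatibility of minors with the base-change homomorphism, which is formal from multilinearity of the determinant. The mildest subtlety is making sure $X\otimes_A B$ is again finitely presented over $B$ so that "the" Fitting ideal makes sense; but this is immediate from the displayed $B$-presentation. No Noetherian hypothesis beyond what is already assumed is needed for the argument, though it is harmless to keep it for uniformity with the rest of the paper.
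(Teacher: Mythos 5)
Your argument is correct and is exactly the standard presentation-theoretic proof: the paper itself gives no proof but simply cites \cite[$\S$3.1.5]{skinner-urban}, where the same reasoning (tensor a finite presentation with $B$ by right-exactness, observe that the $r\times r$ minors of the base-changed matrix are the images of the minors of $h$) is used. No discrepancy to report.
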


\subsection{Convention on characters} \label{subsec:convention_characters}
Since we use various characters in this article, we fix the convention here.
Consider three characters
\[
\xymatrix{
\chi: \Gamma_{m'p^\infty}  \to \overline{\mathbb{Q}}^\times_p , &
\omega: \Gamma_{m'}  \to \overline{\mathbb{Q}}^\times_p , &
\varphi: \Gamma_{m'p^\infty}  \to \overline{\mathbb{Q}}^\times_p
}
\]
where 
\begin{itemize}
\item $\chi$ is a ring class character of finite order used in $\S$\ref{subsec:main_theorem_consequences},
\item $\omega$ is a character on the prime-to-$p$ part $\Gamma_{m'}$ used in $\S$\ref{subsec:enhanced_decomposition} and $\S$\ref{sec:control_freeness}, and
\item $\varphi$ is a character used in $\S$\ref{sec:euler_systems} for the specialization.
\end{itemize}
We recall the $\mathbb{Q}_p$-conjugacy relation of the characters in \cite[$\S$1.4]{kurihara-fitting}.
\begin{defn}[Conjugacy relation of characters over $\mathbb{Q}_p$]
We say that two characters $\omega_1$ and $\omega_2$ are \textbf{$\mathbb{Q}_p$-conjugate} if $\omega_1 = \sigma \omega_2$ for some $\sigma \in G_{\mathbb{Q}_p}$.
\end{defn}

For any ($p$-adic) ring $\mathcal{O}'$ of characteristic zero, let $\mathcal{O}'_{?}$ be the $\mathcal{O}'$-algebra generated by the values of $? \in \lbrace \chi, \omega, \varphi \rbrace$. Using these rings, we regard character $?$ as a $\mathcal{O}'_{?}$-valued character.
We also denote a uniformizer $\mathcal{O}'_{?}$ by $\varpi'_?$,
the residue field by $\mathbb{F}'_?$, and 
$\mathcal{O}'_{?,n} := \mathcal{O}'_{?} /(\varpi'_?)^{n}$.
Note that all $\mathcal{O}_{\chi}$, $\mathcal{O}_\omega$, and $\mathcal{O}_\varphi$ are discrete valuation rings of characteristic zero. Also, they admit the action of $\Gamma_{m'p^\infty}$ or $\Gamma_{m'}$ via the characters. 
\begin{rem}
Although the character $\chi$ and $\varphi$ equal, we use different notation since their roles are completely different.
The character $\varphi$ is only used as the map of group rings $\mathcal{O}\llbracket\Gamma_{m'p^\infty}\rrbracket \to \mathcal{O}_{\varphi}$. 
\end{rem}
Let $M$ be an $\mathcal{O}\llbracket\Gamma_{m'p^\infty}\rrbracket$-module where $\mathcal{O}$ is any $p$-adic ring of characteristic zero.
Then we define the character-isotypic quotients
\[
\xymatrix{
M_{\chi} := M \otimes_{\chi} \mathcal{O}_{\chi} , &
M_{\varphi} := M \otimes_{\varphi} \mathcal{O}_{\varphi}
}
\]
where the tensor products are taken over $\mathcal{O}\llbracket\Gamma_{m'p^\infty}\rrbracket$ via $\chi$ and $\varphi$, respectively, and
$$M_{\omega} := M \otimes_{\omega} \mathcal{O}_{\omega}$$
where the tensor products are taken over $\mathcal{O}[\Gamma_{m'}]$ via $\omega$.
We call $M_{?}$ the \textbf{$?$-isotypic quotient of $M$} or the \textbf{specialization of $M$ via $? \in \lbrace \chi, \omega, \varphi \rbrace$}.

\subsection{The enhanced isotypic decomposition} \label{subsec:enhanced_decomposition}
We recall an \emph{enhanced} version of the isotypic decomposition of modules over group rings following \cite[$\S$1.3, 1.4]{kurihara-fitting}. Here we use isotypic quotients not isotypic subspaces.

Let $m = m'p^r$ be an integer with $(m',p)  = 1$.
Let $\omega: \Gamma_{m'} \to \mathcal{O}_\omega$
be a character as in $\S$\ref{subsec:convention_characters}.
Then we have a decomposition
$$\mathcal{O}[\Gamma_{m'}] = \oplus_\omega \mathcal{O}_\omega$$
where $\omega$ runs over $\mathrm{Hom}(\Gamma_{m'}, \overline{\mathbb{Q}}^\times_p)$ up to the $\mathbb{Q}_p$-conjugation.
The existence of the decomposition essentially comes from the fact $m'$ is invertible in $\mathcal{O}$. The decomposition works even over $\mathcal{O}_n = \mathcal{O}/\varpi^n$ for the same reason.
Then we have the enhanced isotypic decomposition
$M  = \oplus_\omega \left( M \otimes_{\omega}  \mathcal{O}_\omega \right)$.
The following lemma is straightforward.
\begin{lem} \label{lem:kurihara_decomposition_freeness}
Let $M$ be a $\mathcal{O}\llbracket\Gamma_{m'p^\infty}\rrbracket$-module.
If $M_\omega$ is free of rank $s$ over $\mathcal{O}_\omega\llbracket\Gamma_{p^\infty}\rrbracket$ for each $\omega \in \mathrm{Hom}(\Gamma_{m'}, \overline{\mathbb{Q}}^\times_p)$ up to the $\mathbb{Q}_p$-conjugation, then
$M$ is free of rank $s$ over $\mathcal{O}\llbracket\Gamma_{m'p^\infty}\rrbracket$.
\end{lem}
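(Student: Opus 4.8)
The plan is to exploit the decomposition $\mathcal{O}[\Gamma_{m'}] = \bigoplus_\omega \mathcal{O}_\omega$, which exists because $m'$ is a unit in $\mathcal{O}$, and to check that this decomposition is compatible with the completed group ring in the $p$-direction, i.e.\ that
\[
\mathcal{O}\llbracket \Gamma_{m'p^\infty} \rrbracket \;\simeq\; \mathcal{O}[\Gamma_{m'}] \llbracket \Gamma_{p^\infty} \rrbracket \;\simeq\; \bigoplus_\omega \mathcal{O}_\omega \llbracket \Gamma_{p^\infty} \rrbracket
\]
as rings, using that the idempotents $e_\omega \in \mathcal{O}[\Gamma_{m'}] \otimes_{\mathbb{Z}_p} \overline{\mathbb{Z}}_p$ (more precisely, the $\mathbb{Q}_p$-rational idempotents cutting out each $\mathbb{Q}_p$-conjugacy class) commute with the $\Gamma_{p^\infty}$-action and hence remain idempotents in the completed ring. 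Consequently, for an $\mathcal{O}\llbracket\Gamma_{m'p^\infty}\rrbracket$-module $M$, multiplication by $e_\omega$ realizes $M_\omega = M \otimes_\omega \mathcal{O}_\omega = e_\omega M$ as a direct summand, and the enhanced isotypic decomposition $M = \bigoplus_\omega M_\omega$ is a decomposition \emph{of $\mathcal{O}\llbracket\Gamma_{m'p^\infty}\rrbracket$-modules}, compatible with the corresponding ring decomposition.

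First I would fix, for each $\mathbb{Q}_p$-conjugacy class of characters $\omega$, the associated idempotent $e_\omega$ and note $e_\omega \mathcal{O}[\Gamma_{m'}] \simeq \mathcal{O}_\omega$ (this is the cited content of \cite[$\S$1.3, 1.4]{kurihara-fitting}); then I would observe $e_\omega \cdot \mathcal{O}\llbracket\Gamma_{m'p^\infty}\rrbracket \simeq \mathcal{O}_\omega\llbracket\Gamma_{p^\infty}\rrbracket$ by base-changing the identification $\Gamma_{m'p^\infty} \simeq \Gamma_{m'} \times \Gamma_{p^\infty}$ (topologically) through $e_\omega$ and using that completed tensor product commutes with the finite direct sum. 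Second, apply $e_\omega$ to a presentation of $M$: if $M$ is generated by $s$ elements over $\mathcal{O}\llbracket\Gamma_{m'p^\infty}\rrbracket$ then each $M_\omega = e_\omega M$ is generated by $s$ elements over $\mathcal{O}_\omega\llbracket\Gamma_{p^\infty}\rrbracket$. Third, run the argument in reverse: given that each $M_\omega$ is \emph{free} of rank $s$ over $\mathcal{O}_\omega\llbracket\Gamma_{p^\infty}\rrbracket$, take a basis of each summand; their union is an $\mathcal{O}\llbracket\Gamma_{m'p^\infty}\rrbracket$-spanning set of size $s \cdot \#\{\omega\}$, but — crucially — because the $e_\omega$ are orthogonal idempotents in the base ring and $M = \bigoplus_\omega e_\omega M$ with $e_\omega M$ an $e_\omega\mathcal{O}\llbracket\Gamma_{m'p^\infty}\rrbracket$-module, any $\mathcal{O}\llbracket\Gamma_{m'p^\infty}\rrbracket$-module that is a direct sum of free rank-$s$ modules over the ring-summands $e_\omega\mathcal{O}\llbracket\Gamma_{m'p^\infty}\rrbracket$ is itself free of rank $s$ over $\mathcal{O}\llbracket\Gamma_{m'p^\infty}\rrbracket$: a choice of $\mathcal{O}_\omega\llbracket\Gamma_{p^\infty}\rrbracket$-basis $\{x_{\omega,1},\dots,x_{\omega,s}\}$ of $M_\omega$ gives rise to elements $y_j := \sum_\omega x_{\omega,j} \in M$, and the $\{y_1,\dots,y_s\}$ form an $\mathcal{O}\llbracket\Gamma_{m'p^\infty}\rrbracket$-basis, since a relation $\sum_j a_j y_j = 0$ with $a_j \in \mathcal{O}\llbracket\Gamma_{m'p^\infty}\rrbracket$ decomposes as $\sum_j (e_\omega a_j) x_{\omega,j} = 0$ for each $\omega$, forcing $e_\omega a_j = 0$ for all $\omega, j$, hence $a_j = 0$; and spanning is clear since every $m \in M$ writes as $m = \sum_\omega e_\omega m$ with $e_\omega m = \sum_j (e_\omega b_{\omega,j}) x_{\omega,j}$ and one reassembles $m = \sum_j (\sum_\omega e_\omega b_{\omega,j}) y_j$.

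I do not expect a serious obstacle here — the statement is, as the authors say, "straightforward" — but the one point that warrants genuine care is the claim that the idempotent decomposition of $\mathcal{O}[\Gamma_{m'}]$ is inherited by $\mathcal{O}\llbracket\Gamma_{m'p^\infty}\rrbracket$, together with the identification of each ring-summand as $\mathcal{O}_\omega\llbracket\Gamma_{p^\infty}\rrbracket$; this is where one uses $(m',p) = 1$ essentially, so that the $e_\omega$ (a priori defined over a ramified extension of $\mathbb{Z}_p$) descend to $\mathbb{Q}_p$-rational idempotents grouped by $\mathbb{Q}_p$-conjugacy and that $m'$ remains invertible modulo every $\varpi^n$, making the decomposition stable under passing to $\mathcal{O}_n$-coefficients and hence under the inverse limit defining the completed group ring. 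Once this ring-theoretic preliminary is in place, the module-theoretic argument above is purely formal, and the rank-$s$ bookkeeping goes through verbatim. I would present the write-up in two short steps: (1) the ring decomposition $\mathcal{O}\llbracket\Gamma_{m'p^\infty}\rrbracket \simeq \bigoplus_\omega \mathcal{O}_\omega\llbracket\Gamma_{p^\infty}\rrbracket$ with its orthogonal idempotents, and (2) the reassembly of local bases $\{x_{\omega,j}\}$ into a global basis $\{y_j = \sum_\omega x_{\omega,j}\}$, checking linear independence and spanning via the idempotents as above.
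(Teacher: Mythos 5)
Your proposal is correct and is exactly the argument the paper has in mind: the paper gives no written proof (it calls the lemma ``straightforward'' immediately after recording the decomposition $\mathcal{O}[\Gamma_{m'}] = \oplus_\omega \mathcal{O}_\omega$ and the induced splitting $M = \oplus_\omega M_\omega$), and your two steps --- extending the orthogonal idempotents to the ring decomposition $\mathcal{O}\llbracket\Gamma_{m'p^\infty}\rrbracket \simeq \oplus_\omega \mathcal{O}_\omega\llbracket\Gamma_{p^\infty}\rrbracket$ and then reassembling the local bases $\{x_{\omega,j}\}$ into a global basis $\{y_j = \sum_\omega x_{\omega,j}\}$ --- are precisely the intended fleshing-out. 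No gaps.
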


\section{Galois cohomology and Selmer groups} \label{sec:selmer}
We recall the language of Galois cohomology following \cite[$\S$1]{chida-hsieh-main-conj}.
\subsection{Galois cohomology}
Let $L$ be an algebraic extension of $\mathbb{Q}$. For a discrete $G_\mathbb{Q}$-module $M$, we put
\[
\xymatrix{
\mathrm{H}^1(L_\ell, M) : = \bigoplus_{\lambda \vert \ell} \mathrm{H}^1(L_\lambda, M) , & \mathrm{H}^1(I_{L_\ell}, M) : = \bigoplus_{\lambda \vert \ell} \mathrm{H}^1(I_{L_\lambda}, M)
}
\]
where $\lambda$ runs over all primes of $L$ dividing $\ell$ and $I_{L_\lambda} \subset G_{L_\lambda}$ is the inertia subgroup.
Denote the \textbf{restriction map at $\ell$} by
$$\mathrm{res}_\ell : \mathrm{H}^1(L, M) \to \mathrm{H}^1(L_\ell, M).$$
Define the \textbf{finite part of $\mathrm{H}^1(L_\ell, M)$} by
$$\mathrm{H}^1_{\mathrm{fin}}(L_\ell, M) := \ker \left( \mathrm{H}^1(L_\ell, M) \to \mathrm{H}^1(I_{L_\ell}, M) \right)$$
and the \textbf{singular part of  $\mathrm{H}^1(L_\ell, M)$} by 
$$\mathrm{H}^1_{\mathrm{sing}}(L_\ell, M) := \dfrac{\mathrm{H}^1(L_\ell, M)}{\mathrm{H}^1_{\mathrm{fin}}(L_\ell, M)} .$$
The natural map induced by the restriction
$$\partial_\ell : \mathrm{H}^1(L, M) \to \mathrm{H}^1_{\mathrm{sing}}(L_\ell, M)$$
is called the \textbf{residue map}. 

If $\kappa \in \mathrm{H}^1(L,M)$ with $\partial_\ell (\kappa) = 0$, then the image of $\kappa$ under $\mathrm{res}_\ell$ lies in $\mathrm{H}^1_{\mathrm{fin}}(L_\ell, M)$. We write $v_\ell (\kappa) \in \mathrm{H}^1_{\mathrm{fin}}(L_\ell, M)$ for the image. For an $n$-admissible prime $\ell$, $v_\ell (\kappa) \in \mathrm{H}^1_{\mathrm{fin}}(L_\ell, T_{f,n})$ can be extended to the case $\partial_\ell (\kappa) \neq 0$ due to the decomposition in Proposition \ref{prop:local_computation}.(5). See Definition \ref{defn:localization_n-admissible}.

%

\subsection{Galois representations} \label{subsec:galois_representations}
Let $\rho_f$ be the Galois representations attached to a newform $f \in S_2(\Gamma_0(N))^{\mathrm{new}}$.
Then $\rho_f$ satisfies the following properties:
\begin{enumerate}
\item $\rho_f$ is unramified outside $pN$;
\item the restriction to $G_{\mathbb{Q}_p}$ is of the form $\left( \begin{matrix}
\chi^{-1}_p \varepsilon & *\\
0  & \chi_p
\end{matrix} \right)$, where $\chi_p$ is unramified and $\chi_p(\mathrm{Frob}_p) = \alpha$;
\item for all $\ell$ dividing $N$ exactly, the restriction to $G_{\mathbb{Q}_\ell}$ is of the form $\left( \begin{matrix}
\pm\varepsilon & *\\
0  & \pm \mathbf{1}
\end{matrix} \right)$.
\end{enumerate}

\subsection{Ordinary local conditions}
Let $L/K$ be a finite extension. Let $M$ be one of $V_f$, $T_f$, $A_f$, $T_{f,n}$, and $A_{f,n}$. 
For $\ell$ a prime factor of $pN^-$ or an $n$-admissible prime (Definition \ref{defn:n-admissible-primes}),
we define $G_{\mathbb{Q}_\ell}$-invariant submodule $F^+_\ell M$.
If one have $F^+_\ell V_f$, then one can define $F^+_\ell M$ for other $M$ as follows. 
Consider the exact diagram
\[
\xymatrix@R=1.5em{
 & 0 \ar[d] &  0 \ar[d]  & 0 \ar[d] \\
 & T_f \ar[d]^-{\varpi^n} & F^+_\ell V_f \ar[d]   & A_{f,n} \ar[d]^-{\beta} \\
0 \ar[r] & T_f \ar[r]^-{\mathrm{inc}} \ar[d]^-{\alpha} & V_f \ar[r]^-{\pi} \ar[d] & A_f \ar[r] \ar[d]^-{\varpi^n} & 0 \\
 & T_{f,n} \ar[d] &   V_f / F^+_\ell V_f \ar[d]  & A_f \ar[d] \\
 & 0  & 0   & 0 .
}
\]
Then we define
\[
\xymatrix@R=0.5em@C=0.5em{
F^+_\ell T_f  := \mathrm{inc}^{-1} \left( F^{+}_\ell V_f \right) , & F^{+}_\ell A_f  := \pi \left( F^{+}_\ell V_f \right) , \\
F^+_\ell T_{f,n}  := \alpha \left( F^{+}_\ell T_f \right)  = \alpha \left( \mathrm{inc}^{-1} \left( F^{+}_\ell V_f \right) \right) , & F^{+}_\ell A_{f,n}  := \beta^{-1} \left( F^{+}_\ell A_f \right) = \beta^{-1} \left( \pi \left( F^{+}_\ell V_f \right) \right) .
}
\]
Using the above definitions, for $M = T_{f,n}$ and $A_{f,n}$,
we define the \textbf{ordinary submodule $\mathrm{H}^1_{\mathrm{ord}}(L_\ell, M)$ of $\mathrm{H}^1(L_\ell, M)$}
by
$$\mathrm{H}^1_{\mathrm{ord}} (L_\ell, M) := \ker \left(\mathrm{H}^1 (L_\ell, M) \to \mathrm{H}^1 (L_\ell, M/F^+_\ell M) \right).$$
Thus, it suffices to specify $F^+_\ell V_f$ for $\ell$ a prime factor of $pN^-$ or an $n$-admissible prime. We use $\S$\ref{subsec:galois_representations} here.
\begin{enumerate}
\item ($\ell = p$) Let $F^+_p V_f \subset V_f$ be the $E$-rank-one $G_{\mathbb{Q}_p}$-subspace on which the inertia subgroup $I_p$ acts via $\varepsilon$.
\item ($\ell$ dividing $N^-$) Let $F^+_\ell V_f \subset V_f$ be the $E$-rank-one $G_{\mathbb{Q}_\ell}$-subspace on which the inertia subgroup $I_p$ acts via $\varepsilon$ or $\varepsilon \tau_\ell$, where $\varepsilon$ is the $p$-adic cycotomic character and $\tau_\ell$ is the non-trivial unramified quadratic character of $G_{\mathbb{Q}_\ell}$.
\end{enumerate}
The case for $n$-admissible primes is defined as follows.
\begin{defn}[$n$-admissible primes; {\cite[Definition 2.20]{bertolini-darmon-derived-heights-1994}, \cite[$\S$2.2]{bertolini-darmon-imc-2005}}] \label{defn:n-admissible-primes}
A rational prime $\ell$ is called \textbf{$n$-admissible for $f$} if it satisfies the following conditions:
\begin{enumerate}
\item $\ell$ does not divide $pN$;
\item $\ell$ is inert in $K/\mathbb{Q}$;
\item $\ell^2 \not\equiv 1 \pmod{p}$;
\item $ \epsilon_\ell a_\ell(f) \equiv \ell+1 \pmod{\varpi^n}$ with $\epsilon_\ell = \pm 1$.
\end{enumerate}
\end{defn}
If $\ell$ is $n$-admissible, then $V_f$ is unramified at $\ell$ and $(\mathrm{Frob}_\ell - \epsilon_\ell)  (\mathrm{Frob}_\ell - \epsilon_\ell \ell) = 0$ in $A_{f,n}$ and $T_{f,n}$ for the Frobenius $\mathrm{Frob}_\ell$ of $G_{\mathbb{Q}_\ell}$.
We let
\begin{itemize}
\item $F^+_\ell A_{f,n} \subset A_{f,n}$ be the unique $\mathcal{O}_n$-corank-one submodule on which $\mathrm{Frob}_\ell$ acts via the multiplication by $\epsilon_\ell \ell$, and
\item $F^+_\ell T_{f,n} \subset T_{f,n}$ be the unique $\mathcal{O}_n$-rank-one submodule on which $\mathrm{Frob}_\ell$ acts via the multiplication by $\epsilon_\ell \ell$.
\end{itemize}
Then one can define the ordinary submodules $\mathrm{H}^1_{\mathrm{ord}} (L_\ell, A_{f,n})$ and  $\mathrm{H}^1_{\mathrm{ord}} (L_\ell, T_{f,n})$ similarly.

\subsection{Residual $\Delta$-ordinary $S$-relaxed Selmer groups} \label{subsec:selmer}
Let $\Delta$ be a square-free integer such that $\Delta / N^-$ is a product of $n$-admissible primes.
Let $S$ be a square-free integer with $(S, \Delta N^+p) = 1$.
\begin{defn}[$\Delta$-ordinary $S$-relaxed Selmer groups] \label{defn:selmer}
Let $M = A_{f,n}$ or $T_{f,n}$. We define
\textbf{$\Delta$-ordinary $S$-relaxed Selmer groups $\mathrm{Sel}^S_\Delta (L, M)$ attached to data $(f, n, \Delta, S)$} to be the group of elements $s$ in $\mathrm{H}^1(L,M)$ satisfying the following properties:
\begin{enumerate}
\item $\partial_q (s) = 0$ for all $q \nmid p\Delta S$.
\item $\mathrm{res}_q(s) \in \mathrm{H}^1_{\mathrm{ord}}(L_q, M)$ for $q \mid p\Delta$.
\item $\mathrm{res}_q(s)$ is arbitrary for $q \mid S$.
\end{enumerate}
In other words, the following sequence is exact:
\[
\xymatrix{ 
0 \ar[r] & \mathrm{Sel}^S_\Delta (L, M) \ar[r] & \mathrm{H}^1 (L, M) \ar[r]^-{\prod_q \mathrm{res}_q} &
\displaystyle\prod_{q \mid p}  \dfrac{\mathrm{H}^1(L_q, M)}{\mathrm{H}^1_{\mathrm{ord}}(L_q, M)} \times
\prod_{q \mid \Delta}  \dfrac{\mathrm{H}^1(L_q, M)}{\mathrm{H}^1_{\mathrm{ord}}(L_q, M)} \times
\prod_{q \nmid S}  \dfrac{\mathrm{H}^1(L_q, M)}{\mathrm{H}^1_{\mathrm{fin}}(L_q, M)}
}\]
where $q$ runs over all places of $L$.
\end{defn}
\begin{rem}
This definition of Selmer groups depends on both $A_{f,n}$ (or $T_{f,n}$) and $\Delta$. Note that $\Delta$ may not be recovered from $A_{f,n}$ (or $T_{f,n}$).
\end{rem}

\subsection{$p$-adic minimal Selmer groups and the comparison with $N^-$-ordinary Selmer groups} \label{subsec:minimal_selmer}
We define the \textbf{minimal Selmer groups of $f$ over $K(m'p^\infty)$} by the following exact sequence:
\[
\xymatrix{ 
0 \ar[r] & \mathrm{Sel}(K(m'p^\infty), A_f) \ar[r] & \mathrm{H}^1 (K(m'p^\infty), A_f) \ar[r]^-{\prod_q \mathrm{res}_q} &
\displaystyle\prod_{q \mid p}  \dfrac{\mathrm{H}^1(K(m'p^\infty)_q, A_f)}{\mathrm{H}^1_{\mathrm{ord}}(K(m'p^\infty)_q, A_f)} \times
\prod_{q \nmid p} \mathrm{H}^1(K(m'p^\infty)_q, A_f),
}\]
and we can also define the residual minimal Selmer group $\mathrm{Sel}(K(m'p^\infty), A_{f,n})$ similarly (c.f.~\cite[$\S$4.2]{epw}). See also \cite[$\S$3.1]{pw-mu}.
\begin{rem}
These minimal Selmer groups are closer to the classical Selmer groups of elliptic curves than the $N^-$-ordinary Selmer groups.
Also, these Selmer groups are more relevant to the context of anticyclotomic Iwasawa theory for modular forms than Greenberg Selmer groups as in \cite[$\S$3.1]{pw-mu} due to the existence of primes which splits completely in the infinite extension.
\end{rem}

Let $\Sigma$ be the finite set of places of $K$ containing the places dividing $Nm'p\infty$, and $K_{\Sigma}$ be the maximal extension of $K$ unramified outside $\Sigma$.
Then we have
\[
\xymatrix@R=1em{ 
0 \ar[r] & \mathrm{Sel}(K(m'p^\infty), A_f)[\varpi^n] \ar[r] & \mathrm{H}^1(K_{\Sigma}/K(m'p^\infty), A_{f})[\varpi^n] \ar@{=}[d]^-{\textrm{Lemma } \ref{lem:control_mod_pn}} \\
0 \ar[r] & \mathrm{Sel}_{N^-}(K(m'p^\infty), A_{f,n})  \ar[r] &  \mathrm{H}^1(K_{\Sigma}/K(m'p^\infty), A_{f,n})
}\]
since the residual representation is irreducible. Moreover, the local conditions at primes dividing $N^-$ of minimal Selmer groups and $N^-$-ordinary Selmer groups coincide since such primes split completely in $K(m'p^\infty)/K$.
Thus, we have the following sequence
\[
\xymatrix{ 
0 \ar[r] & \mathrm{Sel}_{N^-}(K(m'p^\infty), A_{f,n})  \ar[r] & \mathrm{Sel}(K(m'p^\infty), A_f)[\varpi^n] \ar[r] & \displaystyle\prod_{w} A^{ G_{K(m'p^\infty)_w} }_f / \varpi^n A^{ G_{K(m'p^\infty)_w} }_f
}
\]
where $w$ runs over primes of $K(m'p^\infty)$ dividing $N^+$.
Also, since we have the exact sequence
\[
\xymatrix{ 
0 \ar[r] & \mathrm{Sel}(K(m'p^\infty), A_{f,n})  \ar[r] & \mathrm{Sel}(K(m'p^\infty), A_f)[\varpi^n] \ar[r] & \displaystyle\prod_{w} A^{ G_{K(m'p^\infty)_w} }_f / \varpi^n A^{ G_{K(m'p^\infty)_w} }_f
}
\]
where $w$ runs over primes of $K(m'p^\infty)$ dividing $N$ (\cite[Corollary 4.2.5]{epw}),
the inclusion
\begin{equation} \label{eqn:selmer_inclusion}
\mathrm{Sel}(K(m'p^\infty), A_{f,n}) \subseteq \mathrm{Sel}_{N^-}(K(m'p^\infty), A_{f,n})
\subseteq \mathrm{Sel}(K(m'p^\infty), A_f)[\varpi^n]
\end{equation}
holds and the latter inclusion is of finite index. The index is bounded and independent of $n$.
If $f$ is $N^+$-minimal, then
$$\mathrm{Sel}(K(m'p^\infty), A_{f,n}) \subseteq \mathrm{Sel}_{N^-}(K(m'p^\infty), A_{f,n})
= \mathrm{Sel}(K(m'p^\infty), A_f)[\varpi^n].$$
If $f$ is $N$-minimal, then
$$\mathrm{Sel}(K(m'p^\infty), A_{f,n}) = \mathrm{Sel}_{N^-}(K(m'p^\infty), A_{f,n})
= \mathrm{Sel}(K(m'p^\infty), A_f)[\varpi^n].$$
See \cite[Lemma 4.1.2]{epw} for detail.
\begin{prop}[Comparison between Selmer groups I]
If $f$ is $N^+$-minimal, then
$$\mathrm{Sel}_{N^-}(K(m'p^\infty), A_{f}) := \displaystyle\varinjlim_n \mathrm{Sel}_{N^-}(K(m'p^\infty), A_{f,n}) = \mathrm{Sel}(K(m'p^\infty), A_{f}) .$$
\end{prop}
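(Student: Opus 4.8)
The plan is to obtain the proposition by passing to the direct limit in the level-$n$ identifications already recorded in the discussion preceding the statement. Recall from there that the $N^+$-minimality of $f$ forces the product term $\prod_{w \mid N^+} A_f^{G_{K(m'p^\infty)_w}}/\varpi^n A_f^{G_{K(m'p^\infty)_w}}$ to vanish, and hence that for every $n \geq 1$ one has the equality
\[
\mathrm{Sel}_{N^-}(K(m'p^\infty), A_{f,n}) = \mathrm{Sel}(K(m'p^\infty), A_f)[\varpi^n]
\]
of submodules of $\mathrm{H}^1(K_\Sigma/K(m'p^\infty), A_{f,n}) = \mathrm{H}^1(K_\Sigma/K(m'p^\infty), A_f)[\varpi^n]$, the latter identification being the one furnished by the irreducibility of $\overline{\rho}$ (Lemma \ref{lem:control_mod_pn}), which makes the connecting map in the long exact sequence of $0 \to A_{f,n} \to A_f \xrightarrow{\varpi^n} A_f \to 0$ vanish. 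So the equality $\mathrm{Sel}_{N^-}(K(m'p^\infty), A_{f,n}) = \mathrm{Sel}(K(m'p^\infty), A_f)[\varpi^n]$ holds at each finite level, and it remains only to take $\varinjlim_n$.

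Before doing so I would check that these level-$n$ identifications are compatible with the transition maps in the two direct systems. The transition map on $\mathrm{Sel}_{N^-}(K(m'p^\infty), A_{f,n})$ is the one induced by $A_{f,n} \hookrightarrow A_{f,n+1}$ (by definition of $\mathrm{Sel}_{N^-}(K(m'p^\infty), A_f)$), while on the other side the transition map is simply the inclusion $\mathrm{Sel}(K(m'p^\infty), A_f)[\varpi^n] \hookrightarrow \mathrm{Sel}(K(m'p^\infty), A_f)[\varpi^{n+1}]$. Since the identification is realized through the inclusion $A_{f,n} = A_f[\varpi^n] \hookrightarrow A_f$, and the composite $A_{f,n} \hookrightarrow A_{f,n+1} \hookrightarrow A_f$ is just $A_{f,n} \hookrightarrow A_f$, the relevant square of cohomology maps commutes; hence the two direct systems are isomorphic. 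This compatibility is the one step that takes a moment's care, and I regard it as the (rather minor) main obstacle — the rest is formal.

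Finally I would take the direct limit. On the left this gives $\varinjlim_n \mathrm{Sel}_{N^-}(K(m'p^\infty), A_{f,n}) = \mathrm{Sel}_{N^-}(K(m'p^\infty), A_f)$ by definition. On the right, since $A_f$ is $\varpi$-power torsion, so is $\mathrm{H}^1(K_\Sigma/K(m'p^\infty), A_f)$ (continuous Galois cohomology of the profinite group $\mathrm{Gal}(K_\Sigma/K(m'p^\infty))$ commutes with the filtered colimit $A_f = \varinjlim_n A_f[\varpi^n]$, and each $\mathrm{H}^1(K_\Sigma/K(m'p^\infty), A_f[\varpi^n])$ is $\varpi^n$-torsion), so $\mathrm{H}^1(K_\Sigma/K(m'p^\infty), A_f) = \bigcup_n \mathrm{H}^1(K_\Sigma/K(m'p^\infty), A_f)[\varpi^n]$; intersecting with the (fixed) local Selmer conditions yields $\mathrm{Sel}(K(m'p^\infty), A_f) = \bigcup_n \mathrm{Sel}(K(m'p^\infty), A_f)[\varpi^n] = \varinjlim_n \mathrm{Sel}(K(m'p^\infty), A_f)[\varpi^n]$. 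Combining the two sides gives $\mathrm{Sel}_{N^-}(K(m'p^\infty), A_f) = \mathrm{Sel}(K(m'p^\infty), A_f)$, as claimed.
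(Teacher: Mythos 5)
Your proposal is correct and follows exactly the route the paper intends: it takes the level-$n$ identification $\mathrm{Sel}_{N^-}(K(m'p^\infty), A_{f,n}) = \mathrm{Sel}(K(m'p^\infty), A_f)[\varpi^n]$ already established under $N^+$-minimality in the preceding discussion (via the exact sequence with the local terms at $w \mid N^+$) and passes to the direct limit. The paper leaves this limit step implicit, and your verification of the transition-map compatibility and of the fact that $\mathrm{Sel}(K(m'p^\infty), A_f)$ is the union of its $\varpi^n$-torsion submodules is exactly the right (formal) content.
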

This comparison is enough for our purpose. However, we also prove the comparison statement without assuming $N^+$-minimality with $m'=1$. In other words, we give a detailed proof of \cite[Proposition 3.6]{pw-mu} and \cite[Proposition1.3]{chida-hsieh-main-conj}.
\begin{prop} \label{prop:cotorsion_vanishing_mu}
If $\mathrm{Sel}_{N^-}(K(p^\infty), A_{f})$ is $\Lambda$-cotorsion with vanishing of $\mu$-invariant, then
$\mathrm{Sel}(K(p^\infty), A_{f})$ is also $\Lambda$-cotorsion with vanishing of $\mu$-invariant.
\end{prop}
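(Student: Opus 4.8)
The plan is to deduce Proposition \ref{prop:cotorsion_vanishing_mu} from the inclusions in \eqref{eqn:selmer_inclusion} together with the bounded-index statement, by a direct comparison of the $\varpi$-torsion of the two Selmer groups. First I would recall that for a cofinitely generated $\Lambda$-module $X$, being $\Lambda$-cotorsion with vanishing $\mu$-invariant is equivalent to $\mathrm{corank}_{\mathcal{O}} X < \infty$, i.e.\ to $X$ being a cofinitely generated $\mathcal{O}$-module; equivalently, $\dim_{\mathbb{F}} X[\varpi]$ is finite. So the entire statement reduces to showing that $\mathrm{Sel}(K(p^\infty), A_f)[\varpi]$ has finite $\mathbb{F}$-dimension once $\mathrm{Sel}_{N^-}(K(p^\infty), A_f)[\varpi]$ does. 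Here I would use the $n=1$ case of the chain \eqref{eqn:selmer_inclusion}, which gives
\[
\mathrm{Sel}(K(p^\infty), A_{f,1}) \subseteq \mathrm{Sel}_{N^-}(K(p^\infty), A_{f,1}) \subseteq \mathrm{Sel}(K(p^\infty), A_f)[\varpi],
\]
with the outer inclusion of finite index bounded independently of $n$ (in particular finite for $n=1$).

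Next I would connect these residual Selmer groups to the $\varpi$-torsion of the $\varpi^\infty$-Selmer groups. The snake-lemma / control argument that produces \eqref{eqn:selmer_inclusion} also shows, via the exact sequences displayed just before Proposition \ref{prop:cotorsion_vanishing_mu}, that $\mathrm{Sel}_{N^-}(K(p^\infty), A_f)[\varpi]$ sits inside $\mathrm{Sel}_{N^-}(K(p^\infty), A_{f,1})$ up to a finite error term controlled by $\prod_w A_f^{G_{K(p^\infty)_w}}/\varpi$, where $w$ runs over the finitely many primes above $N^+$; since each local factor is a finite $\mathbb{F}$-vector space and there are finitely many such $w$, the error is finite-dimensional. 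Hence finiteness of $\dim_{\mathbb{F}} \mathrm{Sel}_{N^-}(K(p^\infty), A_f)[\varpi]$ forces finiteness of $\dim_{\mathbb{F}} \mathrm{Sel}_{N^-}(K(p^\infty), A_{f,1})$, which by the displayed chain forces finiteness of $\dim_{\mathbb{F}} \mathrm{Sel}(K(p^\infty), A_{f,1})$ and then, again by the same bounded-finite-index comparison (now read in the other direction, between $\mathrm{Sel}(K(p^\infty), A_{f,1})$ and $\mathrm{Sel}(K(p^\infty), A_f)[\varpi]$), of $\dim_{\mathbb{F}} \mathrm{Sel}(K(p^\infty), A_f)[\varpi]$.

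Finally I would translate this back: $\dim_{\mathbb{F}} \mathrm{Sel}(K(p^\infty), A_f)[\varpi] < \infty$ says exactly that $\mathrm{Sel}(K(p^\infty), A_f)^\vee$ is a finitely generated $\mathcal{O}$-module, hence $\Lambda$-torsion with trivial $\mu$-invariant, which is the assertion. I expect the only real subtlety to be bookkeeping: making sure that when $n=1$ the ``bounded index'' appearing in \eqref{eqn:selmer_inclusion} is genuinely \emph{finite} (not merely bounded as $n$ varies) — this is immediate because each index divides a fixed power determined by the finitely many local invariants $A_f^{G_{K(p^\infty)_w}}/\varpi$ for $w \mid N^+$ — and checking that passing to $\varpi$-torsion of the $\varpi^\infty$-Selmer group versus the residual Selmer group introduces only a finite discrepancy at those same primes. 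Both points follow from the Euler-characteristic / control formalism of \cite[$\S$4]{epw} already invoked in the excerpt, so no new ingredient is needed beyond careful diagram chasing.
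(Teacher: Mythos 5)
Your argument is correct and follows essentially the same route as the paper: translate ``$\Lambda$-cotorsion with $\mu=0$'' into finiteness of the $\varpi$-torsion, identify $\mathrm{Sel}_{N^-}(K(p^\infty),A_f)[\varpi]$ with the residual Selmer group $\mathrm{Sel}_{N^-}(K(p^\infty),A_{f,1})$ (the paper gets this as an equality directly from the direct-limit definition, so your ``finite error term'' is not even needed), and then transfer finiteness to $\mathrm{Sel}(K(p^\infty),A_f)[\varpi]$ via the finite-index inclusion in \eqref{eqn:selmer_inclusion}. The only cosmetic difference is that the paper uses the containment $\mathrm{Sel}_{N^-}(K(p^\infty),A_{f,1})\subseteq\mathrm{Sel}(K(p^\infty),A_f)[\varpi]$ in one step rather than detouring through $\mathrm{Sel}(K(p^\infty),A_{f,1})$.
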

\begin{proof}
We have 
$$\mathrm{Sel}_{N^-}(K(p^\infty), A_{f})[\varpi] = \mathrm{Sel}_{N^-}(K(p^\infty), A_{f,1})$$ since the $N^-$-ordinary Selmer groups are defined as the injective limit of the residual Selmer groups.
By assumption, $\mathrm{Sel}_{N^-}(K(p^\infty), A_{f,1})$ is finite as in \cite[Proof of Corollary 2.3]{kim-pollack-weston}.
Since the inclusion
$$\mathrm{Sel}_{N^-}(K(p^\infty), A_{f,1})
\subseteq \mathrm{Sel}(K(p^\infty), A_f)[\varpi]$$ is of finite index (as in Inclusion (\ref{eqn:selmer_inclusion})),
$\mathrm{Sel}(K(p^\infty), A_f)[\varpi]$ is also finite. Thus, we obtain the conclusion.
\end{proof}
We still have the same conclusion without assuming the $N^+$-minimality of $f$.
\begin{cor}[Comparison between Selmer groups II] 
Assume the same conditions of Corollary \ref{cor:main_conj_nonminimal}. Then
$$\mathrm{Sel}_{N^-}(K(p^\infty), A_{f}) = \mathrm{Sel}(K(p^\infty), A_{f}) .$$
\end{cor}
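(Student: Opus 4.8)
The plan is to reduce the asserted equality to the vanishing of an a priori finite $\Lambda$-module, and then to kill that module by combining the Euler system divisibility with a no-finite-submodule property of the dual minimal Selmer group.

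\emph{Step 1 (reduction to a finiteness statement).} Specialize the inclusions and control sequences of $\S$\ref{subsec:minimal_selmer} to $m'=1$. In Inclusion (\ref{eqn:selmer_inclusion}) pass to the direct limit over $n$. Since $\mathrm{Sel}(K(p^\infty), A_f) \subseteq \mathrm{H}^1(K(p^\infty), A_f)$ is $\varpi$-power torsion, $\varinjlim_n \mathrm{Sel}(K(p^\infty), A_f)[\varpi^n] = \mathrm{Sel}(K(p^\infty), A_f)$, so one obtains
$$\mathrm{Sel}_{N^-}(K(p^\infty), A_f) \subseteq \mathrm{Sel}(K(p^\infty), A_f),$$
with cokernel $Q$ equal to the direct limit of the finite groups $\mathrm{Sel}(K(p^\infty), A_f)[\varpi^n]/\mathrm{Sel}_{N^-}(K(p^\infty), A_{f,n})$, whose orders are bounded by the $n$-independent index appearing in Inclusion (\ref{eqn:selmer_inclusion}); hence $Q$ is finite. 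Thus the corollary is equivalent to the assertion $Q = 0$.

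\emph{Step 2 (input from the main conjecture).} By Corollary \ref{cor:main_conj_nonminimal} we have $L_p(K(p^\infty), f_\alpha) \in \mathrm{char}_\Lambda(\mathrm{Sel}(K(p^\infty), A_f)^\vee)$. Since $L_p(K(p^\infty), f_\alpha)$ is nonzero (anticyclotomic nonvanishing, \cite{vatsal-nonvanishing}, \cite{hung-nonvanishing}) and has trivial $\mu$-invariant (\cite{vatsal-nonvanishing}, using Condition CR; cf.\ \cite{kim-pollack-weston}), the module $\mathrm{Sel}(K(p^\infty), A_f)^\vee$ is $\Lambda$-torsion with vanishing $\mu$-invariant, hence finitely generated over $\mathcal{O}$; equivalently, $\mathrm{Sel}(K(p^\infty), A_f)$ is a cofinitely generated $\mathcal{O}$-module whose $\varpi$-divisible part has finite index.

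\emph{Step 3 (no finite submodule, and conclusion).} Dualizing $0 \to \mathrm{Sel}_{N^-}(K(p^\infty), A_f) \to \mathrm{Sel}(K(p^\infty), A_f) \to Q \to 0$ yields an exact sequence $0 \to Q^\vee \to \mathrm{Sel}(K(p^\infty), A_f)^\vee \to \mathrm{Sel}_{N^-}(K(p^\infty), A_f)^\vee \to 0$ with $Q^\vee$ finite. It therefore suffices to prove that $\mathrm{Sel}(K(p^\infty), A_f)^\vee$ has no nonzero finite (equivalently, in view of Step 2, no $\varpi$-torsion; equivalently pseudo-null) $\Lambda$-submodule, since then $Q^\vee = 0$ and $Q = 0$. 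This last assertion is the main obstacle. I would deduce it from Poitou--Tate global duality applied to the compact $N^-$-ordinary Selmer group over $K(p^\infty)$, which is $\Lambda$-free with no $N^+$-minimality hypothesis by Proposition \ref{prop:kpw} and Theorem \ref{thm:freeness}; here Proposition \ref{prop:cotorsion_vanishing_mu} together with the $\mu$-vanishing of Step 2 is used to certify that the dual \emph{minimal} Selmer module is $\mathcal{O}$-free. Alternatively, under the running hypotheses ($\overline{\rho}$ absolutely irreducible, $f$ $p$-ordinary, Condition PO) one may invoke Greenberg's general criterion for the absence of finite submodules in duals of ordinary Selmer groups. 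As in \cite{pw-mu} and \cite{chida-hsieh-main-conj}, the delicate point throughout is the control of the local conditions at primes dividing $N^+$, which is precisely where the earlier arguments used $N^+$-minimality and where the $\mu$-invariant computation of \cite{kim-pollack-weston} is needed to proceed without it.
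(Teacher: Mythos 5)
Your proposal is correct and follows essentially the same route as the paper: reduce to the vanishing of the finite cokernel of $\mathrm{Sel}_{N^-}(K(p^\infty),A_f)\subseteq\mathrm{Sel}(K(p^\infty),A_f)$, obtain cotorsion-ness and $\mu=0$ from Corollary \ref{cor:main_conj_nonminimal} via Proposition \ref{prop:cotorsion_vanishing_mu}, and kill the dual finite submodule by the no-finite-submodule property. For that last step the paper uses precisely your ``alternative'' route, namely the Greenberg/Hachimori--Matsuno criterion (Proposition \ref{prop:greenberg-hachimori-matsuno}), whose hypotheses it verifies by transferring the control theorem from the $N^-$-ordinary to the minimal Selmer group using the bounded index and by invoking Flach's Cassels--Tate pairing, rather than your primary Poitou--Tate/freeness suggestion.
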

\begin{proof}
By Corollary \ref{cor:main_conj_nonminimal}, $\mathrm{Sel}_{N^-}(K(p^\infty), A_{f})$ is $\Lambda_{p^\infty}$-cotorsion with vanishing of $\mu$-invariant. By Proposition \ref{prop:cotorsion_vanishing_mu}, $\mathrm{Sel}(K(p^\infty), A_{f})$ is also $\Lambda_{p^\infty}$-cotorsion. 
We want to apply the work of Hachimori-Matsuno \cite[Corollary]{hachimori-matsuno} (Proposition \ref{prop:greenberg-hachimori-matsuno}) to minimal Selmer groups.
Since the residual representation is irreducible and there exists a relevant Cassels-Tate pairing for minimal Selmer groups as in \cite{flach-cassels-tate}, it suffices to check the control theorem works for minimal Selmer groups. 
Since we have the control theorem for $N^-$-ordinary Selmer groups (Lemma \ref{lem:control_galois_selmer})
and the index between $N^-$-ordinary Selmer groups and minimal Selmer groups is bounded independently on $n$, the control theorem for minimal Selmer groups holds. Thus, we obtain the conclusion.
\end{proof}

\begin{rem} \label{rem:comparision_of_selmer}
It is not difficult to see that the minimal Selmer groups and the $N^-$-ordinary Selmer groups coincide under the $N$-minimality assumption. For the comparison over $K(p^\infty)$ between minimal Selmer groups and $N^-$-ordinary Selmer groups under Condition CR, see \cite[Proposition 3.6]{pw-mu} and \cite[Proposition 1.3]{chida-hsieh-main-conj}. In the comparison argument over $K(p^\infty)$, we first take the direct limit $r \to \infty$ in $K(p^r)$ then take the direct limit $n \to \infty$ in $A_{f,n}$. Thus, $\mathrm{Sel}_{N^-}(K(p^r), A_{f})$ does not appear in the comparison over $K(p^\infty)$.
Also, the comparison over $K(p^\infty)$ uses the theory of $\Lambda$-modules.
In order to compare them over $K(m)$ without any minimality condition, it seems that it involves certain Tamagawa exponents over $K(m)$ at $\ell$ dividing $N^+$. See \cite[Definition 3.3]{pw-mu}, \cite[Corollary 2 and Corollary 6.15]{chida-hsieh-main-conj}, and \cite[$\S$1 and $\S$4.2]{lundell-level-lowering} for hints.
\end{rem}

\subsection{Local computation, local duality, and global reciprocity}
We recall some computation and duality results of local cohomology \emph{over $K(m)$}. All the proofs over $K(p^r)$ and $K(p^\infty)$ in \cite{bertolini-darmon-imc-2005} and \cite{chida-hsieh-main-conj} work for $K(m)$ in the exactly same way.
\subsubsection{Local computation}
\begin{prop}[Local computation] \label{prop:local_computation} $ $
\begin{enumerate}
\item Let $\ell \neq p$ and be split in $K/\mathbb{Q}$. Let $m = m'p^r$ with $(m',p)=1$. For $r >>0$, we have
\[
\xymatrix{
\mathrm{H}^1_{ \mathrm{sing} } (K(m)_{\ell}, T_{f,n})  = 0 , &
\mathrm{H}^1_{ \mathrm{fin} }(K(m)_{\ell}, A_{f,n})  = 0 .
}
\]
\item Let $\ell \neq p$ and be non-split in $K/\mathbb{Q}$.
\begin{align*}
\mathrm{H}^1_{ \mathrm{sing} } (K(m)_{\ell}, T_{f,n}) & \simeq \mathrm{H}^1_{ \mathrm{sing} } (K_{\ell}, T_{f,n}) \otimes_{\mathcal{O}_n} \mathcal{O}_n [ \Gamma_{m} ]  \\
\mathrm{H}^1_{ \mathrm{fin} }(K(m)_{\ell}, A_{f,n}) & \simeq \mathrm{Hom}_{\mathcal{O}_n} \left( \mathrm{H}^1_{ \mathrm{sing} }(K_{\ell}, T_{f,n}) \otimes_{\mathcal{O}_n} \mathcal{O}_n [ \Gamma_{m} ] , E/\mathcal{O} \right)
\end{align*}
\item Let $\ell$ be an $n$-admissible prime. 
\[
\xymatrix{
\mathrm{H}^1_{ \mathrm{sing} } (K_{\ell}, T_{f,n}) \simeq \mathcal{O}_n , &
\mathrm{H}^1_{ \mathrm{fin} } (K_{\ell}, A_{f,n})  \simeq \mathcal{O}_n .
}
\]
\item Let $\ell$ be an $n$-admissible prime. 
Then
$\mathrm{H}^1_{ \mathrm{sing} } (K(m)_{\ell}, T_{f,n})$ and 
$\mathrm{H}^1_{ \mathrm{fin} } (K(m)_{\ell}, T_{f,n})$
are free of rank one over $\mathcal{O}_n [ \Gamma_{m} ]$.
\item Let $\ell$ be an $n$-admissible prime. 
We have the decomposition
\begin{align*}
\mathrm{H}^1(K(m)_{\ell}, T_{f,n}) & = \mathrm{H}^1_{ \mathrm{fin} }(K(m)_{\ell}, T_{f,n}) \oplus \mathrm{H}^1_{ \mathrm{ord} }(K(m)_{\ell}, T_{f,n}) \\
\mathrm{H}^1(K(m)_{\ell}, A_{f,n}) & = \mathrm{H}^1_{ \mathrm{fin} }(K(m)_{\ell}, A_{f,n}) \oplus \mathrm{H}^1_{ \mathrm{ord} }(K(m)_{\ell}, A_{f,n}) .
\end{align*}
\end{enumerate}
\end{prop}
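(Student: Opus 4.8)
The plan is to reduce all five statements to purely local assertions at a single place $\lambda$ of $K(m)$ above $\ell$, since in each case the group in question is $\bigoplus_{\lambda\vert\ell}\mathrm{H}^1(K(m)_\lambda,-)$ together with its finite, singular, or ordinary part, and the only way $K(m)$ intervenes is through (i) the completed local field $K(m)_\lambda$ — equivalently the residue degree $f_\lambda$ of $\lambda$ over $\ell$ — and (ii) the residual $\mathcal{O}_n[\Gamma_m]$-module structure on the $\ell$-part of global cohomology. Both are controlled by how $\ell$ decomposes in $K(m)/K$, so once that decomposition is recorded the local computations of \cite[\S2.2]{bertolini-darmon-imc-2005} and \cite[\S1]{chida-hsieh-main-conj} apply with no change. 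The first step is therefore to record the splitting dichotomy. Writing $c$ for complex conjugation: $c$ inverts $\Gamma_m$ and fixes the unique place of $K$ above a prime $\ell$ non-split in $K$, so the Frobenius $\sigma\in\Gamma_m$ of that place satisfies $\sigma=\sigma^{-1}$; the $p$-component of $\sigma$ is thus trivial (as $p$ is odd), and the tame component is dealt with by the generalized notion of $n$-admissible set over $K(m')$ (Definition \ref{defn:n-admissible-set}), so that every $n$-admissible prime — and more generally every prime inert or ramified in $K$ with trivial Frobenius in $\Gamma_{m'}$ — splits completely in $K(m)/K$. For $\ell\neq p$ split in $K$ as $\lambda\overline{\lambda}$ one only has $\mathrm{Frob}_{\overline{\lambda}}=\mathrm{Frob}_\lambda^{-1}$, and $\mathrm{Frob}_\lambda$ generically has infinite order in the anticyclotomic $\mathbb{Z}_p$-quotient, so $f_\lambda$ is divisible by arbitrarily large powers of $p$ as $r\to\infty$.

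Granting this, parts (1)--(4) go as follows. For (1), $V_f$ is unramified at $\ell$, so $\mathrm{H}^1_{\mathrm{fin}}(K(m)_\lambda,A_{f,n})=A_{f,n}/(\mathrm{Frob}_\ell^{f_\lambda}-1)A_{f,n}$ and, by local Tate duality, $\mathrm{H}^1_{\mathrm{sing}}(K(m)_\lambda,T_{f,n})$ is Pontryagin dual to it; the local computation of \cite[\S2.2]{bertolini-darmon-imc-2005}, now applied with $f_\lambda$ growing $p$-adically, shows both vanish for $r\gg 0$. For (2), the non-split $\ell$ splits completely by the first step, which gives $\mathrm{H}^1_{\mathrm{sing}}(K(m)_\ell,T_{f,n})\simeq\mathrm{H}^1_{\mathrm{sing}}(K_\ell,T_{f,n})\otimes_{\mathcal{O}_n}\mathcal{O}_n[\Gamma_m]$ directly, and the displayed description of $\mathrm{H}^1_{\mathrm{fin}}(K(m)_\ell,A_{f,n})$ is then local Tate duality at $\ell$ combined with the self-duality $A_{f,n}\simeq\mathrm{Hom}_{\mathcal{O}}(T_{f,n},E/\mathcal{O}(1))$ recalled in \S\ref{subsubsec:modular_setup}. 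Part (3) is the standard unramified computation at an $n$-admissible prime: by Definition \ref{defn:n-admissible-primes} the Frobenius acts on $T_{f,n}$ with the two eigenvalues $\epsilon_\ell$ and $\epsilon_\ell\ell$, distinct modulo $\varpi$ because $\ell^2\not\equiv 1\pmod p$, whence $\mathrm{H}^1_{\mathrm{sing}}(K_\ell,T_{f,n})$ and $\mathrm{H}^1_{\mathrm{fin}}(K_\ell,A_{f,n})$ are each free of rank one over $\mathcal{O}_n$; part (4) then follows from (3) by combining with the complete splitting of the $n$-admissible $\ell$ in $K(m)/K$, which produces the $\mathcal{O}_n[\Gamma_m]$-base change, exactly as in \cite[\S2]{bertolini-darmon-imc-2005}.

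For part (5), the $n$-admissibility of $\ell$ gives a $G_{\mathbb{Q}_\ell}$-stable splitting $T_{f,n}=F^+_\ell T_{f,n}\oplus(T_{f,n}/F^+_\ell T_{f,n})$ into $\mathrm{Frob}_\ell$-eigenlines (eigenvalues $\epsilon_\ell\ell$ and $\epsilon_\ell$), hence $\mathrm{H}^1(K(m)_\ell,T_{f,n})$ splits correspondingly; I would identify $\mathrm{H}^1(K(m)_\ell,F^+_\ell T_{f,n})$ with $\mathrm{H}^1_{\mathrm{ord}}(K(m)_\ell,T_{f,n})$ straight from the definition, note that on $F^+_\ell T_{f,n}$ the finite part vanishes (since $\mathrm{Frob}_\ell^2$ acts by $\ell^2\not\equiv 1$) while on the complement the singular part vanishes, so that $\mathrm{H}^1(K(m)_\ell,T_{f,n}/F^+_\ell T_{f,n})=\mathrm{H}^1_{\mathrm{fin}}(K(m)_\ell,T_{f,n})$; the $A_{f,n}$-assertion is the Pontryagin dual. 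The place that really demands care — rather than a genuine obstacle — is the reduction itself: one must check that the $\Gamma_m$-equivariance and the decomposition behavior honestly reduce parts (2)--(5) to the $K$-level and $K(p^r)$-level statements of \cite{bertolini-darmon-imc-2005} and \cite{chida-hsieh-main-conj}, and in particular that part (4) relies on the complete splitting of $n$-admissible primes in $K(m)/K$, which is precisely what the generalized definition of $n$-admissible sets over $K(m')$ (Definition \ref{defn:n-admissible-set}) is designed to guarantee; the analytic input for part (1) — the $p$-adic growth of $f_\lambda$ for split primes — is where the hypothesis $r\gg 0$ is consumed.
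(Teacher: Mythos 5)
Your overall architecture — reduce everything to the decomposition of $\ell$ in $K(m)/K$ and then quote the $K$-level computations of \cite{bertolini-darmon-imc-2005} and \cite{chida-hsieh-main-conj} — is exactly what the paper intends, and parts (3) and (5) are argued correctly. But the pivotal input for parts (2) and (4), namely that a prime inert in $K/\mathbb{Q}$ and prime to $m$ splits \emph{completely} in $K(m)/K$, is not actually established by what you wrote. The conjugation argument $\sigma=\sigma^{c}=\sigma^{-1}$ only forces the odd part of $\mathrm{Frob}_\lambda$ in the cyclic group $\Gamma_m$ to vanish; it kills the $p$-part (as $p>2$) but says nothing when $2\mid m'$. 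Your appeal to Definition \ref{defn:n-admissible-set} to "deal with the tame component" is a non sequitur: that definition concerns injectivity of localization maps on Selmer groups and has no bearing on splitting of primes. The correct argument is explicit class field theory for ring class fields: for $\ell$ inert in $K$ the uniformizer at $\lambda$ may be taken to be $\ell$ itself, whose idele class lies in $\widehat{\mathbb{Q}}^\times$ and hence dies in $G_{m_0}=K^\times\backslash\widehat{K}^\times/\widehat{\mathcal{O}}^\times_{K,m_0}\widehat{\mathbb{Q}}^\times$ (the group appearing in $\S$\ref{subsec:theta_elements}); so $\mathrm{Frob}_\lambda$ is trivial in $G_{m_0}$, a fortiori in $\Gamma_m$. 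Note also that this argument does \emph{not} extend to primes ramified in $K$ (their Frobenius is a genuine $2$-torsion class), so your parenthetical inclusion of ramified primes should be dropped; fortunately only inert primes (divisors of $N^-$ and $n$-admissible primes) are ever used.

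There is a second gap in part (1): you assert that $V_f$ is unramified at every $\ell\neq p$ split in $K$, but this fails precisely when $\ell\mid N^+$ (such $\ell$ are split by definition of $N^+$, and $\rho_f|_{G_{\mathbb{Q}_\ell}}$ is then of ramified Steinberg or ramified principal series type). This is not a peripheral case: Proposition \ref{prop:local_properties} invokes part (1) exactly at the primes dividing $N^+$ to show the Heegner classes are locally trivial there. For those $\ell$ one must first pass to $A_{f,n}^{I_\lambda}$ and argue as in \cite[Lemma 1.4.(1)]{chida-hsieh-main-conj}, using that the residue degree of $\lambda$ in $K(m)/K$ is divisible by arbitrarily high powers of $p$ for $r\gg 0$ (your finite-decomposition observation, which is correct) so that $\mathrm{Frob}-1$ becomes invertible on the relevant finite module. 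With these two repairs the proof goes through as you outline it.
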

\begin{proof} Note that all the references below work for finite layers.
\begin{enumerate}
\item See \cite[Lemma 2.4]{bertolini-darmon-imc-2005} and note that the same argument works for $\ell$ dividing $N$ as in \cite[Lemma 1.4.(1)]{chida-hsieh-main-conj}.
\item See \cite[Lemma 2.5]{bertolini-darmon-imc-2005} and note that the same argument works for $\ell$ dividing $N$ as in \cite[Lemma 1.4.(2)]{chida-hsieh-main-conj}.
\item See \cite[Lemma 2.6]{bertolini-darmon-imc-2005} and \cite[Lemma 1.5.(1)]{chida-hsieh-main-conj}.
\item See \cite[Lemma 2.7]{bertolini-darmon-imc-2005} and \cite[Lemma 1.5.(2)]{chida-hsieh-main-conj}.
\item See \cite[$\S$4.1]{bertolini-darmon-imc-2005} and \cite[Lemma 1.5.(3)]{chida-hsieh-main-conj}.
\end{enumerate}
\end{proof}

\begin{defn}[Localized projections at $n$-admissible primes] \label{defn:localization_n-admissible}
For $\kappa_m \in \mathrm{H}^1(K(m), T_{f,n})$, 
$v_\ell(\kappa_m)$ is defined to be the projection of the restriction of $\kappa_m$ at $\ell$ to $\mathrm{H}^1_{ \mathrm{fin} }(K(m)_{\ell}, T_{f,n})$
according to the decomposition in Proposition \ref{prop:local_computation}.(5).
\end{defn}
\subsubsection{Local duality}
We recall the Tate local duality at $\ell$ over $K(m)$, which is a \emph{perfect} pairing
\[
\xymatrix{
\langle -,- \rangle_{\ell}: \mathrm{H}^1(K(m)_{\ell}, T_{f,n}) \times \mathrm{H}^1(K(m)_{\ell}, A_{f,n}) \ar[r] & E/\mathcal{O} .
}
\]
The pairing is compatible with action of $\mathcal{O}_n [ \Gamma_m ]$, so we have an isomorphism
$\mathrm{H}^1_{ \mathrm{fin} }(K(m)_{\ell}, T_{f,n}) \simeq \mathrm{H}^1_{ \mathrm{fin} }(K(m)_{\ell}, A_{f,n})^\vee$
where $M^\vee = \mathrm{Hom}_{\mathcal{O}}(M, E/\mathcal{O})$ as $\mathcal{O}_n [ \Gamma_m ]$-modules.
\begin{prop}[Local dualities] \label{prop:local_dualities} $ $
\begin{enumerate} 
\item Let $\ell \neq p$. Then 
$\mathrm{H}^1_{ \mathrm{fin} }(K(m)_{\ell}, T_{f,n})$ and $\mathrm{H}^1_{ \mathrm{fin} }(K(m)_{\ell}, A_{f,n})$ are orthogonal complements under the pairing $\langle -,- \rangle_{\ell}$. In particular, 
$\mathrm{H}^1_{ \mathrm{sing} }(K(m)_{\ell}, T_{f,n})$ and $\mathrm{H}^1_{ \mathrm{fin} }(K(m)_{\ell}, A_{f,n})$ are the Pontryagin dual to each other.
\item Let $\ell$ be an $n$-admissible or $\ell$ divide $pN^-$. Assume Condition CR (for $\ell \mid N^-$) and Condition PO (for $\ell =p$). Then
$\mathrm{H}^1_{ \mathrm{ord} }(K(m)_{\ell}, T_{f,n})$ and $\mathrm{H}^1_{ \mathrm{ord} }(K(m)_{\ell}, A_{f,n})$ are orthogonal complements under the pairing $\langle -,- \rangle_{\ell}$.
\item Let $\ell$ be an $n$-admissible.
Then $\mathrm{H}^1_{ \mathrm{fin} }(K(m)_{\ell}, T_{f,n})$ and $\mathrm{H}^1_{ \mathrm{ord} }(K(m)_{\ell}, A_{f,n})$ are the Pontryagin dual to each other.
\end{enumerate}
\end{prop}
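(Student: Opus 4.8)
The plan is to reduce all three assertions to local statements over the completions $K(m)_\lambda$ for primes $\lambda \mid \ell$, and then invoke the standard local duality computations; these are carried out over $K(p^r)$ in \cite[$\S$2, $\S$4.1]{bertolini-darmon-imc-2005} and \cite[$\S$1]{chida-hsieh-main-conj}, and go through for $K(m)$ verbatim, exactly as for Proposition \ref{prop:local_computation}. Since $\mathrm{H}^1(K(m)_{\ell}, -)$, the submodules $\mathrm{H}^1_{\mathrm{fin}}$ and $\mathrm{H}^1_{\mathrm{ord}}$, and the Tate pairing $\langle -,- \rangle_{\ell}$ all split as finite direct sums indexed by $\lambda \mid \ell$, with $\mathcal{O}_n[\Gamma_m]$ permuting the factors compatibly, it suffices to establish the orthogonality relations over each local field $K(m)_\lambda$.

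For (1), the statement is the classical fact that for $\ell \neq p$ the finite (unramified) local condition is its own exact annihilator under Tate local duality; this follows from the local Euler characteristic formula together with the vanishing of $\mathrm{H}^2$ of the unramified part, as in \cite[$\S$2]{bertolini-darmon-imc-2005} and \cite[Lemma 1.4]{chida-hsieh-main-conj}. The ``in particular'' is then formal: mutual orthogonality of $\mathrm{H}^1_{\mathrm{fin}}(K(m)_{\ell}, T_{f,n})$ and $\mathrm{H}^1_{\mathrm{fin}}(K(m)_{\ell}, A_{f,n})$ inside a perfect pairing identifies the quotient $\mathrm{H}^1_{\mathrm{sing}}(K(m)_{\ell}, T_{f,n})$ with the Pontryagin dual of $\mathrm{H}^1_{\mathrm{fin}}(K(m)_{\ell}, A_{f,n})$, compatibly with the $\mathcal{O}_n[\Gamma_m]$-action.

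For (2) we treat the three cases separately. When $\ell$ is $n$-admissible, the description in Definition \ref{defn:n-admissible-primes} shows that $\mathrm{Frob}_\ell$ acts on the ordinary submodule by $\epsilon_\ell \ell$ and on the complementary quotient by $\epsilon_\ell$, and these are distinct since $\ell^2 \not\equiv 1 \pmod{p}$; a direct computation then shows $\mathrm{H}^1_{\mathrm{ord}}$ is isotropic of the correct $\mathcal{O}_n[\Gamma_m]$-length, hence its own orthogonal complement, following \cite[$\S$4.1]{bertolini-darmon-imc-2005} and \cite[Lemma 1.5]{chida-hsieh-main-conj}. When $\ell \mid N^-$, one uses the explicit shape of $\rho_f|_{G_{\mathbb{Q}_\ell}}$ recalled in $\S$\ref{subsec:galois_representations} together with Condition CR --- which controls the troublesome case $\ell \equiv \pm 1 \pmod{p}$ and supplies the relevant mod $p$ multiplicity one --- to see that the ordinary line and its cohomology are self-orthogonal. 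When $\ell = p$, the ordinary filtration $F^+_p V_f$ is the $I_p$-eigenspace for $\varepsilon$, and Condition PO guarantees that this filtration remains a line after reduction mod $\varpi^n$ and after the relevant specializations (no exceptional coincidence with $\alpha_p(f)$), so that $\mathrm{H}^1_{\mathrm{ord}}(K(m)_p, T_{f,n})$ and $\mathrm{H}^1_{\mathrm{ord}}(K(m)_p, A_{f,n})$ are orthogonal complements; compare \cite[Lemma 1.5]{chida-hsieh-main-conj} and Lemma \ref{lem:local_condition_at_p}.

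For (3), combine (1) with Proposition \ref{prop:local_computation}.(5): the decomposition $\mathrm{H}^1(K(m)_{\ell}, A_{f,n}) = \mathrm{H}^1_{\mathrm{fin}}(K(m)_{\ell}, A_{f,n}) \oplus \mathrm{H}^1_{\mathrm{ord}}(K(m)_{\ell}, A_{f,n})$ identifies $\mathrm{H}^1_{\mathrm{ord}}(K(m)_{\ell}, A_{f,n})$ with $\mathrm{H}^1(K(m)_{\ell}, A_{f,n})/\mathrm{H}^1_{\mathrm{fin}}(K(m)_{\ell}, A_{f,n})$, which by (1) is the Pontryagin dual of $\mathrm{H}^1_{\mathrm{fin}}(K(m)_{\ell}, T_{f,n})$. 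I expect the main obstacle to be part (2): one must check that Condition CR (for $\ell \mid N^-$) and Condition PO (for $\ell = p$) genuinely exclude the degenerate configurations in which the ``ordinary'' submodule fails to be half-length or isotropic modulo $\varpi^n$, the bookkeeping around $\ell \equiv \pm 1 \pmod{p}$ being the delicate point, whereas the remaining assertions are formal consequences of local Tate duality and the decomposition already established.
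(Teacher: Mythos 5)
Your proposal is correct and follows essentially the same route as the paper, which simply cites the local duality computations of Chida--Hsieh (Propositions 1.6, 1.7 and Lemma 1.8) together with Lemma \ref{lem:local_condition_at_p} and observes that the arguments carry over verbatim to $K(m)$; your reduction to the completions $K(m)_\lambda$, the Euler-characteristic argument for (1), the eigenvalue/Condition CR/Condition PO analysis for (2), and the formal deduction of (3) from (1) and Proposition \ref{prop:local_computation}.(5) are exactly the content of those references. The only slight imprecision is attributing the role of Condition CR at $\ell \mid N^-$ to mod $p$ multiplicity one (that is its role elsewhere in the paper); here it is used to control the local representation when $\ell \equiv \pm 1 \pmod{p}$, which you do also identify as the delicate point.
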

\begin{proof} 
See \cite[Proposition 1.6, Proposition 1.7, and Lemma 1.8]{chida-hsieh-main-conj} for proof with help of Lemma \ref{lem:local_condition_at_p}. Note that all the proofs work for $K(m)$.
\end{proof}

\begin{lem}[{\cite[Lemma 1.8]{chida-hsieh-main-conj}}] \label{lem:local_condition_at_p}
Assume Condition PO (Assumption \ref{assu:conditionPO}).
Then
$$\mathrm{H}^0(K(m)_p, A_{f,n}/F^+A_{f,n}) = 0.$$
\end{lem}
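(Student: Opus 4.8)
The plan is to compute $A_{f,n}/F^+A_{f,n}$ as a $G_{\mathbb{Q}_p}$-module explicitly, reduce the vanishing of $\mathrm{H}^0(K(m)_p,-)$ to a statement about Frobenius-invariants at each prime of $K(m)$ over $p$, and then check that the resulting congruence condition on $\alpha=\alpha_p(f)$ is precisely what Condition PO provides.

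First I would use $\S$\ref{subsec:galois_representations}: $F^+_pV_f$ is the $G_{\mathbb{Q}_p}$-stable line on which inertia acts by the cyclotomic character, i.e.\ the sub-representation carrying the diagonal character $\chi_p^{-1}\varepsilon$, so that $V_f/F^+_pV_f$ is the unramified line on which $\mathrm{Frob}_p$ acts by $\alpha$. Transporting this to the lattice filtration and reducing modulo $\varpi^n$ gives $A_{f,n}/F^+_pA_{f,n}\simeq\mathcal{O}_n(\chi_p)$, free of rank one over $\mathcal{O}_n$, with $G_{\mathbb{Q}_p}$ acting through the unramified character $\chi_p$, $\chi_p(\mathrm{Frob}_p)=\alpha$. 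Since the action is unramified, for each prime $\mathfrak{P}$ of $K(m)$ over $p$ we have $\mathrm{H}^0(K(m)_{\mathfrak{P}},\mathcal{O}_n(\chi_p))=(\mathcal{O}_n)^{\mathrm{Frob}_{\mathfrak{P}}}$, with $\mathrm{Frob}_{\mathfrak{P}}$ acting by $\alpha^{f_{\mathfrak{P}}}$ where $f_{\mathfrak{P}}$ is the residue degree of $K(m)_{\mathfrak{P}}/\mathbb{Q}_p$; this module is the $\mathcal{O}_n$-annihilator of $\alpha^{f_{\mathfrak{P}}}-1$, hence vanishes exactly when $\alpha^{f_{\mathfrak{P}}}\not\equiv 1\pmod\varpi$. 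So it remains to show this last inequality for every $\mathfrak{P}\mid p$.

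Next I would compute $f_{\mathfrak{P}}$. By Assumption \ref{assu:exact_degree}, $p$ is totally ramified in $K(p^r)/K$, while $K(m')/K$ is unramified at $p$ since $(m',p)=1$; hence, writing $\mathfrak{p}=\mathfrak{P}\cap K(m')$, we get $f_{\mathfrak{P}}=f(K_{\mathfrak{p}}/\mathbb{Q}_p)\cdot d$, where $d$ is the order of the Frobenius of $\mathfrak{p}$ in $\Gamma_{m'}=\mathrm{Gal}(K(m')/K)$. If $p$ is inert in $K$, then $p\mathcal{O}_{K,m'_0}$ is principal (generated by $p\in\mathbb{Z}$), so $p$ splits completely in $H(m'_0)/K\supseteq K(m')$, whence $d=1$ and $f_{\mathfrak{P}}=f(K_p/\mathbb{Q}_p)=2$; and $\alpha^2\not\equiv 1\pmod\varpi$ is equivalent to $\alpha\not\equiv\pm1\pmod\varpi$, which, since the non-unit root $p/\alpha\equiv 0\pmod\varpi$ forces $\alpha\equiv a_p(f)\pmod\varpi$, is exactly Condition PO (1). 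If $p=\mathfrak{p}\overline{\mathfrak{p}}$ splits, then $f_{\mathfrak{P}}=d_{\mathfrak{p}}$ (the order of $\mathrm{Fr}_{\mathfrak{p}}$ in $\Gamma_{m'}$) for $\mathfrak{P}$ above $\mathfrak{p}$, and $d_{\overline{\mathfrak{p}}}$ for $\mathfrak{P}$ above $\overline{\mathfrak{p}}$; as $p\nmid d_{\mathfrak{p}}$, the polynomial $X^{d_{\mathfrak{p}}}-1$ is separable mod $\varpi$ and factors as $\prod_{\zeta\in\mu_{d_{\mathfrak{p}}}}(X-\zeta)$, and each $\zeta\in\mu_{d_{\mathfrak{p}}}$ equals $\omega(\mathrm{Fr}_{\mathfrak{p}})$ for some character $\omega:\Gamma_{m'}\to\overline{\mathbb{Q}}^\times_p$ (the restriction $\widehat{\Gamma}_{m'}\to\widehat{\langle\mathrm{Fr}_{\mathfrak{p}}\rangle}$ is onto). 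So $\alpha^{d_{\mathfrak{p}}}\equiv 1\pmod\varpi$ would give $\overline\alpha=\overline{\omega(\mathrm{Fr}_{\mathfrak{p}})}$ in $\mathbb{F}_\omega$, i.e.\ $\alpha\equiv\omega(\mathrm{Fr}_{\mathfrak{p}})\pmod{\varpi_\omega}$, contradicting Condition PO (2); the case above $\overline{\mathfrak{p}}$ is identical. Summing over $\mathfrak{P}\mid p$ yields $\mathrm{H}^0(K(m)_p,A_{f,n}/F^+A_{f,n})=0$.

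The hard part is the bookkeeping in the residue-degree step: isolating that $p$ splits completely in $K(m')/K$ in the inert case, that $f_{\mathfrak{P}}$ equals the order of $\mathrm{Fr}_{\mathfrak{p}}$ in $\Gamma_{m'}$ in the split case, and then translating the group-ring condition "$\alpha\not\equiv\omega(\mathrm{Fr}_{\mathfrak{p}})\pmod{\varpi_\omega}$ for all $\omega$" into the single congruence "$\alpha^{f_{\mathfrak{P}}}\not\equiv 1\pmod\varpi$", which requires the small argument with the residue-field extensions $\mathbb{F}\subseteq\mathbb{F}_\omega$ and the separability of $X^{d_{\mathfrak{p}}}-1$ modulo $p$. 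The cohomological reduction itself is routine once the module is identified.
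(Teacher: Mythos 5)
Your proof is correct and follows essentially the same route as the paper's: identify $A_{f,n}/F^+A_{f,n}$ as the unramified rank-one quotient on which Frobenius acts by $\alpha$, reduce the vanishing of $\mathrm{H}^0$ to $\alpha^{f_{\mathfrak{P}}}\not\equiv 1\pmod{\varpi}$ for each place $\mathfrak{P}\mid p$ of $K(m)$, and deduce that congruence from Condition PO. You in fact supply more detail than the paper, which simply labels the last step ``Condition PO'' --- in particular the residue-degree bookkeeping (complete splitting of an inert $p$ in $H(m'_0)/K$, total ramification in $K(p^r)/K$) and the translation of the character-by-character form of Condition PO (2) into $\alpha^{d_{\mathfrak{p}}}\not\equiv 1\pmod{\varpi}$ via the roots of $X^{d_{\mathfrak{p}}}-1$ are all correct and worth making explicit.
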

\begin{proof}
Let $F^- A_{f,n} := A_{f,n}/F^+A_{f,n}$.
Let $\wp$ be a place of $K(m)$ lying above $p$.
Then the Frobenius $\mathrm{Frob}_\wp \in \mathrm{Gal}(K(m)/K)$ acts on $F^- A_{f,n}$ as $\alpha_{p}(f)^{r_p}$ where $\alpha_p(f)$ is the unit $U_p$-eigenvalue and $r_p = 1$ or 2 if $p$ splits or is inert in $K$, respectively.
Let $m''$ be the inertia degree of $\wp$ in $K(m)/K$. If $p$ is inert in $K$, then $m''= 1$.
Then
\begin{align*}
\left( F^- A_{f,n} \right)^{G_{K(m)_\wp}} & = \left( F^- A_{f,n} \right) [\left(\mathrm{Frob}_\wp\right)^{r_p m''} - 1 ] & \textrm{ unramifiedness at $\wp$} \\
& =  \left( F^- A_{f,n} \right) [ \alpha_p(f)^{r_p m''} - 1 ] & \textrm{ property of } \rho_f\\
& = \lbrace 0 \rbrace . & \textrm{ Condition PO }
\end{align*}
\end{proof}
\subsubsection{Global reciprocity}
Let $\kappa_m \in \mathrm{H}^1(K(m), T_{f,n})$ and
$s_m \in \mathrm{H}^1(K(m), A_{f,n})$.
Via global class field theory, we have
$$\sum_{q} \left\langle \mathrm{res}_q (\kappa_m) , \mathrm{res}_q(s_m)  \right\rangle_{q} = 0$$
where $q$ runs over all finite places of $\mathbb{Q}$.
\begin{prop}[Global reciprocity] \label{prop:global_reciprocity}
Let $\kappa_m \in \mathrm{Sel}^S_{\Delta}(K(m), T_{f,n})$ and 
$s_m \in \mathrm{Sel}_{\Delta}(K(m), A_{f,n}) $.
Then 
$$\sum_{q \mid S} \left\langle \partial_q(\kappa_m), v_q(s_m) \right\rangle_{q} = 0.$$
\end{prop}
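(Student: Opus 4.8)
The plan is to deduce the identity from the global reciprocity law of class field theory recalled immediately before the statement, namely $\sum_q \langle \mathrm{res}_q(\kappa_m), \mathrm{res}_q(s_m) \rangle_q = 0$ with $q$ running over all finite places of $\mathbb{Q}$ (a finite sum, since both classes are unramified outside a finite set), by showing that every local term with $q \nmid S$ vanishes while the term at each $q \mid S$ equals $\langle \partial_q(\kappa_m), v_q(s_m)\rangle_q$. The only inputs are the local conditions defining $\mathrm{Sel}^S_\Delta(K(m), T_{f,n})$ and $\mathrm{Sel}_\Delta(K(m), A_{f,n})$ in Definition \ref{defn:selmer} together with the orthogonality assertions of Proposition \ref{prop:local_dualities}. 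First I would dispose of the archimedean places, which contribute nothing since $p$ is odd and every infinite place of $K(m)$ is complex.

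Next I would partition the finite places into three families. For $q \nmid p\Delta S$ (this includes every $q \mid N^+$, as $\Delta$ and $S$ are prime to $N^+p$), both classes are locally finite: $\partial_q(\kappa_m)=0$ gives $\mathrm{res}_q(\kappa_m) \in \mathrm{H}^1_{\mathrm{fin}}(K(m)_q, T_{f,n})$, and likewise $\mathrm{res}_q(s_m) \in \mathrm{H}^1_{\mathrm{fin}}(K(m)_q, A_{f,n})$; since these two subgroups annihilate each other under $\langle-,-\rangle_q$ by Proposition \ref{prop:local_dualities}.(1), the local term is $0$. For $q \mid p\Delta$ — that is, $q=p$, $q \mid N^-$, or $q$ an $n$-admissible prime dividing $\Delta$ — both $\mathrm{res}_q(\kappa_m)$ and $\mathrm{res}_q(s_m)$ lie in the ordinary subgroup by definition of the Selmer groups, and $\mathrm{H}^1_{\mathrm{ord}}(K(m)_q, T_{f,n})$ and $\mathrm{H}^1_{\mathrm{ord}}(K(m)_q, A_{f,n})$ are orthogonal complements by Proposition \ref{prop:local_dualities}.(2); this is exactly where Condition CR (at $q \mid N^-$) and Condition PO (at $q=p$, via Lemma \ref{lem:local_condition_at_p}) are used, and again the local term vanishes.

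Finally, for $q \mid S$ the class $\mathrm{res}_q(\kappa_m)$ is unconstrained, while $q \nmid p\Delta$ forces $\partial_q(s_m)=0$, so $\mathrm{res}_q(s_m)=v_q(s_m)\in \mathrm{H}^1_{\mathrm{fin}}(K(m)_q, A_{f,n})$. Since $\mathrm{H}^1_{\mathrm{fin}}(K(m)_q, A_{f,n})$ is precisely the annihilator of $\mathrm{H}^1_{\mathrm{fin}}(K(m)_q, T_{f,n})$ (Proposition \ref{prop:local_dualities}.(1)), the pairing $\langle \mathrm{res}_q(\kappa_m), v_q(s_m)\rangle_q$ depends only on the image of $\mathrm{res}_q(\kappa_m)$ in $\mathrm{H}^1_{\mathrm{sing}}(K(m)_q, T_{f,n})$, which is $\partial_q(\kappa_m)$; under the induced perfect pairing of $\mathrm{H}^1_{\mathrm{sing}}(K(m)_q, T_{f,n})$ with $\mathrm{H}^1_{\mathrm{fin}}(K(m)_q, A_{f,n})$ this value is $\langle \partial_q(\kappa_m), v_q(s_m)\rangle_q$. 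Feeding these three computations into the global reciprocity law leaves exactly $\sum_{q \mid S} \langle \partial_q(\kappa_m), v_q(s_m)\rangle_q = 0$. There is no genuine obstacle beyond keeping track of which local condition holds at which prime; the substance is entirely contained in Proposition \ref{prop:local_dualities}, and the argument is the standard "sum of local invariants" computation carried out over $K(m)$ rather than $K(p^r)$.
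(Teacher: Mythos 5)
Your proof is correct and is exactly the argument the paper intends: the paper's own proof is the one-line observation that the local conditions of $\mathrm{Sel}^S_{\Delta}(K(m), T_{f,n})$ and $\mathrm{Sel}_{\Delta}(K(m), A_{f,n})$ are orthogonal at all primes not dividing $S$, which is precisely the case analysis (via Proposition \ref{prop:local_dualities}) that you spell out. Nothing is missing and nothing differs in substance.
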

\begin{proof}
The local conditions of $\mathrm{Sel}^S_{\Delta}(K(m), T_{f,n})$ and $\mathrm{Sel}_{\Delta}(K(m), A_{f,n})$ are orthogonal at the primes not dividing $S$.
\end{proof}

\section{The generalized divisibility criterion} \label{sec:generalized_divisibility_criterion}
We recall the original divisibility criterion.
\begin{prop}[{\cite[Proposition 3.1]{bertolini-darmon-imc-2005}}] \label{prop:original_divisibility}
Let $X$ be a finitely generated $\Lambda$-module and $L \in \Lambda$. Suppose that $\varphi(L)$ belongs to $\mathrm{Fitt}_{\mathcal{O}_\varphi}(X \otimes_{\varphi} \mathcal{O}_\varphi)$ for any homomorphism $\varphi : \Lambda \to \mathcal{O}_\varphi$ as in $\S$\ref{subsec:convention_characters} and $\mathrm{Fitt}_{\mathcal{O}_\varphi}(X \otimes_{\varphi} \mathcal{O}_\varphi)$ is the Fitting ideal of $\mathcal{O}_\varphi$-module of $X \otimes_{\varphi} \mathcal{O}_\varphi$. Then $L$ belongs to the characteristic ideal of $X$.
\end{prop}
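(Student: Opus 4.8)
\smallskip
\noindent\emph{Sketch of the intended proof.} The plan is to reduce the assertion $L\in\mathrm{char}_\Lambda(X)$ to a valuation inequality at each height-one prime of $\Lambda$, and to extract those inequalities from the character-by-character hypothesis by specializing $L$ along well-chosen families of characters and passing to the limit. We may assume $X$ is $\Lambda$-torsion, for otherwise $\mathrm{char}_\Lambda(X)=0$ while $X\otimes_\varphi\mathcal{O}_\varphi$ has positive $\mathcal{O}_\varphi$-rank for all but finitely many $\varphi$, forcing $\varphi(L)=0$ and hence $L=0$. Now $\Lambda\cong\mathcal{O}\llbracket T\rrbracket$ is a two-dimensional regular local ring, hence a unique factorization domain, so $\mathrm{char}_\Lambda(X)=(g)$ with $g=\varpi^{\mu}\prod_{i}P_i^{e_i}$, the $P_i$ distinguished irreducible polynomials; as $(g)$ is divisorial, $L\in(g)$ amounts to $\varpi^{\mu}\mid L$ and $P_i^{e_i}\mid L$ in $\Lambda$ for every $i$. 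Combining the standard containment $\mathrm{Fitt}_\Lambda(X)\subseteq\mathrm{char}_\Lambda(X)$ with the base-change identity $\mathrm{Fitt}_{\mathcal{O}_\varphi}(X\otimes_\varphi\mathcal{O}_\varphi)=\mathrm{Fitt}_\Lambda(X)\,\mathcal{O}_\varphi$ of Lemma \ref{lem:fitting-ideals-base-change}, the hypothesis yields, for every $\varphi:\Lambda\to\mathcal{O}_\varphi$ of the allowed type,
\[
\varphi(L)\in\mathrm{char}_\Lambda(X)\,\mathcal{O}_\varphi=(\varphi(g)),\qquad\text{equivalently}\qquad \mathrm{ord}\bigl(\varphi(L)\bigr)\geq\mathrm{ord}\bigl(\varphi(g)\bigr),
\]
where $\mathrm{ord}$ denotes the valuation on $\overline{\mathbb{Q}}_p$ normalized by $\mathrm{ord}(p)=1$. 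Here the available $\varphi$ are precisely the maps induced by continuous characters of $\Gamma_{p^\infty}$: $\varphi$ sends the fixed topological generator $\gamma$ to some $1+\alpha$ with $\alpha\in\mathfrak{m}_{\mathcal{O}_\varphi}$, so $\varphi(T)=\alpha$ and $\ker\varphi$ is generated by the minimal polynomial of $\alpha$ over $\mathcal{O}$; in particular every height-one prime of $\Lambda$ other than $(\varpi)$ arises as some $\ker\varphi$, and every root of a distinguished polynomial is a legitimate value of $\varphi(T)$.

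For the distinguished factors, fix $i$, put $P=P_i$, $e=e_i$, and write $L=P^{t}L'$ with $P\nmid L'$ in $\Lambda$; the goal is $t\geq e$. Given $k\geq 1$ and $c\in\mathcal{O}^{\times}$, the polynomial $P-\varpi^{k}c$ is distinguished, so choose $\varphi_k$ with $\varphi_k(T)=\alpha_k$ a root of it; then $\mathrm{ord}(\varphi_k(P))=\mathrm{ord}(\varpi^{k}c)\to\infty$ as $k\to\infty$, and since $P-\varpi^{k}c\to P$ coefficientwise and $P$ is separable we may take $\alpha_k\to\alpha$ a root of $P$, with associated $\varphi_\infty$ having $\ker\varphi_\infty=(P)$. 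The displayed inequality and the factorization of $g$ give $\mathrm{ord}(\varphi_k(L))\geq\mathrm{ord}(\varphi_k(g))\geq e\,\mathrm{ord}(\varphi_k(P))$, using that each $\mathrm{ord}(\varphi_k(P_j))\geq 0$; on the other hand $\mathrm{ord}(\varphi_k(L))=t\,\mathrm{ord}(\varphi_k(P))+\mathrm{ord}(\varphi_k(L'))$ with $\varphi_k(L')\to\varphi_\infty(L')\neq 0$, so $\mathrm{ord}(\varphi_k(L'))$ stays bounded. Dividing through by $\mathrm{ord}(\varphi_k(P))$ and letting $k\to\infty$ forces $t\geq e$.

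For the $\varpi$-factor, write $L=\varpi^{t}L_0$ with $\varpi\nmid L_0$; the goal is $t\geq\mu$. Use the cyclotomic characters $\varphi_j:\gamma\mapsto\zeta_{p^j}$, for which $\mathcal{O}_{\varphi_j}$ is totally ramified over $E$ of degree $p^{j-1}(p-1)$; then $\mathrm{ord}(\varphi_j(P))=\dfrac{\deg P}{p^{j-1}(p-1)}\to 0$ for every distinguished $P$, and likewise $\mathrm{ord}(\varphi_j(L_0))\to 0$ by Weierstrass preparation (the whole valuation of $\varphi_j$ on a $\varpi$-primitive power series is concentrated at the totally ramified prime and tends to $0$), whereas $\mathrm{ord}(\varphi_j(\varpi))=\mathrm{ord}(\varpi)$ is a fixed positive constant. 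For all large $j$ with $P_i\nmid\Phi_{p^j}(1+T)$ the displayed inequality reads $t\,\mathrm{ord}(\varpi)+\mathrm{ord}(\varphi_j(L_0))\geq\mu\,\mathrm{ord}(\varpi)+\sum_i e_i\,\mathrm{ord}(\varphi_j(P_i))$, and letting $j\to\infty$ gives $t\geq\mu$. This completes the reduction; the main obstacle is the uniform bookkeeping of $\mathrm{ord}$ across the varying discrete valuation rings $\mathcal{O}_\varphi$ — above all the two limiting computations of $\mathrm{ord}(\varphi(P))$ for distinguished $P$ — and the verification that $\{\varphi_k\}$ and $\{\varphi_j\}$ genuinely lie in the allowed class. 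One could alternatively package the $\varpi$-factor argument (and more) by first descending to the finite layers $\Lambda/((1+T)^{p^n}-1)$ using Lemma \ref{lem:fitting_lim} together with Lemma \ref{lem:fitting-ideals-base-change}.
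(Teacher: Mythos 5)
Your proof is correct and is essentially the argument of the cited source \cite[Proposition 3.1]{bertolini-darmon-imc-2005}, which the paper itself does not reprove: reduce to the torsion case, convert the hypothesis into the exact valuation inequality $\mathrm{ord}(\varphi(L))\geq\mathrm{ord}(\varphi(g))$ via $\mathrm{Fitt}_{\Lambda}(X)\subseteq\mathrm{char}_{\Lambda}(X)$ and base change, and then recover each prime-power divisibility by letting $\varphi$ degenerate toward the corresponding height-one prime (roots of $P-\varpi^{k}c$ for the distinguished factors, finite-order characters of large conductor for the $\varpi$-factor). The one point you rightly flag — that the specializations $T\mapsto\alpha_{k}$ are not finite-order characters — is genuinely needed (with finite-order characters alone the statement fails, e.g.\ for $X=\Lambda/(P^{2})$ and $L=P\varpi^{M}$ with $M$ large and $P$ a non-cyclotomic irreducible distinguished polynomial) and is consistent with how $\varphi$ is actually used in $\S$\ref{sec:euler_systems}, namely as an arbitrary $\mathcal{O}$-algebra homomorphism into a characteristic-zero discrete valuation ring.
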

This section is devoted to prove the following proposition.
\begin{prop}[The generalized divisibility criterion] \label{prop:generalized_divisibility}
Suppose that
$$\varphi(L) \in \varphi \left(\mathrm{Fitt}_{\Lambda_{m'p^\infty}} \left( \mathrm{Sel}_{N^-}(K(m'p^\infty), A_f)^\vee\right) \right) $$
in $\mathcal{O}_\varphi$
for any character $\varphi : \Lambda_{m'p^\infty} \to \mathcal{O}_\varphi$.
Then
$$L \in \mathrm{Fitt}_{\Lambda_{m'p^\infty}} \left( \mathrm{Sel}_{N^-}(K(m'p^\infty), A_f)^\vee\right). $$
\end{prop}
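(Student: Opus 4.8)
The plan is to reduce the statement to the one–variable Iwasawa algebra by means of the enhanced isotypic decomposition of $\S$\ref{subsec:enhanced_decomposition}, apply the original criterion (Proposition \ref{prop:original_divisibility}) on each piece, and bridge the gap between "characteristic ideal'' and "Fitting ideal'' using the absence of finite submodules. Write $Y := \mathrm{Sel}_{N^-}(K(m'p^\infty),A_f)^\vee$, a finitely generated module over $\Lambda := \Lambda_{m'p^\infty}$. Since $(m',p)=1$, we have $\mathcal{O}[\Gamma_{m'}]=\bigoplus_\omega \mathcal{O}_\omega$ and hence $\Lambda = \bigoplus_\omega \Lambda_\omega$ with $\Lambda_\omega := \mathcal{O}_\omega\llbracket\Gamma_{p^\infty}\rrbracket$, where $\omega$ runs over $\mathrm{Hom}(\Gamma_{m'},\overline{\mathbb{Q}}_p^\times)$ modulo $\mathbb{Q}_p$-conjugacy; let $\varepsilon_\omega$ be the idempotents, $Y_\omega := \varepsilon_\omega Y$, $L_\omega := \varepsilon_\omega L$. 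First I would record, by applying Lemma \ref{lem:fitting-ideals-base-change} to each projection $\Lambda\to\Lambda_\omega$, that $\mathrm{Fitt}_\Lambda(Y)=\prod_\omega \mathrm{Fitt}_{\Lambda_\omega}(Y_\omega)$ under $\Lambda\cong\prod_\omega\Lambda_\omega$, so $L\in\mathrm{Fitt}_\Lambda(Y)$ if and only if $L_\omega\in\mathrm{Fitt}_{\Lambda_\omega}(Y_\omega)$ for every $\omega$. Since $\mathcal{O}_\varphi$ is a domain, every character $\varphi:\Lambda\to\mathcal{O}_\varphi$ kills all but one idempotent and hence factors through a single $\Lambda_\omega$; conversely all characters of $\Lambda_\omega$ arising from characters of $\Gamma_{m'p^\infty}$ (in particular the evaluations at $p$-power roots of unity in the $\Gamma_{p^\infty}$-variable) occur this way. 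As $\varphi$ annihilates $\varepsilon_{\omega'}$ for $\omega'\neq\omega$, both $\varphi(L)=\varphi(L_\omega)$ and $\varphi(\mathrm{Fitt}_\Lambda(Y))=\varphi(\mathrm{Fitt}_{\Lambda_\omega}(Y_\omega))$, so the hypothesis unwinds to: for each $\omega$ and each character $\varphi:\Lambda_\omega\to\mathcal{O}_\varphi$ one has $\varphi(L_\omega)\in\varphi\bigl(\mathrm{Fitt}_{\Lambda_\omega}(Y_\omega)\bigr)$. It therefore suffices to treat one $\omega$ at a time.

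Fixing $\omega$, set $\Lambda':=\Lambda_\omega=\mathcal{O}_\omega\llbracket\Gamma_{p^\infty}\rrbracket$ (a two–dimensional regular local ring, isomorphic to $\mathcal{O}_\omega\llbracket T\rrbracket$), $Y':=Y_\omega$, $L':=L_\omega$. If $\mathrm{Fitt}_{\Lambda'}(Y')=0$, the hypothesis together with the fact that characters separate elements of $\Lambda'$ forces $L'=0$ and there is nothing to prove; otherwise $Y'$ is $\Lambda'$-torsion. The key structural input I would then use is the non-existence of a nontrivial finite $\Lambda'$-submodule of $Y'$ — the $\omega$-component of the corresponding statement for $\mathrm{Sel}_{N^-}(K(m'p^\infty),A_f)^\vee$ over $\Lambda_{m'p^\infty}$ (cf.\ $\S$\ref{sec:reduction_to_main_theorem}), which after an appropriate twist reduces to the classical statement over $K(p^\infty)$. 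By Auslander--Buchsbaum this gives $\mathrm{depth}_{\Lambda'}Y'\ge 1$, hence $\mathrm{pd}_{\Lambda'}Y'\le 1$; since $Y'$ is torsion, a minimal free resolution has the shape $0\to(\Lambda')^{a}\xrightarrow{\Phi}(\Lambda')^{a}\to Y'\to 0$ with $\det\Phi\neq 0$, so $\mathrm{Fitt}_{\Lambda'}(Y')=(\det\Phi)=\mathrm{char}_{\Lambda'}(Y')$.

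To finish, I would note that Lemma \ref{lem:fitting-ideals-base-change}, applied to the surjection $\varphi:\Lambda'\twoheadrightarrow\mathcal{O}_\varphi$, identifies $\varphi\bigl(\mathrm{Fitt}_{\Lambda'}(Y')\bigr)$ with $\mathrm{Fitt}_{\mathcal{O}_\varphi}\bigl(Y'\otimes_\varphi\mathcal{O}_\varphi\bigr)$, so the condition obtained above is exactly the hypothesis of Proposition \ref{prop:original_divisibility} with $\mathcal{O}_\omega$ in place of $\mathcal{O}$. Since the proof of that proposition uses only that the coefficient ring is a DVR finite flat over $\mathbb{Z}_p$, it applies verbatim and yields $L'\in\mathrm{char}_{\Lambda'}(Y')$, which equals $\mathrm{Fitt}_{\Lambda'}(Y')$ by the previous paragraph. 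Re-assembling over all $\omega$ then gives $L\in\mathrm{Fitt}_{\Lambda_{m'p^\infty}}(Y)$, as desired.

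I expect the main obstacle to be the no–finite–submodule property over the \emph{generalized} Iwasawa algebra: this is precisely what upgrades "$L\in\mathrm{char}$'' (all that Proposition \ref{prop:original_divisibility} delivers) to the required "$L\in\mathrm{Fitt}$'', and it is the single point where one must reckon with $\Lambda_{m'p^\infty}$ being only semi-local rather than local — which is exactly why the enhanced isotypic decomposition is indispensable, reducing the issue to the genuinely local pieces $\Lambda_\omega$ and thence, after twisting, to the known anticyclotomic $\mathbb{Z}_p$-extension case. A secondary, more routine point to verify is that the internal mechanism of Proposition \ref{prop:original_divisibility} — which tracks $\mu$-invariants and the roots of distinguished polynomials via characters — is unaffected by enlarging the coefficient ring from $\mathcal{O}$ to the possibly larger DVRs $\mathcal{O}_\omega$.
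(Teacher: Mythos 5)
Your proposal is correct and follows essentially the same route as the paper: decompose $\Lambda_{m'p^\infty}=\bigoplus_\omega\mathcal{O}_\omega\llbracket\Gamma_{p^\infty}\rrbracket$ via the enhanced isotypic decomposition, apply Proposition \ref{prop:original_divisibility} componentwise to get $L_\omega$ in the characteristic ideal, and upgrade to the Fitting ideal using the absence of nontrivial finite submodules (the paper's Lemma \ref{lem:NTFSmp} and Corollary \ref{cor:nontrivial_finite_omega}, combined with the cited result of Nuccio in place of your explicit Auslander--Buchsbaum argument). The only presentational differences are that you reprove the ``$\mathrm{char}=\mathrm{Fitt}$'' identity homologically where the paper cites it, and you handle the degenerate case $\mathrm{Fitt}=0$ explicitly, whereas the paper secures torsionness via the nonvanishing results for the trivial tame character.
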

The decomposition in $\S$\ref{subsec:enhanced_decomposition} implies that the assumption for a character $\varphi$ is equivalent to the statement
$$\overline{\varphi} (L_\omega) \in \overline{\varphi} \left(\mathrm{Fitt}_{\Lambda_{m'p^\infty}} \left( \mathrm{Sel}_{N^-}(K(m'p^\infty), A_f)^\vee\right) \otimes_\omega \mathcal{O}_\omega \right) $$
holds for all character $\overline{\varphi} : \mathcal{O}_\omega\llbracket\Gamma_{p^\infty}\rrbracket \to \mathcal{O}_\varphi$ with $\varphi = \omega \times \overline{\varphi}$.
Since Fitting ideals behave well under quotients and $\mathcal{O}_\omega$ is projective (so flat) over $\mathcal{O}[\Gamma_{m'}]$, we have
\begin{align} 
& \mathrm{Fitt}_{\Lambda_{m'p^\infty}} \left( \mathrm{Sel}_{N^-}(K(m'p^\infty), A_f)^\vee\right) \otimes_\omega \mathcal{O}_\omega  \nonumber\\
 = & \mathrm{Fitt}_{\Lambda_{m'p^\infty}} \left( \mathrm{Sel}_{N^-}(K(m'p^\infty), A_f)^\vee\right) \mathcal{O}_\omega & \textrm{(flatness)}  \label{eqn:fitting_quotient}   \\
 = & \mathrm{Fitt}_{\mathcal{O}_\omega\llbracket\Gamma_{p^\infty}\rrbracket} \left( \mathrm{Sel}_{N^-}(K(m'p^\infty), A_f)^\vee \otimes_\omega \mathcal{O}_\omega \right) .  & \textrm{(Lemma \ref{lem:fitting-ideals-base-change}.(1))} \nonumber
\end{align}
Thus, the original divisibility criterion over $\mathcal{O}_\omega\llbracket\Gamma_{p^\infty}\rrbracket$ (Proposition \ref{prop:original_divisibility}) implies that
\begin{equation} \label{eqn:divisibility_char_compoenents}
L_\omega \in \mathrm{char}_{\mathcal{O}_\omega\llbracket\Gamma_{p^\infty}\rrbracket} \left( \mathrm{Sel}_{N^-}(K(m'p^\infty), A_f)^\vee \otimes_\omega \mathcal{O}_\omega \right) .
\end{equation}
In order to compare the chracteristic ideal and the Fitting ideal, we need to prove the non-existence of non-trivial finite submodule of the dual Selmer modules over $\mathcal{O}_\omega\llbracket\Gamma_{p^\infty}\rrbracket$ for each $\omega$.
We first recall the result over $\mathcal{O}\llbracket\Gamma_{p^\infty}\rrbracket$.
\begin{prop}[Greenberg{\cite[Proposition 4.14]{greenberg-lnm}}, Hachimori-Matsuno {\cite[Corollary]{hachimori-matsuno}}] \label{prop:greenberg-hachimori-matsuno}
Suppose that
\begin{enumerate}
\item $\mathrm{Sel}_{N^-}(K(m'p^\infty), A_f)^\vee$ is $\mathcal{O}\llbracket\Gamma_{p^\infty}\rrbracket$-torsion, and
\item $\mathrm{H}^0(K(m'), A_{f,1}) = 0$.
\end{enumerate}
Then $\mathrm{Sel}_{N^-}(K(m'p^\infty), A_f)^\vee$ has no non-trivial finite $\mathcal{O}\llbracket\Gamma_{p^\infty}\rrbracket$-submodule.
\end{prop}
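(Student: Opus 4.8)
The plan is to apply Greenberg \cite[Proposition 4.14]{greenberg-lnm} and Hachimori--Matsuno \cite[Corollary]{hachimori-matsuno} to the Selmer group $\mathrm{Sel}_{N^-}(K(m'p^\infty), A_f)$, regarded as a Greenberg-style Selmer group attached to the $p$-ordinary representation $A_f$ over the anticyclotomic $\mathbb{Z}_p$-extension $K(m'p^\infty)/K(m')$. By Assumption \ref{assu:exact_degree} one has $\mathrm{Gal}(K(m'p^\infty)/K(m')) \simeq \Gamma_{p^\infty} \simeq \mathbb{Z}_p$, so the coefficient ring is exactly $\Lambda = \mathcal{O}\llbracket\Gamma_{p^\infty}\rrbracket$, and nothing in the cited arguments is sensitive to the base field being $K(m')$ rather than $\mathbb{Q}$ or $K$. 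The $\Lambda$-torsion hypothesis required by those results is precisely hypothesis (1).

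The key reduction is to extract from (2) the vanishing $A_f^{G_{K(m'p^\infty)}} = 0$. I would argue: since $\Gamma_{p^\infty}$ is pro-$p$ and $A_{f,1}$ is a finite $p$-group, a nonzero $A_{f,1}^{G_{K(m'p^\infty)}}$ would have nonzero $\Gamma_{p^\infty}$-invariants; but
\[
\left(A_{f,1}^{G_{K(m'p^\infty)}}\right)^{\Gamma_{p^\infty}} = A_{f,1}^{G_{K(m')}} = \mathrm{H}^0(K(m'), A_{f,1}) = 0,
\]
whence $A_{f,1}^{G_{K(m'p^\infty)}} = 0$; and since $\left(A_f^{G_{K(m'p^\infty)}}\right)[\varpi] = A_{f,1}^{G_{K(m'p^\infty)}} = 0$, a nonzero $A_f^{G_{K(m'p^\infty)}}$ would be $\varpi$-divisible with trivial $\varpi$-torsion, hence contain a copy of $E/\mathcal{O}$ — impossible. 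Thus $\mathrm{H}^0(\mathrm{Gal}(K_{\Sigma}/K(m'p^\infty)), A_f) = 0$. The local Greenberg conditions are of the expected type: the ordinary condition at $p$ is governed by $F^+_p A_f$, and Condition PO ensures $\mathrm{H}^0(K(m'p^\infty)_v, A_f/F^+_p A_f) = 0$ for $v \mid p$ by the infinite-level analogue of Lemma \ref{lem:local_condition_at_p}, while the conditions at the primes dividing $N^-$ (which split completely in $K(m'p^\infty)/K$) and away from $pN^-$ are the standard ordinary/unramified ones.

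Granting these inputs, the argument of the cited results, which I would simply transcribe, runs as follows. The $\Lambda$-torsion hypothesis together with a global Euler characteristic / Poitou--Tate computation (which also yields the weak Leopoldt vanishing $\mathrm{H}^2(\mathrm{Gal}(K_{\Sigma}/K(m'p^\infty)), A_f) = 0$) forces the defining global-to-local map for $\mathrm{Sel}_{N^-}(K(m'p^\infty), A_f)$ to be surjective, so that there is a short exact sequence
\[
0 \longrightarrow \bigoplus_v \mathcal{H}_v^{\vee} \longrightarrow \mathrm{H}^1(K_{\Sigma}/K(m'p^\infty), A_f)^{\vee} \longrightarrow \mathrm{Sel}_{N^-}(K(m'p^\infty), A_f)^{\vee} \longrightarrow 0
\]
of $\Lambda$-modules, where $\mathcal{H}_v$ is the local quotient at $v$. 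Using $\mathrm{H}^0 = \mathrm{H}^2 = 0$, the middle term has projective dimension at most $1$ over the $2$-dimensional regular local ring $\Lambda$, hence has no nonzero finite $\Lambda$-submodule; one then checks, using the local structure of the $\mathcal{H}_v^{\vee}$ (in particular their $\Lambda$-freeness at $v \mid p$ coming from Lemma \ref{lem:local_condition_at_p}), that the same conclusion descends to the quotient $X := \mathrm{Sel}_{N^-}(K(m'p^\infty), A_f)^{\vee}$.

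The hard part is exactly this last step: propagating ``no nonzero finite $\Lambda$-submodule'' from $\mathrm{H}^1(K_{\Sigma}/K(m'p^\infty), A_f)^{\vee}$ to its quotient $X$, since a quotient of a module of projective dimension $\le 1$ need not again have projective dimension $\le 1$. This is where the surjectivity of the global-to-local map is essential — it makes $\bigoplus_v \mathcal{H}_v^{\vee}$ \emph{embed} into the middle term, thereby controlling its local pieces — together with the fine local analysis of the $\mathcal{H}_v^{\vee}$. All of this is the content of \cite[Proposition 4.14]{greenberg-lnm} and \cite[Corollary]{hachimori-matsuno}, and in our situation it requires no new idea beyond recording that the ambient $\mathbb{Z}_p$-extension is $K(m'p^\infty)/K(m')$ and that the relevant $\mathrm{H}^0$'s vanish.
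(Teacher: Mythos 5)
Your argument is correct in outline, but it follows a genuinely different route from the one the paper leans on. The paper gives no proof body for this proposition; its accompanying remark makes clear that the intended mechanism is the Hachimori--Matsuno one: the statement follows for \emph{any} Selmer group that satisfies the exact control theorem over $\Lambda_{p^\infty}$ (here Lemma \ref{lem:control_galois_selmer}) and admits a Cassels--Tate pairing (supplied by \cite{flach-cassels-tate}), together with the $\Lambda$-torsion hypothesis. That route is essentially formal and is chosen precisely because the $N^-$-ordinary Selmer group is not the classical one, so one wants to avoid re-doing local cohomological analysis. You instead transcribe Greenberg's argument from \cite[Proposition 4.14]{greenberg-lnm}: torsion plus weak Leopoldt gives surjectivity of the global-to-local map, $\mathrm{H}^0=\mathrm{H}^2=0$ gives projective dimension $\le 1$ for $\mathrm{H}^1(K_\Sigma/K(m'p^\infty),A_f)^\vee$, and one descends to the quotient using the structure of the local terms. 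Your reduction of hypothesis (2) to $A_f^{G_{K(m'p^\infty)}}=0$ via the pro-$p$ fixed-point argument is correct, as is the treatment of the condition at $p$ via Condition PO and Lemma \ref{lem:local_condition_at_p}. What your sketch leaves thin is exactly the step you flag as hard: propagating the no-finite-submodule property to the quotient requires knowing the $\Lambda$-module structure of the local terms in the anticyclotomic tower --- at split primes $\ell\nmid p$ the local contribution vanishes in the limit, and at non-split primes (in particular $\ell\mid N^-$, which split completely in $K(m'p^\infty)/K$) it is co-induced, hence $\Lambda$-co-free; this is the content of Proposition \ref{prop:local_computation}, and without invoking it your descent step is not yet justified. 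In short: your approach buys a self-contained cohomological proof at the cost of this local bookkeeping, whereas the paper's route buys uniformity over the choice of local conditions at the cost of importing the Cassels--Tate pairing and the control theorem as black boxes. Either is acceptable here, since both sources are cited in the proposition's attribution.
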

\begin{rem}
The statement in \cite{hachimori-matsuno} uses the classical Selmer groups of elliptic curves.
However, the argument in \cite{hachimori-matsuno} works for any Selmer group satisfying the control theorem over $\Lambda_{p^\infty}$ if there exists a relevant Cassels-Tate pairing. Such a pairing is given in \cite{flach-cassels-tate}.
Thus, it does not harm the validity of Proposition \ref{prop:greenberg-hachimori-matsuno} although $N^-$-ordinary Selmer groups are used.
\end{rem}
\begin{cor} \label{cor:greenberg-hachimori-matsuno}
The module $\mathrm{Sel}_{N^-}(K(m'p^\infty), A_f)^\vee$ has no non-trivial finite $\mathcal{O}\llbracket\Gamma_{p^\infty}\rrbracket$-submodule.
\end{cor}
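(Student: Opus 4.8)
The plan is to obtain this as an unconditional instance of Proposition \ref{prop:greenberg-hachimori-matsuno}. That proposition is already phrased with base field $K(m')$ and the (anticyclotomic-type) $\mathbb{Z}_p$-extension $K(m'p^\infty)/K(m')$, so all that is needed is to check that its two hypotheses hold automatically under the running assumptions: (i) that $\mathrm{Sel}_{N^-}(K(m'p^\infty), A_f)^\vee$ is $\mathcal{O}\llbracket\Gamma_{p^\infty}\rrbracket$-torsion, and (ii) that $\mathrm{H}^0(K(m'), A_{f,1}) = 0$.

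I would dispose of (ii) first, as it is purely formal. Every subgroup of the abelian group $\mathrm{Gal}(H(m'_0)/K)$ is stable under the inverting action of complex conjugation, so $K(m')$ is Galois over $\mathbb{Q}$; hence $G_{K(m')}$ is normal in $G_\mathbb{Q}$, and $W := A_{f,1}^{G_{K(m')}}$ is a $G_\mathbb{Q}$-subrepresentation of $\overline{\rho} \cong A_{f,1}$. Since $\overline{\rho}$ is absolutely irreducible, $W$ is either $0$ or all of $A_{f,1}$. In the latter case $\overline{\rho}$ factors through $\mathrm{Gal}(K(m')/\mathbb{Q})$, which is ramified only at the primes dividing $\mathrm{disc}(K)\cdot m'$; as this integer is prime to $Np$ while $\overline{\rho}$ is unramified outside $Np$, $\overline{\rho}$ would be unramified at every finite place, hence trivial — contradicting irreducibility. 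Thus $W = 0$.

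For (i), since $\mathcal{O}\llbracket\Gamma_{m'p^\infty}\rrbracket$ is free of finite rank over $\mathcal{O}\llbracket\Gamma_{p^\infty}\rrbracket$, it is equivalent to show that $X := \mathrm{Sel}_{N^-}(K(m'p^\infty), A_f)^\vee$ is $\Lambda_{m'p^\infty}$-torsion. I would use the enhanced isotypic decomposition of $\S$\ref{subsec:enhanced_decomposition}, $X = \bigoplus_\omega X\otimes_\omega\mathcal{O}_\omega$ over $\omega \in \mathrm{Hom}(\Gamma_{m'}, \overline{\mathbb{Q}}^\times_p)$ up to $\mathbb{Q}_p$-conjugacy, reducing to the torsionness of each $X\otimes_\omega\mathcal{O}_\omega$ over $\mathcal{O}_\omega\llbracket\Gamma_{p^\infty}\rrbracket$. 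Because $\mathrm{Gal}(K(m'p^\infty)/K(p^\infty)) \cong \Gamma_{m'}$ has order prime to $p$ and $\omega$ is unramified at $p$, a routine descent (carrying the local Selmer conditions along as in the comparisons of $\S$\ref{subsec:minimal_selmer}) identifies $X\otimes_\omega\mathcal{O}_\omega$ with the Pontryagin dual of an $N^-$-ordinary Selmer group over the anticyclotomic $\mathbb{Z}_p$-extension $K(p^\infty)/K$ attached to $A_f$ twisted by the ring class character $\omega^{-1}$. For the trivial $\omega$ this is the untwisted group, which is $\Lambda$-cotorsion by Corollary \ref{cor:main_conj_nonminimal} together with the comparison of $\S$\ref{subsec:minimal_selmer}; for a general $\omega$, Condition PO for $(f, K(m')/K, p)$ makes the Euler system bound of $\S$\ref{sec:euler_systems} applicable to the twisted setting, and that bound combined with the nonvanishing of the associated Bertolini--Darmon element — valid for ring class characters of conductor $m'$ by the theorems of Vatsal and Hung cited in $\S$\ref{subsec:main_theorem_consequences} — forces $\Lambda_{p^\infty}$-cotorsionness. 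Granting this, the corollary follows at once from Proposition \ref{prop:greenberg-hachimori-matsuno}. The main obstacle is precisely hypothesis (i) for the nontrivial isotypic components: one must keep the logical order straight, so that the torsion input used here rests on the ``character by character'' divisibility over discrete valuation rings (established independently in $\S$\ref{sec:euler_systems}) and on nonvanishing of $L$-values, and not on the lifting statement — Proposition \ref{prop:generalized_divisibility}, the goal of this section — that it feeds.
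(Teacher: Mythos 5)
Your proposal is correct and follows the same skeleton as the paper's proof: reduce to Proposition \ref{prop:greenberg-hachimori-matsuno} and verify its two hypotheses, with hypothesis (ii) handled exactly as the paper does (absolute irreducibility of $\overline{\rho}$, the fact that $K(m')/\mathbb{Q}$ is Galois, and the mismatch of ramification loci). The only real divergence is in how you establish the torsion hypothesis (i). The paper combines the inclusion (\ref{eqn:divisibility_char_compoenents}) — already in hand at this point of $\S$\ref{sec:generalized_divisibility_criterion}, under the hypothesis of Proposition \ref{prop:generalized_divisibility} that is later supplied by Theorem \ref{thm:character-by-character} — with the nonvanishing of the $(\omega=\mathbf{1})$-component of $L_p(K(m'p^\infty),f)$ via Hung's theorems, and (read literally) only addresses the trivial isotypic component, consistent with the remark immediately following the corollary. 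You instead treat every $\omega$-component by descending $X\otimes_\omega\mathcal{O}_\omega$ to an $\omega^{-1}$-twisted Selmer group over $K(p^\infty)$ and invoking a twisted Euler-system bound together with twisted nonvanishing. That descent is more complete in spirit (full torsionness of $X$ over $\mathcal{O}\llbracket\Gamma_{p^\infty}\rrbracket$ does require all components), but it is also an unnecessary detour in this framework: inclusion (\ref{eqn:divisibility_char_compoenents}) is stated for each $\omega$ over $\mathcal{O}_\omega\llbracket\Gamma_{p^\infty}\rrbracket$, so torsionness of each component follows directly from $L_\omega\neq 0$ without ever leaving $K(m'p^\infty)$, whereas your route imports extra verifications (matching of local conditions under the prime-to-$p$ descent, availability of the Euler-system divisibility for the twisted form) that the paper's setup makes redundant. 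Your remark on logical order — that the input is the character-by-character divisibility of $\S$\ref{sec:euler_systems} and the nonvanishing of $L$-values, not the lifting statement this corollary feeds — is correct and matches the paper's implicit structure.
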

\begin{proof}
The $(\omega = \mathbf{1})$-component of $L_p(K(m'p^\infty), f)$ is non-zero due to \cite[Theorems A and C]{hung-nonvanishing}. Thus, the characteristic ideal of the dual Selmer group over $\mathcal{O}\llbracket \Gamma_{p^\infty} \rrbracket$ is non-zero due to inclusion (\ref{eqn:divisibility_char_compoenents}).
Also, the residual representation is absolutely irreducible and $K(m')/\mathbb{Q}$ is Galois.
\end{proof}
Indeed, this corollary correspond to the $\omega = \mathbf{1}$-component. In general, we expect the following statement.
In order to have Corollary \ref{cor:nontrivial_finite_omega}, it suffice to show the following statement.
\begin{lem} \label{lem:NTFSmp}
The module $\mathrm{Sel}(K(m'p^\infty), A_f)^\vee$ has no non-trivial finite $\mathcal{O}\llbracket\Gamma_{m'p^\infty}\rrbracket$-submodule.
\end{lem}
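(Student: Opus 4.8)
The plan is to break the module up using the enhanced isotypic decomposition of $\S$\ref{subsec:enhanced_decomposition} and, on each component, to run the Greenberg--Hachimori--Matsuno argument behind Proposition \ref{prop:greenberg-hachimori-matsuno} with the coefficient ring $\mathcal{O}$ replaced by $\mathcal{O}_\omega$. Since $(m',p)=1$ we have $\Lambda_{m'p^\infty}\simeq\bigoplus_\omega\mathcal{O}_\omega\llbracket\Gamma_{p^\infty}\rrbracket$, a finite product of complete regular local rings indexed by $\omega\in\mathrm{Hom}(\Gamma_{m'},\overline{\mathbb{Q}}^\times_p)$ up to $\mathbb{Q}_p$-conjugacy, with orthogonal idempotents $e_\omega$; hence $\mathrm{Sel}(K(m'p^\infty),A_f)^\vee=\bigoplus_\omega X_\omega$ with $X_\omega$ the $\omega$-isotypic quotient, and every finite $\Lambda_{m'p^\infty}$-submodule $F$ splits as $\bigoplus_\omega e_\omega F$ with $e_\omega F$ a finite $\mathcal{O}_\omega\llbracket\Gamma_{p^\infty}\rrbracket$-submodule of $X_\omega$. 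So it is enough to show that each $X_\omega$ has no non-trivial finite $\mathcal{O}_\omega\llbracket\Gamma_{p^\infty}\rrbracket$-submodule.

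Viewing $\omega$ as a character of $\mathrm{Gal}(K(m')/K)$, hence of $G_K$, a Shapiro-lemma computation identifies $X_\omega$ with the Pontryagin dual of the minimal Selmer group over the anticyclotomic $\mathbb{Z}_p$-extension $K(p^\infty)/K$ of the twisted module $A_f(\omega):=A_f\otimes_{\mathcal{O}}\mathcal{O}_\omega(\omega)$; here one uses that the local conditions pass to $\omega$-isotypic quotients, which is immediate at the primes above $N^-$ (and, in the relevant cases, above $N^+$) since those split completely in $K(m'p^\infty)/K(p^\infty)$ as in $\S$\ref{subsec:minimal_selmer}, and is $\Gamma_{m'}$-equivariant at $p$. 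The proof of Proposition \ref{prop:greenberg-hachimori-matsuno} uses only a control theorem over $K(p^\infty)$, a non-degenerate Cassels--Tate pairing, and residual irreducibility, and all three survive the prime-to-$p$ twist: the control theorem comes from Lemma \ref{lem:control_galois_selmer} and Inclusion (\ref{eqn:selmer_inclusion}) exactly as in the proof of Corollary \ref{cor:greenberg-hachimori-matsuno}, and the pairing is that of \cite{flach-cassels-tate}.

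It then remains to check the two hypotheses of Proposition \ref{prop:greenberg-hachimori-matsuno} for $A_f(\omega)$ over $\mathcal{O}_\omega$. First, $X_\omega$ is $\mathcal{O}_\omega\llbracket\Gamma_{p^\infty}\rrbracket$-torsion: by \cite[Theorems A and C]{hung-nonvanishing} there is a finite-order character $\chi$ of $\Gamma_{m'p^\infty}$ with $\chi|_{\Gamma_{m'}}=\omega$ and $\chi(L_p(K(m'p^\infty),f))\neq 0$, so $L_\omega\neq 0$; together with the character-by-character divisibility of $\S$\ref{sec:euler_systems} (inclusion (\ref{eqn:divisibility_char_compoenents})) this makes the $\omega$-component of $\mathrm{Sel}_{N^-}(K(m'p^\infty),A_f)^\vee$ torsion, and Inclusion (\ref{eqn:selmer_inclusion}), whose cokernel stays finite of bounded order after $\otimes_\omega\mathcal{O}_\omega$, transfers torsionness to $X_\omega$. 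Second, the $\mathrm{H}^0$-hypothesis for $A_f(\omega)$ reduces, as $\omega$ is trivial on $G_{K(m')}$, to $\mathrm{H}^0(K(m'),A_{f,1})=0$; this holds because $A_{f,1}=\overline{\rho}$ is absolutely irreducible over $G_{\mathbb{Q}}$ and $K(m')/\mathbb{Q}$ is Galois, so a non-zero $G_{K(m')}$-invariant subspace would be $G_{\mathbb{Q}}$-stable, hence all of $\overline{\rho}$, forcing $\overline{\rho}$ to factor through the prime-to-$p$ group $\mathrm{Gal}(K(m')/\mathbb{Q})$ --- impossible, since $\overline{\rho}$ is ramified at $p$ (it is $p$-ordinary and $p>3$) while $K(m')/\mathbb{Q}$ is unramified at $p$. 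Proposition \ref{prop:greenberg-hachimori-matsuno} then gives the claim for each $X_\omega$, and reassembling over $\omega$ finishes the proof.

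The main obstacle I anticipate is the identification of $X_\omega$ in the second paragraph: one has to be certain that the $\omega$-isotypic quotient of the \emph{minimal} Selmer group over $K(m'p^\infty)$ really is a minimal Selmer group for the twisted coefficients over $K(p^\infty)$ --- the local conditions at primes dividing $N^+$ being the subtle ones --- and that the control-theorem and Cassels--Tate inputs to the Greenberg/Hachimori--Matsuno argument genuinely persist under the prime-to-$p$ twist. Once these structural points are in place, the torsionness input and the $\mathrm{H}^0$-vanishing are routine and what remains is bookkeeping.
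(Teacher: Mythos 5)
Your argument is correct in outline, but it takes a genuinely different and much heavier route than the paper, and the heavy part is precisely the step you flag as the main obstacle. The paper never twists the coefficients and never invokes Shapiro's lemma: it observes that a finite $\mathcal{O}\llbracket\Gamma_{m'p^\infty}\rrbracket$-submodule of $\mathrm{Sel}(K(m'p^\infty),A_f)^\vee$ is in particular a finite $\mathcal{O}\llbracket\Gamma_{p^\infty}\rrbracket$-submodule, and these have already been excluded by Corollary \ref{cor:greenberg-hachimori-matsuno}, i.e.\ by running Greenberg/Hachimori--Matsuno (Proposition \ref{prop:greenberg-hachimori-matsuno}) for the \emph{untwisted} module $A_f$ over the enlarged base field $K(m')$ with the $\mathbb{Z}_p$-extension $K(m'p^\infty)/K(m')$. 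The hypotheses needed there are exactly the two inputs you verify --- $\mathcal{O}\llbracket\Gamma_{p^\infty}\rrbracket$-cotorsionness (from nonvanishing of the theta element plus the character-by-character divisibility (\ref{eqn:divisibility_char_compoenents})) and $\mathrm{H}^0(K(m'),A_{f,1})=0$ (your irreducibility-plus-ramification-at-$p$ argument is fine) --- but applied once, without decomposition. So the trade-off is: you descend the base field to $K$ at the cost of twisting the Galois module by $\omega$, which forces you to re-identify the $\omega$-isotypic quotient of the minimal Selmer group as a minimal Selmer group for $A_f(\omega)$ over $K(p^\infty)$ and to re-verify the local conditions (especially at primes dividing $N^+$), the control theorem, and the Cassels--Tate pairing in the twisted setting; the paper keeps the module untwisted and enlarges the base field, so none of that arises. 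What your route buys is the component-by-component statement, but that is Corollary \ref{cor:nontrivial_finite_omega} and it already follows formally from the lemma as stated, since a finite $\mathcal{O}_\omega\llbracket\Gamma_{p^\infty}\rrbracket$-submodule of $X_\omega$ is a finite $\Lambda_{m'p^\infty}$-submodule of the direct sum. If you keep your approach, the Shapiro-lemma identification in your second paragraph must be written out in full; it is believable but it is not bookkeeping, and it is entirely avoidable.
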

\begin{proof}
Let $C$ be a non-trivial finite $\mathcal{O}\llbracket\Gamma_{m'p^\infty}\rrbracket$-submodule $\mathrm{Sel}(K(m'p^\infty), A_f)^\vee$.
Let $c \in C$ be a non-zero element.
Then the submodule of $C$ generated by $c$ over $\mathcal{O}\llbracket\Gamma_{p^\infty}\rrbracket$ is the trivial module due to Corollary \ref{cor:greenberg-hachimori-matsuno}.
Thus, the submodule of $C$ generated by $c$ over $\mathcal{O}[\Gamma_{m'}]$ must be non-trivial.
In other words, 
$$\mathcal{O}\llbracket\Gamma_{m'p^\infty}\rrbracket c = \mathcal{O}[\Gamma_{m'}]c$$
since $\mathcal{O}\llbracket\Gamma_{p^\infty}\rrbracket c = 0$.
By decomposition $\mathcal{O}[\Gamma_{m'}]c = \oplus_\omega \mathcal{O}_\omega c_\omega$, one of $\mathcal{O}_\omega c_\omega$ should be non-trivial. ($c_{\mathbf{1}}$ is trivial though.)
However, $\mathcal{O}c$ is trivial since $\mathcal{O} \subset \Lambda_{p^\infty}$, so $\mathcal{O}c_\omega$ is trivial.
Considering the natural surjective quotient map $$\mathcal{O}[\Gamma_{m'}] \otimes_{\mathcal{O}} \mathcal{O}c_\omega \to \mathcal{O}_\omega c_\omega,$$
we get contradiction.
\end{proof}

\begin{cor} \label{cor:nontrivial_finite_omega}
For each $\omega$, the module $\mathrm{Sel}(K(m'p^\infty), A_f)^\vee \otimes_\omega \mathcal{O}_\omega$ has no nontrivial finite $\mathcal{O}_\omega\llbracket\Gamma_{p^\infty}\rrbracket$-submodule.
\end{cor}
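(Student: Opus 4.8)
The plan is to deduce the corollary directly from Lemma \ref{lem:NTFSmp} by means of the enhanced isotypic decomposition of $\S$\ref{subsec:enhanced_decomposition}. Write $M := \mathrm{Sel}(K(m'p^\infty), A_f)^\vee$, regarded as a module over $\Lambda_{m'p^\infty} = \mathcal{O}[\Gamma_{m'}]\llbracket \Gamma_{p^\infty} \rrbracket$. Since $(m',p) = 1$ and $\mathcal{O}$ is a $p$-adic ring, $m'$ is invertible in $\mathcal{O}$, so $\mathcal{O}[\Gamma_{m'}]$ splits as a finite product $\bigoplus_\omega \mathcal{O}_\omega$ of discrete valuation rings indexed by the $\mathbb{Q}_p$-conjugacy classes of characters $\omega : \Gamma_{m'} \to \overline{\mathbb{Q}}_p^\times$, with orthogonal idempotents $e_\omega$. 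Tensoring $M$ against this decomposition yields $M = \bigoplus_\omega e_\omega M$ with $e_\omega M \simeq M_\omega = M \otimes_\omega \mathcal{O}_\omega$ as $\mathcal{O}_\omega\llbracket\Gamma_{p^\infty}\rrbracket$-modules.

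First I would record the elementary observation that, on the summand $e_\omega M$, the subring $\mathcal{O}[\Gamma_{m'}] \subseteq \Lambda_{m'p^\infty}$ acts through the scalar homomorphism $\omega : \mathcal{O}[\Gamma_{m'}] \to \mathcal{O}_\omega$; consequently any $\mathcal{O}_\omega\llbracket\Gamma_{p^\infty}\rrbracket$-stable subgroup of $e_\omega M$ is automatically $\mathcal{O}[\Gamma_{m'}]$-stable, hence $\Lambda_{m'p^\infty}$-stable. In other words, the $\mathcal{O}_\omega\llbracket\Gamma_{p^\infty}\rrbracket$-submodules of $M_\omega$ are precisely the $\Lambda_{m'p^\infty}$-submodules of $M$ contained in $e_\omega M$. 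This is the only place where the coprimality $(m',p)=1$ is genuinely used.

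With this in hand the argument is immediate. Let $C \subseteq M_\omega$ be a finite $\mathcal{O}_\omega\llbracket\Gamma_{p^\infty}\rrbracket$-submodule. Under the identification $M_\omega \simeq e_\omega M \subseteq M$, the set $C$ is a finite subgroup of $M$ stable under $\Lambda_{m'p^\infty} = \mathcal{O}\llbracket\Gamma_{m'p^\infty}\rrbracket$, hence a finite $\mathcal{O}\llbracket\Gamma_{m'p^\infty}\rrbracket$-submodule of $\mathrm{Sel}(K(m'p^\infty), A_f)^\vee$. By Lemma \ref{lem:NTFSmp} there is no nonzero such submodule, so $C = 0$. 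As $\omega$ was arbitrary, the corollary follows.

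Since the entire argument is a formal consequence of Lemma \ref{lem:NTFSmp} together with the semilocal splitting of $\mathcal{O}[\Gamma_{m'}]$, I do not expect any real obstacle; all the genuine content has already been absorbed into Lemma \ref{lem:NTFSmp}, and through it into Corollary \ref{cor:greenberg-hachimori-matsuno} and the nonvanishing input of \cite{hung-nonvanishing}. The one point deserving care is the bookkeeping that a submodule over the "small" ring $\mathcal{O}_\omega\llbracket\Gamma_{p^\infty}\rrbracket$ remains a submodule over the larger semilocal ring $\Lambda_{m'p^\infty}$ once it is transported along the idempotent $e_\omega$ — but this is exactly the observation recorded in the second paragraph.
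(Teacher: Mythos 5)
Your argument is correct and is exactly the deduction the paper intends: the paper states that Corollary \ref{cor:nontrivial_finite_omega} follows from Lemma \ref{lem:NTFSmp} and leaves the step implicit, and your use of the idempotent decomposition of $\mathcal{O}[\Gamma_{m'}]$ from $\S$\ref{subsec:enhanced_decomposition} to identify $M_\omega$ with the summand $e_\omega M$ and transport a finite $\mathcal{O}_\omega\llbracket\Gamma_{p^\infty}\rrbracket$-submodule into a finite $\Lambda_{m'p^\infty}$-submodule is the intended bookkeeping. No gap.
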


Due to Corollary \ref{cor:nontrivial_finite_omega} and \cite[Proposition 3]{nuccio-fitting}, we have
$$L_\omega \in \mathrm{Fitt}_{\mathcal{O}_\omega\llbracket\Gamma_{p^\infty}\rrbracket} \left( \mathrm{Sel}(K(m'p^\infty), A_f)^\vee \otimes_\omega \mathcal{O}_\omega \right) .$$
Considering the equality (\ref{eqn:fitting_quotient}), we have
$$L \in \mathrm{Fitt}_{\mathcal{O}\llbracket\Gamma_{m'p^\infty}\rrbracket} \left( \mathrm{Sel}(K(m'p^\infty), A_f)^\vee \right) ,$$
so we prove Proposition \ref{prop:generalized_divisibility}.

\begin{rem}
Note that the the finite layer analogue of Proposition \ref{prop:original_divisibility} does not hold in general. The difficulty of the finite layer analogue is observed in \cite{joongul_character_values}.
\end{rem}

\section{Reduction of Euler system divisibilities to Mazur-Tate conjectures} \label{sec:reduction_to_main_theorem}
In this section, we show that the generalized Euler system divisibility (Theorem \ref{thm:main_theorem_1}) and the exact control theorem (Lemma \ref{lem:control_galois_selmer} with $S=1$) imply Theorem \ref{thm:main_theorem} with the $p$-stabilized form $f_\alpha$.

\begin{prop} \label{prop:mainconj_to_mazurtate}
Under the assumption of Theorem \ref{thm:main_theorem_1}, we have
$$L_p(K(m'p^r), f_\alpha) \in \mathrm{Fitt}_{\mathcal{O}[ \mathrm{Gal}(K(m'p^r)/K)  ]} \left( \mathrm{Sel}_{N^-}(K(m'p^r), A_{f})^\vee \right) .$$
\end{prop}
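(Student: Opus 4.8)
The strategy is to pass from the infinite-level divisibility of Theorem~\ref{thm:main_theorem_1} down to finite level $K(m'p^r)$ using a control theorem together with the good behaviour of Fitting ideals under base change. Write $\Gamma = \mathrm{Gal}(K(m'p^r)/K)$ and let $\omega_r \subset \Lambda_{m'p^\infty}$ denote the kernel of the natural projection $\Lambda_{m'p^\infty} \twoheadrightarrow \mathcal{O}[\Gamma]$, so that $\mathcal{O}[\Gamma] = \Lambda_{m'p^\infty}/\omega_r$. First I would record that the Bertolini--Darmon element specializes correctly: the image of $L_p(K(m'p^\infty), f_\alpha)$ under the projection $\Lambda_{m'p^\infty} \to \mathcal{O}[\Gamma]$ is $L_p(K(m'p^r), f_\alpha)$, which follows from the norm-compatibility of the $p$-stabilized theta elements $\{\theta_{m'p^r}(f_\alpha)\}_r$ constructed in $\S$\ref{subsec:theta_elements} (using Remark~\ref{rem:lift_cores}).

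Next, applying Lemma~\ref{lem:fitting-ideals-base-change}.(2) with the ideal $I = \omega_r$, we obtain
$$
\mathrm{Fitt}_{\mathcal{O}[\Gamma]}\left( \mathrm{Sel}_{N^-}(K(m'p^\infty), A_f)^\vee / \omega_r \right) = \mathrm{Fitt}_{\Lambda_{m'p^\infty}}\left( \mathrm{Sel}_{N^-}(K(m'p^\infty), A_f)^\vee \right) \Mod{\omega_r}.
$$
Combining this with Theorem~\ref{thm:main_theorem_1} and the specialization of the $L$-value above gives
$$
L_p(K(m'p^r), f_\alpha) \in \mathrm{Fitt}_{\mathcal{O}[\Gamma]}\left( \mathrm{Sel}_{N^-}(K(m'p^\infty), A_f)^\vee / \omega_r \right).
$$
So what remains is to compare $\mathrm{Sel}_{N^-}(K(m'p^\infty), A_f)^\vee / \omega_r$ with $\mathrm{Sel}_{N^-}(K(m'p^r), A_f)^\vee$. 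By dualizing, this is equivalent to comparing $\mathrm{Sel}_{N^-}(K(m'p^\infty), A_f)[\omega_r]$ (the $\omega_r$-torsion, i.e.\ the part fixed by $\mathrm{Gal}(K(m'p^\infty)/K(m'p^r))$ in the appropriate sense) with $\mathrm{Sel}_{N^-}(K(m'p^r), A_f)$. This is precisely the content of the exact control theorem, which the section's opening sentence tells us is Lemma~\ref{lem:control_galois_selmer} specialized to $S = 1$: the restriction map induces an isomorphism
$$
\mathrm{Sel}_{N^-}(K(m'p^r), A_f) \xrightarrow{\ \sim\ } \mathrm{Sel}_{N^-}(K(m'p^\infty), A_f)^{\mathrm{Gal}(K(m'p^\infty)/K(m'p^r))}.
$$
Here is where Condition PO enters, through Lemma~\ref{lem:local_condition_at_p} which guarantees the local condition at $p$ is controlled; the $N^+$-minimality is needed so that the control statement is exact (no error terms from primes dividing $N^+$), as flagged in the remark following Definition~\ref{defn:bigimage}. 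Taking Pontryagin duals of the control isomorphism turns it into an isomorphism $\mathrm{Sel}_{N^-}(K(m'p^\infty), A_f)^\vee / \omega_r \cong \mathrm{Sel}_{N^-}(K(m'p^r), A_f)^\vee$ of $\mathcal{O}[\Gamma]$-modules, and substituting into the displayed Fitting-ideal membership above yields exactly the claim.

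**Main obstacle.** The substantive input is the exact control theorem (Lemma~\ref{lem:control_galois_selmer} with $S=1$), and the only delicate point in invoking it here is that one must genuinely have \emph{exactness} — i.e.\ the cokernel of the control map vanishes, not merely that it is finite of bounded order — since a nonzero cokernel would only give a containment of Fitting ideals in the wrong direction or up to an auxiliary ideal. This is why both Condition PO (killing $\mathrm{H}^0(K(m)_p, A_{f,n}/F^+A_{f,n})$ via Lemma~\ref{lem:local_condition_at_p}, hence controlling the local term at $p$) and $N^+$-minimality (matching the $N^-$-ordinary local conditions at primes dividing $N^+$ with no defect) are essential hypotheses; absolute irreducibility of $\overline{\rho}$ handles the global $\mathrm{H}^0$ term. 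Everything else — the specialization of $L_p$, the base-change behaviour of Fitting ideals — is formal given the machinery already set up in $\S$\ref{sec:fitting} and $\S$\ref{sec:theta_elements}.
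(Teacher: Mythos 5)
Your proposal is correct and follows essentially the same route as the paper's own proof: specialize $L_p(K(m'p^\infty),f_\alpha)$ via norm-compatibility, apply Lemma \ref{lem:fitting-ideals-base-change}.(2) to the infinite-level Fitting-ideal membership from Theorem \ref{thm:main_theorem_1}, and identify the coinvariants of the dual Selmer group with the finite-level dual via the exact control theorem (Lemma \ref{lem:control_galois_selmer} with $S=1$). Your added remarks on why exactness (rather than finite-index control) is needed, and where Condition PO and $N^+$-minimality enter, match the paper's own discussion.
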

\begin{proof}
Write $X_{m'p^\infty} = \mathrm{Sel}_{N^-}(K(m'p^\infty), A_{f})^\vee$ and
$X_{m'p^r} = \mathrm{Sel}_{N^-}(K(m'p^r), A_{f})^\vee$ where $(-)^\vee$ is the Pontryagin dual.
Let $L_p(K(m'p^\infty), f_\alpha) = \left( L_p(K(m'p^r), f_\alpha) \right)_r \in \Lambda_{m'p^\infty}$ be the generalized $p$-adic $L$-function defined in $\S$\ref{sec:theta_elements}.
Theorem  \ref{thm:main_theorem_1} implies that
$$ L_p(K(m'p^\infty), f_\alpha)  \in \mathrm{Fitt}_{\Lambda_{m'p^\infty}} \left( X_{m'p^\infty} \right) .$$
By the exact control theorem (Lemma \ref{lem:control_galois_selmer} with $S=1$), we have
$$\mathrm{Sel}_{N^-}(K(m'p^\infty), A_{f})^{\left(\Gamma_{m'p^\infty}\right)^{p^r}} \simeq \mathrm{Sel}_{N^-}(K(m'p^r), A_{f}),$$
and the corresponding dual statement is
$$\left( X_{m'p^\infty} \right)_{\left(\Gamma_{m'p^\infty}\right)^{p^r}} \simeq X_{m'p^r}.$$
Let $\gamma$ be a topological generator of $\Gamma_{m'p^\infty}$ and $I_r$ be the ideal of $\Lambda_{m'p^\infty}$ generated by $\gamma^{p^r} - 1$.
By Lemma \ref{lem:fitting-ideals-base-change}.(2) and the above isomorphism, we have
$$\mathrm{Fitt}_{\Lambda_{m'p^\infty}} \left( X_{m'p^\infty} \right) \Mod{I_r} = \mathrm{Fitt}_{\mathcal{O}[\Gamma_{m'p^r}]} \left( X_{m'p^r} \right) .$$
Thus, if we take mod $I_r$ reduction of the inclusion
$$L_p(K(m'p^\infty), f_\alpha) = \left( L_p(K(m'p^r), f_\alpha) \right)_r \in \mathrm{Fitt}_{\Lambda_{m'p^\infty}} \left( X_{m'p^\infty} \right) ,$$
then we obtain $$L_p(K(m'p^r), f_\alpha) \in  \mathrm{Fitt}_{\mathcal{O}[\Gamma_{m'p^r}]} \left( X_{m'p^r} \right).$$
\end{proof}
%
%

\section{$p$-stabilization and tame exceptional zeroes} \label{sec:tame_exceptional_zeros}
In this section, we deduce the non-$p$-stabilized version of the Mazur-Tate conjecture from the $p$-stabilized version under a condition on the $p$-adic multiplier (Assumption \ref{assu:conditionPO}).
\subsection{$p$-adic multipliers}
\subsubsection{$p$ splits in $K$}
Suppose that $p$ splits into $\mathfrak{p} \overline{\mathfrak{p}}$ in $K$.
Then we define the $p$-adic multiplier $e_p(f, K)$ by
$$e_p(f, K) := (1 - \frac{1}{\alpha} \mathrm{Fr}_{\mathfrak{p}})(1 - \frac{1}{\alpha} \mathrm{Fr}_{\overline{\mathfrak{p}}}) \in \mathcal{O}[\Gamma_{m'}] $$
where $\mathrm{Fr}_{\mathfrak{p}}$, $\mathrm{Fr}_{\overline{\mathfrak{p}}}$ are the geometric Frobenii in $\Gamma_{m'} = \mathrm{Gal}(K(m')/K)$.
Then Condition PO (Assumption \ref{assu:conditionPO} for splitting $p$) directly implies that
$e_p(f, K) \in \mathcal{O}[\Gamma_{m'}]^\times$.
%
\subsubsection{$p$ is inert in $K$}
Suppose that $p$ is inert in $K$.
Then 
$$e_p(f, K) :=  (1 - \frac{1}{\alpha^2} )$$ and it is also invertible in $\mathcal{O}$ due to Condition PO (Assumption \ref{assu:conditionPO} for inert $p$).
\subsection{Reduction via induction}
We prove the following statement.
\begin{prop}
Under Condition PO, $\theta_{m'p^r}(f)$ is a multiple of $\theta_{m'p^r}(f_\alpha)$ by an element of $\mathcal{O}[\Gamma_{p^r}]$, i.e.
$$\theta_{m'p^r}(f) = c \cdot \theta_{m'p^r}(f_\alpha)$$
where $c \in \mathcal{O}[\Gamma_{m'p^r}]$.
\end{prop}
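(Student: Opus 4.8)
The plan is an induction on $r$ that unwinds the recursion defining $\theta_{m'p^r}(f_\alpha)$ in $\S$\ref{sec:theta_elements}. Rewriting that recursion, for every $r\ge 1$ one has
\[
\theta_{m'p^r}(f)\;=\;\alpha^r\cdot\theta_{m'p^r}(f_\alpha)\;+\;\tfrac{1}{\alpha}\cdot\mathrm{cores}^r_{r-1}\theta_{m'p^{r-1}}(f),
\]
so, since $\alpha\in\mathcal O^\times$ by ordinarity, it suffices to prove inductively that $\mathrm{cores}^r_{r-1}\theta_{m'p^{r-1}}(f)$ is an $\mathcal O[\Gamma_{m'p^r}]$-multiple of $\theta_{m'p^r}(f_\alpha)$. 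Two structural facts drive the argument: first, $\{\theta_{m'p^r}(f_\alpha)\}_r$ is norm-compatible, i.e.\ $\pi_{r,r-1}\bigl(\theta_{m'p^r}(f_\alpha)\bigr)=\theta_{m'p^{r-1}}(f_\alpha)$ for the natural projection $\pi_{r,r-1}\colon\mathcal O[\Gamma_{m'p^r}]\to\mathcal O[\Gamma_{m'p^{r-1}}]$; second, by Remark~\ref{rem:lift_cores} one has $\mathrm{cores}^r_{r-1}(a)=\Phi_{p^r}(\gamma_{p^r})\cdot\widetilde a$ for any lift $\widetilde a\in\mathcal O[\Gamma_{m'p^r}]$ of $a$, so that $\mathrm{cores}^r_{r-1}\circ\pi_{r,r-1}$ is just multiplication by $\Phi_{p^r}(\gamma_{p^r})$.

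For the inductive step, assume $\theta_{m'p^{r-1}}(f)=c_{r-1}\cdot\theta_{m'p^{r-1}}(f_\alpha)$ with $c_{r-1}\in\mathcal O[\Gamma_{m'p^{r-1}}]$. Write $\theta_{m'p^{r-1}}(f_\alpha)=\pi_{r,r-1}\bigl(\theta_{m'p^r}(f_\alpha)\bigr)$ by norm-compatibility, choose a lift $\widetilde c_{r-1}\in\mathcal O[\Gamma_{m'p^r}]$ of $c_{r-1}$, and compute
\[
\mathrm{cores}^r_{r-1}\theta_{m'p^{r-1}}(f)\;=\;\mathrm{cores}^r_{r-1}\Bigl(\pi_{r,r-1}\bigl(\widetilde c_{r-1}\cdot\theta_{m'p^r}(f_\alpha)\bigr)\Bigr)\;=\;\Phi_{p^r}(\gamma_{p^r})\cdot\widetilde c_{r-1}\cdot\theta_{m'p^r}(f_\alpha),
\]
the last expression being independent of the chosen lift by Remark~\ref{rem:lift_cores}. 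Substituting into the displayed recursion gives $\theta_{m'p^r}(f)=c_r\cdot\theta_{m'p^r}(f_\alpha)$ with $c_r=\alpha^r+\tfrac1\alpha\Phi_{p^r}(\gamma_{p^r})\widetilde c_{r-1}\in\mathcal O[\Gamma_{m'p^r}]$.

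The content therefore sits entirely in the bottom of the recursion, and this is the step I expect to be the main obstacle --- and the only place Condition PO (Assumption~\ref{assu:conditionPO}) is used. One must identify the projection of the $p$-stabilized element to the $p$-free layer, namely establish
\[
\pi_{1,0}\bigl(\theta_{m'p}(f_\alpha)\bigr)\;=\;e_p(f,K)\cdot\theta_{m'}(f)\qquad\text{in }\mathcal O[\Gamma_{m'}],
\]
with $e_p(f,K)$ the $p$-adic multiplier defined above. This is a comparison of the Gross points of conductor $m_0$ attached to $m=m'$ and to $m=m'p$: using the $U_p$-recursion for $f$ at $p$ and the definition $f_\alpha(\tau)=f(\tau)-\tfrac1\alpha f(p\tau)$, the projection of the sum over conductor-$m'p$ Gross points telescopes to the $p$-adic Euler factor acting on the conductor-$m'$ theta element (the quaternionic analogue of the interpolation in the ``improvement'' direction, in the Gross-point formalism of \cite{chida-hsieh-main-conj}).

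Granting this identity, Condition PO forces $e_p(f,K)\in\mathcal O[\Gamma_{m'}]^\times$ --- invertibility being immediate from the two non-congruences in the split case and from $a_p(f)\not\equiv\pm1\pmod{\varpi}$ in the inert case --- hence $\theta_{m'}(f)=e_p(f,K)^{-1}\cdot\pi_{1,0}\bigl(\theta_{m'p}(f_\alpha)\bigr)$, and running the inductive-step computation with $\mathrm{cores}^1_0$ in place of $\mathrm{cores}^r_{r-1}$ yields $\theta_{m'p}(f)=c_1\cdot\theta_{m'p}(f_\alpha)$ with $c_1=\alpha+\tfrac1\alpha\Phi_p(\gamma_p)\,\widetilde{e_p(f,K)^{-1}}$, the lift being taken along the canonical splitting $\Gamma_{m'p}=\Gamma_{m'}\times\Gamma_p$. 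Everything outside that base-case identity is the formal bookkeeping above; and the factor $c$ so produced is precisely what accounts for the tame exceptional zero phenomenon of this section, since up to units $L_p(K(m),f)=c\,c^{*}\cdot L_p(K(m),f_\alpha)$ with $(\,\cdot\,)^{*}$ the involution.
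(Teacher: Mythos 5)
Your argument is correct and is exactly the paper's proof, which also proceeds by induction on $r$ via the norm compatibility of the $p$-stabilized theta elements, with the base case resting on the invertibility of $e_p(f,K)$ in $\mathcal{O}[\Gamma_{m'}]^\times$ supplied by Condition PO. You have simply made explicit the unwinding of the recursion and the use of Remark~\ref{rem:lift_cores} that the paper leaves implicit.
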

\begin{proof}
Using the norm compatibility of the $p$-stabilized theta elements, it follows from induction on $r$. The first step of induction requires $e_p(f,K) \in \mathcal{O}[\Gamma_{m'}]^\times$, which follows from Condition PO.
\end{proof}
\begin{cor} \label{cor:reduction_p-stabilization}
Let $\mathfrak{a}$ be an ideal of $\mathcal{O}[\Gamma_{m'p^r}]$.
If $\theta_{m'p^r}(f_\alpha) \in \mathfrak{a}$, then $\theta_{m'p^r}(f) \in \mathfrak{a}$.
\end{cor}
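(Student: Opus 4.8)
The plan is to deduce this immediately from the preceding proposition, which already expresses $\theta_{m'p^r}(f)$ as a group-ring multiple of $\theta_{m'p^r}(f_\alpha)$. Concretely, that proposition supplies an element $c \in \mathcal{O}[\Gamma_{m'p^r}]$ with $\theta_{m'p^r}(f) = c \cdot \theta_{m'p^r}(f_\alpha)$. Since $\mathfrak{a}$ is an ideal of $\mathcal{O}[\Gamma_{m'p^r}]$, it is stable under multiplication by arbitrary elements of $\mathcal{O}[\Gamma_{m'p^r}]$; hence the hypothesis $\theta_{m'p^r}(f_\alpha) \in \mathfrak{a}$ forces $c \cdot \theta_{m'p^r}(f_\alpha) = \theta_{m'p^r}(f) \in \mathfrak{a}$. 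That is the entire argument, and I would present it in two lines.

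The only mathematical content hiding behind the corollary is therefore the preceding proposition, whose proof proceeds by induction on $r$: the norm-compatibility built into the definition of $\theta_{m'p^r}(f_\alpha)$ via the corestriction maps $\mathrm{cores}^{r}_{r-1}$ (see $\S$\ref{sec:theta_elements} and Remark \ref{rem:lift_cores}) lets one pass from level $m'p^{r-1}$ to level $m'p^r$, while the base of the induction is exactly where one needs the $p$-adic multiplier $e_p(f,K)$ to be a unit in $\mathcal{O}[\Gamma_{m'}]$ — which is precisely Condition PO (Assumption \ref{assu:conditionPO}). Consequently there is essentially no obstacle at the level of the corollary itself: any difficulty has already been absorbed into the invertibility of $e_p(f,K)$ guaranteed by Condition PO and into the corestriction computation.

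In the intended application one takes $\mathfrak{a} = \mathrm{Fitt}_{\mathcal{O}[\Gamma_{m'p^r}]}\!\left( \mathrm{Sel}_{N^-}(K(m'p^r), A_f)^\vee \right)$, which is an ideal by definition of the Fitting ideal, and combines the corollary with Proposition \ref{prop:mainconj_to_mazurtate} together with the factorization $L_p(K(m), f) = \theta_m(f)\cdot\theta^*_m(f)$ to transfer the membership statement from $f_\alpha$ to $f$; the analogous assertion for the involuted element $\theta^*_{m'p^r}$ follows by applying the ring involution $\gamma \mapsto \gamma^{-1}$, which carries such Fitting ideals to Fitting ideals of the correspondingly twisted modules. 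I would state the corollary exactly as above and record only the two-line proof, leaving the substantive induction (and its use of Condition PO) in the preceding proposition.
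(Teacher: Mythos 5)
Your proposal is correct and is exactly the paper's (implicit) argument: the corollary follows immediately from the preceding proposition's identity $\theta_{m'p^r}(f) = c\cdot\theta_{m'p^r}(f_\alpha)$ with $c \in \mathcal{O}[\Gamma_{m'p^r}]$, together with the fact that an ideal absorbs multiplication by ring elements. Your remarks correctly locate the real content in the proposition's induction and the invertibility of $e_p(f,K)$ under Condition PO, which is also where the paper places it.
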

Even if Assumption \ref{assu:conditionPO} for splitting $p$ fails for some $\omega$, we are still able to obtain the following partial result.
\begin{cor}
Assume that $e_p(f,K)_\omega$ is invertible in $\mathcal{O}_\omega[\Gamma_{p^r}]$. If $$L(K(m'p^\infty), f_\alpha)_\omega \in \mathrm{Fitt}_{\mathcal{O}_\omega\llbracket \Gamma_{p^\infty} \rrbracket} (\mathrm{Sel}(K(m'p^\infty), A_f)^\vee  \otimes_\omega \mathcal{O}_\omega ),$$
then
$$L(K(m'p^\infty), f)_\omega \in \mathrm{Fitt}_{\mathcal{O}_\omega\llbracket \Gamma_{p^\infty} \rrbracket} (\mathrm{Sel}(K(m'p^\infty), A_f)^\vee \otimes_\omega \mathcal{O}_\omega ) .$$
\end{cor}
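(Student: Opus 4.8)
The plan is to transfer the Fitting-ideal membership from $f_\alpha$ to $f$ by dividing out the $p$-adic multiplier, which is a unit by hypothesis. Concretely, I would first pin down the precise relation between the $\omega$-specializations of the anticyclotomic theta elements attached to $f$ and to $f_\alpha$, and then observe that the $\omega$-component of $L(K(m'p^\infty),f)$ differs from that of $L(K(m'p^\infty),f_\alpha)$ by a unit of $\mathcal{O}_\omega\llbracket\Gamma_{p^\infty}\rrbracket$, after which the conclusion is formal.

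First I would revisit the Proposition preceding Corollary \ref{cor:reduction_p-stabilization}: its proof (induction on $r$, with the base case governed by $e_p(f,K)$) shows $\theta_{m'p^r}(f)=c_r\cdot\theta_{m'p^r}(f_\alpha)$ with $c_r\in\mathcal{O}[\Gamma_{m'p^r}]$ a product built from $p$-adic multipliers in $\mathcal{O}[\Gamma_{m'}]$ (for split $p$, a product of factors $1-\tfrac1\alpha\mathrm{Fr}_{\mathfrak{p}}$, $1-\tfrac1\alpha\mathrm{Fr}_{\overline{\mathfrak{p}}}$; for inert $p$, a power of $1-\tfrac1{\alpha^2}$). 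Specializing at $\omega$ and using the assumed invertibility of $e_p(f,K)_\omega$ — note that a unit of $\mathcal{O}_\omega$ is automatically a unit of $\mathcal{O}_\omega\llbracket\Gamma_{p^\infty}\rrbracket$, since the latter is local with maximal ideal lying over $(\varpi'_\omega)$, and each multiplier factor of $c_r$ becomes a unit as soon as $e_p(f,K)_\omega$ is — one gets, using that the $\theta_{m'p^r}(f_\alpha)$ are norm-compatible, a well-defined element $\theta_{m'p^\infty}(f)_\omega\in\mathcal{O}_\omega\llbracket\Gamma_{p^\infty}\rrbracket$ with $\theta_{m'p^\infty}(f_\alpha)_\omega=e_p(f,K)_\omega\cdot\theta_{m'p^\infty}(f)_\omega$. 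I would then record that $e_p(f,K)^{*}=e_p(f,K)$: since ring class fields are generalized dihedral over $\mathbb{Q}$, complex conjugation inverts $\Gamma_{m'}$ and interchanges $\mathrm{Fr}_{\mathfrak{p}}$ and $\mathrm{Fr}_{\overline{\mathfrak{p}}}$, so $\mathrm{Fr}_{\overline{\mathfrak{p}}}=\mathrm{Fr}_{\mathfrak{p}}^{-1}$ in $\Gamma_{m'}$ and the involution fixes $e_p(f,K)$; in particular $e_p(f,K)_{\omega^{-1}}=e_p(f,K)_\omega$, so the single hypothesis on $\omega$ already controls the $\theta^{*}$-factor. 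Combining, $L(K(m'p^\infty),f_\alpha)_\omega=u_\omega\cdot L(K(m'p^\infty),f)_\omega$ with $u_\omega=e_p(f,K)_\omega^{2}\in\mathcal{O}_\omega\llbracket\Gamma_{p^\infty}\rrbracket^{\times}$, the square arising from $L=\theta\cdot\theta^{*}$ together with $e_p^{*}=e_p$.

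Finally, since $\mathrm{Fitt}_{\mathcal{O}_\omega\llbracket\Gamma_{p^\infty}\rrbracket}(\mathrm{Sel}(K(m'p^\infty),A_f)^\vee\otimes_\omega\mathcal{O}_\omega)$ is an ideal and $u_\omega$ is a unit, the hypothesis $L(K(m'p^\infty),f_\alpha)_\omega\in\mathrm{Fitt}_{\mathcal{O}_\omega\llbracket\Gamma_{p^\infty}\rrbracket}(\mathrm{Sel}(K(m'p^\infty),A_f)^\vee\otimes_\omega\mathcal{O}_\omega)$ immediately gives $L(K(m'p^\infty),f)_\omega=u_\omega^{-1}L(K(m'p^\infty),f_\alpha)_\omega\in\mathrm{Fitt}_{\mathcal{O}_\omega\llbracket\Gamma_{p^\infty}\rrbracket}(\mathrm{Sel}(K(m'p^\infty),A_f)^\vee\otimes_\omega\mathcal{O}_\omega)$, which is the assertion. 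The only delicate point is the bookkeeping in the second step: making the multiplier relation precise at infinite level and after $\omega$-specialization, in particular the interaction of the involution $*$ with the enhanced isotypic decomposition of \S\ref{subsec:enhanced_decomposition} and a uniform treatment of the split and inert cases; the transfer of the Fitting-ideal membership itself is purely formal, and the rest is immediate from Section \ref{sec:theta_elements} and the earlier part of this section.
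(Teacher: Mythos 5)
Your overall strategy is the paper's: the corollary is meant to follow from the preceding Proposition and Corollary \ref{cor:reduction_p-stabilization}, run $\omega$-component by $\omega$-component, with the invertibility of $e_p(f,K)_\omega$ replacing Condition PO only where it is actually used, namely in the base case $r=0$ of the induction producing the multiplier $c$. Your observations that a unit of $\mathcal{O}_\omega$ is a unit of the local ring $\mathcal{O}_\omega\llbracket\Gamma_{p^\infty}\rrbracket$, and that the involution fixes $e_p(f,K)$ because complex conjugation sends $\mathrm{Fr}_{\mathfrak{p}}$ to $\mathrm{Fr}_{\mathfrak{p}}^{-1}=\mathrm{Fr}_{\overline{\mathfrak{p}}}$ in $\Gamma_{m'}$, are both correct and worth recording.

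However, the identity you rest the argument on at the step you yourself flag as delicate is not correct. The elements $\theta_{m'p^r}(f)$ are \emph{not} norm-compatible (that is precisely why one $p$-stabilizes), so there is no inverse limit $\theta_{m'p^\infty}(f)$ satisfying $\theta_{m'p^\infty}(f_\alpha)_\omega=e_p(f,K)_\omega\cdot\theta_{m'p^\infty}(f)_\omega$, and consequently $L(f_\alpha)_\omega=e_p(f,K)_\omega^{2}\cdot L(f)_\omega$ is false. The true relation is level-dependent: unwinding the definition $\theta_{m'p^r}(f_\alpha)=\alpha^{-r}\bigl(\theta_{m'p^r}(f)-\alpha^{-1}\mathrm{cores}^{r}_{r-1}\theta_{m'p^{r-1}}(f)\bigr)$ together with Remark \ref{rem:lift_cores} gives $\theta_{m'p^r}(f)=c_r\cdot\theta_{m'p^r}(f_\alpha)$ with $c_r$ built from $\alpha^r$, the cyclotomic polynomials $\Phi_{p^s}(\gamma_{p^s})$, and, only at the bottom of the induction, $e_p(f,K)^{-1}$; the ratio is not constant in $r$ (one sees this already in the interpolation formula, where the Euler factor appears only for characters of conductor one). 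The repair is immediate and is the direction the paper takes: you do not need the multiplier to be a unit, because the implication goes from $f_\alpha$ to $f$. Once $e_p(f,K)_\omega$ is invertible, the induction produces $c_{r,\omega}\in\mathcal{O}_\omega[\Gamma_{p^r}]$ with $\theta_{m'p^r}(f)_\omega=c_{r,\omega}\,\theta_{m'p^r}(f_\alpha)_\omega$, hence $L(K(m'p^r),f)_\omega=c_{r,\omega}c_{r,\omega}^{*}\,L(K(m'p^r),f_\alpha)_\omega$ lies in any ideal containing $L(K(m'p^r),f_\alpha)_\omega$; no division is performed, so you need not identify the multiplier or prove it is a unit (although in fact $c_{r,\omega}\equiv\alpha^{r}$ modulo the maximal ideal of $\mathcal{O}_\omega[\Gamma_{p^r}]$, so it happens to be one). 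Passing this ideal membership to $\mathcal{O}_\omega\llbracket\Gamma_{p^\infty}\rrbracket$ then gives the stated conclusion.
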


\section{Shimura curves and CM points} \label{sec:shimura_curves}
We quickly review Shimura curves and CM points on them. See \cite[$\S$3]{chida-hsieh-main-conj} and \cite[$\S$1]{nekovar-hilbert} for detail. 
\subsection{Shimura curves and complex uniformization} \label{subsec:complex_uniformization}
Let $B^\Delta$ be the definite quaternion algebra over $\mathbb{Q}$ of discriminant $\Delta = \Delta_B$.
Let $\ell$ be a prime which is inert in $K$ and is prime to $\Delta_B$.
Let $\mathcal{B}^{\Delta\ell}$ be the \emph{indefinite} quaternion algebra over $\mathbb{Q}$ of discriminant $\Delta \ell$.
Fix a $\mathbb{Q}$-embedding $t^{\Delta\ell} : K \hookrightarrow \mathcal{B}^{\Delta\ell}$ and an isomorphism
$\varphi_{B^{\Delta}, \mathcal{B}^{\Delta\ell}}: \widehat{B^{\Delta}}^{(\ell)} \simeq \widehat{\mathcal{B}^{\Delta\ell}}^{(\ell)}$.
Let
\begin{itemize}
\item $U = U_{N^+}$ be the level structure of $B^\Delta$ corresponding to the unit group of an adelic Eichler order $\widehat{R}^\times_{N^+} = \widehat{R}^\times$ of level $\Gamma_0(N^+)$, and
\item $U_0(\ell) = U_{N^+\ell}$ be the level structure of $B^\Delta$ corresponding to the unit group of an adelic Eichler order $\widehat{R}^\times_{N^+\ell} = \widehat{R_0(\ell)}^\times$ (contained in $\widehat{R}^\times_{N^+}$) of level $\Gamma_0(N^+\ell)$.
\end{itemize}
Let $\mathcal{U} := \mathcal{U}_{N^+}  = \varphi_{B^{\Delta}, \mathcal{B}^{\Delta\ell}} \left( U^{(\ell)}  \right) \mathcal{O}^\times_{ \mathcal{B}^{\Delta\ell} }$ be the $\Gamma_0(N^+)$-level structure of $\mathcal{B}^{\Delta\ell}$. We can also similarly define the $\Gamma_{0}(N^+) \cap \Gamma_{1}(p)$-level structure $\mathcal{U}_{N+,p}$.
 
Let $M^{\Delta\ell}_\mathcal{U} = M^{\Delta\ell}(N^+)$ be the Shimura curve over $\mathbb{Q}$ attached to $\mathcal{B}^{\Delta\ell}$ of level $\mathcal{U}$.
The complex uniformization of $M^{\Delta\ell}_\mathcal{U}$ is given by the double coset
$$M^{\Delta\ell}_\mathcal{U}(\mathbb{C}) = \mathcal{B}^{\Delta\ell, \times} \backslash \left( \left( \mathbb{C} \setminus \mathbb{R}\right) \times \widehat{\mathcal{B}^{\Delta\ell}}^{\times} \right) / \mathcal{U}.$$

For $z \in \mathbb{C} \setminus \mathbb{R}$ and $b \in \widehat{\mathcal{B}^{\Delta\ell}}^{\times}$,
denote by $[z, b]_{\mathcal{U}}$ the point of  $M^{\Delta\ell}_\mathcal{U}(\mathbb{C})$ represented by $(z, b)$.

\subsection{Bad reduction of Shimura curves}
\subsubsection{The dual graph of the fiber of the $\ell$-adic upper half plane and the oriented Bruhat-Tits tree}
Let $\mathfrak{h}_\ell$ be the $\ell$-adic upper half plane as a rigid analytic variety over $\mathbb{Q}_\ell$, and $\widehat{\mathfrak{h}}_\ell$ be a natural formal model of $\mathfrak{h}_\ell$.
Let $\mathscr{T}_\ell = \mathcal{V}(\mathscr{T}_\ell) \coprod \mathcal{E}(\mathscr{T}_\ell)$ be the dual graph of the special fiber of $\widehat{\mathfrak{h}}_\ell$, where
$\mathcal{V}(\mathscr{T}_\ell)$ and $\mathcal{E}(\mathscr{T}_\ell)$ are the set of vertices and edges of $\mathscr{T}_\ell$, respectively. Then $\mathscr{T}_\ell$ is the Bruhat-Tits tree of $B^\times_\ell / \mathbb{Q}^\times_\ell \simeq \mathrm{PGL}_2(\mathbb{Q}_p)$.
Let $\overset{\to}{\mathcal{E}}(\mathscr{T}_\ell)$ be the set of oriented edges. Then we have the identifications
$\mathcal{V}(\mathscr{T}_\ell) = B^\times_\ell / U_\ell \mathbb{Q}^\times_\ell$ and $ \overset{\to}{\mathcal{E}}(\mathscr{T}_\ell) = B^\times_\ell / U_0(\ell)_\ell \mathbb{Q}^\times_\ell$ via the orbit-stabilizer theorem.
\subsubsection{The dual graph of the fiber of the formal model of the Shimura curves and the Bruhat-Tits tree}
Let $\mathfrak{M}^{\Delta\ell}(N^+)$ be an integral model of  $M^{\Delta\ell}(N^+) \times_{\mathbb{Q}} \mathbb{Q}_\ell$ over $\mathbb{Z}_\ell$, and $\widehat{\mathfrak{M}}^{\Delta\ell}(N^+)$ be the formal completion of $\mathfrak{M}^{\Delta\ell}(N^+)$ along its special fiber. 
Let
\[
\xymatrix{
\widehat{\mathfrak{M}}^{\Delta\ell}(N^+)_{\mathcal{O}_{K_\ell}} :=\widehat{\mathfrak{M}}^{\Delta\ell}(N^+) \times_{\mathbb{Z}_\ell} \mathcal{O}_{K_\ell} , &
\widehat{\mathfrak{M}}^{\Delta\ell}(N^+)_{\mathbb{F}_{\ell^2}} := \widehat{\mathfrak{M}}^{\Delta\ell}(N^+)_{\mathcal{O}_{K_\ell}} \times_{\mathcal{O}_{K_\ell}} \mathbb{F}_{\ell^2}
}
\]
be the base change of $\widehat{\mathfrak{M}}^{\Delta\ell}(N^+)$ over $\mathcal{O}_{K_\ell}$ and its semi-stable reduction fiber over $\mathbb{F}_{\ell^2}$, respectively.
Let $\mathcal{G} = \mathcal{V}(\mathcal{G}) \coprod \mathcal{E}(\mathcal{G})$ be the dual graph of $\widehat{\mathfrak{M}}^{\Delta\ell}(N^+)_{\mathbb{F}_{\ell^2}}$.
The set $\mathcal{V}(\mathcal{G})$ of vertices of $\mathcal{G}$ consists of the irreducible components of 
$\widehat{\mathfrak{M}}^{\Delta\ell}(N^+)_{\mathbb{F}_{\ell^2}}$, and
the set $\mathcal{E}(\mathcal{G})$ of edges of $\mathcal{G}$ consists of the singular points of 
$\widehat{\mathfrak{M}}^{\Delta\ell}(N^+)_{\mathbb{F}_{\ell^2}}$.
\subsubsection{Reduction map and identifications} \label{subsubsec:reduction_identification}
We write $B$ for $B^\Delta$ for convenience.
Let $$\mathrm{red}_\ell : \mathfrak{M}^{\Delta\ell}(N^+)(\mathbb{C}_\ell) \to \mathcal{G} = \mathcal{V}(\mathcal{G}) \coprod \mathcal{E}(\mathcal{G})$$ be the reduction map modulo $\ell$. 
The $\ell$-adic uniformization \`{a} la Cerednik-Drinfeld (e.g.~\cite[$\S$5.2]{bertolini-darmon-imc-2005}, \cite[$\S$3.2]{chida-hsieh-main-conj}) induces identifications
\[
\xymatrix@R=1.2em{
\mathcal{V}(\mathcal{G}) \ar@{=}[d]\\
B^\times \backslash \left( \mathcal{V}(\mathscr{T}_\ell) \times \mathbb{Z}/2\mathbb{Z} \times \widehat{B}^{(\ell), \times} / \widehat{R}^{(\ell),\times} \right) \ar@{=}[d] \\
B^\times \backslash \left( B^\times_\ell / U_\ell \mathbb{Q}^\times_\ell \times \mathbb{Z}/2\mathbb{Z} \times \widehat{B}^{(\ell), \times} / \widehat{R}^{(\ell),\times} \right) \ar[d]^-{\simeq} & B^\times(b_\ell U_\ell, j, b^{(\ell)}U^{(\ell)}) \ar@{|->}[d] \\
B^\times \backslash \widehat{B}^{\times} / \widehat{R}^{\times} \times \mathbb{Z}/2\mathbb{Z} & ( [b_\ell b^{(\ell)} ]_U, j + \mathrm{ord}_\ell (\mathrm{nrd}(b_\ell)) )
}
\]
and
\[
\xymatrix@R=1.2em{
\mathcal{E}(\mathcal{G}) \ar[d]^-{\simeq} & \overset{\to}{\mathcal{E}}(\mathcal{G}) \ar[d]^-{\simeq}\\
B^\times \backslash \widehat{B}^{\times} / \widehat{R_0(\ell)}^{\times} \times \lbrace 0 \rbrace \ar@{^{(}->}[r] & B^\times \backslash \widehat{B}^{\times} / \widehat{R_0(\ell)}^{\times} \times \mathbb{Z}/2\mathbb{Z}
}
\]
where $\mathrm{nrd}$ is the reduced norm map.

\subsection{Bad reduction of Jacobians of Shimura curves}
\subsubsection{Hecke action}
Let $J^{\Delta\ell}(N^+)$ be the Jacobian of $M^{\Delta\ell}(N^+)$. If $L/\mathbb{Q}$ is a field extension, let $\mathrm{Div}^0 M^{\Delta\ell}(N^+)(L)$ be the group of divisors on $J^{\Delta\ell}(N^+)(L)$ of degree zero on each connected component of $M^{\Delta\ell}(N^+) \times_{\mathbb{Q}} L$.
For $D \in \mathrm{Div}^0 M^{\Delta\ell}(N^+)(L)$, write 
$cl(D) \in J^{\Delta\ell}(N^+)(L)$ for the point represented by $D$.
Let $\mathbb{T}^{\Delta\ell, (\ell)}(N^+)$ be the $\ell$-deprived Hecke algebra faithfully acting on $J^{\Delta\ell}(N^+)$ via Picard functoriality.
The isomorphism
$\varphi_{B^{\Delta}, \mathcal{B}^{\Delta\ell}}: \widehat{B^{\Delta}}^{(\ell)} \simeq \widehat{\mathcal{B}^{\Delta\ell}}^{(\ell)}$
induces an isomorphism of the corresponding $\ell$-deprived Hecke algebras
$\varphi_{B^{\Delta}, \mathcal{B}^{\Delta\ell}, *}: \mathbb{T}^{\Delta, (\ell)}(\ell N^+) \simeq \mathbb{T}^{\Delta\ell, (\ell)}(N^+)$.
We extend the map to the full Hecke algebras
$$\mathbb{T}^{\Delta}(\ell N^+) \twoheadrightarrow \mathbb{T}^{\Delta\ell}(N^+) \to \mathrm{End} ( J^{\Delta\ell}(N^+)_{\mathbb{Q}} )$$
by defining
$U_\ell \mapsto \mathcal{R}_\ell \pi_\ell \mathcal{R}_\ell$
for some $\pi_\ell \in \mathcal{B}_\ell$ with $\mathrm{nrd}(\pi_\ell) = \ell$ where $\mathcal{R}$ is an Eichler order in $\mathcal{B}^{\Delta\ell}$ corresponding to $\mathcal{U}$.

Let $\mathfrak{J}^{\Delta\ell}(N^+)$ be the N\'{e}ron model of $J^{\Delta\ell}(N^+)_{\mathbb{Q}_\ell}$ over $\mathbb{Z}_\ell$.
Let $\mathfrak{J}_s := \mathfrak{J}^{\Delta\ell}(N^+) \times_{\mathbb{Z}_\ell} \mathbb{F}_{\ell^2}$ be the special fiber over $\mathbb{F}_{\ell^2}$ and
 $\mathfrak{J}^\circ_s$ be the connected component of $\mathfrak{J}_s$ at the identity.
 Let $\Phi^{\Delta\ell}(N^+) :=  \mathfrak{J}_s / \mathfrak{J}^\circ_s$ be the group of connected components of $\mathfrak{J}_s$, which is an \'{e}tale group scheme over $\mathbb{F}_{\ell^2}$ admitting the action of $\mathbb{T}^{\Delta}(\ell N^+)$.
Let $\mathrm{red}_\ell : J^{\Delta\ell}(N^+)(K_\ell) \to \Phi^{\Delta\ell}(N^+)$ be the reduction map.
\subsubsection{The graph description of the component group} \label{subsubsec:graph}
We describe $\Phi^{\Delta\ell}(N^+)$ in terms of the graph $\mathcal{G}$. See \cite[$\S$5.5]{bertolini-darmon-imc-2005}, \cite[$\S$1.6]{nekovar-hilbert}, and \cite[$\S$3.4]{chida-hsieh-main-conj} for detail.
Let
\[
\xymatrix@R=0em{
s: \overset{\to}{\mathcal{E}}(\mathcal{G}) \ar[r] & \mathcal{V}(\mathcal{G}) & e =(v_1 \to v_2) \ar@{|->}[r]^-{s} & s(e) = v_1 \\
t: \overset{\to}{\mathcal{E}}(\mathcal{G}) \ar[r] & \mathcal{V}(\mathcal{G}) & e =(v_1 \to v_2)\ar@{|->}[r]^-{t} & t(e) =v_2
}
\]
be the source and target maps. Using the identifications in $\S$\ref{subsubsec:reduction_identification},
we induces maps (with the same notation)
\[
\xymatrix@R=1.2em{
\overset{\to}{\mathcal{E}}(\mathcal{G}) \ar[r]^-{s} & \mathcal{V}(\mathcal{G}) & \overset{\to}{\mathcal{E}}(\mathcal{G}) \ar[r]^-{t} & \mathcal{V}(\mathcal{G}) \\
{\mathcal{E}}(\mathcal{G}) \ar@{^{(}->}[u] \ar[r]^-{s} & B^\times \backslash \widehat{B}^\times / \widehat{R}^\times \times \lbrace 0 \rbrace \ar@{^{(}->}[u] & {\mathcal{E}}(\mathcal{G}) \ar@{^{(}->}[u] \ar[r]^-{t} & B^\times \backslash \widehat{B}^\times / \widehat{R}^\times \times \lbrace 1 \rbrace \ar@{^{(}->}[u] \\
B^\times b \widehat{R_0(\ell)}^\times \ar@{|->}[r]^-s & (B^\times b \widehat{R}^\times, 0) &  B^\times b \widehat{R_0(\ell)}^\times \ar@{|->}[r]^-t & (B^\times b \xi_\ell \widehat{R}^\times, 1)
}
\]
where $\xi_\ell = \left( \begin{smallmatrix} 1 & 0 \\ 0 & \ell \end{smallmatrix} \right) \in \mathrm{GL}_2(\mathbb{Q}_\ell) \simeq B^\times_\ell$. We also induce the maps for the chain and cochain complexes of $\mathcal{G}$, which can be identified with degeneracy maps between the spaces of automorphic forms as follows:
\[
\xymatrix@R=1.2em{
\mathbb{Z}[\mathcal{E}(\mathcal{G})] \ar[r]^-{d_* = -s_* + t_*} \ar@{=}[d] & \mathbb{Z}[\mathcal{V}(\mathcal{G})] \ar@{=}[d]
& \mathbb{Z}[\mathcal{V}(\mathcal{G})] \ar[r]^-{d^* = -s^* + t^*} \ar@{=}[d] & \mathbb{Z}[\mathcal{E}(\mathcal{G})] \ar@{=}[d]\\
S^{N^-}_2(N^+\ell, \mathbb{Z}) \ar[r]^-{-\alpha_* + \beta_*} & S^{N^-}_2(N^+, \mathbb{Z})^{\oplus 2}
& S^{N^-}_2(N^+, \mathbb{Z})^{\oplus 2} \ar[r]^-{-\alpha^* + \beta^*} & S^{N^-}_2(N^+\ell, \mathbb{Z})
}
\]
where $(\alpha^*f)(b) = f(b)$, $(\beta^*f)(b) = f(b\xi_\ell)$ are the injective degeneracy maps, and $\alpha_*$ and $\beta_*$ are the corresponding trace maps as in \cite[$\S$1.2.1]{nekovar-hilbert}.

Let $\mathbb{Z}[\mathcal{V}(\mathcal{G})]_0 := d_* \left( \mathbb{Z}[\mathcal{E}(\mathcal{G})] \right) \subseteq \mathbb{Z}[\mathcal{V}(\mathcal{G})]$. Following \cite[$\S$1.6.5]{nekovar-hilbert} (using \cite[Appendix by Edixhoven]{bertolini-darmon-rigidanalytic-1997}), we have a canonical isomorphism
\[
\xymatrix{
\mathbb{Z}[\mathcal{E}(\mathcal{G})] / \mathrm{Im}(d^*) \ar[r]^-{d_*}_-{\simeq} & \mathbb{Z}[\mathcal{V}(\mathcal{G})]_0 / \mathrm{Im}(d_* \circ d^*) \simeq \Phi^{\Delta\ell}(N^+). 
}
\]

Then we have the following commutative diagram:
\begin{equation} \label{eqn:comm_diagram_1st_explicit}
\begin{gathered}
\xymatrix{
\mathrm{Div}^0 M^{\Delta\ell}(N^+)(K_\ell) \ar[r]^-{cl}  \ar[d]^-{cl_\mathcal{V}}& J^{\Delta\ell}(N^+)(K_\ell) \ar[d]^-{\mathrm{red}_\ell}\\
\mathbb{Z}[\mathcal{V}(\mathcal{G})]_0 / \mathrm{Im}(d_* \circ d^*) \ar[r]^-{\simeq} & \Phi^{\Delta\ell}(N^+) .
}
\end{gathered}
\end{equation}
where $cl_\mathcal{V}$ is
the specialization map of divisors
defined in  \cite[$\S$1.6.6]{nekovar-hilbert}. See \cite[(3.10)]{chida-hsieh-main-conj} for the explicit formula of $cl_\mathcal{V}$.

\subsection{CM points unramified at $\ell$} \label{subsec:cm_points}
Let $z_0 \in \mathbb{C} \setminus \mathbb{R}$ be a point fixed by the image of $K^\times$ in $\mathrm{GL}_2(\mathbb{R})$ under the composition $\otimes_{\mathbb{Q}} \mathbb{R} \circ t^{\Delta\ell} : K^\times \to \mathcal{B}^{\Delta\ell, \times} \to \mathrm{GL}_2(\mathbb{R})$.
The \textbf{set of CM points by $K$ unramified at $\ell$ on $M^{\Delta\ell}_{\mathcal{U}}$} is defined by
$$\mathrm{CM}_{K, \ell}\left( M^{\Delta\ell}(N^+) \right)  := \left\lbrace [z_0, b]_{\mathcal{U}} : b \in \widehat{\mathcal{B}^{\Delta\ell}}^\times, b_\ell =1\right\rbrace  \subset M^{\Delta\ell}(N^+) (K^{\mathrm{ab}}) .$$
Shimura's reciprocity law implies the existence of embedding $\iota_\ell : \mathrm{CM}_{K, \ell}\left( M^{\Delta\ell}(N^+) \right) \subset M^{\Delta\ell}(N^+) (K_\ell) $
as in \cite[$\S$1.8.3]{nekovar-hilbert} and \cite[$\S$3.6]{chida-hsieh-main-conj}.
Let
$\mathrm{CM}^0_{K, \ell}\left( M^{\Delta\ell}(N^+) \right) \subset \mathrm{Div}^0 M^{\Delta\ell}(N^+)(K_\ell$) be the subgroup generated by the degree zero divisors supported in $\mathrm{CM}_{K, \ell}\left( M^{\Delta\ell}(N^+) \right)$.
Then we define the reduction map $\mathrm{red}_\mathcal{V}$ by
\[
\xymatrix@R=0em{
\mathrm{CM}^0_{K, \ell}\left( M^{\Delta\ell}(N^+) \right) \ar[r]^-{\mathrm{red}_\mathcal{V}} & \mathbb{Z}[\mathcal{V}(\mathcal{G})]_0 \\
\sum_i n_i [z,b]_{\mathcal{U}} \ar@{|->}[r] &\sum_i n_i \left[\varphi^{-1}_{B^{\Delta}, \mathcal{B}^{\Delta\ell}}(b) \right]_U
}
\]
and we have $\mathrm{red}_\mathcal{V} \circ \iota_\ell  = cl_{\mathcal{V}}$ where $cl_{\mathcal{V}}$ is defined in $\S$\ref{subsubsec:graph}.
\section{Level raising mod $\varpi^n$} \label{sec:level_raising}
Let $f_\alpha = \sum a_n (f) q^n \in S_2(\Gamma_0(N^+p\Delta))^{\mathrm{new}}$ be the $\alpha = \alpha_p(f)$-stabilization of $f$.
Using the integral Jacquet-Langlands correspondence as in \ref{subsec:modular_forms}, we regard $f_{\alpha}$ as a modular form in $S^{\Delta}_2(N^+p)$. We define the $\mathcal{O}$-algebra homomorphism
$$\pi_{f_\alpha} : \mathbb{T}^{\Delta}(N^+p)  \to \mathcal{O}$$
 associated to $f_\alpha$ by the assignments
$T_q  \mapsto a_q(f)$ for $q \nmid N^+p\Delta$, $U_q  \mapsto a_q(f)$ for $q \mid N^+$, $U_q  \mapsto \alpha_p(f)$  for $q = p$, $U_q  \mapsto \varepsilon_q(f)$ for $q \mid \Delta$, and $\langle a \rangle  \mapsto 1$.

\subsection{Level raising mod $\varpi^n$ at one $n$-admissible prime: from definite to indefinite} \label{subsec:level_raising_at_one_prime}
We explain level raising mod $\varpi^n$ at an $n$-admissible prime. See \cite[Theorem 5.15]{bertolini-darmon-imc-2005}, \cite[$\S$4.2]{chida-hsieh-main-conj}, and \cite[$\S$7.4.1]{longo-hilbert-modular-case} for detail.

\begin{defn}[$n$-admissible forms] \label{defn:n-admissible-forms}
An \textbf{$n$-admissible form $\mathcal{D} = (\Delta, g)$} is a pair consisting of
\begin{itemize}
\item a square-free integer $\Delta$ of an odd number of prime factors
\item an $\mathcal{O}_n$-valued quaternionic eigenform $g \in S^{\Delta}_2(N^+p, \mathcal{O}_n)$
\end{itemize}
such that the following conditions hold:
\begin{enumerate}
\item $N^- \mid \Delta$ and every prime factor of $\Delta/N^-$ is $n$-admissible;
\item $g \not\equiv 0 \pmod{\varpi}$;
\item $g$ is a $\mathbb{T}^{\Delta}(N^+p)$-eigenform and $\pi_g \equiv \pi_{f_\alpha} \pmod{\varpi^n}$, where
$\pi_g : \mathbb{T}^{\Delta}(N^+p)  \to \mathcal{O}_n$ is the $\mathcal{O}$-algebra homomorphism induced by $g$.
\end{enumerate}
We write $\mathcal{I}^{\Delta}_g := \ker(\pi_g)$.
\end{defn}
The concept of $n$-admissible forms allows us to remove all the rigid pair argument and the $p$-isolated condition in \cite{bertolini-darmon-imc-2005}.
Our $n$-admissible forms are slightly different from Chida-Hsieh's ones \cite[Definition 4.1]{chida-hsieh-main-conj} since the level is maximal at $p$. We do not need any $p$-power in the level since we focus only on the forms of weight two.

For an $n$-admissible form $\mathcal{D} = (\Delta, g)$, we define a surjective $\mathcal{O}$-linear map $\psi_g$ by
\[
\xymatrix@R=0em{
S^{\Delta}_2(N^+p, \mathcal{O}) / \mathcal{I}^{\Delta}_g \ar@{->>}[r]^-{\psi_g} & \mathcal{O}_n \\
h \ar@{|->}[r] & \langle g, h \rangle_U .
}
\]
\begin{prop}[{\cite[Proposition 4.2]{chida-hsieh-main-conj}}] Under Condition CR, $\psi_g$ becomes an isomorphism
$$\psi_g : S^{\Delta}_2(N^+, \mathcal{O}) / \mathcal{I}^{\Delta}_g \simeq \mathcal{O}_n$$
\end{prop}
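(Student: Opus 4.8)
The strategy is to reduce the claim to \emph{mod $p$ multiplicity one} for the quaternionic Hecke module; once that is in hand, everything is formal. Put $R := \mathbb{T}^{\Delta}(N^+p)$ and let $\mathfrak{m} \subset R$ be the non-Eisenstein maximal ideal determined by $\overline{\rho}$ (so $T_q - a_q(f)$, $U_p - \alpha_p(f)$, etc.\ lie in $\mathfrak{m}$). Since $\pi_g \equiv \pi_{f_\alpha} \pmod{\varpi}$ we have $\mathcal{I}^{\Delta}_g = \ker(\pi_g) \subseteq \mathfrak{m}$; moreover $\pi_g$ is surjective onto $\mathcal{O}_n$ (its image is an $\mathcal{O}$-subalgebra of $\mathcal{O}_n$ containing the image of $\mathcal{O}$, which is all of $\mathcal{O}_n$), so $R/\mathcal{I}^{\Delta}_g \simeq \mathcal{O}_n$ is local and $\mathcal{I}^{\Delta}_g$ lies in no maximal ideal of $R$ other than $\mathfrak{m}$. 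As $R$ is finite over the complete ring $\mathcal{O}$, it decomposes as the product of its localizations, and $\mathcal{I}^{\Delta}_g$ generates the unit ideal in $R_{\mathfrak{m}'}$ for every $\mathfrak{m}' \neq \mathfrak{m}$; hence the corresponding summands of $S^{\Delta}_2(N^+p,\mathcal{O})$ are annihilated in the quotient, and $S^{\Delta}_2(N^+p,\mathcal{O})/\mathcal{I}^{\Delta}_g S^{\Delta}_2(N^+p,\mathcal{O}) \simeq S_{\mathfrak{m}}/\mathcal{I}^{\Delta}_g S_{\mathfrak{m}}$ with $S_{\mathfrak{m}} := S^{\Delta}_2(N^+p,\mathcal{O})_{\mathfrak{m}}$. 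So I may work at $\mathfrak{m}$ throughout.

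The key input I would invoke is that, under Condition CR, $S_{\mathfrak{m}}$ is free of rank one over $R_{\mathfrak{m}}$. This follows from mod $p$ multiplicity one, $\dim_{\mathbb{F}} S^{\Delta}_2(N^+p,\mathbb{F})[\mathfrak{m}] = 1$: the pairing $\langle -,-\rangle_U$ is perfect and the Hecke action is self-adjoint, so it induces an isomorphism $S_{\mathfrak{m}} \simeq \mathrm{Hom}_{\mathcal{O}}(S_{\mathfrak{m}}, \mathcal{O})$ of $R_{\mathfrak{m}}$-modules, which together with multiplicity one and Nakayama forces $R_{\mathfrak{m}}$ to be Gorenstein and $S_{\mathfrak{m}}$ free of rank one. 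The multiplicity one statement under Condition CR is established through the arithmetic of Jacobians of Shimura curves recalled in $\S\ref{sec:shimura_curves}$; see \cite[Proposition 4.2]{chida-hsieh-main-conj} and the references there, the only difference here being the weight-two, $p$-maximal level normalization of Remark \ref{rem:normalization_modular_forms}, which simplifies the argument. Granting this, $S^{\Delta}_2(N^+p,\mathcal{O})/\mathcal{I}^{\Delta}_g \simeq R_{\mathfrak{m}}/\mathcal{I}^{\Delta}_g R_{\mathfrak{m}} \simeq \mathcal{O}_n$.

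It remains to identify this isomorphism with $\psi_g$. By self-adjointness, $\psi_g(Th) = \langle g, Th\rangle_U = \langle Tg, h\rangle_U = \pi_g(T)\,\psi_g(h)$ for $T \in R$, so $\psi_g$ kills $\mathcal{I}^{\Delta}_g S^{\Delta}_2(N^+p,\mathcal{O})$ and descends to a map $\mathcal{O}_n \to \mathcal{O}_n$; it is surjective because $g \not\equiv 0 \pmod{\varpi}$ and $\langle -,-\rangle_U$ stays perfect modulo $\varpi$, so some $h$ gives $\psi_g(h) \in \mathcal{O}_n^{\times}$; and a surjection $\mathcal{O}_n \twoheadrightarrow \mathcal{O}_n$ of $\mathcal{O}$-modules of equal finite length is an isomorphism. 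This proves the proposition (with $S^{\Delta}_2(N^+p,\mathcal{O})$ in place of the $S^{\Delta}_2(N^+,\mathcal{O})$ of the displayed line, which is a misprint for the source of $\psi_g$).

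The genuinely hard ingredient is the mod $p$ multiplicity one / Gorenstein property, which I would cite rather than reprove: it rests on the Cerednik-Drinfeld uniformization and the graph description of the component group of $\S\ref{subsubsec:graph}$, on Ihara's lemma and Ribet-style level raising, and on the Jacquet-Langlands identification of $\mathbb{T}^{\Delta}(N^+p)$ with an $N^-$-new Hecke quotient; Condition CR enters precisely to control the ramified primes $\ell \mid \Delta$ with $\ell \equiv \pm 1 \pmod{p}$ so that the localized module stays Gorenstein. This is the same mechanism exploited repeatedly in $\S\ref{sec:level_raising}$; the rest of the present proof is bookkeeping around it.
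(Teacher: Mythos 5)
Your proposal is correct and follows essentially the route the paper intends: the paper gives no proof beyond citing \cite[Proposition 4.2]{chida-hsieh-main-conj}, and your reconstruction (localize at the non-Eisenstein maximal ideal, invoke mod $p$ multiplicity one under Condition CR to get rank-one freeness of $S^{\Delta}_2(N^+p,\mathcal{O})_{\mathfrak{m}}$ over $\mathbb{T}^{\Delta}(N^+p)_{\mathfrak{m}}$, then use self-adjointness and perfectness of $\langle-,-\rangle_U$ to identify the quotient with $\mathcal{O}_n$ via $\psi_g$) is exactly the standard argument used there and reused later in $\S$\ref{subsec:level_raising_at_two_prime} of this paper. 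Your observation that the displayed source should read $S^{\Delta}_2(N^+p,\mathcal{O})$ rather than $S^{\Delta}_2(N^+,\mathcal{O})$ is also correct.
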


Fix an $n$-admissible form $\mathcal{D}=(\Delta, g)$ and an $n$-admissible prime $\ell \nmid \Delta$ with $a_\ell (f) \equiv \varepsilon_{\ell} \left( \ell+1 \right) \Mod{\varpi^n}$.

We first \emph{formally} extend $\pi_g$ to an $\mathcal{O}$-algebra homomorphism 
$\pi_{g'_\ell} : \mathbb{T}^{\Delta}(\ell N^+p) \to \mathcal{O}_n$ by defining
$\pi_{g'_\ell} (U_\ell) = \varepsilon_\ell$. Since $U_\ell$-eigenvalue is $\varepsilon_\ell$ modulo $\varpi^n$, the extended map $\pi_{g'_\ell}$ factors through the $\ell$-new quotient $\mathbb{T}^{\Delta\ell}(N^+p)$ modulo $\varpi^n$.
We define the $\mathcal{O}$-algebra homomorphism $\pi_{g_\ell}: \mathbb{T}^{\Delta\ell}(N^+p) \to \mathcal{O}_n$ by the following commutative diagram
\[
\xymatrix{
\mathbb{T}^{\Delta}(\ell N^+p) \ar@/^2pc/[rr]^-{\pi_{g'_\ell}} \ar@{->>}[r] \ar@{->>}[rd]_-{\ell\textrm{-new quotient}} & \mathbb{T}^{\Delta}(N^+p)  \ar[r]^-{\pi_g} & \mathcal{O}_n \\
& \mathbb{T}^{\Delta\ell}(N^+p) \ar[ru]_-{\pi_{g_\ell}}
}
\]
and write $\mathcal{I}^{\Delta\ell}_{g_\ell} = \ker(\pi_{g_\ell})$.
\begin{rem}[on the existence of $\Delta\ell$-new form $g_\ell$ corresponding to $\pi_{g_\ell}$]
The existence of $g_\ell$ in $S^{\Delta\ell}_2(N^+p)$ itself is not a formal result at all because we also need to match up the relation between Hecke modules where the Hecke algebras act faithfully in the argument. In other words, we need to check that the Hecke eigensystem of $\pi_{g_\ell}$ arises in the Hecke module over $\mathcal{O}_n$ where $\mathbb{T}^{\Delta\ell}(N^+p)$ acts faithfully.
Note that Condition PO also applies to $g_\ell$. Thus, the mod $\varpi^n$ Hecke eigensystem of $g_\ell$ is also $p$-old. This is in fact tacitly used in  \cite[Theorem 5.15]{bertolini-darmon-imc-2005}, whose proof uses Ribet's exact sequence extensively.  One may use \cite{diamond-taylor-non-optimal} instead.
In fact, the existence of $g_\ell$ and Theorem \ref{thm:level_raising} are the content of \cite[Theorem 5.15]{bertolini-darmon-imc-2005} under Condition CR.
\end{rem}

Let $\mathcal{B}^{\Delta\ell}$ be the indefinite quaternion algebra of discriminant $\Delta\ell$.
Let $M^{\Delta\ell}(N^+p)$ be the Shimura curve attached to $\mathcal{B}^{\Delta\ell}$ of level $\mathcal{U}_0(p)$.
Let $J^{\Delta\ell}(N^+p)$ be the Jacobian of $M^{\Delta\ell}(N^+)$.
Let $\Phi^{\Delta\ell}(N^+p) $ be the group of connected components of the special fibre of the N\'{e}ron model of $J^{\Delta\ell}(N^+p)$ over $\mathcal{O}_{K_{\ell}}$. Write $M_\mathcal{O} := M \otimes_{\mathbb{Z}} \mathcal{O}$.
\begin{thm}[Level raising mod $\varpi^n$; {\cite[Theorem 4.3]{chida-hsieh-main-conj}}] \label{thm:level_raising}
Assume Condition CR.
Then $$\Phi^{\Delta\ell}(N^+p)_{\mathcal{O}} / \mathcal{I}^{\Delta\ell}_{g_\ell} \simeq S^{\Delta}_2(N^+p) / \mathcal{I}^{\Delta}_g \simeq \mathcal{O}_n.$$
\end{thm}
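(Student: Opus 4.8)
The plan is to reduce the theorem to Ribet's level-raising isomorphism on the dual reduction graph at $\ell$, exactly as in \cite[Theorem 5.15]{bertolini-darmon-imc-2005} and \cite[Theorem 4.3]{chida-hsieh-main-conj}, the only genuinely new point being that the tame level now carries the factor $p$ at which $f_\alpha$ is $p$-stabilized, so every step of $\S$\ref{sec:shimura_curves} must be re-run with $\mathcal{U}_0(p)$-level structure in place of $\mathcal{U}$.

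First I would set up the Cerednik--Drinfeld $\ell$-adic uniformization of $M^{\Delta\ell}(N^+p)$ and its Jacobian verbatim as in $\S$\ref{subsubsec:reduction_identification}--$\S$\ref{subsubsec:graph}, obtaining a dual graph $\mathcal{G}$ whose vertices form two copies of $B^\times\backslash\widehat{B}^\times/\widehat{R}^\times$ with $R$ now an Eichler order of level $\Gamma_0(N^+p)$ in $B=B^\Delta$, and whose edges are $B^\times\backslash\widehat{B}^\times/\widehat{R_0(\ell)}^\times$ with $R_0(\ell)$ of level $\Gamma_0(N^+p\ell)$. The description of the component group via the Néron model special fibre together with the Edixhoven-appendix isomorphism recalled in $\S$\ref{subsubsec:graph} then gives, Hecke-equivariantly for the $\ell$-deprived Hecke algebra,
\[
\Phi^{\Delta\ell}(N^+p)\;\simeq\;\mathbb{Z}[\mathcal{E}(\mathcal{G})]/\mathrm{Im}(d^*)\;\simeq\;\mathbb{Z}[\mathcal{V}(\mathcal{G})]_0/\mathrm{Im}(d_*\circ d^*),
\]
and under the dictionary of $\S$\ref{subsubsec:graph} the maps $d^*,d_*$ become the pair of level-$\ell$ degeneracy maps between $S^\Delta_2(N^+p)^{\oplus 2}$ and $S^\Delta_2(N^+p\ell)$, while the operator $U_\ell$ on $\Phi^{\Delta\ell}(N^+p)$ induced by Frobenius at $\ell$ acts through the involution of $\mathcal{G}$ exchanging the two vertex copies. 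This last point is where the sign $\varepsilon_\ell$ coming from $n$-admissibility enters.

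Next I would compute the quotient by $\mathcal{I}^{\Delta\ell}_{g_\ell}$ using the vertex presentation. The composite $d_*\circ d^*$ on $S^\Delta_2(N^+p)^{\oplus 2}$ is given by the Ribet/``Ihara'' matrix $\left(\begin{smallmatrix}\ell+1 & -T_\ell\\ -T_\ell & \ell+1\end{smallmatrix}\right)$ (the diamond operator being trivial in weight two); pulling $\mathcal{I}^{\Delta\ell}_{g_\ell}$ back to $\mathbb{T}^\Delta(N^+p)$ it is $\mathcal{I}^\Delta_g$, and the defining congruence of an $n$-admissible prime (Definition \ref{defn:n-admissible-primes}), $a_\ell(f)\equiv \varepsilon_\ell(\ell+1)\pmod{\varpi^n}$, turns this matrix modulo $\mathcal{I}^\Delta_g$ into $(\ell+1)\left(\begin{smallmatrix}1 & -\varepsilon_\ell\\ -\varepsilon_\ell & 1\end{smallmatrix}\right)$ acting on $\bigl(S^\Delta_2(N^+p,\mathcal{O})/\mathcal{I}^\Delta_g\bigr)^{\oplus 2}$. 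Since $\ell^2\not\equiv 1\pmod p$ the scalar $\ell+1$ is a unit in $\mathcal{O}_n$, and because $\varepsilon_\ell^2=1$ the cokernel of $\left(\begin{smallmatrix}1 & -\varepsilon_\ell\\ -\varepsilon_\ell & 1\end{smallmatrix}\right)$ on a free rank-two $\mathcal{O}_n$-module is free of rank one; the relation $U_\ell=\varepsilon_\ell$ and the degree-zero condition cutting out $\mathbb{Z}[\mathcal{V}(\mathcal{G})]_0$ are precisely what select the correct diagonal so that this cokernel is canonically $S^\Delta_2(N^+p,\mathcal{O})/\mathcal{I}^\Delta_g$. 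Feeding in the isomorphism $S^\Delta_2(N^+p,\mathcal{O})/\mathcal{I}^\Delta_g\simeq\mathcal{O}_n$ of $\S$\ref{subsec:level_raising_at_one_prime} (valid under Condition CR by the $\psi_g$-argument) then yields the asserted chain $\Phi^{\Delta\ell}(N^+p)_{\mathcal{O}}/\mathcal{I}^{\Delta\ell}_{g_\ell}\simeq S^\Delta_2(N^+p)/\mathcal{I}^\Delta_g\simeq\mathcal{O}_n$.

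The step I expect to be the real obstacle is the bookkeeping around the prime $p$ in the level. In \cite{chida-hsieh-main-conj} one first removes $p$ from the level of the localized Hecke module (invoking Ihara's lemma through Condition PO) before running Ribet's argument; here I must instead verify directly that the Cerednik--Drinfeld picture, the identification of $\Phi^{\Delta\ell}(N^+p)$ with spaces of $\Gamma_0(N^+p)$-quaternionic forms, and above all the mod $\varpi$ multiplicity-one input underlying $S^\Delta_2(N^+p,\mathcal{O})/\mathcal{I}^\Delta_g\simeq\mathcal{O}_n$, all remain valid with the extra $\Gamma_0(p)$-structure and with $U_p$ acting as the unit $\alpha_p(f)$ rather than on a $p$-new module. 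It is exactly Condition PO (forcing the relevant eigensystem to be $p$-old and $p$-isolated in the localized Hecke module) together with the weight-two hypothesis that makes this go through, and pinning down precisely where these are used is the delicate part; the purely combinatorial computation of $d_*\circ d^*$ is standard once the normalizations of the degeneracy maps for the definite algebra $B^\Delta$ are fixed.
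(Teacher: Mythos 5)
Your proposal is correct and follows exactly the Ribet-style component-group computation (Cerednik--Drinfeld dual graph, $d_*\circ d^*$ as the matrix $\left(\begin{smallmatrix}\ell+1 & -T_\ell\\ -T_\ell & \ell+1\end{smallmatrix}\right)$, the $n$-admissibility congruence making its cokernel free of rank one, then $\psi_g$) that underlies the sources the paper cites; the paper itself gives no independent proof of Theorem \ref{thm:level_raising} but simply invokes \cite[Theorem 4.3]{chida-hsieh-main-conj} and \cite[Theorem 5.15]{bertolini-darmon-imc-2005}, with the $p$-in-the-level issue handled via Condition PO exactly as you flag. So your write-up is, if anything, more explicit than the paper on the point it treats as a black box.
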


Let $\mathrm{Ta}_p(J^{\Delta\ell}(N^+p) )$ be the $p$-adic Tate module of $J^{\Delta\ell}(N^+p)$.
\begin{cor}[{\cite[Corollary 4.4]{chida-hsieh-main-conj}}] \label{cor:arithmetic_jochnowitz}
Under Condition CR, we have
$$\mathrm{Ta}_p(J^{\Delta\ell}(N^+p) )_{\mathcal{O}} / \mathcal{I}^{\Delta\ell}_{g_\ell} \simeq T_{f,n} .$$
\end{cor}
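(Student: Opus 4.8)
The plan is to prove $\mathrm{Ta}_p(J^{\Delta\ell}(N^+p) )_{\mathcal{O}} / \mathcal{I}^{\Delta\ell}_{g_\ell} \simeq T_{f,n}$ by transporting the component-group identification of Theorem \ref{thm:level_raising} up to the full $p$-adic Tate module, using the theory of $\ell$-adic uniformization set up in $\S$\ref{sec:shimura_curves} together with the mod $\varpi$ multiplicity one that Condition CR provides (the same input used to prove Theorem \ref{thm:level_raising}, cf.\ \cite[Proposition 4.2]{chida-hsieh-main-conj}). Put $\mathfrak{m} := \mathcal{I}^{\Delta\ell}_{g_\ell} + \varpi\,\mathbb{T}^{\Delta\ell}(N^+p)$; by Theorem \ref{thm:level_raising} this is a genuine non-Eisenstein maximal ideal in the support of $\Phi^{\Delta\ell}(N^+p)$, hence (Picard functoriality) in the support of $J^{\Delta\ell}(N^+p)$, and $\pi_{g_\ell}$ factors through the $\ell$-new quotient of $\mathbb{T}^{\Delta\ell}(N^+p)_\mathfrak{m}$. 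Since $\mathbb{T}^{\Delta\ell}(N^+p)/\mathcal{I}^{\Delta\ell}_{g_\ell} \simeq \mathcal{O}_n$ is local Artinian, $\mathcal{I}^{\Delta\ell}_{g_\ell}$ is $\mathfrak{m}$-primary, so $M/\mathcal{I}^{\Delta\ell}_{g_\ell}M = M_\mathfrak{m}/\mathcal{I}^{\Delta\ell}_{g_\ell}M_\mathfrak{m}$ for any $\mathbb{T}^{\Delta\ell}(N^+p)$-module $M$; it therefore suffices to compute after localizing at $\mathfrak{m}$.

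Next I would invoke the monodromy (Cerednik--Drinfeld) exact sequence of $\mathbb{T}^{\Delta\ell}(N^+p)[G_{\mathbb{Q}_\ell}]$-modules
\[
0 \longrightarrow \mathcal{X}^\vee \otimes_{\mathbb{Z}} \mathbb{Z}_p(1) \longrightarrow \mathrm{Ta}_p(J^{\Delta\ell}(N^+p)) \longrightarrow \mathcal{X} \otimes_{\mathbb{Z}} \mathbb{Z}_p \longrightarrow 0,
\]
where $\mathcal{X}$ is the character group of the torus $\mathfrak{J}^\circ_s$ — a finitely generated free $\mathbb{Z}$-module with unramified $G_{\mathbb{Q}_\ell}$-action — and the monodromy pairing realizes $\mathcal{X} \hookrightarrow \mathcal{X}^\vee$ with cokernel $\Phi^{\Delta\ell}(N^+p)$ (this is precisely the datum entering the graph description of $\S$\ref{subsubsec:graph}). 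The mod $\varpi$ multiplicity one furnished by Condition CR makes $\mathbb{T}^{\Delta\ell}(N^+p)_\mathfrak{m}$ Gorenstein, $\mathcal{X}_\mathfrak{m}$ and $\mathcal{X}^\vee_\mathfrak{m}$ free of rank one over its $\ell$-new quotient, and $\mathrm{Ta}_p(J^{\Delta\ell}(N^+p))_\mathfrak{m}$ free of rank two over $\mathbb{T}^{\Delta\ell}(N^+p)_\mathfrak{m}$. By Theorem \ref{thm:level_raising}, $\mathcal{X}/\mathcal{I}^{\Delta\ell}_{g_\ell} \to \mathcal{X}^\vee/\mathcal{I}^{\Delta\ell}_{g_\ell}$ is then an isomorphism of free $\mathcal{O}_n$-modules of rank one, so quotienting the monodromy sequence by $\mathcal{I}^{\Delta\ell}_{g_\ell}$ (exact on all three terms by the freeness just recorded) gives an extension
\[
0 \longrightarrow \mathcal{O}_n(\mu_{\epsilon_\ell \ell}) \longrightarrow \mathrm{Ta}_p(J^{\Delta\ell}(N^+p))_{\mathcal{O}}/\mathcal{I}^{\Delta\ell}_{g_\ell} \longrightarrow \mathcal{O}_n(\mu_{\epsilon_\ell}) \longrightarrow 0
\]
of free rank-one $\mathcal{O}_n$-modules, where $\mu_c$ denotes the unramified character of $G_{\mathbb{Q}_\ell}$ sending $\mathrm{Frob}_\ell$ to $c$; here the $\mathrm{Frob}_\ell$-eigenvalue $\epsilon_\ell$ on the quotient is read off from the action of $U_\ell$ on $\mathcal{X}$ for an $\ell$-new eigensystem (as in $\S$\ref{subsubsec:graph}, $\S$\ref{subsec:cm_points}), and the eigenvalue $\epsilon_\ell\ell$ on the sub picks up the extra factor $\ell$ from the Tate twist. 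Since $\ell \not\equiv \pm 1 \pmod{\varpi}$ for an $n$-admissible prime, $\epsilon_\ell \neq \epsilon_\ell\ell$ in $\mathcal{O}_n$, the relevant $\mathrm{Ext}^1_{G_{\mathbb{Q}_\ell}}$ vanishes, and the extension splits; this matches the local structure of $T_{f,n}$ at $\ell$ and identifies the image of $\mathcal{X}^\vee \otimes \mathbb{Z}_p(1)$ with $F^+_\ell T_{f,n}$.

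It remains to identify $\mathrm{Ta}_p(J^{\Delta\ell}(N^+p))_{\mathcal{O}}/\mathcal{I}^{\Delta\ell}_{g_\ell}$ with $T_{f,n}$ as an $\mathcal{O}_n[G_\mathbb{Q}]$-module. Its reduction modulo $\varpi$ is $\overline{\rho}$, which occurs in $J^{\Delta\ell}(N^+p)$ because $\mathfrak{m}$ is non-Eisenstein and is absolutely irreducible by hypothesis; and by the Eichler--Shimura relations the characteristic polynomial of $\mathrm{Frob}_q$ on it equals $X^2 - \pi_{g_\ell}(T_q)X + q$ for $q \nmid N^+p\Delta\ell$, which is congruent modulo $\varpi^n$ to $X^2 - a_q(f)X + q$ since $\pi_{g_\ell} \equiv \pi_{f_\alpha} \pmod{\varpi^n}$ and $a_q(f_\alpha) = a_q(f)$ for $q \neq p$. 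A free rank-two $\mathcal{O}_n$-module carrying a $G_\mathbb{Q}$-action with absolutely irreducible residual representation $\overline{\rho}$ is determined up to isomorphism by the characteristic polynomials of a dense set of Frobenii (the group algebra acts through all of $\mathrm{M}_2(\mathcal{O}_n)$, so the module is the one attached to its pseudo-character), whence $\mathrm{Ta}_p(J^{\Delta\ell}(N^+p))_{\mathcal{O}}/\mathcal{I}^{\Delta\ell}_{g_\ell} \simeq T_{f,n}$, and by the first paragraph this descends to the stated unlocalized isomorphism. The main obstacle is the step securing the freeness of $\mathrm{Ta}_p(J^{\Delta\ell}(N^+p))_\mathfrak{m}$ of rank two over the localized Hecke algebra under Condition CR; everything else is bookkeeping with the monodromy sequence and a standard rigidity argument for Galois modules with irreducible reduction. (One could alternatively bypass the monodromy sequence and obtain $T_{f,n}$ directly by specialising the universal modular pseudo-representation attached to $\mathfrak{m}$ along $\pi_{g_\ell} \equiv \pi_f \pmod{\varpi^n}$; the route above has the advantage of simultaneously pinning down $F^+_\ell T_{f,n}$, which is needed for the local computations of Proposition \ref{prop:local_computation}.)
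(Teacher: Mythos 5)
The paper itself gives no proof of this corollary; it simply quotes \cite[Corollary 4.4]{chida-hsieh-main-conj}, and your overall architecture (localize at the $\mathfrak{m}$-primary ideal $\mathcal{I}^{\Delta\ell}_{g_\ell}$, use multiplicity one under Condition CR to get freeness of rank two of $\mathrm{Ta}_p(J^{\Delta\ell}(N^+p))_\mathfrak{m}$ over $\mathbb{T}_\mathfrak{m}$, pass the Cerednik--Drinfeld monodromy filtration through the quotient, and finish with the Carayol/pseudo-character rigidity argument for residually absolutely irreducible representations) is exactly the standard route taken there. However, two of your intermediate claims are false as stated. First, the induced map $\mathcal{X}/\mathcal{I}^{\Delta\ell}_{g_\ell} \to \mathcal{X}^\vee/\mathcal{I}^{\Delta\ell}_{g_\ell}$ is \emph{not} an isomorphism: tensoring $0 \to \mathcal{X} \to \mathcal{X}^\vee \to \Phi^{\Delta\ell}(N^+p) \to 0$ with $\mathbb{T}/\mathcal{I}^{\Delta\ell}_{g_\ell}$ and using Theorem \ref{thm:level_raising} together with the rank-one freeness of $\mathcal{X}^\vee/\mathcal{I}^{\Delta\ell}_{g_\ell}$, the surjection $\mathcal{X}^\vee/\mathcal{I}^{\Delta\ell}_{g_\ell} \twoheadrightarrow \Phi^{\Delta\ell}(N^+p)_{\mathcal{O}}/\mathcal{I}^{\Delta\ell}_{g_\ell} \simeq \mathcal{O}_n$ is a surjection of $\mathcal{O}_n$ onto $\mathcal{O}_n$, hence an isomorphism, so the monodromy map is \emph{zero} mod $\mathcal{I}^{\Delta\ell}_{g_\ell}$; were it an isomorphism, $\Phi^{\Delta\ell}(N^+p)_{\mathcal{O}}/\mathcal{I}^{\Delta\ell}_{g_\ell}$ would vanish, contradicting Theorem \ref{thm:level_raising}. (The exactness of the quotiented monodromy sequence that you actually need follows from the $\mathrm{Tor}_1$-vanishing supplied by the freeness of $\mathcal{X}_\mathfrak{m}$, not from this claim.) Second, the asserted vanishing of $\mathrm{Ext}^1_{G_{\mathbb{Q}_\ell}}\bigl(\mathcal{O}_n(\mu_{\epsilon_\ell}), \mathcal{O}_n(\mu_{\epsilon_\ell\ell})\bigr)$ is false: this group is $\mathrm{H}^1(G_{\mathbb{Q}_\ell}, \mathcal{O}_n(\mu_\ell))$, whose unramified part vanishes because $\ell-1 \in \mathcal{O}_n^\times$ but whose singular part is free of rank one over $\mathcal{O}_n$ --- indeed this nonvanishing is exactly Proposition \ref{prop:local_computation}.(3), and it is what makes the residue maps $\partial_\ell$ at $n$-admissible primes nontrivial throughout the Euler system argument.

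The correct justification for splitness is the one your own setup makes available once the first error is repaired: for $\sigma$ in inertia, $(\sigma-1)$ on $\mathrm{Ta}_p(J^{\Delta\ell}(N^+p))$ factors through the monodromy map $\mathcal{X}\otimes\mathbb{Z}_p \to \mathcal{X}^\vee\otimes\mathbb{Z}_p(1)$, which vanishes mod $\mathcal{I}^{\Delta\ell}_{g_\ell}$ by the computation above; hence $\mathrm{Ta}_p(J^{\Delta\ell}(N^+p))_{\mathcal{O}}/\mathcal{I}^{\Delta\ell}_{g_\ell}$ is unramified at $\ell$, and the two distinct Frobenius eigenvalues $\epsilon_\ell$ and $\epsilon_\ell\ell$ (distinct in $\mathcal{O}_n$ since $\ell \not\equiv 1 \pmod{\varpi}$) then split it into eigenlines, identifying the image of the toric part with $F^+_\ell T_{f,n}$. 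With these two steps corrected, the remainder of your argument --- the freeness input from Condition CR and the final identification with $T_{f,n}$ via Eichler--Shimura traces and absolute irreducibility of $\overline{\rho}$ --- is sound and agrees with the proof in the cited reference.
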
 
Consider the (enhanced) reduction map
$$\widetilde{\mathrm{red}}_\ell : J^{\Delta\ell}(N^+p)(K(m)) \to \Phi^{\Delta\ell}(N^+p)_{\mathcal{O}} \otimes \mathcal{O}_n[\Gamma_m]$$
defined by
$$\widetilde{\mathrm{red}}_\ell (D) := \sum_{\sigma \in \Gamma_m} \mathrm{red}_\ell \left( \iota_\ell \left(  \sigma D\right)    \right) \sigma$$
where $\iota_\ell : J^{\Delta\ell}(N^+p)(K(m)) \to J^{\Delta\ell}(N^+p)(K_\ell)$ is an embedding since $\ell$ splits completely in $K(m)/K$. This $\iota_\ell$ coincides with the embedding with the same notation in $\S$\ref{subsec:cm_points}.

The next theorem is used to prove the first explicit reciprocity law.
\begin{thm}[{\cite[Theorem 4.5]{chida-hsieh-main-conj}}] \label{thm:reduction_diagram}
Assume Condition CR.
\begin{enumerate}
\item There is an isomorphism of $\mathcal{O}_n$-modules
\[
\xymatrix{
\psi_g : \Phi^{\Delta\ell}(N^+p)_{\mathcal{O}} / \mathcal{I}^{\Delta\ell}_{g_\ell} \ar[r]^-{\simeq} & \mathrm{H}^1_{\mathrm{sing}}(K_\ell, T_{f,n})
}
\]
which is canonical up to the choice of an isomorphism in Corollary \ref{cor:arithmetic_jochnowitz}.
\item There is a commutative diagram
\[
\xymatrix{
J^{\Delta\ell}(N^+p)(K(m)) / \mathcal{I}^{\Delta\ell}_{g_\ell} \ar[r]^-{\mathrm{Kum}} \ar[d]^-{\widetilde{\mathrm{red}}_\ell} & \mathrm{H}^1(K(m), T_{f,n}) \ar[d]^-{\partial_\ell} \\
\Phi^{\Delta\ell}(N^+p)_{\mathcal{O}} \otimes \mathcal{O}_n[\Gamma_m] \ar[r]^-{\psi_g}_-{\simeq} & \mathrm{H}^1_{\mathrm{sing}}(K(m)_\ell, T_{f,n}) .
}
\]
\end{enumerate}
\end{thm}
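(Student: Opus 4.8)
The plan is to reproduce the proof of \cite[Theorem 4.5]{chida-hsieh-main-conj} (itself a streamlined form of \cite[\S5--9]{bertolini-darmon-imc-2005}), checking that replacing $K(p^r)$ by the general ring class field $K(m)$ costs nothing. The structural reason this passage is free is that an $n$-admissible prime $\ell$, being inert in $K$ and prime to $mN$, splits completely in $K(m)/K$: the ideal $\ell\mathcal{O}_{K,m_0}$ is principal, generated by the rational integer $\ell$, hence trivial in the ring class group. Consequently $K(m)_\ell$ is a product of copies of $K_\ell$ indexed by $\Gamma_m$, and every local object over $K(m)_\ell$ occurring in the theorem --- the local cohomology groups, the component group tensored with $\mathcal{O}_n[\Gamma_m]$, the reduction map $\widetilde{\mathrm{red}}_\ell = \sum_\sigma \mathrm{red}_\ell\circ\iota_\ell\circ\sigma$, and the residue map $\partial_\ell$ --- is $\Gamma_m$-induced from its analogue over $K_\ell$. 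So it suffices to establish the corresponding $\Gamma_m$-equivariant statement over $K_\ell$ and then tensor up along $\mathcal{O}_n[\Gamma_m]$.

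For part (1): by the Cerednik--Drinfeld uniformization the special fiber of $J^{\Delta\ell}(N^+p)$ over $\mathcal{O}_{K_\ell}$ is purely toric, and its group of connected components has the combinatorial description recalled in \S\ref{subsubsec:graph}, which exhibits $\Phi^{\Delta\ell}(N^+p)$ as a quotient of the space of definite quaternionic forms on $B^\Delta$ of level $\Gamma_0(N^+p\ell)$. Grothendieck's monodromy description of the component group, combined with the uniformization exact sequence $0\to\mathrm{Hom}(X,\mathbb{Z}_p(1))\to\mathrm{Ta}_p(J^{\Delta\ell}(N^+p))\to X'\otimes\mathbb{Z}_p\to 0$ of the Tate module (with $I_\ell$ acting trivially on the character groups $X,X'$ because $\ell\neq p$), yields a canonical comparison between $\mathrm{H}^1_{\mathrm{sing}}(K_\ell,-)$ of $\mathrm{Ta}_p(J^{\Delta\ell}(N^+p))$ and $\Phi^{\Delta\ell}(N^+p)$. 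Passing to the quotient by $\mathcal{I}^{\Delta\ell}_{g_\ell}$, Corollary \ref{cor:arithmetic_jochnowitz} identifies $\mathrm{Ta}_p(J^{\Delta\ell}(N^+p))_{\mathcal{O}}/\mathcal{I}^{\Delta\ell}_{g_\ell}$ with $T_{f,n}$, Theorem \ref{thm:level_raising} identifies $\Phi^{\Delta\ell}(N^+p)_{\mathcal{O}}/\mathcal{I}^{\Delta\ell}_{g_\ell}$ with $\mathcal{O}_n$, and Proposition \ref{prop:local_computation}.(3) identifies $\mathrm{H}^1_{\mathrm{sing}}(K_\ell,T_{f,n})$ with $\mathcal{O}_n$. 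The mod $p$ multiplicity one statement secured by Condition CR --- exactly the input making \cite[Proposition 4.2]{chida-hsieh-main-conj} an isomorphism --- forces the induced map of cyclic $\mathcal{O}_n$-modules $\Phi^{\Delta\ell}(N^+p)_{\mathcal{O}}/\mathcal{I}^{\Delta\ell}_{g_\ell}\to\mathrm{H}^1_{\mathrm{sing}}(K_\ell,T_{f,n})$ to be an isomorphism; this is $\psi_g$, canonical up to the choice in Corollary \ref{cor:arithmetic_jochnowitz}.

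For part (2): over $K_\ell$ the heart of the matter is the commutativity of
\[
\xymatrix{
J^{\Delta\ell}(N^+p)(K_\ell)/\mathcal{I}^{\Delta\ell}_{g_\ell} \ar[r]^-{\mathrm{Kum}} \ar[d]^-{\mathrm{red}_\ell} & \mathrm{H}^1(K_\ell, T_{f,n}) \ar[d]^-{\partial_\ell} \\
\Phi^{\Delta\ell}(N^+p)_{\mathcal{O}}/\mathcal{I}^{\Delta\ell}_{g_\ell} \ar[r]^-{\psi_g}_-{\simeq} & \mathrm{H}^1_{\mathrm{sing}}(K_\ell, T_{f,n}) ,
}
\]
which is the standard fact that the singular part of the local Kummer class of a point records its image in the group of connected components; one deduces it from the N\'eron-model exact sequence $0\to\mathfrak{J}^{\circ}\to\mathfrak{J}\to\Phi^{\Delta\ell}(N^+p)\to 0$ together with the identification of $\mathrm{H}^1_{\mathrm{fin}}$ with the image of the unramified points, after using Corollary \ref{cor:arithmetic_jochnowitz} to identify $J^{\Delta\ell}(N^+p)[p^n]_{\mathcal{O}}/\mathcal{I}^{\Delta\ell}_{g_\ell}$ with $T_{f,n}$ and part (1) for the bottom row. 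Tensoring this square up along $\Gamma_m$ --- legitimate by the complete splitting of $\ell$ --- produces the stated diagram over $K(m)$, with $\widetilde{\mathrm{red}}_\ell=\sum_\sigma \mathrm{red}_\ell\circ\iota_\ell\circ\sigma$ as the left vertical. I expect the main obstacle to be part (1): making the isomorphism $\psi_g$ \emph{canonical} rests on Cerednik--Drinfeld uniformization, Grothendieck's theory of monodromy pairings and component groups of semistable abelian varieties (in the form worked out by Edixhoven and used in \cite{nekovar-hilbert}), and, crucially, a mod $p$ multiplicity one result for the relevant localized Hecke module --- which is where Condition CR is indispensable --- so that after the $\mathcal{I}^{\Delta\ell}_{g_\ell}$-quotient every module is free of rank one and every comparison map is an isomorphism rather than merely an isogeny. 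Once part (1) is in place, part (2) and the passage to $K(m)$ are formal.
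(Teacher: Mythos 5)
Your proposal is correct and follows essentially the same route as the source the paper relies on: the paper gives no independent argument here but simply cites \cite[Theorem 4.5]{chida-hsieh-main-conj}, whose proof is exactly the Cerednik--Drinfeld/monodromy description of $\Phi^{\Delta\ell}(N^+p)$ combined with Theorem \ref{thm:level_raising}, Corollary \ref{cor:arithmetic_jochnowitz}, and the N\'eron-model computation of the singular part of the Kummer class. Your explicit justification that the passage from $K(p^r)$ to $K(m)$ is free --- the $n$-admissible prime $\ell$ splits completely in $K(m)/K$, so everything local at $\ell$ is induced from $K_\ell$ along $\Gamma_m$ --- is precisely the point the paper leaves implicit (cf.\ Proposition \ref{prop:local_computation}.(4) and the definition of $\widetilde{\mathrm{red}}_\ell$).
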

\subsection{Ihara input}
We recall two results: (a special case of) Ihara's lemma for weight 2 forms on Shimura curves and Ihara's work on rational points on Shimura curves over finite fields.
\subsubsection{Ihara's lemma for Shimura curves}
Let $q \geq 4$ be an integer dividing $M^+$.
Let $\mathcal{B}$ be the indefinite quaternion algebra over $\mathbb{Q}$ of discriminant $\Delta_{\mathcal{B}}$ and $\mathcal{V} = \widehat{\mathcal{R}}^{\times}_{M^+/q, q}$ be the open subgroup of $\widehat{\mathcal{B}}^\times$ representing the $\Gamma_0(M^+/q)\cap \Gamma_1(q)$-level structure.
Let $M^{\Delta_{\mathcal{B}}}_\mathcal{V} = M^{\Delta_{\mathcal{B}}}(N^+/q,q)$ be the corresponding Shimura curve.
Let $\mathscr{L}_2(N^+/q,q, \mathbb{F}) := \mathrm{H}^1_{\mathrm{\acute{e}t}}(M^{\Delta_{\mathcal{B}}}(M^+/q,q)_{\overline{\mathbb{Q}}}, \underline{\mathbb{F}})$ be the \'{e}tale cohomology of the Shimura curve with $\mathbb{F}$-coefficients, where $\underline{\mathbb{F}}$ is the constant \'{e}tale sheaf associated to $\mathbb{F}$.  
Let $\ell$ be a prime not dividing $p\Delta_{\mathcal{B}}$.
Let $\mathfrak{m}_{g'_\ell} \subseteq \mathbb{T}^{\Delta_{\mathcal{B}}}(\ell M^+/q,q)$ be the maximal ideal containing the kernel of the map
$$\pi_{g'_\ell} : \mathbb{T}^{\Delta_{\mathcal{B}}}(\ell M^+/q,q)  \to \mathbb{F}$$
as in $\S$\ref{subsec:level_raising_at_one_prime} although the roles of the definite and the indefinite change.
Then it is also ordinary and non-Eisenstein.
\begin{thm}[Ihara's lemma for Shimura curves; {\cite[Theorem 2]{diamond-taylor-non-optimal}}] \label{thm:ihara_lemma_shimura_curves}
Let $q \geq 4$ be an integer dividing $M^+$.
Assume that $p$ and $\ell$ do not divide $M^+$. Then 
the map
$$\alpha_\ell = 1 + \eta_\ell : \left( \mathscr{L}_2(M^+/q,q, \mathbb{F})^{\oplus 2} \right)_{\mathfrak{m}_{g'_\ell}} \to 
\left( \mathscr{L}_2(\ell M^+/q,q, \mathbb{F}) \right)_{\mathfrak{m}_{g'_\ell}}$$
is injective where $\eta_\ell$ is the degeneracy map at $\ell$.
\end{thm}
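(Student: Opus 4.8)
The statement is Theorem~2 of \cite{diamond-taylor-non-optimal}, so the plan is not to reprove it but to check that our data place us in the setting of \emph{loc.~cit.}\ and to recall why each hypothesis matters. As already observed, the maximal ideal $\mathfrak{m}_{g'_\ell} \subseteq \mathbb{T}^{\Delta_{\mathcal{B}}}(\ell M^+/q,q)$ is non-Eisenstein --- it carries the absolutely irreducible residual representation $\overline{\rho}$ --- and is ordinary at $p$; these are exactly the conditions that control degeneracy maps on cohomology localized at $\mathfrak{m}_{g'_\ell}$. The role of Condition Ihara (Assumption~\ref{assu:ihara}) is that the auxiliary prime $q \geq 4$ carrying a $\Gamma_1(q)$-factor in the level structure $\mathcal{V} = \widehat{\mathcal{R}}^\times_{M^+/q,q}$ forces the associated arithmetic group to be torsion-free; hence $M^{\Delta_{\mathcal{B}}}(M^+/q,q)$ and its $\Gamma_0(\ell)$-cover are fine moduli spaces with no elliptic points, and their \'{e}tale $\mathbb{F}$-cohomology computes the automorphic side without any caveat about $p$ dividing orders of automorphisms.

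Granting this, I would reduce the injectivity of $\alpha_\ell = 1 + \eta_\ell$ to a statement about bad reduction at $\ell$. Since $\ell \nmid p\Delta_{\mathcal{B}}$ and $\ell$ enters the level only through a $\Gamma_0(\ell)$-factor, $M^{\Delta_{\mathcal{B}}}(\ell M^+/q,q)$ has semistable reduction at $\ell$, its special fibre being two copies of $M^{\Delta_{\mathcal{B}}}(M^+/q,q)_{\mathbb{F}_\ell}$ crossing transversally along a finite ``supersingular'' set --- a double coset space for the \emph{definite} quaternion algebra of discriminant $\Delta_{\mathcal{B}}\ell$, the same type of combinatorial description of bad reduction of Shimura curves as in $\S$\ref{sec:shimura_curves}. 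Feeding this into the weight--monodromy (Picard--Lefschetz) exact sequence for $\mathrm{H}^1$ of the nearby cycles and localizing at $\mathfrak{m}_{g'_\ell}$, one identifies $\ker(\alpha_\ell)$ with the kernel of a Hecke-equivariant map out of this supersingular module; the problem then collapses to showing that this kernel is supported only at Eisenstein maximal ideals, where the non-Eisenstein and ordinarity properties of $\mathfrak{m}_{g'_\ell}$ --- the latter ensuring that the localized cohomology is free of the expected rank over the Hecke algebra --- are what make the reduction work cleanly.

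The last point is the genuine obstacle. For classical modular curves it is Ihara's original lemma, proved via the amalgam decomposition $\mathrm{SL}_2(\mathbb{Z}[1/\ell]) \simeq \mathrm{SL}_2(\mathbb{Z}) \ast_{\Gamma_0(\ell)} \mathrm{SL}_2(\mathbb{Z})$ acting on the Bruhat--Tits tree at $\ell$; for quaternionic Shimura curves the analogous $S$-arithmetic group acts on the product of the complex upper half-plane with that tree but no longer admits so transparent an amalgam/congruence description, and supplying an adequate substitute --- a suitable ``no small quotients'' input --- is precisely what \cite{diamond-taylor-non-optimal} accomplishes. I therefore plan to invoke Theorem~2 of \emph{loc.~cit.}\ directly, having checked that its hypotheses ($q \geq 4$ dividing $M^+$, the primes $p$ and $\ell$ prime to $M^+$, and $\mathfrak{m}_{g'_\ell}$ non-Eisenstein) are all satisfied here.
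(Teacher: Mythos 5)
Your proposal is correct and matches the paper exactly: the paper offers no independent proof of this statement but simply records it as Theorem~2 of \cite{diamond-taylor-non-optimal}, after noting (as you do) that $\mathfrak{m}_{g'_\ell}$ is ordinary and non-Eisenstein and that the $\Gamma_1(q)$-factor with $q \geq 4$ makes the relevant arithmetic groups torsion-free. Your additional sketch of the semistable reduction at $\ell$ and of why the genuine difficulty lies in the quaternionic analogue of Ihara's amalgam argument is accurate background, but the operative step in both your write-up and the paper is the direct citation.
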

The module $\mathscr{L}_2(M^+/q, q, \mathbb{F})^{\oplus 2}$ admits a $\mathbb{T}^{\Delta_{\mathcal{B}}}(\ell M^+/q,q)$-module structure. See \cite[Proposition 5.8]{bertolini-darmon-imc-2005} and \cite[$\S$3.5]{chida-hsieh-main-conj} for detail.

Now we put $M^+ = N^+p$ and $q =p$. Although it does not satisfy the condition $p \nmid M^+$, the following lemma shows that we can still apply Theorem \ref{thm:ihara_lemma_shimura_curves} to our setting.
\begin{lem}[{\cite[Lemma 5.3]{chida-hsieh-main-conj}}] \label{lem:preparation_ihara}
Suppose that $\mathfrak{m}$ satisfies Condition PO. Then
$$\mathscr{L}_2(N^+, \mathbb{F})_{\mathfrak{m}} \simeq \mathscr{L}_2(N^+p, \mathbb{F})_{\mathfrak{m}} \simeq \mathscr{L}_2(N^+,p, \mathbb{F})_{\mathfrak{m}} .$$
\end{lem}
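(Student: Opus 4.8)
The plan is to prove the chain in two steps: first $\mathscr{L}_2(N^+,p,\mathbb{F})_{\mathfrak{m}}\simeq\mathscr{L}_2(N^+p,\mathbb{F})_{\mathfrak{m}}$, comparing the $\Gamma_1(p)$- and $\Gamma_0(p)$-level structures, and then $\mathscr{L}_2(N^+p,\mathbb{F})_{\mathfrak{m}}\simeq\mathscr{L}_2(N^+,\mathbb{F})_{\mathfrak{m}}$, which removes $p$ from the level altogether. Throughout, $\mathfrak{m}$ is non-Eisenstein (as $\overline{\rho}$ is absolutely irreducible), has trivial nebentypus, and is ordinary at $p$, so $U_p\bmod\mathfrak{m}$ is the unit root of $X^2-a_p(f)X+p$, which by $\S\ref{subsec:galois_representations}.(2)$ is congruent to $a_p(f)$ and in particular nonzero.

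Step 1. The finite group $\Gamma_0(N^+p)/(\Gamma_0(N^+)\cap\Gamma_1(p))\simeq(\mathbb{Z}/p\mathbb{Z})^{\times}$ acts on the Shimura curve $M^{\Delta_{\mathcal{B}}}(N^+,p)$ by the diamond operators $\langle d\rangle_p$ with quotient $M^{\Delta_{\mathcal{B}}}(N^+p)$; since $p-1$ is invertible in $\mathbb{F}$, the transfer argument gives $\mathscr{L}_2(N^+p,\mathbb{F})=\mathscr{L}_2(N^+,p,\mathbb{F})^{(\mathbb{Z}/p\mathbb{Z})^{\times}}$, and semisimplicity of $\mathbb{F}[(\mathbb{Z}/p\mathbb{Z})^{\times}]$ yields a Hecke-equivariant decomposition of $\mathscr{L}_2(N^+,p,\mathbb{F})$ into nebentypus eigenspaces whose trivial one is exactly $\mathscr{L}_2(N^+p,\mathbb{F})$. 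Because $\mathfrak{m}$ has trivial nebentypus, i.e.\ $\langle d\rangle_p-1\in\mathfrak{m}$ for all $d$, localization at $\mathfrak{m}$ annihilates every nontrivial eigenspace, and Step 1 follows.

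Step 2. Let $\mathscr{L}_2(N^+p,\mathbb{F})^{p\text{-old}}\subseteq\mathscr{L}_2(N^+p,\mathbb{F})$ be the (Hecke-stable) image of the two degeneracy maps $(\alpha^{*},\beta^{*})\colon\mathscr{L}_2(N^+,\mathbb{F})^{\oplus 2}\to\mathscr{L}_2(N^+p,\mathbb{F})$, with quotient the $p$-new part. On the $p$-new part one has the Hecke identity $U_p^2=\langle p\rangle=1$ (Atkin--Lehner in weight two, trivial nebentypus), so $(U_p-1)(U_p+1)$ annihilates it; since $U_p\equiv a_p(f)\not\equiv\pm1\pmod{\varpi}$ by Condition PO, $U_p^2-1$ is a unit modulo $\mathfrak{m}$, so the $p$-new part vanishes after localizing at $\mathfrak{m}$ and $\mathscr{L}_2(N^+p,\mathbb{F})_{\mathfrak{m}}=\mathscr{L}_2(N^+p,\mathbb{F})^{p\text{-old}}_{\mathfrak{m}}$. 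On the $p$-old part $\mathbb{T}(\Gamma_0(N^+p))$ acts through $\mathbb{T}(\Gamma_0(N^+))[U_p]/(U_p^2-T_pU_p+p)$, and modulo $\mathfrak{m}$ the polynomial $X^2-a_p(f)X+p$ has the distinct roots $a_p(f)$ and $0$; after localizing it therefore factors, and the idempotent selecting the unit root identifies $\mathscr{L}_2(N^+p,\mathbb{F})^{p\text{-old}}_{\mathfrak{m}}$ with the image of the ordinary $p$-stabilization $\mathscr{L}_2(N^+,\mathbb{F})_{\mathfrak{m}}\to\mathscr{L}_2(N^+p,\mathbb{F})_{\mathfrak{m}}$, which is an isomorphism (a one-sided inverse is furnished by a degeneracy trace map, using once more that the two roots are distinct). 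Composing with Step 1 proves the lemma.

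The one place where Condition PO is genuinely used is the vanishing of the $p$-new part in Step 2, via the elementary observation that $U_p^2-1$ becomes a unit after localizing at the ordinary $\mathfrak{m}$ (alternatively, the $p$-new part vanishes because a $p$-ordinary form of level prime to $p$ is crystalline, hence not Steinberg, at $p$); the remaining ingredients — the transfer/semisimplicity comparison of $\Gamma_1(p)$ with $\Gamma_0(p)$, and the fact that the ordinary $p$-stabilization is an isomorphism on $\mathfrak{m}$-localized cohomology once the roots of the $U_p$-polynomial are distinct — are standard. Thus there is no serious obstacle; the only points warranting care are that the transfer argument applies to the possibly ramified $(\mathbb{Z}/p\mathbb{Z})^{\times}$-cover (fine, $p-1$ being invertible in $\mathbb{F}$) and the trace computation confirming split injectivity of the stabilization.
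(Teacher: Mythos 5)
Your proof is correct and follows essentially the same route as the paper, which for this lemma simply defers to \cite[Lemma 5.3]{chida-hsieh-main-conj}: the prime-to-$p$ covering-group/semisimplicity argument comparing the $\Gamma_1(p)$- and $\Gamma_0(p)$-structures, and the $p$-old/$p$-new decomposition in which Condition PO kills the $p$-new part and the ordinary stabilization identifies the rest with level $N^+$. One small correction to your closing remark: the split injectivity of the $p$-stabilization does not follow from the distinctness of the roots $a_p(f)$ and $0$ alone — the Gram matrix of the two degeneracy maps has determinant $(p+1)^2-T_p^2\equiv 1-a_p(f)^2\pmod{\mathfrak{m}}$, so Condition PO ($a_p(f)\not\equiv\pm1$) is genuinely used a second time here, not only for the vanishing of the $p$-new part.
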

Due to Lemma \ref{lem:preparation_ihara}, we can remove the maximal condition on the level at $p$ in Theorem \ref{thm:ihara_lemma_shimura_curves}.
\begin{cor} \label{cor:ihara_lemma_shimura_curves}
Suppose that $\mathfrak{m}_{g'_\ell}$ satisfies Condition PO.
Assume that $p$ and $\ell$ do not divide $N^+$. Then 
the map
$$\alpha_\ell = 1 + \eta_\ell : \left( \mathscr{L}_2(N^+,p, \mathbb{F})^{\oplus 2} \right)_{\mathfrak{m}_{g'_\ell}} \to 
\left( \mathscr{L}_2(\ell N^+,p, \mathbb{F}) \right)_{\mathfrak{m}_{g'_\ell}}$$
is injective where $\eta_\ell$ is the degeneracy map at $\ell$.
\end{cor}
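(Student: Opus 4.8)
The plan is to obtain Corollary~\ref{cor:ihara_lemma_shimura_curves} as an application of Theorem~\ref{thm:ihara_lemma_shimura_curves} with $M^+ = N^+p$ and $q = p$, using Lemma~\ref{lem:preparation_ihara} to dispose of the single hypothesis of that theorem which is violated in this setup. First I would record the elementary numerical facts: since $\ell$ is $n$-admissible it is prime to $pN$, so $\ell \neq p$ and $\ell \nmid N^+$, which together with the standing hypothesis $p \nmid N^+$ gives $\ell \nmid N^+p$ and $p \nmid N^+ = (N^+p)/p$; moreover $p > 3$, so $q := p \geq 4$ divides $M^+ = N^+p$. Thus, with $M^+ = N^+p$ and $q = p$, every hypothesis of Theorem~\ref{thm:ihara_lemma_shimura_curves} is met \emph{except} the requirement $p \nmid M^+$, and the conclusion of that theorem in this case, after unwinding $M^+/q = N^+$ and $\ell M^+/q = \ell N^+$, is literally the injectivity asserted in the corollary.

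Next I would invoke Condition PO for $\mathfrak{m}_{g'_\ell}$, which is exactly the hypothesis of Lemma~\ref{lem:preparation_ihara}. Applying that lemma at level $N^+$ and then at level $\ell N^+$ (legitimate since $\ell \nmid N^+p$, so the prime $p$ at which Condition PO is imposed is untouched, and $\mathfrak{m}_{g'_\ell}$ at level $\ell N^+$ is the $\ell$-level-raised maximal ideal) yields Hecke-equivariant isomorphisms
\[
\mathscr{L}_2(N^+, p, \mathbb{F})_{\mathfrak{m}_{g'_\ell}} \simeq \mathscr{L}_2(N^+, \mathbb{F})_{\mathfrak{m}_{g'_\ell}}, \qquad
\mathscr{L}_2(\ell N^+, p, \mathbb{F})_{\mathfrak{m}_{g'_\ell}} \simeq \mathscr{L}_2(\ell N^+, \mathbb{F})_{\mathfrak{m}_{g'_\ell}},
\]
i.e.\ after localization the $\Gamma_1(p)$-structure at $p$ may be replaced by the prime-to-$p$ level $\Gamma_0(N^+)$, as observed by Chida--Hsieh. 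The point that then needs checking is that these isomorphisms intertwine the degeneracy map $\eta_\ell$, hence $\alpha_\ell = 1 + \eta_\ell$, on source and target: this holds because $\eta_\ell$ alters the level only at $\ell$ while the isomorphisms of Lemma~\ref{lem:preparation_ihara} are induced by degeneracy (and trace) maps at $p$, and operations at the coprime primes $p$ and $\ell$ commute.

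Granting this compatibility, injectivity of $\alpha_\ell$ on $\left(\mathscr{L}_2(N^+, p, \mathbb{F})^{\oplus 2}\right)_{\mathfrak{m}_{g'_\ell}}$ is equivalent to injectivity of $\alpha_\ell$ on $\left(\mathscr{L}_2(N^+, \mathbb{F})^{\oplus 2}\right)_{\mathfrak{m}_{g'_\ell}} \to \left(\mathscr{L}_2(\ell N^+, \mathbb{F})\right)_{\mathfrak{m}_{g'_\ell}}$, and now the level $\Gamma_0(N^+)$ is prime to $p$, so Theorem~\ref{thm:ihara_lemma_shimura_curves} applies honestly and gives the conclusion. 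The main obstacle is precisely the $\eta_\ell$-compatibility in the previous paragraph, together with the bookkeeping of checking that Lemma~\ref{lem:preparation_ihara} is available at \emph{both} the source and the target level with the \emph{same} maximal ideal $\mathfrak{m}_{g'_\ell}$; once this is in place the argument is short. I would also remark that no auxiliary $\Gamma_1(q)$-structure with $q \geq 4$ dividing $N^+$ is needed here, since the relevant cohomology has been localized at the non-Eisenstein (indeed Condition~PO) ideal $\mathfrak{m}_{g'_\ell}$, which is insensitive to the elliptic points of the quaternionic arithmetic group --- this is the step that, in the non-ordinary analogue treated in $\S$\ref{subsec:level_raising_at_two_prime}, has no substitute because Lemma~\ref{lem:preparation_ihara} is unavailable there, and it is why Condition Ihara must be imposed in that setting.
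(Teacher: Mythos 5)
Your proposal is correct and follows essentially the same route as the paper, whose entire proof is the single sentence that Lemma \ref{lem:preparation_ihara} removes the condition $p \nmid M^+$ from Theorem \ref{thm:ihara_lemma_shimura_curves}; you merely make explicit the two points the paper leaves implicit (applying the lemma at both levels $N^+$ and $\ell N^+$ with the same maximal ideal, and the compatibility of the resulting identifications with the degeneracy map $\eta_\ell$ at the coprime prime $\ell$). Your closing remark about why no auxiliary $\Gamma_1(q)$-structure with $q\geq 4$ dividing $N^+$ is needed after localizing at the non-Eisenstein ideal, and why this fails in the non-ordinary setting, matches the paper's own discussion of Condition Ihara.
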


\subsubsection{Rational points on Shimura curves over finite fields} \label{subsubsec:rational_pts_shimura}
Let $\mathcal{B}$ be the indefinite quaternion algebra over $\mathbb{Q}$ of discriminant $\Delta_{\mathcal{B}}$.
Fix an embedding $\mathcal{B}$ into $\mathrm{M}_2(\mathbb{R})$, which is compatible with the convention in $\S$\ref{subsec:complex_uniformization}.
Let $\mathcal{R} \subset \mathcal{B}$ be an Eichler order of level $N^+p$.
Let $\Gamma_{\infty, N^+p}$ be the image of the group of reduced norm one elements in $\mathcal{R}[1/\ell]^\times / \lbrace \pm 1\rbrace$ in $\mathrm{PSL}_2(\mathbb{R})$. Then $M^{\Delta_{\mathcal{B}}} (N^+)(\mathbb{C}) \simeq \Gamma_{\infty, N^+p} \backslash \mathfrak{h}$ as compact Riemann surfaces where $\mathfrak{h}$ is the complex upper-half plane. See also \cite[$\S$5.1]{bertolini-darmon-imc-2005}.
Let $\Gamma_{\infty, N^+,p} \subset \Gamma_{\infty, N^+p}$ be the finite index subgroup which is determined by
$M^{\Delta_{\mathcal{B}}} (N^+,p)(\mathbb{C}) \simeq \Gamma_{\infty, N^+,p} \backslash \mathfrak{h}$.  Then
$\Gamma_{\infty, N^+,p}$ is torsion-free. See \cite[Proof of Proposition 9.2]{bertolini-darmon-imc-2005}. See also \cite[Corollary 3, $\S$4]{diamond-taylor-non-optimal} for the geometric property of the torsion-freeness.
In this setting, we have the following statement.
\begin{prop}[{\cite[(G), $\S$3]{ihara-shimura-curves}}] \label{prop:ihara_rational_pts_shimura}
Let $\ell$ be a prime not dividing $N^+p \Delta_{\mathcal{B}}$. Then we have
$$\mathfrak{J}^{\Delta_{\mathcal{B}}}(N^+,p)(\mathbb{F}_{\ell^2}) / \mathfrak{J}^{\Delta_{\mathcal{B}}}(N^+,p)(\mathbb{F}_{\ell^2})^{ss} \simeq \left( \Gamma_{\infty, N^+,p} \right)^{ab}$$
where 
$\mathfrak{J}^{\Delta_{\mathcal{B}}}(N^+,p)(\mathbb{F}_{\ell^2})^{ss} \subset \mathfrak{J}^{\Delta_{\mathcal{B}}}(N^+,p)(\mathbb{F}_{\ell^2})$ is the subgroup generated by the divisors supported on the supersingular points in $\mathfrak{J}^{\Delta_{\mathcal{B}}}(N^+,p)(\mathbb{F}_{\ell^2})$ and
$\left( \Gamma_{\infty, N^+,p} \right)^{ab}$ is the abelianization of $\Gamma_{\infty, N^+,p}$.
\end{prop}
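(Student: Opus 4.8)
The plan is to put oneself in the good-reduction situation, invoke Ihara's congruence-monodromy description of the special fibre, and then read off the abelianization from the quotient graph. Since $\ell$ does not divide $N^+p\Delta_{\mathcal{B}}$, the Shimura curve $M^{\Delta_{\mathcal{B}}}(N^+,p)$ has good reduction at $\ell$; write $\overline{M}$ for its smooth proper reduction over $\mathbb{F}_{\ell^2}$, so that $\mathfrak{J}^{\Delta_{\mathcal{B}}}(N^+,p)(\mathbb{F}_{\ell^2}) = \mathrm{Pic}^0(\overline{M})(\mathbb{F}_{\ell^2})$ because the N\'{e}ron model agrees with the Jacobian in the good case. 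I would then stratify $\overline{M}(\overline{\mathbb{F}}_\ell)$ into its ordinary and supersingular loci; the supersingular points are all rational over $\mathbb{F}_{\ell^2}$, and the Deuring--Cerednik correspondence (in the bimodule form used by Ribet) puts them in Hecke-equivariant bijection with $B^\times\backslash\widehat{B}^\times/\widehat{\mathcal{R}'}^\times$, where $B$ is the \emph{definite} quaternion algebra of discriminant $\Delta_{\mathcal{B}}\ell$ and $\mathcal{R}'$ an Eichler order reflecting the $\Gamma_0(N^+)\cap\Gamma_1(p)$-level structure. Since $\mathfrak{J}^{\Delta_{\mathcal{B}}}(N^+,p)(\mathbb{F}_{\ell^2})^{ss}$ is by definition the subgroup generated by divisors supported on this finite set, what remains is to identify the quotient.

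The heart of the argument is Ihara's theorem identifying this quotient with $H_1(\Gamma_{\infty,N^+,p}\backslash\mathscr{T}_\ell,\mathbb{Z})$. The group $\Gamma_{\infty,N^+,p}$ is a lattice in $\mathrm{PSL}_2(\mathbb{R})\times\mathrm{PGL}_2(\mathbb{Q}_\ell)$ --- the second factor appearing because $\ell\nmid\Delta_{\mathcal{B}}$ splits $\mathcal{B}$ at $\ell$ --- and hence acts on $\mathfrak{h}\times\mathscr{T}_\ell$; projecting onto $\mathscr{T}_\ell$ yields the finite quotient graph $\Gamma_{\infty,N^+,p}\backslash\mathscr{T}_\ell$, whose vertices carry the two types recorded in $\S$\ref{subsubsec:reduction_identification} and whose edges are indexed by the supersingular points, in exact analogy with the bad-reduction dual graph of $\S$\ref{subsubsec:graph} but now for the \emph{good} prime $\ell$. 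I would produce the surjection $\mathrm{Pic}^0(\overline{M})(\mathbb{F}_{\ell^2})\twoheadrightarrow H_1(\Gamma_{\infty,N^+,p}\backslash\mathscr{T}_\ell,\mathbb{Z})$ by representing a class by a divisor supported on the ordinary locus (possible as the supersingular set is finite), using the Eichler--Shimura congruence relation $\mathrm{Frob}\cdot\mathrm{Ver}=\ell$ on $\overline{M}$ together with the $\ell$-power Frobenius of the canonical $\mathbb{F}_\ell$-model to attach to it a $1$-cycle on the quotient graph, and verifying that the kernel is precisely the span of the supersingular classes; this is Ihara's congruence-monodromy exact sequence, which I would cite from \cite[(G), $\S$3]{ihara-shimura-curves} after matching his hypotheses to the present level $(N^+,p)$, auxiliary prime $\ell$, and quaternion discriminant $\Delta_{\mathcal{B}}$.

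It then remains to pass from $H_1$ of the graph to $(\Gamma_{\infty,N^+,p})^{ab}$. Because the curve carries a $\Gamma_1(p)$-level structure with $p>3$, the group $\Gamma_{\infty,N^+,p}$ is torsion-free, as recalled in the paragraph preceding the proposition (\cite[Proof of Proposition 9.2]{bertolini-darmon-imc-2005}, \cite[Corollary 3, $\S$4]{diamond-taylor-non-optimal}). A torsion-free group acting on a tree acts freely, so $\mathscr{T}_\ell\to\Gamma_{\infty,N^+,p}\backslash\mathscr{T}_\ell$ is a covering map of a contractible space; hence $\pi_1(\Gamma_{\infty,N^+,p}\backslash\mathscr{T}_\ell)\simeq\Gamma_{\infty,N^+,p}$ and therefore $H_1(\Gamma_{\infty,N^+,p}\backslash\mathscr{T}_\ell,\mathbb{Z})\simeq(\Gamma_{\infty,N^+,p})^{ab}$. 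Combining this with the surjection of the previous paragraph gives the asserted isomorphism.

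The main obstacle is the middle step: faithfully reconstructing Ihara's congruence-monodromy exact sequence with the correct bookkeeping of the two vertex types, the compatibility of the Frobenius on $\overline{M}$ with the combinatorial action on $\mathscr{T}_\ell$, and the exactness claim that no non-supersingular class is killed --- that is, genuinely proving rather than merely quoting \cite[(G), $\S$3]{ihara-shimura-curves}. The passage to good reduction and the final topological identification are routine.
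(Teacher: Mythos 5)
The paper offers no proof of this proposition: it is quoted verbatim from Ihara, and the only thing to check is that the hypotheses match. Your first paragraph (good reduction at $\ell$, Néron model $=$ Jacobian, identification of the supersingular locus) is fine, and your instinct that the substance must ultimately be cited from \cite[(G), $\S$3]{ihara-shimura-curves} is also correct. But the mechanism you propose for the middle and final steps is not Ihara's theorem and is in fact false, so the parts you label ``routine'' are where the argument breaks.

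The decisive error is the claim that, because $\Gamma_{\infty,N^+,p}$ is torsion-free, it acts freely on $\mathscr{T}_\ell$, so that $\Gamma_{\infty,N^+,p}\simeq\pi_1(\Gamma_{\infty,N^+,p}\backslash\mathscr{T}_\ell)$ and $(\Gamma_{\infty,N^+,p})^{ab}\simeq H_1(\Gamma_{\infty,N^+,p}\backslash\mathscr{T}_\ell,\mathbb{Z})$. Torsion-freeness only forces the \emph{finite} stabilizers to be trivial; here the vertex stabilizers are the cocompact Fuchsian groups uniformizing $M^{\Delta_{\mathcal{B}}}(N^+,p)$ and the edge stabilizers those uniformizing $M^{\Delta_{\mathcal{B}}}(\ell N^+,p)$ --- infinite torsion-free surface groups. (This is exactly what the paper exploits afterwards: the Mayer--Vietoris sequence of Proposition \ref{prop:serre_trees} is useful precisely because the stabilizers are these surface groups; if the action were free the sequence would carry no information about Ihara's lemma.) By Bass--Serre theory $\Gamma_{\infty,N^+,p}$ is the fundamental group of a graph of \emph{groups}, not of the underlying graph, and its abelianization is not $H_1$ of the quotient graph. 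A direct sanity check also kills the intermediate identification: $\mathfrak{J}^{\Delta_{\mathcal{B}}}(N^+,p)(\mathbb{F}_{\ell^2})$ is the group of $\mathbb{F}_{\ell^2}$-points of an abelian variety over a finite field, hence finite, whereas $H_1$ of a finite graph is free abelian; the two can only agree when both vanish. Relatedly, the edges of $\Gamma_{\infty,N^+,p}\backslash\mathscr{T}_\ell$ are \emph{not} indexed by the supersingular points (by strong approximation for the indefinite algebra $\mathcal{B}$ the quotient graph is tiny, while the supersingular locus is large); you are importing the Cerednik--Drinfeld dual-graph picture from the bad-reduction prime into the good-reduction situation, where it does not apply. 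The actual content of Ihara's $(G)$ is arithmetic, not combinatorial: geometric class field theory identifies quotients of $\mathfrak{J}^{\Delta_{\mathcal{B}}}(N^+,p)(\mathbb{F}_{\ell^2})/\mathfrak{J}^{\Delta_{\mathcal{B}}}(N^+,p)(\mathbb{F}_{\ell^2})^{ss}$ with unramified abelian covers of the special fibre in which all supersingular points split, and Ihara's uniformization of the ordinary locus by $\mathfrak{h}\times\mathscr{T}_\ell$ matches these covers with finite abelian quotients of $\Gamma_{\infty,N^+,p}$ (in particular the theorem asserts the nontrivial fact that $(\Gamma_{\infty,N^+,p})^{ab}$ is finite). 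If you intend to cite the result, cite all of it; if you intend to prove it, the graph-homology route cannot be repaired.
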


\subsection{Level raising mod $\varpi^n$ at two $n$-admissible prime: from definite to definite passing through indefinite} \label{subsec:level_raising_at_two_prime}
We study the level raising at two $n$-admissible primes in detail. We focus on how ``Ihara input" is used.
In order to apply Ihara's work, one needs to work with the level ``smaller" than the $\Gamma_1(q)$-level structure with $q \geq 4$
as described in Remark \ref{rem:normalization_modular_forms}.
Note that the level-raised form exists only in the localized Hecke module.
See \cite[$\S$9]{bertolini-darmon-imc-2005}, \cite[$\S$7.4.3]{longo-hilbert-modular-case} and \cite[$\S$11, Level raising at two primes]{vanorder-dihedral} for detail. Also, see \cite[$\S$5.3]{chida-hsieh-main-conj} for the moduli description.

We first recall some definitions to draw the important commutative diagram, which exactly describes the level raising at two $n$-admissible primes.
Let $\ell_1$ and $\ell_2$ be distinct $n$-admissible primes not dividing $\Delta$.
Let $\mathbb{T}^{\Delta\ell_1}(N^+p)$ be the Hecke algebra acting on Shimura curve $X^{\Delta\ell_1}(N^+p)$.
Let $J^{\Delta\ell_1}(N^+p)(K_{\ell_2})$ be the $K_{\ell_2}$-points of the Jacobian of $X^{\Delta\ell_1}(N^+p)$.
Let $\mathfrak{J}^{\Delta\ell_1}(N^+)(\mathbb{F}_{(\ell_2)^2})$ be the $\mathbb{F}_{(\ell_2)^2}$-points of the reduction at $\ell_2$ of the N\'{e}ron model $\mathfrak{J}^{\Delta\ell_1}(N^+p)$ over $\mathcal{O}_{K_\ell}$.
Let $\mathfrak{J}^{\Delta\ell_1}(N^+p)(\mathbb{F}_{(\ell_2)^2})^{ss}$ be the subgroup generated by the divisors supported on the supersingular points in $\mathfrak{J}^{\Delta\ell_1}(N^+p)(\mathbb{F}_{(\ell_2)^2})$. 
We first draw the following commutative diagram and explain the nontrivial part.
\[
\xymatrix{
J^{\Delta\ell_1}(N^+p)(K_{\ell_2}) / \mathcal{I}^{\Delta\ell_1}_{g_{\ell_1}} \ar@{->>}[r]^-{\mathrm{Kum}_{\ell_2}} \ar[d]^-{\simeq}_{\mathrm{red}_{\ell_2}} &\mathrm{H}^1(K_{\ell_2}, \mathrm{Ta}_p\left( J^{\Delta\ell_1}(N^+p) \right) / \mathcal{I}^{\Delta\ell_1}_{g_{\ell_1}}  ) \ar[r]^-{\simeq}  & \mathrm{H}^1(K_{\ell_2}, T_{f,n}) \ar@/^1PC/[d]^-{v_{\ell_2}} \\
\mathfrak{J}^{\Delta\ell_1}(N^+p)(\mathbb{F}_{(\ell_2)^2}) / \mathcal{I}^{\Delta\ell_1}_{g_{\ell_1}} & & \mathrm{H}^1_{\mathrm{fin}}(K_{\ell_2}, T_{f,n}) \ar@{^{(}->}[u]^-{\simeq} \ar[d]^-{\simeq} \\
 \mathfrak{J}^{\Delta\ell_1}(N^+p)(\mathbb{F}_{(\ell_2)^2})^{ss}  / \mathcal{I}^{\Delta\ell_1}_{g_{\ell_1}} \ar@{^{(}->}[u] & & \mathcal{O}_n \\
\mathfrak{J}^{\Delta\ell_1}(N^+p)(\mathbb{F}_{(\ell_2)^2})^{ss} \ar@{->>}[u] \\
\mathbb{Z} [ B^{\Delta\ell_1\ell_2, \times} \backslash \widehat{B}^{\Delta\ell_1\ell_2, \times} / \widehat{R}^\times_{N^+p} ] \ar[u]^-{\simeq}  \ar[uurr]^-{g^{*}_{\ell_1\ell_2}}
}
\]
Using the above diagram and the pairing in $\S$\ref{subsec:modular_forms}, we define the level-raised $n$-admissible form $\mathcal{D}^{\ell_2\ell_2}_n = (\Delta\ell_1\ell_2, g_{\ell_1\ell_2})$ by the equality
$$g^{*}_{\ell_1\ell_2}(b) = \langle g_{\ell_1\ell_2}, b\rangle_U.$$
\begin{rem} $ $
\begin{enumerate}
\item The form $g_{\ell_1\ell_2}$ is a mod $\varpi^n$ $\mathbb{T}^{\Delta\ell_1\ell_2}(N^+p)$-eigenform due to \cite[Lemma 9.1]{bertolini-darmon-imc-2005}.
\item The reduction map at $\mathbb{F}_{(\ell_2)^2}$
$$ \mathrm{red}_{\ell_2}: J^{\Delta\ell_1}(N^+p)(K_{\ell_2}) / \mathcal{I}^{\Delta\ell_1}_{g_{\ell_1}} \to \mathfrak{J}^{\Delta\ell_1}(N^+p)(\mathbb{F}_{(\ell_2)^2}) / \mathcal{I}^{\Delta\ell_1}_{g_{\ell_1}}$$
is an isomorphism because $J^{\Delta\ell_1}(N^+p)$ has good reduction at $\ell_2$.
\item The natural inclusion $$\mathrm{H}^1_{\mathrm{fin}}(K_{\ell_2}, T_{f,n}) \hookrightarrow \mathrm{H}^1(K_{\ell_2}, T_{f,n})$$ also becomes an isomorphism because $\mathrm{Ta}_p\left( J^{\Delta\ell_1}(N^+p) \right) / \mathcal{I}^{\Delta\ell_1}_{g_{\ell_1}}   \simeq T_{f,n}$ is unramified at $\ell_2$.
\item Since the image of the Kummer map at $\ell_2$ of $J^{\Delta\ell_1}(N^+p)(K_{\ell_2}) / \mathcal{I}^{\Delta\ell_1}_{g_{\ell_1}}$ is $\mathrm{H}^1_{\mathrm{fin}}(K_{\ell_2}, T_{f,n})$, the Kummer map $\mathrm{Kum}_{\ell_2}$ becomes surjective.
\end{enumerate}
\end{rem}
Following \cite[Proposition 9.2]{bertolini-darmon-imc-2005}, \cite[$\S$7.4.3]{longo-hilbert-modular-case}, and \cite[Proposition 11.12]{vanorder-dihedral}, we give a detailed exposition of the following statement.
\begin{prop} \label{prop:surjectivity_via_ihara}
Under Condition Ihara, the composition
$$J^{\Delta\ell_1}(N^+p)(\mathbb{F}_{(\ell_2)^2})^{ss} \twoheadrightarrow
J^{\Delta\ell_1}(N^+p)(\mathbb{F}_{(\ell_2)^2})^{ss} / \mathcal{I}^{\Delta\ell_1}_{g_{\ell_1}} \hookrightarrow
J^{\Delta\ell_1}(N^+p)(\mathbb{F}_{(\ell_2)^2}) / \mathcal{I}^{\Delta\ell_1}_{g_{\ell_1}}$$
in the above diagram is surjective. Thus, $g_{\ell_1\ell_2}$ is surjective.
If we further assume Condition CR, then $g_{\ell_1\ell_2}$ is determined uniquely up to $\mathcal{O}^\times_n$.
\end{prop}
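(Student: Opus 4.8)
The plan is to reduce the surjectivity assertion to the vanishing of a localized cokernel, identify that cokernel via Ihara's description of the $\mathbb{F}_{(\ell_2)^2}$-points of the Jacobian, and annihilate it with Ihara's lemma; the uniqueness will then follow from mod $\varpi^n$ multiplicity one. Write $\mathfrak{J}:=\mathfrak{J}^{\Delta\ell_1}(N^+p)(\mathbb{F}_{(\ell_2)^2})$ and $\mathfrak{J}^{ss}$ for its supersingular subgroup. The first arrow in the composition is surjective by construction, so the composition is surjective exactly when the image of $\mathfrak{J}^{ss}$ generates $\mathfrak{J}/\mathcal{I}^{\Delta\ell_1}_{g_{\ell_1}}$. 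Since $\mathfrak{J}^{ss}$ is stable under $\mathbb{T}^{\Delta\ell_1}(N^+p)$ (supersingularity is preserved by the correspondences at primes away from $\ell_2$), applying $-\otimes_{\mathbb{T}^{\Delta\ell_1}(N^+p)}\mathbb{T}^{\Delta\ell_1}(N^+p)/\mathcal{I}^{\Delta\ell_1}_{g_{\ell_1}}$ to $\mathfrak{J}^{ss}\hookrightarrow\mathfrak{J}\twoheadrightarrow\mathfrak{J}/\mathfrak{J}^{ss}$ and using right exactness, the composition is surjective iff $(\mathfrak{J}/\mathfrak{J}^{ss})/\mathcal{I}^{\Delta\ell_1}_{g_{\ell_1}}(\mathfrak{J}/\mathfrak{J}^{ss})=0$; as this module is killed by $\varpi^n$ and finitely generated over the Artinian ring $\mathbb{T}^{\Delta\ell_1}(N^+p)/\varpi^n$, Nakayama reduces everything to the residual statement that $(\mathfrak{J}/\mathfrak{J}^{ss})\otimes\mathbb{F}$ vanishes after localization at $\mathfrak{m}_{g_{\ell_1}}$.

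Next I would invoke Ihara's theorem on rational points (Proposition \ref{prop:ihara_rational_pts_shimura}), applied to the indefinite algebra $\mathcal{B}^{\Delta\ell_1}$ and the auxiliary prime $\ell_2\nmid N^+p\Delta\ell_1$, which identifies $\mathfrak{J}/\mathfrak{J}^{ss}$ with the abelianization $(\Gamma_{\infty,N^+,p})^{\mathrm{ab}}$ of the relevant arithmetic group. To apply it I first need a level structure making $\Gamma_{\infty,N^+,p}$ torsion-free: in the ordinary situation the $\Gamma_1(p)$-structure with $p\geq 5>4$ does this automatically, and in general Condition Ihara (Assumption \ref{assu:ihara}) supplies a prime $q\mid N^+$ with $q\geq 4$ and $p\nmid\varphi(q)$ whose $\Gamma_1(q)$-structure removes the elliptic points, cf. Remark \ref{rem:normalization_modular_forms} and $\S$\ref{subsubsec:rational_pts_shimura}. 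Condition PO then guarantees, through the $p$-old mechanism of Lemma \ref{lem:preparation_ihara}, that passing from the $\Gamma_0(p)$-level appearing in $\mathfrak{J}^{\Delta\ell_1}(N^+p)$ to this torsion-free level does not change the $\mathfrak{m}_{g_{\ell_1}}$-localized module. Granting torsion-freeness, the Cerednik–Drinfeld analysis of the special fibre at $\ell_2$ together with Poincaré duality expresses $(\Gamma_{\infty,N^+,p})^{\mathrm{ab}}\otimes\mathbb{F}$, localized at $\mathfrak{m}$, as the cokernel of the trace map $\mathscr{L}_2(\ell_2N^+,p,\mathbb{F})_{\mathfrak{m}}\to\mathscr{L}_2(N^+,p,\mathbb{F})^{\oplus 2}_{\mathfrak{m}}$ attached to the two degeneracy maps at $\ell_2$ for $\mathcal{B}^{\Delta\ell_1}$.

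Now Ihara's lemma (Corollary \ref{cor:ihara_lemma_shimura_curves}, with $\ell=\ell_2$, applicable since $\mathfrak{m}$ is non-Eisenstein and ordinary) says the adjoint pullback $1+\eta_{\ell_2}$ is injective on the $\mathfrak{m}$-localization; by Poincaré duality the trace map above is then surjective, so its cokernel vanishes after localizing at $\mathfrak{m}_{g_{\ell_1}}$. Hence $(\mathfrak{J}/\mathfrak{J}^{ss})/\mathcal{I}^{\Delta\ell_1}_{g_{\ell_1}}=0$, so the composition — and therefore the functional $g^{*}_{\ell_1\ell_2}$ and the form $g_{\ell_1\ell_2}$ — is surjective. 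For the last claim, under Condition CR the mod $\varpi^n$ multiplicity one statement underlying Theorem \ref{thm:level_raising} and Corollary \ref{cor:arithmetic_jochnowitz}, transported through the integral Jacquet–Langlands correspondence of $\S$\ref{subsec:modular_forms}, shows that the $\mathfrak{m}$-localized space of definite quaternionic forms for $B^{\Delta\ell_1\ell_2}$ of level $N^+p$ over $\mathcal{O}_n$ is free of rank one; since $g_{\ell_1\ell_2}$ represents the surjection $g^{*}_{\ell_1\ell_2}$ under the perfect pairing $\langle-,-\rangle_U$, it is well-defined up to $\mathcal{O}^\times_n$, completing the proof along the lines of \cite[Proposition 9.2]{bertolini-darmon-imc-2005}.

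The main obstacle is the identification in the second paragraph: making precise, at the $\Gamma_1(p)$- (or $\Gamma_1(q)$-) level and with the correct coefficients, that $\mathfrak{J}/\mathfrak{J}^{ss}$ is exactly the mapping cone of the $\ell_2$-degeneracy maps, so that Ihara's lemma can be fed in — this requires careful bookkeeping of the level at $p$ (where Condition PO is what allows one to switch between $\Gamma_0(p)$ and $\Gamma_1(p)$) and of the Hecke-equivariance and compatibility of every arrow in the large commutative diagram. By contrast, the reduction step and the insertion of Ihara's lemma are essentially formal once this identification is established.
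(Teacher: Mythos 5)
Your proposal is correct and follows essentially the same route as the paper: reduce surjectivity to the vanishing of $(\mathfrak{J}/\mathfrak{J}^{ss})/\mathcal{I}^{\Delta\ell_1}_{g_{\ell_1}}$ after localization, identify $\mathfrak{J}/\mathfrak{J}^{ss}$ with $\left( \Gamma_{\infty, N^+,p} \right)^{ab}$ via Proposition \ref{prop:ihara_rational_pts_shimura}, kill it using Ihara's lemma (Corollary \ref{cor:ihara_lemma_shimura_curves}) together with Nakayama, descend from the $\Gamma_1(p)$-level to the $\Gamma_0(N^+p)$-level, and deduce uniqueness from mod $\varpi$ multiplicity one under Condition CR. The only cosmetic difference is that where you express $\left( \Gamma_{\infty, N^+,p} \right)^{ab}\otimes\mathbb{F}$ as the cokernel of the trace map and apply duality to Ihara's injectivity, the paper runs the dual argument via Serre's tree cohomology sequence (Proposition \ref{prop:serre_trees}), concluding $\mathrm{Hom}(\Gamma_{\infty, N^+,p},\mathbb{F})[\mathfrak{m}_{g_{\ell_1},p}]=0$ directly from the injectivity of $\alpha_{\ell_2}$.
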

\begin{rem}
The surjectivity of $g_{\ell_1\ell_2}$ is essential because it ensures nonvanishing modulo $\varpi$.
\end{rem}
The rest of this section is devoted to prove Proposition \ref{prop:surjectivity_via_ihara}.
We first recall the following result on trees.
\begin{prop}[{\cite[Proposition 13, $\S$II.2.8]{serre-trees}}] \label{prop:serre_trees}
Let $\mathscr{T}$ be a tree and $G$ be a group acting on $\mathscr{T}$.
Let $\Sigma_0$ be the set of representatives  of $\mathcal{V}(\mathscr{T})/G$ and
$\Sigma_1$ the set of representatives  of $\mathcal{E}(\mathscr{T})/G$.
Let $M$ be a $G$-module. Then we have an exact cohomology sequence
$$\cdots \to \mathrm{H}^i(G,M) \to \prod_{v \in \Sigma_0} \mathrm{H}^i(G_v,M) \to \prod_{e \in \Sigma_1} \mathrm{H}^i(G_e,M) \to \mathrm{H}^{i+1}(G,M) \to \cdots$$
where $G_v = \mathrm{Stab}_G(v)$ and $G_e = \mathrm{Stab}_G(e)$.
\end{prop}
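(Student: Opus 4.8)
The plan is to recall Serre's argument, which exhibits the asserted sequence as the long exact $\mathrm{Ext}$-sequence attached to the simplicial chain complex of $\mathscr{T}$. The first step is to record that, since $\mathscr{T}$ is a tree, its augmented oriented chain complex
$$0 \to C_1(\mathscr{T}) \xrightarrow{\partial} C_0(\mathscr{T}) \xrightarrow{\varepsilon} \mathbb{Z} \to 0$$
is an exact sequence of $\mathbb{Z}[G]$-modules, where (after fixing an orientation of each edge) $C_0(\mathscr{T}) = \mathbb{Z}[\mathcal{V}(\mathscr{T})]$, $C_1(\mathscr{T}) = \mathbb{Z}[\mathcal{E}(\mathscr{T})]$, $\varepsilon$ is the augmentation, and $\partial$ sends an oriented edge to (target) $-$ (source); exactness is just the acyclicity of a tree ($\widetilde{\mathrm{H}}_0(\mathscr{T}) = \mathrm{H}_1(\mathscr{T}) = 0$). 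To avoid sign issues I would assume, as in Serre, that $G$ acts without inversion (replacing $\mathscr{T}$ by its barycentric subdivision if necessary); then $\Sigma_0$ and $\Sigma_1$ are genuine sets of orbit representatives for $\mathcal{V}(\mathscr{T})$ and $\mathcal{E}(\mathscr{T})$, and choosing them yields $\mathbb{Z}[G]$-module isomorphisms $C_0(\mathscr{T}) \simeq \bigoplus_{v \in \Sigma_0} \mathbb{Z}[G/G_v]$ and $C_1(\mathscr{T}) \simeq \bigoplus_{e \in \Sigma_1} \mathbb{Z}[G/G_e]$, with $G_v = \mathrm{Stab}_G(v)$ and $G_e = \mathrm{Stab}_G(e)$.

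Next I would apply $\mathrm{RHom}_{\mathbb{Z}[G]}(-, M)$ to the short exact sequence above and pass to cohomology, using $\mathrm{H}^i(G, M) = \mathrm{Ext}^i_{\mathbb{Z}[G]}(\mathbb{Z}, M)$. This gives a long exact sequence
$$\cdots \to \mathrm{Ext}^i_{\mathbb{Z}[G]}(\mathbb{Z}, M) \to \mathrm{Ext}^i_{\mathbb{Z}[G]}(C_0(\mathscr{T}), M) \to \mathrm{Ext}^i_{\mathbb{Z}[G]}(C_1(\mathscr{T}), M) \to \mathrm{Ext}^{i+1}_{\mathbb{Z}[G]}(\mathbb{Z}, M) \to \cdots,$$
and it remains to identify the two middle terms. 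Since $\mathrm{Hom}$, hence $\mathrm{Ext}$, converts direct sums in the contravariant variable into products, and since extension of scalars $\mathbb{Z}[G] \otimes_{\mathbb{Z}[G_v]} (-)$ is exact and carries projectives to projectives, Shapiro's lemma yields $\mathrm{Ext}^i_{\mathbb{Z}[G]}(\mathbb{Z}[G/G_v], M) = \mathrm{Ext}^i_{\mathbb{Z}[G_v]}(\mathbb{Z}, M) = \mathrm{H}^i(G_v, M)$ — no finiteness of $[G:G_v]$ is needed, because induction occupies the first slot of $\mathrm{Ext}$ and there corresponds to restricting resolutions — and similarly $\mathrm{Ext}^i_{\mathbb{Z}[G]}(\mathbb{Z}[G/G_e], M) = \mathrm{H}^i(G_e, M)$. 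Hence the middle terms become $\prod_{v \in \Sigma_0} \mathrm{H}^i(G_v, M)$ and $\prod_{e \in \Sigma_1} \mathrm{H}^i(G_e, M)$, and substituting these in gives precisely the stated exact sequence.

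I expect no serious difficulty here; this is a formal consequence of the acyclicity of a tree together with standard homological algebra. The one point that needs care is the edge bookkeeping: without passing to a subdivision, an edge $e$ whose stabilizer contains an inversion contributes $\mathbb{Z}[G] \otimes_{\mathbb{Z}[G_e]} \mathbb{Z}(\varepsilon_e)$, with $G_e$ acting on the edge module through the orientation character $\varepsilon_e : G_e \to \{\pm 1\}$, which would force twisted cohomology $\mathrm{H}^i(G_e, M(\varepsilon_e))$ in the sequence; assuming (with Serre) that $G$ acts without inversion, or equivalently subdividing, removes this, after which everything is as above.
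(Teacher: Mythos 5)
Your argument is correct and is essentially the proof in the cited source: the paper gives no proof of its own (it simply quotes \cite[Proposition 13, $\S$II.2.8]{serre-trees}), and Serre's argument is exactly yours — the augmented chain complex $0 \to C_1(\mathscr{T}) \to C_0(\mathscr{T}) \to \mathbb{Z} \to 0$ is exact by acyclicity of the tree, the chain groups decompose (for an action without inversion) into induced modules indexed by $\Sigma_0$ and $\Sigma_1$, and the long exact $\mathrm{Ext}_{\mathbb{Z}[G]}(-,M)$ sequence together with Shapiro's lemma gives the stated sequence. Your remark on inversions and barycentric subdivision correctly identifies the only hypothesis implicit in Serre's formulation, so nothing further is needed.
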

We fix an embedding $\mathcal{B}^{\Delta\ell_1}$ into $\mathrm{M}_2(\mathbb{Q}_{\ell_2})$.
Let $\Gamma_{\ell_2, N^+p}$ be the image of the group of reduced norm one elements in $\mathcal{R}_{N^+p}[1/\ell]^\times / \lbrace \pm 1\rbrace$ in $\mathrm{PGL}_2(\mathbb{Q}_{\ell_2})$ and $\Gamma_{\ell_2, N^+,p} \subset \Gamma_{\ell_2, N^+p}$ be the similarly defined subgroup as in $\S$\ref{subsubsec:rational_pts_shimura}.
\begin{rem}
We have isomorphisms $\Gamma_{\ell_2, N^+p} \simeq \Gamma_{\infty, N^+p}$ and $\Gamma_{\ell_2, N^+,p} \simeq \Gamma_{\infty, N^+,p}$ as groups since both maps to $\mathrm{GL}_2(\mathbb{Q}_{\ell_2})$ and $\mathrm{GL}_2(\mathbb{R})$ are embeddings.
\end{rem}
Then we obtain an action of $\Gamma_{\ell_2, N^+p}$ (so also of $\Gamma_{\ell_2, N^+,p}$) on the Bruhat-Tits tree $\mathscr{T}_{\ell_2}$ for $\mathrm{PGL}_2(\mathbb{Q}_{\ell_2})$.

Let $v_0 \in \mathcal{V}(\mathscr{T}_{\ell_2})$ such that $\left( \Gamma_{\ell_2, N^+,p} \right)_{v_0} := \mathrm{Stab}_{\Gamma_{\ell_2, N^+,p}}(v_0) \subseteq \mathrm{PSL}_2(\mathbb{Z}_{\ell_2})$. 
Let $e_0$ be the edge whose source is $v_0$ such that the stabilizer $\left( \Gamma_{\ell_2, N^+,p} \right)_{e_0}$ is the subgroup of $\left( \Gamma_{\ell_2, N^+,p} \right)_{v_0}$ consisting of the upper triangular elements modulo $\ell_2$.
Write $v_1$ for the target of $e_0$.
Consider the identifications
\[
\xymatrix{
\left( \Gamma_{\ell_2, N^+,p} \right)_{v_0} \simeq \left( \Gamma_{\infty, N^+,p} \right)_{v_0} \subset \mathrm{PSL}_2(\mathbb{R}) , &
\left( \Gamma_{\ell_2, N^+,p} \right)_{e_0} \simeq \left( \Gamma_{\infty, N^+,p} \right)_{e_0}\subset \mathrm{PSL}_2(\mathbb{R}).
}
\]
Then the discrete subgroups at the infinite place define the corresponding Shimura curves as follows:
\begin{itemize}
\item $\left( \Gamma_{\infty, N^+,p} \right)_{v_0}$ defines Shimura curve $M^{\Delta\ell_1}(N^+,p)$, and
\item $\left( \Gamma_{\infty, N^+,p} \right)_{e_0}$ defines Shimura curve $M^{\Delta\ell_1}(\ell_2 N^+,p)$, respectively.
\end{itemize}
Applying Proposition \ref{prop:serre_trees} with $i = 1$, $M = \mathbb{F}$ and $G = \Gamma_{\ell_2, N^+,p}$, we have the following statement.
\begin{cor}
The sequence
{\scriptsize
\[
\xymatrix{
\mathrm{Hom}\left(\Gamma_{\infty, N^+,p}, \mathbb{F} \right) \ar[r] &
\mathrm{Hom}\left( \left( \Gamma_{\infty, N^+,p} \right)_{v_0}, \mathbb{F} \right) \oplus
\mathrm{Hom}\left( \left( \Gamma_{\infty, N^+,p} \right)_{v_1}, \mathbb{F} \right) \ar[r]^-{d} & 
\mathrm{Hom}\left( \left( \Gamma_{\infty, N^+,p} \right)_{e_0}, \mathbb{F} \right)
}
\]
}
is exact.
\end{cor}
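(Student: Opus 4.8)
The plan is to read the claimed sequence off as a segment of the Mayer--Vietoris cohomology sequence of Proposition \ref{prop:serre_trees}, applied to the action of $G := \Gamma_{\ell_2, N^+,p}$ on the Bruhat--Tits tree $\mathscr{T}_{\ell_2}$ of $\mathrm{PGL}_2(\mathbb{Q}_{\ell_2})$ with trivial coefficient module $M = \mathbb{F}$ and $i = 1$. Two inputs are needed before quoting that proposition. First, $G$ acts on $\mathscr{T}_{\ell_2}$ \emph{without inversion}: under the fixed embedding $\mathcal{B}^{\Delta\ell_1}\hookrightarrow \mathrm{M}_2(\mathbb{Q}_{\ell_2})$ the reduced-norm-one image $G$ lands in $\mathrm{PSL}_2(\mathbb{Q}_{\ell_2})$, which preserves the bipartition of $\mathscr{T}_{\ell_2}$, so no element of $G$ can interchange the endpoints of an edge. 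Second---the substantive point---the quotient graph $\mathscr{T}_{\ell_2}/G$ is the single segment consisting of the edge $\overline{e}_0$ with its two \emph{distinct} vertices $\overline{v}_0$, $\overline{v}_1$; equivalently $\Gamma_{\infty, N^+,p}\cong (\Gamma_{\infty, N^+,p})_{v_0}\ast_{(\Gamma_{\infty, N^+,p})_{e_0}}(\Gamma_{\infty, N^+,p})_{v_1}$ is an amalgam. This is the quaternionic analogue of the Ihara/Serre description of $\mathrm{SL}_2(\mathbb{Z}[1/\ell])$ as an amalgam: strong approximation for the reduced-norm-one group of the indefinite algebra $\mathcal{B}^{\Delta\ell_1}$ (Eichler's theorem, i.e.\ class number one for Eichler orders) shows $G$ acts transitively on each of the two $\mathrm{PSL}_2$-orbits of vertices and transitively on the unoriented edges of $\mathscr{T}_{\ell_2}$, while the bipartition forces $\overline{v}_0\neq\overline{v}_1$. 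This is precisely the fundamental-domain picture set up in the paragraph preceding the Corollary, where $(\Gamma_{\infty, N^+,p})_{v_0}$ and $(\Gamma_{\infty, N^+,p})_{v_1}$ are both the arithmetic group of $M^{\Delta\ell_1}(N^+,p)$ and $(\Gamma_{\infty, N^+,p})_{e_0}$ is that of $M^{\Delta\ell_1}(\ell_2 N^+,p)$; compare \cite[Proposition 9.2]{bertolini-darmon-imc-2005}.

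Granting these, Proposition \ref{prop:serre_trees} with $\Sigma_0=\{v_0,v_1\}$, $\Sigma_1=\{e_0\}$ yields the exact sequence
\[
\cdots \to \mathrm{H}^1(G,\mathbb{F}) \to \mathrm{H}^1(G_{v_0},\mathbb{F})\oplus \mathrm{H}^1(G_{v_1},\mathbb{F}) \xrightarrow{\ d\ } \mathrm{H}^1(G_{e_0},\mathbb{F}) \to \mathrm{H}^2(G,\mathbb{F}) \to \cdots .
\]
Since $\mathbb{F}$ carries the trivial action of all groups involved, $\mathrm{H}^1(H,\mathbb{F})=\mathrm{Hom}(H,\mathbb{F})$ for each of $H\in\{G,G_{v_0},G_{v_1},G_{e_0}\}$ (the $1$-cocycles are just homomorphisms and the coboundaries vanish). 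Transporting along the isomorphisms $\Gamma_{\ell_2,N^+,p}\cong\Gamma_{\infty,N^+,p}$ and $G_{v_i}\cong(\Gamma_{\infty,N^+,p})_{v_i}$, $G_{e_0}\cong(\Gamma_{\infty,N^+,p})_{e_0}$ gives exactness of the displayed sequence at its middle term, with $d$ the difference of the two restriction maps $\mathrm{Hom}((\Gamma_{\infty,N^+,p})_{v_i},\mathbb{F})\to\mathrm{Hom}((\Gamma_{\infty,N^+,p})_{e_0},\mathbb{F})$ induced by the inclusions of the edge group into the two vertex groups.

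For the remaining injectivity of the first arrow one uses the degree-$0$ part of the same sequence: with trivial coefficients it reads $0\to\mathbb{F}\to\mathbb{F}^2\to\mathbb{F}\to\mathrm{H}^1(G,\mathbb{F})\to\cdots$, and because the quotient graph is a contractible segment the map $\mathbb{F}^2\to\mathbb{F}$ is surjective, so the connecting map $\bigoplus_e \mathrm{H}^0(G_e,\mathbb{F})\to\mathrm{H}^1(G,\mathbb{F})$ is zero; hence $\mathrm{Hom}(\Gamma_{\infty,N^+,p},\mathbb{F})\hookrightarrow\mathrm{Hom}((\Gamma_{\infty,N^+,p})_{v_0},\mathbb{F})\oplus\mathrm{Hom}((\Gamma_{\infty,N^+,p})_{v_1},\mathbb{F})$. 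The only genuine obstacle is the identification of $\mathscr{T}_{\ell_2}/G$ with a single edge (and the verification that the two vertices are distinct), but this is classical once strong approximation / Eichler's class-number-one theorem for $\mathcal{B}^{\Delta\ell_1}$ is invoked; everything else is formal manipulation of Serre's exact sequence.
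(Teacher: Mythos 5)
Your proposal is correct and takes essentially the same route as the paper, which obtains the corollary directly by applying Proposition \ref{prop:serre_trees} with $i=1$, $M=\mathbb{F}$, and $G=\Gamma_{\ell_2,N^+,p}$ acting on the Bruhat--Tits tree. The extra details you supply (action without inversion, identification of the quotient graph with a single segment via strong approximation, $\mathrm{H}^1(H,\mathbb{F})=\mathrm{Hom}(H,\mathbb{F})$ for trivial coefficients, and the vanishing of the degree-zero connecting map) are exactly the standard facts the paper leaves implicit.
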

By \cite[(3), $\S$1]{ling-shimura-subgroups}, a canonical homomorphism
$J^{\Delta\ell_1}(N^+,p)(\mathbb{C}) \to \mathrm{Hom}(\Gamma_{\infty, N^+,p}, S^1)$ is injective where $S^1$ is the unit circle in $\mathbb{C}$. Thus, we can identify the source and the target of $d$ with the $\varpi$-torsions of the Jacobians of the corresponding Shimura curves. Also, we identify the $\varpi$-torsions of the Jacobians with the first Betti cohomologies of the corresponding Shimura curves with $\mathbb{F}(1)$-coefficients, where $\mathbb{F}(1)$ is the Tate twist of $\mathbb{F}$. We identify the Betti cohomologies with mod $\varpi$ \'{e}tale cohomologies and take the inverse twist of the coefficients. Then the map $d$ becomes $\alpha_{\ell_2}$ as in Theorem \ref{thm:ihara_lemma_shimura_curves}. The following commutative diagram summarizes all the identifications.
{\scriptsize
\[
\xymatrix@R=1em{
\mathrm{Hom}\left(\Gamma_{\infty, N^+,p}, \mathbb{F} \right) \ar[r] &
\mathrm{Hom}\left( \left( \Gamma_{\infty, N^+,p} \right)_{v_0}, \mathbb{F} \right) \oplus
\mathrm{Hom}\left( \left( \Gamma_{\infty, N^+,p} \right)_{v_1}, \mathbb{F} \right) \ar[r]^-{d} \ar@{=}[d] & 
\mathrm{Hom}\left( \left( \Gamma_{\infty, N^+,p} \right)_{e_0}, \mathbb{F} \right) \ar@{=}[d]\\
& J^{\Delta\ell_1}(N^+,p)(\mathbb{C})[\varpi] \oplus J^{\Delta\ell_1}(N^+,p)(\mathbb{C})[\varpi] \ar[r] \ar@{=}[d] & J^{\Delta\ell_1}(\ell_2 N^+,p)(\mathbb{C})[\varpi] \ar@{=}[d] \\
& \mathrm{H}^1(M^{\Delta\ell_1}(N^+,p)(\mathbb{C}), \mathbb{F}(1)) \oplus \mathrm{H}^1(M^{\Delta\ell_1}(N^+,p)(\mathbb{C}), \mathbb{F}(1)) \ar[r] \ar[d]^-{\simeq} & \mathrm{H}^1(M^{\Delta\ell_1}(\ell_2 N^+,p)(\mathbb{C}), \mathbb{F}(1)) \ar[d]^-{\simeq} \\
& \mathrm{H}^1_{\mathrm{\acute{e}t}}(M^{\Delta\ell_1}(N^+,p)_{\overline{\mathbb{Q}}}, \mathbb{F}(1)) \oplus \mathrm{H}^1_{\mathrm{\acute{e}t}}(M^{\Delta\ell_1}(N^+,p)_{\overline{\mathbb{Q}}}, \mathbb{F}(1)) \ar[r]  \ar[d]^-{\simeq} & \mathrm{H}^1_{\mathrm{\acute{e}t}}(M^{\Delta\ell_1}(\ell_2 N^+,p)_{\overline{\mathbb{Q}}}, \mathbb{F}(1)) \ar[d]^-{\simeq} \\
& \mathrm{H}^1_{\mathrm{\acute{e}t}}(M^{\Delta\ell_1}(N^+,p)_{\overline{\mathbb{Q}}}, \mathbb{F}) \oplus \mathrm{H}^1_{\mathrm{\acute{e}t}}(M^{\Delta\ell_1}(N^+,p)_{\overline{\mathbb{Q}}}, \mathbb{F}) \ar[r]^-{\alpha_{\ell_2}} & \mathrm{H}^1_{\mathrm{\acute{e}t}}(M^{\Delta\ell_1}(\ell_2 N^+,p)_{\overline{\mathbb{Q}}}, \mathbb{F})
}
\]
}
We regard $g_{\ell_1}$ as a mod $\varpi^n$ modular form on $M^{\Delta\ell_1}(N^+,p)$ with trivial character.
Let $\mathcal{I}^{\Delta\ell_1}_{g_{\ell_1},p} \subset \mathbb{T}^{\Delta\ell_1}(N^+,p)$ be the ideal corresponding to $g_{\ell_1}$ and $\mathfrak{m}_{g_{\ell_1},p}$  be the maximal ideal in $\mathbb{T}^{\Delta\ell_1}(N^+,p)$.
Since $\mathfrak{m}_{g_{\ell_1},p}$ induces a residually irreducible Galois representation, we have
$\mathrm{Hom}\left(\Gamma_{\infty, N^+,p}, \mathbb{F}_p \right)[\mathfrak{m}_{g_{\ell_1},p}] = 0$ due to Corollary \ref{cor:ihara_lemma_shimura_curves}. Therefore,
$\left(\Gamma_{\infty, N^+,p}\right)^{ab} / \mathfrak{m}_{g_{\ell_1},p} = 0$ and also 
$\left(\Gamma_{\infty, N^+,p}\right)^{ab} / \mathcal{I}^{\Delta\ell_1}_{g_{\ell_1},p} = 0$ due to Nakayama's lemma with $\mathbb{T}^{\Delta\ell_1}(N^+,p)_{\mathfrak{m}_{g_{\ell_1},p}}$.
By Proposition \ref{prop:ihara_rational_pts_shimura}, we have
$$\mathfrak{J}^{\Delta\ell_1}(N^+,p)(\mathbb{F}_{\ell^2}) / \mathfrak{J}^{\Delta\ell_1}(N^+,p)(\mathbb{F}_{\ell^2})^{ss} \simeq \left( \Gamma_{\infty, N^+,p} \right)^{ab}.$$
Thus, we have the following statement.
\begin{lem} \label{lem:supersingular_surj}
The image of $\mathfrak{J}^{\Delta\ell_1}(N^+,p)(\mathbb{F}_{\ell^2})^{ss}$ in 
$\mathfrak{J}^{\Delta\ell_1}(N^+,p)(\mathbb{F}_{\ell^2}) / \mathcal{I}^{\Delta\ell_1}_{g_{\ell_1},p}$ is full.
\end{lem}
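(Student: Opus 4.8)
The plan is to obtain this as a formal consequence of Ihara's exact sequence (Proposition \ref{prop:ihara_rational_pts_shimura}) by applying the Hecke-quotient functor $-/\mathcal{I}^{\Delta\ell_1}_{g_{\ell_1},p}$ and invoking right exactness. First I would record the short exact sequence of $\mathbb{T}^{\Delta\ell_1}(N^+,p)$-modules
\[
0 \to \mathfrak{J}^{\Delta\ell_1}(N^+,p)(\mathbb{F}_{\ell^2})^{ss} \to \mathfrak{J}^{\Delta\ell_1}(N^+,p)(\mathbb{F}_{\ell^2}) \to \left( \Gamma_{\infty, N^+,p} \right)^{ab} \to 0
\]
coming from Proposition \ref{prop:ihara_rational_pts_shimura}, noting that $\mathfrak{J}^{\Delta\ell_1}(N^+,p)(\mathbb{F}_{\ell^2})^{ss}$ is stable under the Hecke action so that all three terms carry compatible $\mathbb{T}^{\Delta\ell_1}(N^+,p)$-module structures. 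Since $-\otimes_{\mathbb{T}^{\Delta\ell_1}(N^+,p)}\bigl(\mathbb{T}^{\Delta\ell_1}(N^+,p)/\mathcal{I}^{\Delta\ell_1}_{g_{\ell_1},p}\bigr)$ is right exact, this yields an exact sequence
\[
\mathfrak{J}^{\Delta\ell_1}(N^+,p)(\mathbb{F}_{\ell^2})^{ss}/\mathcal{I}^{\Delta\ell_1}_{g_{\ell_1},p} \to \mathfrak{J}^{\Delta\ell_1}(N^+,p)(\mathbb{F}_{\ell^2})/\mathcal{I}^{\Delta\ell_1}_{g_{\ell_1},p} \to \left( \Gamma_{\infty, N^+,p} \right)^{ab}/\mathcal{I}^{\Delta\ell_1}_{g_{\ell_1},p} \to 0,
\]
so that fullness of the image is equivalent to the vanishing of the cokernel $\left( \Gamma_{\infty, N^+,p} \right)^{ab}/\mathcal{I}^{\Delta\ell_1}_{g_{\ell_1},p}$.

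To prove that vanishing I would proceed in two steps. Via the identifications summarized in the commutative diagram preceding the statement, $\mathrm{Hom}(\Gamma_{\infty, N^+,p}, \mathbb{F})$ sits Hecke-equivariantly inside $\mathrm{H}^1_{\mathrm{\acute{e}t}}(M^{\Delta\ell_1}(N^+,p)_{\overline{\mathbb{Q}}}, \mathbb{F})$, and Corollary \ref{cor:ihara_lemma_shimura_curves} together with the residual irreducibility attached to $\mathfrak{m}_{g_{\ell_1},p}$ gives $\mathrm{Hom}(\Gamma_{\infty, N^+,p}, \mathbb{F})[\mathfrak{m}_{g_{\ell_1},p}] = 0$. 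Writing $\mathrm{Hom}(\Gamma_{\infty, N^+,p}, \mathbb{F}) = \mathrm{Hom}_{\mathbb{F}_p}\bigl(\left( \Gamma_{\infty, N^+,p} \right)^{ab}\otimes_{\mathbb{Z}}\mathbb{F}_p, \mathbb{F}\bigr)$ and using exactness of $\mathrm{Hom}_{\mathbb{F}_p}(-,\mathbb{F})$, the $\mathfrak{m}_{g_{\ell_1},p}$-torsion vanishing on the dual side translates into $\left( \Gamma_{\infty, N^+,p} \right)^{ab}/\mathfrak{m}_{g_{\ell_1},p} = 0$ (recall $p\in\mathfrak{m}_{g_{\ell_1},p}$). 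Next, $\left( \Gamma_{\infty, N^+,p} \right)^{ab}$ is finitely generated over $\mathbb{T}^{\Delta\ell_1}(N^+,p)$, so localizing at $\mathfrak{m}_{g_{\ell_1},p}$ and applying Nakayama's lemma over the local ring $\mathbb{T}^{\Delta\ell_1}(N^+,p)_{\mathfrak{m}_{g_{\ell_1},p}}$ forces $\left( \Gamma_{\infty, N^+,p} \right)^{ab}_{\mathfrak{m}_{g_{\ell_1},p}} = 0$; since $\mathbb{T}^{\Delta\ell_1}(N^+,p)/\mathcal{I}^{\Delta\ell_1}_{g_{\ell_1},p}\cong\mathcal{O}_n$ is local with maximal ideal lying over $\mathfrak{m}_{g_{\ell_1},p}$, this gives $\left( \Gamma_{\infty, N^+,p} \right)^{ab}/\mathcal{I}^{\Delta\ell_1}_{g_{\ell_1},p} = 0$, which is exactly what is needed.

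Combining the two steps, the map $\mathfrak{J}^{\Delta\ell_1}(N^+,p)(\mathbb{F}_{\ell^2})^{ss}/\mathcal{I}^{\Delta\ell_1}_{g_{\ell_1},p} \to \mathfrak{J}^{\Delta\ell_1}(N^+,p)(\mathbb{F}_{\ell^2})/\mathcal{I}^{\Delta\ell_1}_{g_{\ell_1},p}$ is surjective, i.e.\ the image of $\mathfrak{J}^{\Delta\ell_1}(N^+,p)(\mathbb{F}_{\ell^2})^{ss}$ in $\mathfrak{J}^{\Delta\ell_1}(N^+,p)(\mathbb{F}_{\ell^2})/\mathcal{I}^{\Delta\ell_1}_{g_{\ell_1},p}$ is full. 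I do not expect a genuine obstacle here: the argument is formal homological algebra plus Nakayama once the inputs are in place. The points requiring care — rather than difficulty — are the Hecke-equivariance of Ihara's exact sequence and of the embedding of (the Pontryagin dual of) the $\varpi$-torsion of the relevant Jacobians into $\left( \Gamma_{\infty, N^+,p} \right)^{ab}$, both of which are supplied by \cite{ihara-shimura-curves}, \cite{ling-shimura-subgroups}, and the diagram above; and the fact that Nakayama is legitimately applicable precisely because we have passed to the $\Gamma_0(N^+)\cap\Gamma_1(p)$-level structure and to the Hecke algebra localized at the (residually irreducible, ordinary) maximal ideal $\mathfrak{m}_{g_{\ell_1},p}$.
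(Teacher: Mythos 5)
Your proposal is correct and follows essentially the same route as the paper: reduce via Ihara's exact sequence (Proposition \ref{prop:ihara_rational_pts_shimura}) to the vanishing of $\left( \Gamma_{\infty, N^+,p} \right)^{ab}/\mathcal{I}^{\Delta\ell_1}_{g_{\ell_1},p}$, obtain $\mathrm{Hom}\left(\Gamma_{\infty, N^+,p}, \mathbb{F} \right)[\mathfrak{m}_{g_{\ell_1},p}] = 0$ from Ihara's lemma (Corollary \ref{cor:ihara_lemma_shimura_curves}) through the chain of identifications, dualize, and finish with Nakayama over $\mathbb{T}^{\Delta\ell_1}(N^+,p)_{\mathfrak{m}_{g_{\ell_1},p}}$. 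The only cosmetic imprecision is that $\mathrm{Hom}\left(\Gamma_{\infty, N^+,p}, \mathbb{F}\right)$ embeds (after localization at the non-Eisenstein $\mathfrak{m}_{g_{\ell_1},p}$) into the \emph{direct sum of two copies} of the étale cohomology, landing in the kernel of $\alpha_{\ell_2}$ via Serre's tree sequence, rather than into a single copy.
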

In order to reduce the level to the $\Gamma_0(N^+p)$-structure, consider the natural map
$$\mathfrak{J}^{\Delta\ell_1}(N^+,p)(\mathbb{F}_{\ell^2}) \to  \mathfrak{J}^{\Delta\ell_1}(N^+p)(\mathbb{F}_{\ell^2}) .$$
Its cokernel can be identified with the abelian quotient of the image of $\Gamma_0(p)$ in $\mathrm{SL}_2(\mathbb{Z}/p\mathbb{Z})$, hence has order dividing $\varphi(p) = p-1$, which is prime to $p$.
Then the composition
\[
\xymatrix{
\mathfrak{J}^{\Delta\ell_1}(N^+,p)(\mathbb{F}_{\ell^2}) \ar[r] \ar@/_2pc/@{->>}[rr] & \mathfrak{J}^{\Delta\ell_1}(N^+p)(\mathbb{F}_{\ell^2}) \ar[r] & \mathfrak{J}^{\Delta\ell_1}(N^+p)(\mathbb{F}_{\ell^2}) /\varpi^n
}
\]
is surjective.
\begin{rem}
In the non-ordinary case, $p$ does not divide the level. Thus, we should replace $N^+$ by $N^+/q$ and $p$ by $q$ under Condition Ihara (Assumption \ref{assu:ihara}).
\end{rem}
Since the following diagram is commutative
\[
\xymatrix@R=1.5em{
\mathfrak{J}^{\Delta\ell_1}(N^+,p)(\mathbb{F}_{\ell^2})^{ss} \ar@{->>}[rr]^-{\textrm{Lemma } \ref{lem:supersingular_surj}}  \ar[d] & & \mathfrak{J}^{\Delta\ell_1}(N^+,p)(\mathbb{F}_{\ell^2}) / \mathcal{I}^{\Delta\ell_1}_{g_{\ell_1},p} \ar@{->>}[d]^-{\textrm{the above surjection}}\\
\mathfrak{J}^{\Delta\ell_1}(N^+p)(\mathbb{F}_{\ell^2})^{ss} \ar[rr] & & \mathfrak{J}^{\Delta\ell_1}(N^+p)(\mathbb{F}_{\ell^2}) / \mathcal{I}^{\Delta\ell_1}_{g_{\ell_1}},
}
\]
the map at the bottom
$$\mathfrak{J}^{\Delta\ell_1}(N^+p)(\mathbb{F}_{\ell^2})^{ss} \to \mathfrak{J}^{\Delta\ell_1}(N^+p)(\mathbb{F}_{\ell^2}) / \mathcal{I}^{\Delta\ell_1}_{g_{\ell_1}}$$
is also surjective.

Let $\overline{\rho}$ be the residual Galois representation associated to the maximal ideal $\mathfrak{m}_{g_{\ell_1\ell_2}} \subset \mathbb{T}^{\Delta\ell_1\ell_2}(N^+p)$. Then $(\overline{\rho}, \Delta\ell_1\ell_2)$ satisfies Condition CR; thus, the following mod $\varpi$ multiplicity one result holds as in \cite[Lemma 3.6]{kim-summary} (also with the $p$-distinguished property of $\overline{\rho}$): 
$S^{\Delta\ell_1\ell_2}_2(N^+p, \mathbb{F})_{ \mathfrak{m}_{ g_{ \ell_1\ell_2 } } }[\mathfrak{m}_{ g_{ \ell_1\ell_2 } } ] \simeq \mathbb{F}$.
Then the argument of Wiles \cite[$\S$2]{wiles} shows that
$S^{\Delta\ell_1\ell_2}_2(N^+p, \mathcal{O})_{ \mathfrak{m}_{ g_{ \ell_1\ell_2 } } }$ is a free module over 
$\mathbb{T}^{\Delta\ell_1\ell_2}_2(N^+p)_{ \mathfrak{m}_{ g_{ \ell_1\ell_2 } } }$ of rank one. This implies that $g_{ \ell_1\ell_2 }$ is uniquely determined up to $\mathcal{O}^\times_n$ as in \cite[Proof of Theorem 5.5]{chida-hsieh-main-conj}. 

\begin{rem}
In \cite{chida-hsieh-main-conj}, such a multiplicity one result comes from the Taylor-Wiles system argument under Condition CR$^+$. See \cite[$\S$6]{chida-hsieh-p-adic-L-functions} for detail.
\end{rem}

\section{Construction of cohomology classes arising from Heegner points} \label{sec:construction_cohomology}
We fix the following data:
\begin{itemize}
\item an $n$-admissible form $\mathcal{D} = (\Delta, g)$;
\item an $n$-admissible prime $\ell \nmid \Delta$.
\end{itemize}
\subsection{Heegner points}
For $m  = \prod q^{r_i}_i$, write $m_0 = \prod q^{r_i + 1}_i$.
Consider the corresponding Shimura curve $M^{\Delta\ell}(N^+)$.
To each $a \in \widehat{K}^\times$, one can associate the Heegner point
$$P^{\Delta \ell}_{m_0} (a) \in M^{\Delta \ell}(N^+p)(H(m_0)) \cap  \mathrm{CM}_{K, \ell}\left( M^{\Delta\ell}(N^+p) \right)$$
using Shimura's reciprocity law.
\begin{rem}
For explicit definition of those points, see \cite[(4.8)]{chida-hsieh-main-conj}.
Since our Shimura curve has the $\Gamma_0(p)$-level structure, we do not impose condition on $r$ with $m=m'p^r$, which is given in \cite[$\S$4.3.2]{chida-hsieh-main-conj} and stated in \cite[Theorem 5.1]{chida-hsieh-main-conj}. Their Shimura curves have level $p^n$ with condition $r \geq n$ in order to use ``mod $\varpi^n$ Hida theory" for reduction of weight. See \cite[Proposition 4.2]{chida-hsieh-main-conj} for their weight reduction argument.
\end{rem}
\subsection{Construction}
Let $P^{\Delta \ell}_{m_0} := P^{\Delta \ell}_{m_0}(1)  \in M^{\Delta \ell}(N^+p)$.
Choose an auxiliary prime $q_0 \nmid p\ell N^+\Delta$ such that $1 + q_0 - a_{q_0}(f) \in \mathcal{O}^\times$.
We define
\begin{align*}
\xi^{\Delta\ell}_{q_0} : \mathrm{Div}M^{\Delta \ell}(N^+p)(H(m_0)) & \to J^{\Delta \ell}(N^+p)_{\mathcal{O}} \\
P & \mapsto \xi^{\Delta\ell}_{q_0}(P) := [  (1 + q_0 - T_{q_0})P ] \otimes (1 + q_0 - a_{q_0})^{-1} ,
\end{align*}
and define
$$D^{\Delta \ell}_m := \sum_{\sigma \in \mathrm{Gal}(H(m_0)/K(m))} \xi^{\Delta\ell}_{q_0} \left( \left(  P^{\Delta \ell}_{m_0} \right)^\sigma \right) \in J^{\Delta\ell} (N^+p)(K(m))_{\mathcal{O}}.$$
Let 
$$\mathrm{Kum}^{\Delta\ell} : J^{\Delta\ell} (N^+p)(K(m))_{\mathcal{O}} \to \mathrm{H}^1 \left( K(m), \mathrm{Ta}_p \left( J^{\Delta\ell} \left( N^+p \right) \right)_{\mathcal{O}} \right)$$
be the Kummer map.
Define 
$$\kappa_{\mathcal{D}}(\ell)_m := \kappa_{\mathcal{D}}(\ell)_{m'p^r} = \dfrac{1}{\alpha^r} \cdot \mathrm{Kum}^{\Delta\ell} (D^{\Delta \ell}_m) \Mod{\mathcal{I}^{\Delta\ell}_{g_\ell}} \in \mathrm{H}^1 \left( K(m), \mathrm{Ta}_p \left( J^{\Delta\ell} \left( N^+\right) \right)_{\mathcal{O}} / \mathcal{I}^{\Delta\ell}_{g_\ell} \right) \simeq \mathrm{H}^1(K(m), T_{f,n}) .$$
Then these cohomology classes satisfy the norm compatibility relation
$$\mathrm{cores}^{K(m'p^{r})}_{{K(m'p^{r-1})}} \left( \kappa_{\mathcal{D}}(\ell)_{m'p^r} \right) = \kappa_{\mathcal{D}}(\ell)_{m'p^{r-1}} .$$
Thus, we have the norm compatible system
 $$\kappa_{\mathcal{D}}(\ell)_{m'p^\infty} := \left( \kappa_{\mathcal{D}}(\ell)_{m'p^r} \right)_r \in \mathrm{H}^1(K(m'p^\infty), T_{f,n}).$$
The following proposition concerns the local properties of $\kappa_{\mathcal{D}}(\ell)_{m'p^\infty}$.
\begin{prop}[{\cite[Proposition 4.7]{chida-hsieh-main-conj}}] \label{prop:local_properties}
$\kappa_{\mathcal{D}}(\ell)_{m'p^\infty} \in \mathrm{Sel}_{\Delta\ell}(K(m'p^\infty), T_{f,n})$.
\end{prop}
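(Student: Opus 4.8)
The statement $\kappa_{\mathcal{D}}(\ell)_{m'p^\infty} \in \mathrm{Sel}_{\Delta\ell}(K(m'p^\infty), T_{f,n})$ is a local assertion: by Definition \ref{defn:selmer} it amounts to checking, at each finite place $q$ of $K(m'p^\infty)$, that $\mathrm{res}_q$ of the class satisfies the prescribed local condition, namely (i) $\partial_q = 0$ for $q \nmid p\Delta\ell$, (ii) $\mathrm{res}_q \in \mathrm{H}^1_{\mathrm{ord}}$ for $q \mid p\Delta$, (iii) arbitrary at $q \mid \ell$. Since $\kappa_{\mathcal{D}}(\ell)_{m'p^\infty}$ is the inverse limit of the finite-layer classes $\kappa_{\mathcal{D}}(\ell)_{m'p^r}$ along corestriction, it suffices to verify each local condition at the finite layers $K(m'p^r)$ compatibly (the conditions at $K(m'p^\infty)$ are defined as limits of the finite-layer ones, c.f. the discussion in $\S$\ref{subsec:minimal_selmer}), so the plan is to work over $K(m)$ with $m = m'p^r$ and then pass to the limit.

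The key input is that $\kappa_{\mathcal{D}}(\ell)_m$ comes, up to the scalar $\alpha^{-r}$ and reduction modulo $\mathcal{I}^{\Delta\ell}_{g_\ell}$, from the image under the Kummer map of the Heegner divisor $D^{\Delta\ell}_m \in J^{\Delta\ell}(N^+p)(K(m))_{\mathcal{O}}$ on a Shimura curve. Classes in the image of a Kummer map are automatically \emph{unramified} (i.e. $\partial_q = 0$) at every place $q$ at which the abelian variety $J^{\Delta\ell}(N^+p)$ has good reduction; combined with the identification $\mathrm{Ta}_p(J^{\Delta\ell}(N^+p))_{\mathcal{O}}/\mathcal{I}^{\Delta\ell}_{g_\ell} \simeq T_{f,n}$ from Corollary \ref{cor:arithmetic_jochnowitz}, this handles condition (i) for all $q \nmid p N^+ \Delta\ell$ at once. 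For the remaining ``bad'' places one argues place by place: at $q \mid N^+$ good reduction still holds for the relevant $N^-$-part so the class is locally in $\mathrm{H}^1_{\mathrm{fin}}$; at $q \mid \Delta$ (equivalently $q \mid N^-$ or $q$ an $n$-admissible prime dividing $\Delta/N^-$) one uses the description of the local Galois representation in $\S$\ref{subsec:galois_representations} together with the shape of $F^+_q V_f$ and the fact that points of the Jacobian land in the ``ordinary'' part of local cohomology — this is precisely the content that makes the Heegner class a class in the $\Delta$-ordinary Selmer group in the standard Euler system setup; at $q \mid p$ one invokes the ordinarity of the Kummer image of a point on a curve with $\Gamma_0(p)$-level, again matching the definition of $\mathrm{H}^1_{\mathrm{ord}}(K(m)_p, T_{f,n})$ via $F^+_p$. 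Finally at $q \mid \ell$ no condition is imposed, so there is nothing to check. All of this is exactly \cite[Proposition 4.7]{chida-hsieh-main-conj} reproduced over the base $K(m)$ in place of $K(p^r)$, and the local computations are insensitive to whether one works over $K(p^r)$ or $K(m'p^r)$ since the relevant primes $q \mid p\Delta\ell$ behave identically (the primes $\ell \mid \Delta\ell$ split completely in $K(m)/K$, and the prime $p$ and primes dividing $\Delta$ are handled by the same local arguments).

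The main obstacle, such as it is, is bookkeeping rather than depth: one must make sure that the scalar $\alpha^{-r}$ and the reduction modulo $\mathcal{I}^{\Delta\ell}_{g_\ell}$ are compatible with all the local identifications (in particular the isomorphism $\mathrm{Ta}_p(J^{\Delta\ell}(N^+p))_{\mathcal{O}}/\mathcal{I}^{\Delta\ell}_{g_\ell}\simeq T_{f,n}$ is canonical only up to the choice in Corollary \ref{cor:arithmetic_jochnowitz}, but this ambiguity is by a unit and does not affect membership in a Selmer submodule), and that the norm-compatibility of the $\kappa_{\mathcal{D}}(\ell)_{m'p^r}$ ensures the local conditions propagate to the inverse limit. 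I would organize the write-up as: (1) reduce to the finite layer $K(m)$; (2) record good reduction of $J^{\Delta\ell}(N^+p)$ away from $pN^+\Delta\ell$ and deduce condition (i) there; (3) treat $q \mid N^+$, $q \mid \Delta$, and $q \mid p$ in turn, citing $\S$\ref{subsec:galois_representations}, the definitions of $F^+_q$, and the ordinarity of Kummer images on the curve, exactly as in \cite[Proposition 4.7]{chida-hsieh-main-conj}; (4) note $q \mid \ell$ is unconstrained; (5) pass to the limit over $r$ using norm compatibility. No new idea beyond the observation — already stressed in the introduction — that these local computations are independent of the shape of the infinite extension.
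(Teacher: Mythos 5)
Your overall strategy---verify the local conditions of Definition \ref{defn:selmer} place by place at the finite layers and pass to the limit by norm compatibility---is the same as the paper's, and your treatment of $q \nmid pN^{+}\Delta\ell m'$ (good reduction of $J^{\Delta\ell}(N^{+}p)$), of $q \mid \Delta\ell$ (toric reduction forcing the class into $\mathrm{H}^1_{\mathrm{ord}}$), and of $q \mid p$ (deferring to \cite[Proof of Proposition 4.7]{chida-hsieh-main-conj}) matches the paper. However, there are two genuine problems. First, your argument at $q \mid N^{+}$ is wrong as stated: $J^{\Delta\ell}(N^{+}p)$ does \emph{not} have good reduction at primes dividing $N^{+}$ (they divide the level), so ``good reduction still holds for the relevant $N^{-}$-part'' is not a justification. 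The correct argument, and the one the paper uses, is Proposition \ref{prop:local_computation}.(1): primes dividing $N^{+}$ split in $K$, hence $\mathrm{H}^1_{\mathrm{sing}}(K(m)_q, T_{f,n}) = 0$ for $r \gg 0$, so the residue condition at such $q$ is vacuous in the limit --- a vanishing of the target group, not a statement about the Kummer map.

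Second, you never isolate the primes $q \mid m'$, which are exactly the places where the generalization from $K(p^r)$ to $K(m)$ requires new content: they are ramified in $K(m)/K$, so the usual formulation ``good reduction plus $K(m)$ unramified at $q$'' does not apply directly, and your remark that the local computations only concern $q \mid p\Delta\ell$ misses them entirely. The paper handles these by an inflation--restriction argument: since $K(m)/K(p^r)$ has degree prime to $p$, the restriction maps $\mathrm{H}^1(K(p^r)_q,-) \to \mathrm{H}^1(K(m)_q,-)$ and their unramified/inertial counterparts are isomorphisms on $T_{f,n}$, reducing the claim to the unramified base $K(p^r)$ where the good-reduction argument of the generic case applies. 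One can argue that the Kummer image is unramified over any local field of residue characteristic prime to $p$ at a place of good reduction (so that ramification of $K(m)_q/\mathbb{Q}_q$ is harmless), but you would need to say this explicitly; as written, the case that motivates restating the proposition over $K(m)$ at all is the one your proof skips.
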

\begin{proof}
We need to check the following local statements:
\begin{enumerate}
\item $\partial_q( \kappa_{\mathcal{D}}(\ell)_m ) = 0$ for $q \nmid p\Delta \ell m'$;
\item $\partial_q( \kappa_{\mathcal{D}}(\ell)_m ) = 0$ for $q \mid N^+$;
\item $\partial_q( \kappa_{\mathcal{D}}(\ell)_m ) = 0$ for $q \mid m'$;
\item $\mathrm{res}_q( \kappa_{\mathcal{D}}(\ell)_m ) \in \mathrm{H}^1_{\mathrm{ord}}(K(m)_q, T_{f,n})$ for $q \mid \Delta\ell$; and
\item $\mathrm{res}_q( \kappa_{\mathcal{D}}(\ell)_m ) \in \mathrm{H}^1_{\mathrm{ord}}(K(m)_q, T_{f,n})$ for $q \mid p$.
\end{enumerate}
Part (1) comes from the fact that $J^{\Delta\ell}(N^+)$ has good reduction at primes not dividing $\ell \Delta N^+$ and $K(m)$ is unramified at primes not dividing $m$.
Part (2) comes from Proposition \ref{prop:local_computation}.(1) for $q \vert N^+$. The elephantine condition is used here.
Part (3) comes from the following argument.

Consider the following horizontally exact commutative diagram
\[
\xymatrix{
0 \ar[r] & \mathrm{H}^1_{\mathrm{ur}}(K(p^r)_q, T_{f,n}) \ar[r]^-{\mathrm{inf}} \ar@{-->}[d]_-{\mathrm{res}}^-{\simeq} & \mathrm{H}^1(K(p^r)_q, T_{f,n}) \ar[r]^-{\mathrm{res}} \ar[d]_-{\mathrm{res}}^-{\simeq} & \mathrm{H}^1(I_{K(p^r)_q}, T_{f,n}) \ar[d]_-{\mathrm{res}}^-{\simeq} \\
0 \ar[r] & \mathrm{H}^1_{\mathrm{ur}}(K(m)_q, T_{f,n}) \ar[r]^-{\mathrm{inf}} & \mathrm{H}^1(K(m)_q, T_{f,n}) \ar[r]^-{\mathrm{res}} & \mathrm{H}^1(I_{K(m)_q}, T_{f,n}) .
}
\]
Then the latter two vertical restriction maps are isomorphism because $K(m)/K(p^r)$ is an extension of degree prime-to-$p$ and the former vertical isomorphism comes from these two isomorphisms.
Since $K(p^r)$ is unramified at $q$, it reduces to Part (1).

Part (4) comes from the fact that $J^{\Delta\ell}(N^+)$ has purely toric reduction at primes dividing $\Delta\ell$. 
Part (5) comes from the exactly same argument given in \cite[Proof of Proposition 4.7]{chida-hsieh-main-conj}.
For a more detail, see \cite[Lemma 7.16]{longo-hilbert-modular-case} and \cite[Proposition 4.7]{chida-hsieh-main-conj}.
\end{proof}

\subsection{Reduction to Gross points} \label{subsec:reduction_to_gross_pts}
Reminding the definition of $\mathrm{red}_{\mathcal{V}}$ in $\S$\ref{subsec:cm_points} and the explicit definition of Gross and Heegner points as in \cite[$\S$4.3.1 and (4.8)]{chida-hsieh-main-conj}, respectively, Heegner points map to Gross points following diagram (\ref{eqn:comm_diagram_1st_explicit}) in $\S$\ref{subsubsec:graph}.
\[
\xymatrix{
P_{m_0}(a) \ar@{|->}[r]^-{cl} \ar@{|->}[d]_-{cl_{\mathcal{V}}} & cl(P_{m_0}(a)) \ar@{|->}[d]^-{\mathrm{red}_\ell}\\
\psi_{m_0}(a) \ar@{|->}[r]^-{\simeq} & \psi_{m_0}(a) 
}
\]
where $\mathrm{red}_\mathcal{V} \circ \iota_\ell  = cl_{\mathcal{V}}$.
\begin{rem}
In some sense, the first explicit reciprocity law is a deformation of this reduction to group rings (and the Iwasawa algebra) following Theorem \ref{thm:reduction_diagram}.(2). 
This reduction of Heegner points to Gross points is the Euler system interpretation of the original form of Jochnowitz congruences \cite[Theorem 6.1]{bertolini-darmon-jochnowitz-1999}. The connection can be made via the Gross-Zagier formula and the Gross-Walspurger-Zhang formula. 
\end{rem}
\section{Explicit reciprocity laws} \label{sec:explicit_reciprocity}
We review two explicit reciprocity laws \`{a} la Bertolini-Darmon, which connect the cohomology classes arising from Heegner points and theta elements arising from Gross points.
\subsection{$n$-admissible theta elements and the connection with theta elements} \label{subsec:n-admissible_theta}
Let $\mathcal{D} = (\Delta, g)$ be an $n$-admissible form. Then we define an element in $\mathcal{O}_n[G_{m_0}]$ depending on $\mathcal{D}$ and $\psi$ by
$$\widetilde{\theta}_{m_0}(\mathcal{D}) :=  \sum_{a}  g (\psi_{m_0}(a)) [a]_{m_0} \in \mathcal{O}_n[G_{m_0}] $$
as in $\S$\ref{sec:theta_elements}.
Then we define the \textbf{$n$-admissible theta element associated to $\mathcal{D}$ and $\psi$} by
the image of $$\frac{1}{\alpha^r} \cdot \theta_{m_0}(\mathcal{D})$$ in $\mathcal{O}_n[\Gamma_m]$.
Define the \textbf{$n$-admissible Bertolini-Darmon element associated to $\mathcal{D}$} by
$$L_p(K(m), \mathcal{D}) := \theta_m(\mathcal{D}) \cdot \theta^*_m(\mathcal{D}) \in \mathcal{O}_n[\Gamma_m].$$

\begin{prop}[Existence of $n$-admissible forms; {\cite[Proposition 6.13]{chida-hsieh-main-conj}}] \label{prop:n-admissible-theta}
If $\Delta = N^-$, there exists an $n$-admissible form $\mathcal{D}^f_n = (N^-, f_{\alpha,n})$ such that
$$\theta_m(\mathcal{D}^f_n) \equiv \theta_{m}(f_{\alpha}) \Mod{\varpi^n} $$
in $\mathcal{O}_n[\Gamma_m]$ up to a unit in $\mathcal{O}_n$.
\end{prop}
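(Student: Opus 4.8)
The plan is to reduce the statement to a comparison at the level of Gross points between the eigenform $f_{\alpha}$, viewed integrally via Jacquet-Langlands, and its reduction modulo $\varpi^n$ packaged as an $n$-admissible form. First I would invoke the results of $\S$\ref{sec:theta_elements}: the $p$-stabilized theta element $\theta_{m}(f_\alpha)$ is obtained from $\widetilde{\theta}_{m_0}(f)$ by the projection $\mathcal{O}[G_{m_0}] \to \mathcal{O}[\Gamma_m]$ together with the $\alpha$-normalization (involving $\mathrm{cores}^r_{r-1}$), while on the $n$-admissible side $\widetilde{\theta}_{m_0}(\mathcal{D})$ is built from the \emph{same} Gross points $\psi_{m_0}(a)$ but with the quaternionic eigenform $g$ in place of $f$. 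So the whole statement comes down to producing, in the case $\Delta = N^-$, a mod $\varpi^n$ eigenform $f_{\alpha,n} \in S^{N^-}_2(N^+p,\mathcal{O}_n)$ that satisfies $\pi_{f_{\alpha,n}} \equiv \pi_{f_\alpha} \pmod{\varpi^n}$, is nonzero mod $\varpi$, and agrees with (a unit multiple of) the reduction of $f_\alpha$ as a \emph{function} on $B^{N^-,\times} \backslash \widehat{B}^{N^-,\times} / \widehat{R}^\times$, not merely as a Hecke eigensystem.

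The key steps, in order, would be: (1) Regard $f_\alpha$ via the integral Jacquet-Langlands correspondence of \cite{kim-summary} as an $\mathcal{O}$-valued form in $S^{N^-}_2(N^+p)$, well-defined up to $\mathcal{O}^\times$; here the absolute irreducibility of $\overline{\rho}$ and Condition CR guarantee that the relevant localized Hecke module $S^{N^-}_2(N^+p,\mathcal{O})_{\mathfrak{m}}$ is free of rank one over the localized Hecke algebra (mod $\varpi$ multiplicity one), so that the eigenform is unique up to a unit and its reduction is unambiguous. (2) Set $f_{\alpha,n}$ to be the reduction of this chosen integral form modulo $\varpi^n$; nonvanishing mod $\varpi$ is exactly the statement that the chosen generator is a unit multiple, which follows from step (1). (3) Check that the pair $\mathcal{D}^f_n = (N^-, f_{\alpha,n})$ satisfies Definition \ref{defn:n-admissible-forms}: $N^- \mid \Delta$ trivially (equality here), $f_{\alpha,n}\not\equiv 0$, and $\pi_{f_{\alpha,n}} \equiv \pi_{f_\alpha} \pmod{\varpi^n}$ is immediate from the construction. (4) Finally, trace through the definitions of $\widetilde{\theta}_{m_0}(\mathcal{D}^f_n)$ and $\widetilde{\theta}_{m_0}(f)$: since they evaluate the \emph{same} Gross points against $f_{\alpha,n} \equiv f_\alpha$, we get $\widetilde{\theta}_{m_0}(\mathcal{D}^f_n) \equiv \widetilde{\theta}_{m_0}(f_\alpha) \pmod{\varpi^n}$ up to $\mathcal{O}_n^\times$; pushing forward along $\mathcal{O}[G_{m_0}] \to \mathcal{O}[\Gamma_m]$ and applying the identical $\tfrac{1}{\alpha^r}$-normalization (and, inductively in $r$, the same corestriction recipe, using Remark \ref{rem:lift_cores}) yields $\theta_m(\mathcal{D}^f_n) \equiv \theta_{m}(f_\alpha) \pmod{\varpi^n}$ in $\mathcal{O}_n[\Gamma_m]$ up to a unit in $\mathcal{O}_n$. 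This is precisely \cite[Proposition 6.13]{chida-hsieh-main-conj}, whose argument transfers verbatim once one notes that our level is maximal at $p$ (weight two, so no mod $\varpi^n$ Hida-theoretic weight reduction is needed).

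The main obstacle I expect is step (1), i.e.\ pinning down the \emph{integral} normalization of $f_\alpha$ on the quaternion side well enough that the reduction mod $\varpi^n$ is canonical up to a unit: this is where Condition CR (and the $p$-distinguishedness supplied by Condition PO) is doing real work, via the freeness of the localized space of quaternionic modular forms over the localized Hecke algebra — the same mod $\varpi$ multiplicity one input used for level raising in $\S$\ref{sec:level_raising}. Once that normalization is fixed, everything else is a matter of chasing the commutative diagram defining Gross points and the theta elements, together with the elementary observation that reduction mod $\varpi^n$ commutes with the finite sums defining $\widetilde{\theta}$ and with the projection and corestriction maps. A secondary subtlety is bookkeeping the ambiguity: $\theta_m(f_\alpha)$ is only defined up to $\Gamma_m$ and $\mathcal{O}^\times$, and $f_{\alpha,n}$ only up to $\mathcal{O}_n^\times$, so the conclusion must be (and is) stated up to a unit in $\mathcal{O}_n$; one should make sure the choice of optimal embedding $\psi$ is the same on both sides so that the $\Gamma_m$-ambiguity matches.
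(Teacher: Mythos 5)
The paper itself gives no proof of this proposition; it is stated as an import of \cite[Proposition 6.13]{chida-hsieh-main-conj}, and your outline is essentially a correct reconstruction of that argument: integrally normalize $f_\alpha$ on the quaternion side via mod $\varpi$ multiplicity one (Condition CR plus the $p$-distinguishedness coming from Condition PO), reduce mod $\varpi^n$, verify Definition \ref{defn:n-admissible-forms}, and compare the two theta elements at the level of Gross points. One point in your step (4) is stated more loosely than it should be: $\theta_m(f_\alpha)$ is \emph{not} defined in $\S$\ref{subsec:theta_elements} by evaluating a level-$N^+p$ form at Gross points — it is defined recursively from the level-$N^+$ data by $\theta_{m'p^r}(f_\alpha) = \alpha^{-r}\bigl(\theta_{m'p^r}(f) - \alpha^{-1}\mathrm{cores}^r_{r-1}\theta_{m'p^{r-1}}(f)\bigr)$ — whereas $\theta_m(\mathcal{D}^f_n)$ \emph{is} a direct Gross-point evaluation of the level-$N^+p$ form $f_{\alpha,n}$; identifying the two requires the standard but nontrivial computation relating the degeneracy maps at $p$ on $B^\times\backslash\widehat{B}^\times/\widehat{R}^\times$ to the norm from conductor $m_0p$ to conductor $m_0$ on Gross points, not merely the observation that ``the same points'' are being evaluated.
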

\begin{rem}[on the existence of $n$-admissible forms]
All the $n$-admissible forms in this article come from $\mathcal{D}^f_n$ and its level raising at two $n$-admissible primes ($\S$\ref{subsec:level_raising_at_two_prime}). Thus, the Euler system argument itself is non-empty.
\end{rem}
\subsection{The first explicit reciprocity law}
Let $\ell$ be an $n$-admissible prime.
The first explicit reciprocity law connects the cohomology classes arising from Heegner points on the Shimura curve of discriminant $\Delta\ell$ and Gross points on the Gross curve of discriminant $\Delta$.

\begin{thm}[The first explicit reciprocity law; {\cite[Theorem 4.1]{bertolini-darmon-imc-2005}, \cite[Theorem 5.1]{chida-hsieh-main-conj}}] \label{thm:first_explicit_reciprocity}
Assume Condition CR.
If $\ell$ is an $n$-admissible prime, then $$v_\ell (\kappa_{\mathcal{D}}(\ell)_m) = 0$$
in $\mathrm{H}^1_{\mathrm{fin}} (K(m)_\ell, T_{f,n} ) \simeq \mathcal{O}_n[\Gamma_m]$, 
and the equality
$$\partial_\ell (\kappa_{\mathcal{D}}(\ell)_m) = \theta_{m}(\mathcal{D})$$ holds under identifications $\mathrm{H}^1_{\mathrm{ord}} (K(m)_\ell, T_{f,n} ) \simeq \mathcal{O}_n[\Gamma_m]$ up to multiplication by $\mathcal{O}^\times_n$ and $\Gamma_m$.
\end{thm}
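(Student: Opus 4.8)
The plan is to reduce both assertions to a purely local computation at $\ell$, feeding the Heegner divisor into the commutative square of Theorem~\ref{thm:reduction_diagram}.(2) and translating reductions of Heegner points into Gross points via $\S$\ref{subsec:reduction_to_gross_pts}. The vanishing $v_\ell(\kappa_{\mathcal D}(\ell)_m)=0$ is essentially immediate: by Proposition~\ref{prop:local_properties} (part (4) of its proof, applied with $q=\ell$; the only input is that $J^{\Delta\ell}(N^+p)$ has purely toric reduction at $\ell$) one has $\mathrm{res}_\ell(\kappa_{\mathcal D}(\ell)_m)\in\mathrm H^1_{\mathrm{ord}}(K(m)_\ell,T_{f,n})$, and since $\ell$ is $n$-admissible, Proposition~\ref{prop:local_computation}.(5) furnishes the decomposition $\mathrm H^1(K(m)_\ell,T_{f,n})=\mathrm H^1_{\mathrm{fin}}(K(m)_\ell,T_{f,n})\oplus\mathrm H^1_{\mathrm{ord}}(K(m)_\ell,T_{f,n})$; as $v_\ell$ is by Definition~\ref{defn:localization_n-admissible} the projection onto the finite summand, it vanishes.

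For the identity $\partial_\ell(\kappa_{\mathcal D}(\ell)_m)=\theta_m(\mathcal D)$, unwind the construction of $\S$\ref{sec:construction_cohomology}: $\kappa_{\mathcal D}(\ell)_m=\tfrac1{\alpha^r}\,\mathrm{Kum}^{\Delta\ell}(D^{\Delta\ell}_m)\bmod\mathcal I^{\Delta\ell}_{g_\ell}$ with $D^{\Delta\ell}_m=\sum_{\sigma\in\mathrm{Gal}(H(m_0)/K(m))}\xi^{\Delta\ell}_{q_0}\big((P^{\Delta\ell}_{m_0})^{\sigma}\big)$. Substituting this into the diagram of Theorem~\ref{thm:reduction_diagram}.(2) (which uses the identification of Corollary~\ref{cor:arithmetic_jochnowitz}) gives
\[
\partial_\ell\big(\kappa_{\mathcal D}(\ell)_m\big)=\psi_g\Big(\tfrac1{\alpha^r}\,\widetilde{\mathrm{red}}_\ell(D^{\Delta\ell}_m)\bmod\mathcal I^{\Delta\ell}_{g_\ell}\Big)\in\mathrm H^1_{\mathrm{sing}}(K(m)_\ell,T_{f,n})\simeq\mathcal O_n[\Gamma_m].
\]
On the $g$-isotypic quotient the auxiliary operator $\xi^{\Delta\ell}_{q_0}$ acts as $(1+q_0-a_{q_0}(f))^{-1}(1+q_0-a_{q_0}(f))=1$, hence may be dropped; by $\S$\ref{subsec:reduction_to_gross_pts} the equality $\mathrm{red}_{\mathcal V}\circ\iota_\ell=cl_{\mathcal V}$ sends the Heegner point $P_{m_0}(a)$ to the Gross point $\psi_{m_0}(a)$, and Shimura's reciprocity law matches the $\mathrm{Gal}(H(m_0)/K)$-action on Heegner points with translation by $[a]_{m_0}$ on Gross points. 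Finally $\psi_g$ is, through Theorems~\ref{thm:level_raising} and~\ref{thm:reduction_diagram}.(1), the map induced by the pairing $\langle g,-\rangle_U$ of $\S$\ref{subsec:modular_forms}, for which $\langle g,\psi_{m_0}(a)\rangle_U=g(\psi_{m_0}(a))$ up to the Atkin--Lehner twist $\tau$; so $\psi_g(\widetilde{\mathrm{red}}_\ell(D^{\Delta\ell}_m)\bmod\mathcal I^{\Delta\ell}_{g_\ell})$ is the image of $\widetilde\theta_{m_0}(\mathcal D)=\sum_a g(\psi_{m_0}(a))[a]_{m_0}$ under the projection $\mathcal O_n[G_{m_0}]\to\mathcal O_n[\Gamma_m]$. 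With the factor $\tfrac1{\alpha^r}$ this is exactly $\theta_m(\mathcal D)$, up to the multiplication by $\mathcal O_n^\times$ and $\Gamma_m$ that is already part of the definition of $\theta_m(\mathcal D)$ (the $\tau$-shift, the choice of orientation, and the normalization of the optimal embedding $\psi$).

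The main obstacle is bookkeeping rather than conceptual: one must chase the full chain of identifications coming from the Cerednik--Drinfeld $\ell$-adic uniformization — the dual graph $\mathcal G$ against the Bruhat--Tits tree, the component group $\Phi^{\Delta\ell}(N^+p)$ against the divisor group of the definite quaternion algebra $B^{\Delta}$, and $\psi_g$ against evaluation against $g$ — and check that $\mathrm{Gal}(H(m_0)/K)$-equivariance reproduces precisely the group-ring coefficients of $\theta_m(\mathcal D)$. All of this is done for $K(p^r)$ in \cite{bertolini-darmon-imc-2005} and \cite{chida-hsieh-main-conj} (with help of \cite{nekovar-hilbert}); the only new point is that nothing breaks for a general $K(m)$, which is formal because $\ell$ splits completely in $K(m)/K$ and the enhanced reduction map $\widetilde{\mathrm{red}}_\ell$ of $\S$\ref{subsec:level_raising_at_one_prime} is set up over $K(m)$ from the start.
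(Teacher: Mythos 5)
Your proposal is correct and follows essentially the same route as the paper: the paper's proof likewise derives the vanishing of $v_\ell$ from the ordinarity of the class at $\ell$ (Proposition \ref{prop:local_properties} together with the decomposition of Proposition \ref{prop:local_computation}.(5)), and obtains $\partial_\ell(\kappa_{\mathcal D}(\ell)_m)=\theta_m(\mathcal D)$ from the reduction of Heegner points to Gross points in $\S$\ref{subsec:reduction_to_gross_pts}, the commutative diagram of Theorem \ref{thm:reduction_diagram}.(2), and the duality realized by $\psi_g$, deferring the explicit computation to Chida--Hsieh. Your write-up simply spells out the same ingredients in more detail.
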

\begin{rem}
The equality depends on the choices of the embedding $\iota_p$ and the isomorphism 
$$\mathrm{Ta}_p\left( J^{\Delta\ell}(N^+p)\right)_{\mathcal{O}} / \mathcal{I}^{\Delta\ell}_g \simeq T_{f,n},$$
and different choices result in the ```up to multiplication by $\mathcal{O}^\times_n$ and $\Gamma_m$"-part.
\end{rem}
\begin{proof}
The reduction of Heegner points to Gross points in $\S$\ref{subsec:reduction_to_gross_pts}, the diagram in Theorem \ref{thm:reduction_diagram}.(2), and the duality of the spaces of modular forms with $\psi_g$ directly imply the conclusion. The explicit computation is given in \cite[Proof of Theorem 5.1]{chida-hsieh-main-conj}.
\end{proof}

\subsection{The second explicit reciprocity law}
Let $\ell_1$ and $\ell_2$ be $n$-admissible primes.
\begin{thm}[Second explicit reciprocity law] \label{thm:second_explicit_reciprocity}
Assume Condition CR and Condition Ihara.
Then there exists an $n$-admissible form $\mathcal{D}^{\ell_1\ell_2}=(\Delta\ell_1\ell_2, g_{\ell_1\ell_2})$ such that
$$v_{\ell_1} (\kappa_{\mathcal{D}}(\ell_2))_m = v_{\ell_2} (\kappa_{\mathcal{D}}(\ell_1))_m = \theta_m(\mathcal{D}^{\ell_1\ell_2}) $$
holds under the identifications
$\mathrm{H}^1_{\mathrm{fin}}(K(m)_{\ell_1}, T_{f,n}) \simeq \mathrm{H}^1_{\mathrm{fin}}(K(m)_{\ell_2}, T_{f,n})
\simeq \mathcal{O}_n[\Gamma_m]$ up to multiplication by $\mathcal{O}^\times_n$ and $\Gamma_m$.
\end{thm}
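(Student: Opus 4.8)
The plan is to adapt the Bertolini--Darmon proof of the second explicit reciprocity law (\cite[Theorem 4.2]{bertolini-darmon-imc-2005}, \cite[Theorem 5.5]{chida-hsieh-main-conj}), the new inputs being the two-prime level raising of $\S$\ref{subsec:level_raising_at_two_prime}, which produces the form $g_{\ell_1\ell_2}$ (surjective under Condition Ihara, unique up to $\mathcal{O}^\times_n$ under Condition CR by Proposition \ref{prop:surjectivity_via_ihara}), together with the reduction of Heegner points to Gross points of $\S$\ref{subsec:reduction_to_gross_pts}. Fix the $n$-admissible form $\mathcal{D} = (\Delta, g)$, distinct $n$-admissible primes $\ell_1, \ell_2 \nmid \Delta$, and the level-raised form $\mathcal{D}^{\ell_1\ell_2} = (\Delta\ell_1\ell_2, g_{\ell_1\ell_2})$; consider $\kappa_{\mathcal{D}}(\ell_2)_m \in \mathrm{H}^1(K(m), T_{f,n})$ built in $\S$\ref{sec:construction_cohomology} from the Heegner divisor $D^{\Delta\ell_2}_m$ on $M^{\Delta\ell_2}(N^+p)$.

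First I would compute $v_{\ell_1}(\kappa_{\mathcal{D}}(\ell_2)_m)$. Since $\ell_1 \nmid \Delta\ell_2 N^+ p$, the Jacobian $J^{\Delta\ell_2}(N^+p)$ has good reduction at $\ell_1$, so $\mathrm{Ta}_p(J^{\Delta\ell_2}(N^+p))_{\mathcal{O}}/\mathcal{I}^{\Delta\ell_2}_{g_{\ell_2}} \simeq T_{f,n}$ is unramified at $\ell_1$; hence $\mathrm{res}_{\ell_1}(\kappa_{\mathcal{D}}(\ell_2)_m) \in \mathrm{H}^1_{\mathrm{fin}}(K(m)_{\ell_1}, T_{f,n})$, and the local Kummer map at $\ell_1$ identifies $v_{\ell_1}(\kappa_{\mathcal{D}}(\ell_2)_m)$ with the $\Gamma_m$-twisted reduction of $D^{\Delta\ell_2}_m$ at $\ell_1$ (defined as in $\S$\ref{subsec:level_raising_at_one_prime} but at the good prime $\ell_1$), taken modulo $\mathcal{I}^{\Delta\ell_2}_{g_{\ell_2}}$ and divided by $\alpha^r$, landing in $\mathfrak{J}^{\Delta\ell_2}(N^+p)(\mathbb{F}_{(\ell_1)^2})_{\mathcal{O}} \otimes \mathcal{O}_n[\Gamma_m]$. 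Because the points $P^{\Delta\ell_2}_{m_0}(a)$ are CM by $K$ and $\ell_1$ is inert in $K$, they reduce to supersingular points, so this reduction factors through $\mathfrak{J}^{\Delta\ell_2}(N^+p)(\mathbb{F}_{(\ell_1)^2})^{ss}_{\mathcal{O}} \otimes \mathcal{O}_n[\Gamma_m]$, which the bad-reduction analysis of $\S$\ref{subsec:level_raising_at_two_prime} identifies with $\mathbb{Z}[B^{\Delta\ell_1\ell_2,\times}\backslash\widehat{B}^{\Delta\ell_1\ell_2,\times}/\widehat{R}^\times_{N^+p}]_{\mathcal{O}} \otimes \mathcal{O}_n[\Gamma_m]$.

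Next I would track the reduction explicitly: exactly as in $\S$\ref{subsec:reduction_to_gross_pts} (which is the $\ell_1 = \ell_2$ shadow of this computation, yielding the first explicit reciprocity law), the CM point $P^{\Delta\ell_2}_{m_0}(a)$ reduces modulo $\ell_1$ to the Gross point $\psi^{\Delta\ell_1\ell_2}_{m_0}(a)$ of conductor $m_0$ attached to the definite quaternion algebra $B^{\Delta\ell_1\ell_2}$. Hence the twisted reduction of $D^{\Delta\ell_2}_m$ becomes, up to the operator $\xi^{\Delta\ell_1\ell_2}_{q_0}$ (which acts trivially on the $g_{\ell_1\ell_2}$-eigenspace), the Gross-point divisor underlying $\widetilde{\theta}_{m_0}(\mathcal{D}^{\ell_1\ell_2})$ weighted over $\Gamma_m$. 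Composing with the isomorphism $\mathrm{H}^1_{\mathrm{fin}}(K(m)_{\ell_1}, T_{f,n}) \simeq \mathcal{O}_n[\Gamma_m]$ coming from $n$-admissibility of $\ell_1$ (Proposition \ref{prop:local_computation}.(4), Corollary \ref{cor:arithmetic_jochnowitz}, Theorem \ref{thm:reduction_diagram}) and the pairing $g^*_{\ell_1\ell_2}(b) = \langle g_{\ell_1\ell_2}, b\rangle_U$ of the commutative diagram of $\S$\ref{subsec:level_raising_at_two_prime}, I obtain the image in $\mathcal{O}_n[\Gamma_m]$ of $\alpha^{-r}\widetilde{\theta}_{m_0}(\mathcal{D}^{\ell_1\ell_2})$, which is exactly $\theta_m(\mathcal{D}^{\ell_1\ell_2})$ up to $\mathcal{O}^\times_n$ and $\Gamma_m$ by the definition in $\S$\ref{subsec:n-admissible_theta}. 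Running the identical computation with $\ell_1$ and $\ell_2$ interchanged gives $v_{\ell_2}(\kappa_{\mathcal{D}}(\ell_1)_m) = \theta_m(\mathcal{D}^{\ell_1\ell_2})$ for the \emph{same} $\mathcal{D}^{\ell_1\ell_2}$: both the $\ell_1$-then-$\ell_2$ and the $\ell_2$-then-$\ell_1$ level raisings produce a $\mathbb{T}^{\Delta\ell_1\ell_2}(N^+p)$-eigenform congruent to $\pi_{f_\alpha}$ modulo $\varpi^n$ with $U_{\ell_i}$-eigenvalue $\varepsilon_{\ell_i}$, and the mod $\varpi$ multiplicity one of $\S$\ref{subsec:level_raising_at_two_prime} (Condition CR) forces such a form to be unique up to $\mathcal{O}^\times_n$.

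The main obstacle is the middle step: identifying the supersingular locus of $\mathfrak{J}^{\Delta\ell_2}(N^+p)$ over $\mathbb{F}_{(\ell_1)^2}$ with the definite quaternionic side of discriminant $\Delta\ell_1\ell_2$, and checking that CM points of conductor $m_0$ reduce there to Gross points of conductor $m_0$. This is the genuine geometric content -- a Cerednik--Drinfeld/Deuring-type analysis of a Shimura curve carrying an additional split-multiplicative prime -- and it is also where Condition Ihara enters, via Proposition \ref{prop:surjectivity_via_ihara}, to guarantee that $g_{\ell_1\ell_2}$ is surjective and hence that $\theta_m(\mathcal{D}^{\ell_1\ell_2})$ does not vanish identically modulo $\varpi$. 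The remaining bookkeeping -- the $\alpha^{-r}$ normalizations, the auxiliary prime $q_0$, and the projection $G_{m_0} \twoheadrightarrow \Gamma_m$ -- is routine and parallels $\S$\ref{subsec:level_raising_at_one_prime} and the first explicit reciprocity law (Theorem \ref{thm:first_explicit_reciprocity}).
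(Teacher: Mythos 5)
Your proposal is correct and follows essentially the same route as the paper: the paper's own (very terse) proof likewise reduces the Heegner point $P^{\Delta\ell_1}_{m_0}(a)$ modulo $(\ell_2)^2$ to the Gross point $\psi_{m_0}(a)\tau$ under the identification of the supersingular locus with $\mathbb{Z}[B^{\Delta\ell_1\ell_2,\times}\backslash\widehat{B}^{\Delta\ell_1\ell_2,\times}/\widehat{R}^\times_{N^+p}]$ (citing the moduli interpretation in Chida--Hsieh) and then pairs with $g_{\ell_1\ell_2}$ via the diagram of $\S$\ref{subsec:level_raising_at_two_prime}. You correctly locate the roles of Condition Ihara (nonvanishing of $g_{\ell_1\ell_2}$ mod $\varpi$) and Condition CR (uniqueness up to $\mathcal{O}_n^\times$, which gives the symmetry in $\ell_1,\ell_2$), and your write-up is in fact more detailed than the paper's.
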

\begin{proof}
Using the moduli interpretation of Shimura curves $M^{\Delta\ell_1\ell_2}(N^+p)$ as in \cite[$\S$5.3]{chida-hsieh-main-conj}, we can observe
$$P^{\Delta\ell_1}_{m_0}(a) \Mod{(\ell_2)^2} = \psi_{m_0}(a) \tau$$
under the identification
$$\mathfrak{J}^{\Delta\ell_1}(N^+p)(\mathbb{F}_{(\ell_2)^2})^{ss} \simeq \mathbb{Z} [ B^{\Delta\ell_1\ell_2, \times} \backslash \widehat{B}^{\Delta\ell_1\ell_2, \times} / \widehat{R}^\times_{N^+p} ] .$$
Using this identity, the computation is straightforward.
\end{proof}
\section{The supply of $n$-admissible primes and $n$-admissible sets} \label{sec:chebotarev}
We focus on the infinitude of certain $n$-admissible primes with help of the Chebotarev density theorem. It is a generalization of
\cite[Lemma 2.21]{bertolini-darmon-derived-heights-1994}, \cite[Theorem 3.2]{bertolini-darmon-imc-2005}, and \cite[Theorem 6.3]{chida-hsieh-main-conj} for our purpose,~i.e. not over $K$ but over $K(m')$. It is one of the key technical improvements to remove the difficulties in the generalization of the Euler system argument. If $m' = 1$, the statement becomes the original statement due to Assumption \ref{assu:exact_degree}.
\begin{thm} \label{thm:chebotarev}
Let $n_0 \leq n$ be positive integers. 
Let $m'$ be a positive integer with $(m',Np) = 1$.
Let $s$ be a non-zero element of $\mathrm{H}^1(K(m'), A_{f,n_0})$.
Assume that $A_{f,1}$ has big image (Assumption \ref{assu:bigimage}).
Then there exist infinitely many $n$-admissible primes $\ell$ such that $\partial_\ell (s) = 0$ and the map
$$v_\ell: \langle s \rangle \to \mathrm{H}^1_{\mathrm{fin}}(K(m')_\ell, A_{f,n_0})$$
is injective, where 
$\langle s \rangle$ is the $\mathcal{O}$-submodule of $\mathrm{H}^1(K(m'), A_{f,n_0})$ generated by $s$.
\end{thm}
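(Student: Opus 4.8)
The plan is to reduce to the classical Chebotarev argument over $K$ (as in \cite[Theorem 3.2]{bertolini-darmon-imc-2005} or \cite[Theorem 6.3]{chida-hsieh-main-conj}) by an ``explicit Galois theory'' device that handles the intermediate field $K(m')$. First I would fix the field $L_{n_0} := K(m')(A_{f,n_0})$ over which $G_{K(m')}$ acts trivially on $A_{f,n_0}$, and set $M := L_{n_0} \cdot K(m')(s)$, the extension cut out by the cocycle $s$; since $s \neq 0$ in $\mathrm{H}^1(K(m'), A_{f,n_0})$ and $A_{f,1}$ has big image (so $\mathrm{H}^0(K(m'), A_{f,n_0}) = 0$ after inflation-restriction, using that $K(m')/\mathbb{Q}$ is Galois and $\overline{\rho}$ is absolutely irreducible), the class $s$ restricts to a nonzero homomorphism on $G_M$ after passing to $L_{n_0}$. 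The key point, exactly as in the original, is to produce a Frobenius conjugacy class in $\mathrm{Gal}(M/\mathbb{Q})$ whose behavior forces the three defining conditions of $n$-admissibility (inert in $K$, $\ell \nmid pN$, $\ell^2 \not\equiv 1 \bmod p$, and $\epsilon_\ell a_\ell(f) \equiv \ell+1 \bmod \varpi^n$) together with $\partial_\ell(s) = 0$ and injectivity of $v_\ell$ on $\langle s \rangle$.

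The new ingredient is that the splitting condition in $K/\mathbb{Q}$ must be compatible with the prime-to-$p$ layer $K(m')/K$, and here is where the hypothesis $(m', Np) = 1$ and the big-image assumption enter. I would argue that $K(m')$ and $L_{n_0}$ (more precisely $\mathbb{Q}(A_{f,n_0})$) are linearly disjoint over $K$: the former is abelian over $K$ of conductor supported away from $Np$ and ramified only at primes dividing $m'$, while $\mathbb{Q}(A_{f,n_0})/\mathbb{Q}$ is ramified only at $pN$; big image moreover guarantees that $\mathrm{Gal}(\mathbb{Q}(A_{f,n_0})/\mathbb{Q})$ is large enough (containing $\mathrm{SL}_2$ of the relevant ring, up to the stated exceptions) that it has no abelian quotient through which the ring class character could factor. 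Consequently $\mathrm{Gal}(M/\mathbb{Q})$ contains $\mathrm{Gal}(K(m')/K) \times \mathrm{Gal}(\mathbb{Q}(A_{f,n_0})(s)/\mathbb{Q})$ as a ``large'' piece, and I can choose a conjugacy class that projects to a generator-type element on the Galois-representation side (the one used in the classical proof to get $(\mathrm{Frob}_\ell - \epsilon_\ell)(\mathrm{Frob}_\ell - \epsilon_\ell \ell) = 0$ with $\epsilon_\ell = \pm 1$ and to kill $\partial_\ell(s)$) \emph{and} to the identity on $\mathrm{Gal}(K(m')/K)$, which by Assumption \ref{assu:exact_degree} is exactly the condition that $\ell$ splits completely in $K(m')/K$ — in particular is inert in $K/\mathbb{Q}$. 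Chebotarev then supplies infinitely many such $\ell$.

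For the injectivity of $v_\ell : \langle s \rangle \to \mathrm{H}^1_{\mathrm{fin}}(K(m')_\ell, A_{f,n_0})$, I would follow the classical dévissage: since $\partial_\ell(s) = 0$, the localization $\mathrm{res}_\ell(s)$ lands in the finite part, and the finite part of $\mathrm{H}^1$ at an $n$-admissible prime is identified via Proposition \ref{prop:local_computation}.(3) with $\mathcal{O}_n$ (and over $K(m')$ with $\mathcal{O}_n[\Gamma_{m'}]$ by part (2), but $\ell$ being split in $K(m')/K$ reduces this to the $K$-case factor by factor); the evaluation map on the unramified quotient is the composite of $\mathrm{res}$ to the decomposition group and evaluation at Frobenius, and the choice of conjugacy class was made precisely so that $s(\mathrm{Frob}_\lambda)$ generates $\langle s \rangle / \varpi^{n_0}$ — I pick the Frobenius element to have nonzero image under the homomorphism attached to $s$. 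The one genuinely delicate point, and what I expect to be the main obstacle, is verifying the linear-disjointness/large-image claim in full generality under only ``big image'' rather than surjectivity of $\overline\rho$: one must rule out that adjoining $A_{f,n_0}$ creates an unexpected abelian subextension that interacts with $K(m')$, which requires invoking the group-theoretic input of \cite[Lemma 6.1, Lemma 6.2]{chida-hsieh-main-conj} (with \cite{ribet-pacific}) to control the image of $G_{\mathbb{Q}}$ on $A_{f,n_0}$ and its commutator, together with the ramification bookkeeping that separates the primes dividing $m'$ from those dividing $Np$. Once that structural statement is in hand, the Chebotarev step and the local computation are routine adaptations of the $m' = 1$ case.
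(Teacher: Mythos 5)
Your overall strategy --- compositum of $K(m')$ with the field cut out by $\overline{\rho}$ and by $s$, linear disjointness from ramification, then Chebotarev --- has the right shape, and the linear-disjointness issue you flag as ``the main obstacle'' is actually the easy part: since $K(m')/\mathbb{Q}$ is ramified only at primes dividing $\mathrm{disc}(K)m'$ and $F = \overline{\mathbb{Q}}^{\ker(\overline{\rho})}$ only at primes dividing $Np$, the intersection is unramified everywhere and hence equal to $\mathbb{Q}$; big image enters only to supply $-I_2$ for the inflation--restriction step and the element $h$ with $\mathrm{tr}(h)=\det(h)+1$, $\det(h)\neq\pm1$. The genuine gap is that you apply Chebotarev to ``$\mathrm{Gal}(M/\mathbb{Q})$'' for $M = L_{n_0}\cdot K(m')(s)$ without verifying that $M$ is Galois over $\mathbb{Q}$ --- and in general it is not. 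The subgroup $A_s\subseteq A_{f,1}$ cut out by $\overline{s}$ over $M(m')=K(m')F$ is stable under $\mathrm{Gal}(F/\mathbb{Q})$ and under complex conjugation, but it need not be stable under $\Gamma_{m'}=\mathrm{Gal}(K(m')/K)$, because $\gamma_{m'}$ carries $s$ to the a priori different class $\gamma_{m'}s$. The paper's proof resolves exactly this by replacing $M(m')_s$ with its Galois closure $L$, constructed explicitly as the compositum of the fields $M(m')_{\gamma_{m'}^i s}$ over the full $\Gamma_{m'}$-orbit of $s$, checking that $A_L=\mathrm{Gal}(L/M(m'))\subseteq A_{f,1}$ is abelian and that $\mathrm{Gal}(L/\mathbb{Q})=A_L\rtimes\mathrm{Gal}(M(m')/\mathbb{Q})$; only then does the Frobenius quadruple $(v',1,\tau,\sigma)$ make sense and Chebotarev apply. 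This is precisely the ``explicit Galois theory'' your opening sentence promises but never delivers.

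A second, smaller defect: you assert that choosing the Frobenius to be the identity on $\mathrm{Gal}(K(m')/K)$ ``is exactly the condition that $\ell$ splits completely in $K(m')/K$ --- in particular is inert in $K/\mathbb{Q}$.'' Triviality on $\Gamma_{m'}$ says nothing about the behaviour of $\ell$ in $K/\mathbb{Q}$; inertness requires the Frobenius to restrict to complex conjugation $\tau$ on $\mathrm{Gal}(K/\mathbb{Q})$, and then complete splitting of the prime of $K$ above $\ell$ in $K(m')$ follows because $(1,\tau)^2=1$ in $\Gamma_{m'}\rtimes\mathrm{Gal}(K/\mathbb{Q})$. Both conditions must be imposed simultaneously on the chosen conjugacy class, which is exactly what the paper's quadruple does.
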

\begin{proof}
It suffices to consider the case $s \in \mathrm{H}^1(K(m')_\ell, A_{f,1})$.
Let $\overline{\rho}$ be the residual representation, and $F := \overline{\mathbb{Q}}^{\mathrm{ker}(\overline{\rho})}$.
Since $\overline{\rho}$ is unramified outside $Np$ and $(\mathrm{disc}(K)m',Np) = 1$, $K(m')$ and $F$ are linearly disjoint. Let $M(m') = K(m')F$ and $M = KF$ be the compositum fields. Then we have
\[
\xymatrix{
\mathrm{Gal}(M(m')/\mathbb{Q})  = \mathrm{Gal}(K(m')/\mathbb{Q}) \times \mathrm{Gal}(F/\mathbb{Q}), & \mathrm{Gal}(M/\mathbb{Q})  = \mathrm{Gal}(K/\mathbb{Q}) \times \mathrm{Gal}(F/\mathbb{Q}) .
}
\]
Since 
$\mathrm{Gal}(K(m')/\mathbb{Q}) = \mathrm{Gal}(K(m')/K) \rtimes \mathrm{Gal}(K/\mathbb{Q})$, the element can be written as $(\gamma^i_{m'}, \tau^j)$ where $\tau$ is the complex conjugate, $i \in \lbrace 0, \cdots, m'-1 \rbrace$, and $j \in \lbrace 0,1 \rbrace$.
We identify
\begin{align*}
\mathrm{Gal}(M(m')/\mathbb{Q})  = \mathrm{Gal}(K(m')/\mathbb{Q}) \times \mathrm{Gal}(F/\mathbb{Q}) & \subseteq \mathrm{Gal}(K(m')/\mathbb{Q}) \times \mathrm{Aut}_{\mathbb{F}}(A_{f,1}) \\
 \mathrm{Gal}(M/\mathbb{Q})  = \mathrm{Gal}(K/\mathbb{Q}) \times \mathrm{Gal}(F/\mathbb{Q}) & \subseteq \mathrm{Gal}(K/\mathbb{Q}) \times \mathrm{Aut}_{\mathbb{F}}(A_{f,1}),
\end{align*}
respectively. Then we may write
\begin{itemize}
\item an element of $\mathrm{Gal}(M(m')/\mathbb{Q})$ as a triple $(\gamma^i_{m'}, \tau^j, \sigma)$ with $\sigma \in \mathrm{Aut}_{\mathbb{F}}(A_{f,1})$ and
\item an element of $\mathrm{Gal}(M/\mathbb{Q})$ as a pair $(\tau^j, \sigma)$ with $\sigma \in \mathrm{Aut}_{\mathbb{F}}(A_{f,1})$.
\end{itemize} 
Since $\overline{\rho}$ has big image, $\mathrm{Gal}(F/\mathbb{Q})\simeq\overline{\rho}(G_{\mathbb{Q}})$ contains $-I_2$. 
Using the inflation-restriction sequence, we have
$$\mathrm{H}^1(\mathrm{Gal}(M(m')/K(m') ), A_{f,1})  = 0$$
since $- I_2$ does not fix any non-zero vector in $A_{f,1}$
Therefore, the restriction map 
\[
\xymatrix@R=0em{
\mathrm{H}^1(K(m'), A_{f,1}) \ar[r] & \mathrm{H}^1(M(m'), A_{f,1}) \ar@{=}[r]& \mathrm{Hom}(G_{M(m')}, A_{f,1}) \\
s \ar@{|->}[r] & \overline{s} 
}
\]
is injective.
Let $M(m')_{s}$ be the non-trivial extension of $M$ cut out by the image $\overline{s}$ of $s$ under the restriction to 
$\mathrm{H}^1(M(m'), A_{f,1}) = \mathrm{Hom}(G_{M(m')}, A_{f,1})$; thus, $M(m')_s/M(m')$ is an abelian extension.
Let $$A_s := \mathrm{Gal}(M(m')_s/M(m')) \subseteq A_{f,1}$$ under $\overline{s}$.
Without loss of generality, we assume that $s$ belongs to a $\tau$-isotypic subspace such that $\tau s = \delta s$ with $\delta = \pm 1$. Then, under the identification $A_s \subseteq A_{f,1}$, $A_s$ admits the action of  $\sigma \in \mathrm{Gal}(F/\mathbb{Q})$ via $\overline{\rho}$ and $\tau \in \mathrm{Gal}(K/\mathbb{Q})$ by the formula
$$(\tau^j, \sigma)(v) = \delta^j\overline{\rho}(\sigma)v $$
where $v \in A_s$.
However, $M(m')_s/\mathbb{Q}$ may not be Galois since $A_s$ may not be stable under the action of $\gamma^i_{m'} \in \mathrm{Gal}(K(m')/K)$.
We explicitly construct the Galois closure $L$ of $M(m')_s/\mathbb{Q}$ and also show that 
\begin{enumerate}
\item $A_L := \mathrm{Gal}(L/M(m'))$ is an abelian extension and
\item $\mathrm{Gal}(L/\mathbb{Q})$ can be written as $\mathrm{Gal}(L/M(m')) \rtimes \mathrm{Gal}(M(m')/\mathbb{Q}) $.
\end{enumerate}
Since $\mathrm{H}^1(K(m'), A_{f,1})$ admits the action of $\Gamma_{m'} = \mathrm{Gal}(K(m')/K)$, we may consider the $\Gamma_{m'}$-orbit of $s$
$$\Gamma_{m'}s = \lbrace \gamma^i_{m'}s : i = 0, \cdots m'-1 \rbrace .$$

Consider the extension $M(m')_{\gamma^i_{m'}s}$ of $\gamma^{-i}_{m'}M(m') = M(m')$ using $\overline{\gamma^i_{m'}s}$ with
$$A_{\gamma^i_{m'}s} := \mathrm{Gal}(M(m')_{\gamma^i_{m'}s}/M(m')) \subseteq A_{f,1}$$
under $\overline{\gamma^i_{m'}s}$.
Then $A_{\gamma^i_{m'}s} \subseteq A_{f,1}$ also admits the action of $\mathrm{Gal}(F/\mathbb{Q})$ via $\overline{\rho}$ as in the case of $A_s$ since the actions of $\mathrm{Gal}(F/\mathbb{Q})$ and $\mathrm{Gal}(K(m')/\mathbb{Q})$ commute with each other.

Consider the set of all such Galois groups over $M(m')$
$$\lbrace A_{\gamma^i_{m'}s} = \mathrm{Gal}(M(m')_{\gamma^i_{m'}s}/M(m')) : i = 0, \cdots m'-1 \rbrace  .$$
Then this set admits the action of $\Gamma_{m'}$ and $\tau$ by
\[
\xymatrix{
\gamma_{m'} \left( A_{\gamma^i_{m'}s} \right)  = A_{\gamma^{i+1}_{m'}s} , & \tau \left( A_{\gamma^i_{m'}s} \right)  = A_{\gamma^{-i}_{m'}s} .
}
\]
We define $L$ by the compositum of $M(m')_{\gamma^i_{m'}s}$ for all $i = 0, \cdots m'-1$
and then
$A_L := \mathrm{Gal}(L/M(m'))$ is the subgroup of $A_{f,1}$ generated by $A_{\gamma^i_{m'}s}$ for all $i = 0, \cdots m'-1$.
Then $L$ is the Galois closure of $M(m')_s/\mathbb{Q}$ by construction, and $A_L$ is abelian since $A_L \subset A_{f,1}$.
Also, $A_L$ admits the action of $\mathrm{Gal}(K(m')/K)$ by construction as well as the action of $\mathrm{Gal}(F/\mathbb{Q})$ via $\overline{\rho}$ and $\mathrm{Gal}(K/\mathbb{Q})$ via $\tau$.
Thus, $L$ is Galois over $\mathbb{Q}$ and $\mathrm{Gal}(L/\mathbb{Q}) = A_L \rtimes \mathrm{Gal}(M(m')/\mathbb{Q})$.

Each element of $\mathrm{Gal}(L/\mathbb{Q})$ can be written as a quadruple $(v', \gamma^i_{m'}, \tau^j, \sigma)$
where $v' \in A_L$, $\gamma^i_{m'} \in \Gamma_{m'}$, $\tau^j \in \mathrm{Gal}(K/\mathbb{Q})$, and $\sigma \in \mathrm{Gal}(F/\mathbb{Q})$.

Since $\overline{\rho}$ has big image, there exists an element $h \in \overline{\rho}(G_{\mathbb{Q}})$ such that $\mathrm{tr}(h) = \mathrm{det}(h) +1$ with $\mathrm{det}(h) \neq \pm 1 \in \mathbb{F}$ (\cite[Lemma 6.2]{chida-hsieh-main-conj}).
It implies that we can choose a quadruple $(v', 1, \tau, \sigma)$ as an element of $\mathrm{Gal}(L/\mathbb{Q})$ such that:
\begin{enumerate}
\item $\overline{\rho}(\sigma) \in \mathrm{GL}_2(\mathbb{F})$ has eigenvalue $\delta (=\pm 1)$ and $\lambda \in \mathbb{F}^\times$ where $\lambda \neq \pm 1 \Mod{\varpi}$ and the order of $\lambda$ is prime to $p$; 
\item the element $v' \in A_L$ satisfies that its image $v$ in $\mathrm{Gal}(M(m')_s/M(m'))$ is non-zero, and $v$ belongs to the $\delta$-eigenspace for $\sigma$ under $\overline{\rho} = \rho_1$. 
\end{enumerate}
Note that $\mathrm{Gal}(L/M(m')_s)$ is normal in $\mathrm{Gal}(L/M(m'))$ since $\mathrm{Gal}(M(m')_s/M(m'))$ is Galois.

By the Chebotarev density theorem, there exist infinitely many primes $\ell$ with $\ell \nmid Nm'p$ such that $\ell$ is unramified in $L/\mathbb{Q}$ and satisfies
$\mathrm{Frob}_\ell(L/\mathbb{Q})$ is the quadruple we have chosen; in fact, the set of such primes has positive density.
Then the restriction to $M(m')/\mathbb{Q}$ becomes the triple $(1, \tau, \sigma)$. It implies that $\ell$ is $n$-admissible.
Also, the computation in \cite[Page 889]{chida-hsieh-main-conj} yields $$v_\ell (s) =  \overline{s}( \mathrm{Frob}_{\mathfrak{l}}(L/M(m')) )_{\mathfrak{l} \mid \ell} \neq 0.$$
\end{proof}

The following diagram would be helpful to understand the proof of Theorem \ref{thm:chebotarev}:
\[
\xymatrix{
& & L  \ar@{-}[d] \ar@{-}[ddl]_-{A_L}\\
& & M(m')_s  \ar@{-}[dl]^-{A_s}\\
& M(m') = K(m')F \ar@{-}[d]_-{\Gamma_{m'}} \ar@{-}[dl]_-{\overline{\rho}(G_{\mathbb{Q}})}  \\
K(m') \ar@{-}[d]_-{\Gamma_{m'}} & M = KF \ar@{-}[d]_-{\langle \tau \rangle} \ar@{-}[dl]_-{\overline{\rho}(G_{\mathbb{Q}})} \\
K \ar@{-}[d]_-{\langle \tau \rangle} & F \ar@{-}[dl]^-{\overline{\rho}(G_{\mathbb{Q}})} \\
\mathbb{Q}
}
\]
\begin{rem}
After proving Theorem \ref{thm:chebotarev}, we noticed that a similar idea is given in \cite[Proposition 4.5 and Appendix A]{longo-vigni-vanishing}.
\end{rem}

Let $\Delta$ be a square-free integer such that $\Delta / N^-$ is a product of $n$-admissible primes.
\begin{defn}[$n$-admissible set] \label{defn:n-admissible-set}
A finite set $\mathfrak{S}$ of primes is said to be an \textbf{$n$-admissible set for $f$ and $m'$} if
\begin{enumerate}
\item All $\ell \in \mathfrak{S}$ are $n$-admissible for $f$
\item The map $\mathrm{Sel}_{\Delta}(K(m'), A_{f,n}) \to \bigoplus_{\ell \in \mathfrak{S}} \mathrm{H}^1_{\mathrm{fin}} (K(m')_\ell, A_{f,n})$ is injective.
\end{enumerate}
\end{defn}

For a given finite set of $n$-admissible primes satisfying the statement in Theorem \ref{thm:chebotarev}, we are always able to enlarge the set to an $n$-admissible set for $f$ and $m'$. This is one of the standard applications of Theorem \ref{thm:chebotarev}.
\begin{prop}
Any finite collection of $n$-admissible primes can be enlarged to an $n$-admissible set for $f$ and $m'$.
\end{prop}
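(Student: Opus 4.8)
The plan is to run the standard ``one prime at a time'' enlargement procedure, using Theorem~\ref{thm:chebotarev} as the engine.

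First I would record that $\mathrm{Sel}_{\Delta}(K(m'), A_{f,n})$ is finite: it is a subgroup of $\mathrm{H}^1(K_\Sigma/K(m'), A_{f,n})$, which is finite by the usual finiteness of Galois cohomology of a global field with finite coefficients and restricted ramification. For a finite collection $\mathfrak{S}$ of $n$-admissible primes for $f$, set
\[
V(\mathfrak{S}) := \ker\left( \mathrm{Sel}_{\Delta}(K(m'), A_{f,n}) \longrightarrow \bigoplus_{\ell \in \mathfrak{S}} \mathrm{H}^1_{\mathrm{fin}}(K(m')_\ell, A_{f,n}) \right),
\]
a finite group; the assertion to be proven says exactly that $\mathfrak{S}$ may be enlarged to a finite set $\mathfrak{S}'$ of $n$-admissible primes with $V(\mathfrak{S}') = 0$. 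Note that enlarging $\mathfrak{S}$ only shrinks $V(\mathfrak{S})$, and that a prime $\ell \in \mathfrak{S}$ dividing $\Delta$ contributes nothing to the map above, since for such $\ell$ and $s \in \mathrm{Sel}_{\Delta}(K(m'), A_{f,n})$ one has $\mathrm{res}_\ell(s) \in \mathrm{H}^1_{\mathrm{ord}}(K(m')_\ell, A_{f,n})$, which meets $\mathrm{H}^1_{\mathrm{fin}}(K(m')_\ell, A_{f,n})$ trivially by the direct sum decomposition in Proposition~\ref{prop:local_computation}.(5).

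The argument then proceeds by induction on $\# V(\mathfrak{S})$. If $V(\mathfrak{S}) = 0$ we are done. Otherwise pick a nonzero $s \in V(\mathfrak{S}) \subseteq \mathrm{H}^1(K(m'), A_{f,n})$; Theorem~\ref{thm:chebotarev}, applied with $n_0 = n$, produces infinitely many $n$-admissible primes $\ell$ with $\partial_\ell(s) = 0$ and with $v_\ell \colon \langle s \rangle \to \mathrm{H}^1_{\mathrm{fin}}(K(m')_\ell, A_{f,n})$ injective. Since only finitely many primes lie in $\mathfrak{S}$ or divide $\Delta$, we may choose such an $\ell$ avoiding both, and set $\mathfrak{S}' := \mathfrak{S} \cup \{\ell\}$. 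Then $\mathfrak{S}'$ is again a finite set of $n$-admissible primes, $V(\mathfrak{S}') \subseteq V(\mathfrak{S})$, and the inclusion is strict because $s \notin V(\mathfrak{S}')$: injectivity of $v_\ell$ on $\langle s \rangle$ together with $s \neq 0$ forces $v_\ell(s) \neq 0$. Hence $\# V(\mathfrak{S}') < \# V(\mathfrak{S})$, and after finitely many iterations we reach an $n$-admissible set for $f$ and $m'$ containing the original collection.

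The only substantive ingredient is the supply of the auxiliary prime $\ell$, namely Theorem~\ref{thm:chebotarev} over $K(m')$, which is already in hand; the remainder is the routine finiteness-and-induction bookkeeping above, and I do not anticipate any real obstacle there.
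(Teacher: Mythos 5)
Your argument is correct and is precisely the ``standard application of Theorem~\ref{thm:chebotarev}'' that the paper invokes without writing out: the paper omits the proof entirely, and your finite-kernel induction (pick a nonzero $s$ in the kernel of localization, use Theorem~\ref{thm:chebotarev} to find an $n$-admissible $\ell$ outside $\mathfrak{S}$ and prime to $\Delta$ with $v_\ell(s)\neq 0$, and iterate) is the intended argument. The observation that primes dividing $\Delta$ contribute nothing because the ordinary and finite parts are complementary is a worthwhile detail to have made explicit.
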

\section{Control theorems and the freeness of compact Selmer groups} \label{sec:control_freeness}
We prove the freeness theorem of $\Delta$-ordinary $S$-relaxed Selmer groups with $n$-admissible set $\mathfrak{S}$ where $S = \prod_{q \in \mathfrak{S}} q$. The original argument depende heavily on the properties of $p$-extensions. In order to generalize to the general cyclic case, we use the decomposition of modules over group rings.
\subsection{Controlling discrete Selmer groups}
Let $L/K$ be a subextension of $K(m'p^\infty)$.
Let $S$ be a square-free product of $n$-admissible primes.

\begin{lem} \label{lem:control_galois}
Assume that the residual representation $A_{f,1}$ of $G_{\mathbb{Q}}$ is absolutely irreducible.
Then the restriction map
$$\mathrm{H}^1(K, A_{f,n}) \to \mathrm{H}^1(L, A_{f,n})^{\mathrm{Gal}(L/K)}$$
is an isomorphism.
\end{lem}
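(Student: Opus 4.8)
The plan is to apply the inflation--restriction exact sequence to the normal subgroup $G_L \triangleleft G_K$ with quotient $\mathrm{Gal}(L/K)$ and the discrete coefficient module $A_{f,n}$, which in the profinite setting (so also when $L/K$ is infinite, e.g.\ $L = K(m'p^\infty)$) reads
\[
0 \to \mathrm{H}^1(\mathrm{Gal}(L/K), A_{f,n}^{G_L}) \to \mathrm{H}^1(K, A_{f,n}) \xrightarrow{\mathrm{res}} \mathrm{H}^1(L, A_{f,n})^{\mathrm{Gal}(L/K)} \to \mathrm{H}^2(\mathrm{Gal}(L/K), A_{f,n}^{G_L}).
\]
Thus the entire content of the lemma is the vanishing $A_{f,n}^{G_L} = 0$, which kills the two outer terms and forces $\mathrm{res}$ to be an isomorphism. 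I would first reduce this to the residual statement $A_{f,1}^{G_L} = 0$: the $G_\mathbb{Q}$-equivariant exact sequences $0 \to A_{f,1} \to A_{f,n} \xrightarrow{\varpi} A_{f,n-1} \to 0$ give, after taking $G_L$-invariants, an injection $A_{f,n}^{G_L} \hookrightarrow A_{f,n-1}^{G_L}$ as soon as $A_{f,1}^{G_L} = 0$, so induction on $n$ finishes the reduction.

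To treat the residual statement I would first observe that every intermediate field $K \subseteq L \subseteq K(m'p^\infty)$ is automatically Galois over $\mathbb{Q}$. Indeed $\mathrm{Gal}(K(m'p^\infty)/K) \cong \Gamma_{m'} \times \Gamma_{p^\infty}$ is abelian and complex conjugation acts on it by inversion, so every closed subgroup is normalized by a lift of $\tau$ and hence normal in $\mathrm{Gal}(K(m'p^\infty)/\mathbb{Q})$; consequently $G_L \triangleleft G_\mathbb{Q}$. Therefore $A_{f,1}^{G_L}$ is an $\mathbb{F}[G_\mathbb{Q}]$-submodule of $A_{f,1}$, and absolute irreducibility of $\overline{\rho}$ forces $A_{f,1}^{G_L}$ to be either $0$ or all of $A_{f,1}$.

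The one point requiring an argument --- and the main, though modest, obstacle --- is excluding the possibility $A_{f,1}^{G_L} = A_{f,1}$, i.e.\ that $\overline{\rho}$ factors through $\mathrm{Gal}(L/\mathbb{Q})$. I would rule this out by a ramification-at-$p$ comparison. On the one hand, since $p \nmid m'\,\mathrm{disc}(K)$ the extension $K(m')/\mathbb{Q}$ is unramified at $p$, while $\mathrm{Gal}(K(m'p^\infty)/K(m')) \cong \Gamma_{p^\infty} \cong \mathbb{Z}_p$ is pro-$p$; hence the inertia subgroup at $p$ of $\mathrm{Gal}(K(m'p^\infty)/\mathbb{Q})$ is pro-$p$, and so is its image in any quotient $\mathrm{Gal}(L/\mathbb{Q})$. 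On the other hand, because $f$ has weight $2$ and trivial nebentypus we have that $\det\overline{\rho}$ is the mod $\varpi$ cyclotomic character, whose restriction to $I_p$ has image of order $p - 1 \geq 4$ (as $\mathbb{Q}(\mu_p)/\mathbb{Q}$ is totally ramified at $p$); so $\overline{\rho}(I_p)$ contains an element whose order is divisible by $p - 1$, in particular $\overline{\rho}(I_p)$ is not a $p$-group. Were $\overline{\rho}$ to factor through $\mathrm{Gal}(L/\mathbb{Q})$, then $\overline{\rho}(I_p)$ would be a finite $p$-group --- a contradiction. Hence $A_{f,1}^{G_L} = 0$, and the lemma follows from the exact sequence above.
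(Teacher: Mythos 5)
Your proof is correct and follows the same route as the paper: $L/\mathbb{Q}$ is Galois, so residual irreducibility forces $A_{f,n}^{G_L}=0$, and inflation--restriction does the rest. The paper states this in one line, leaving implicit both the reduction to $n=1$ and the exclusion of the case where $\overline{\rho}$ factors through $\mathrm{Gal}(L/\mathbb{Q})$; your ramification-at-$p$ argument (pro-$p$ inertia in $\mathrm{Gal}(L/\mathbb{Q})$ versus $\det\overline{\rho}|_{I_p}$ of order $p-1$) correctly supplies that missing detail.
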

\begin{proof}
Since $K(m'p^r)$ is Galois over $\mathbb{Q}$ for all $r$, the residual irreducibility of $\overline{\rho}$ implies that $(T_{f,n})^{G_{K(m'p^r)}} = 0$. Then the isomorphism follows from the inflation-restriction sequence. For any subextension of $K(m'p^r)$, the proof is identical.
\end{proof}
\begin{lem}[Exact control theorem] \label{lem:control_galois_selmer}
Assume that 
\begin{itemize}
\item the residual representation $A_{f,1}$ of $G_\mathbb{Q}$ is absolutely irreducible, and
\item $f$ satisfies Condition PO.
\item $f$ is $N^+$-minimal.
\end{itemize}
Then the restriction map
$$\mathrm{Sel}^{S}_{\Delta}(K, A_{f,n}) \to \mathrm{Sel}^{S}_{\Delta}(L, A_{f,n})^{\mathrm{Gal}(L/K)}$$
is an isomorphism.
\end{lem}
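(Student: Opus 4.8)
The plan is to compare the defining nine-term exact sequences of $\mathrm{Sel}^{S}_{\Delta}(K, A_{f,n})$ and $\mathrm{Sel}^{S}_{\Delta}(L, A_{f,n})^{\mathrm{Gal}(L/K)}$ term by term, using the snake lemma. First I would write down the commutative diagram whose rows are the defining exact sequences of the two Selmer groups (Definition \ref{defn:selmer}), with vertical arrows the restriction maps: on the left the global $\mathrm{H}^1$, on the right the product of local quotients $\mathrm{H}^1(-,M)/\mathrm{H}^1_{\mathrm{ord}}(-,M)$ at $q\mid p\Delta$, $\mathrm{H}^1(-,M)/\mathrm{H}^1_{\mathrm{fin}}(-,M)$ at $q\nmid S$, and nothing at $q\mid S$ (these are unconstrained, hence impose no condition). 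One must be slightly careful that the right-hand row for $L$ is the $\mathrm{Gal}(L/K)$-invariants of the corresponding sequence for $L$; since $\mathrm{Gal}(L/K)$ is a finite (or procyclic) abelian group and the modules are finite, taking invariants is left-exact, so the invariant sequence is exact except possibly at the last spot, which does not affect the kernel computation I need.

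The key input for the global term is Lemma \ref{lem:control_galois}: the restriction map $\mathrm{H}^1(K,A_{f,n})\to \mathrm{H}^1(L,A_{f,n})^{\mathrm{Gal}(L/K)}$ is an isomorphism, using absolute irreducibility of $A_{f,1}$ (so $(T_{f,n})^{G_{K(m'p^r)}}=0$ and inflation-restriction gives the isomorphism). Then, by the snake lemma, it suffices to show that each local restriction map appearing in the target is injective on the relevant quotient, i.e. the map
\[
\mathrm{H}^1_{\mathrm{ord}}(K_q,A_{f,n})\to \mathrm{H}^1_{\mathrm{ord}}(L_q,A_{f,n})
\]
and the analogous statement with $\mathrm{H}^1_{\mathrm{fin}}$ control the local conditions, and more precisely that $\ker\big(\mathrm{H}^1(K_q,A_{f,n})/\mathrm{H}^1_{?}(K_q,A_{f,n})\to \prod_{w\mid q}\mathrm{H}^1(L_w,A_{f,n})/\mathrm{H}^1_{?}(L_w,A_{f,n})\big)=0$ for each place $q$. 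I would split this into cases exactly as in the construction of $F^+_\ell$: for $q\mid N^+$ one uses that such primes split completely in $K(m'p^\infty)/K$ together with the $N^+$-minimality hypothesis (which forces $A_f^{G_{K(m'p^\infty)_w}}/\varpi^n$ to vanish or be controlled, cf.\ the comparison in $\S$\ref{subsec:minimal_selmer} and \cite[Lemma 4.1.2]{epw}); for $q=p$ the crucial point is Lemma \ref{lem:local_condition_at_p}, which under Condition PO gives $\mathrm{H}^0(K(m)_p,A_{f,n}/F^+A_{f,n})=0$, and this exactly kills the kernel of the restriction on the singular/ordinary quotient at $p$ via the inflation-restriction sequence for the unramified-at-$p$ part; for $q\mid\Delta$ (an $n$-admissible prime or a prime dividing $N^-$) one uses Proposition \ref{prop:local_dualities}.(2) and the explicit description of $F^+_\ell$, noting $n$-admissible primes are inert in $K$ and that $\ell^2\not\equiv 1\pmod p$ makes the relevant $\mathrm{H}^0$'s vanish; and for the remaining $q\nmid pS\Delta$ the finite/unramified condition is controlled because $A_{f,n}$ is unramified there and $L/K$ is unramified at such $q$ when $q\nmid m'$, while for $q\mid m'$ one uses the argument already spelled out in the proof of Proposition \ref{prop:local_properties}.(3) (the degree $[K(m)\!:\!K(p^r)]$ is prime to $p$, so restriction on $\mathrm{H}^1_{\mathrm{ur}}$ and on $\mathrm{H}^1(I)$ are isomorphisms).

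The main obstacle I expect is the local analysis at $p$: one has to check not merely that the restriction map on $\mathrm{H}^1(K_p,A_{f,n})$ is injective (which it need not be) but that it is injective after passing to the quotient by $\mathrm{H}^1_{\mathrm{ord}}$, and symmetrically that the cokernel contributions vanish so that no extra classes appear in $\mathrm{Sel}^S_\Delta(L,A_{f,n})^{\mathrm{Gal}(L/K)}$ beyond those restricted from $K$. This is where Condition PO (via Lemma \ref{lem:local_condition_at_p}) does the real work: it guarantees $H^0(L_p, A_{f,n}/F^+A_{f,n}) = 0$ and hence, by inflation-restriction applied to $\mathrm{Gal}(L_p/K_p)$, that the map on the quotient $\mathrm{H}^1(-, A_{f,n}/F^+A_{f,n})$ has trivial kernel; chasing this back through the defining exact sequence of $\mathrm{H}^1_{\mathrm{ord}}$ and combining with Lemma \ref{lem:control_galois} then yields the claimed isomorphism. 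The $N^+$-minimality hypothesis plays the analogous role at primes dividing $N^+$, ensuring the ambient $\mathrm{H}^1$ at those (completely split) primes does not contribute spurious invariant classes; without it one would only get an isomorphism up to a bounded error, which is precisely the subtlety flagged in Remark \ref{rem:comparision_of_selmer}.
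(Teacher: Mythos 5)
Your overall strategy is the same as the paper's: injectivity of the restriction on Selmer groups from Lemma \ref{lem:control_galois} (inflation--restriction plus absolute irreducibility), and surjectivity onto the $\mathrm{Gal}(L/K)$-invariants by checking, place by place, that the kernel of the local restriction on the quotient defining each local condition vanishes. Your treatment of $q=p$ (Condition PO via Lemma \ref{lem:local_condition_at_p} killing $\mathrm{H}^0(L_p, A_{f,n}/F^+A_{f,n})$ and hence the inflation kernel) and of $q\mid m'$ (ramification of degree prime to $p$, as in the proof of Proposition \ref{prop:local_properties}.(3)) matches the paper exactly.

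There is, however, a concrete error at the primes $q\mid\Delta$. You propose that $\ell^2\not\equiv 1\pmod p$ makes ``the relevant $\mathrm{H}^0$'s vanish,'' but this is false for the quotient that matters: $\mathrm{Frob}_\ell$ acts on $A_{f,n}/F^+_\ell A_{f,n}$ by $\epsilon_\ell$, so over $K_\ell$ (the unramified quadratic extension, $\ell$ being inert) the Frobenius $\mathrm{Frob}_\ell^{2}$ acts trivially and $\mathrm{H}^0\bigl(K_\ell, A_{f,n}/F^+_\ell A_{f,n}\bigr)\simeq\mathcal{O}_n\neq 0$. If $\mathrm{Gal}(L_\lambda/K_\ell)$ had a nontrivial $p$-part, the inflation kernel $\mathrm{H}^1\bigl(L_\lambda/K_\ell,(A_{f,n}/F^+_\ell A_{f,n})^{G_{L_\lambda}}\bigr)$ would be nonzero and your argument would break. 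What saves the day --- and what the paper uses --- is that primes inert in $K$ split completely in every ring class field and in $K(p^\infty)/K$, so $K_\ell=L_\lambda$ and the local restriction is the identity; there is nothing to prove at $q\mid\Delta$. Relatedly, your appeal to $N^+$-minimality at $q\mid N^+$ is misplaced: since $(m,N)=1$, the extension $L/K$ is unramified at such $q$, so $I_{K_q}=I_{L_\lambda}$ and the unramified condition is controlled for free; the paper's written proof does not use $N^+$-minimality at this step (and note also that split primes are finitely decomposed, not completely split, in $K(m'p^\infty)/K$).
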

\begin{proof}
By Lemma \ref{lem:control_galois}, we know the map $\mathrm{Sel}^{S}_{\Delta}(K, A_{f,n}) \to \mathrm{Sel}^{S}_{\Delta}(L, A_{f,n})^{\mathrm{Gal}(L/K)}$ is injective. To check the surjectivity, we check the following local statements:
\begin{enumerate}
\item $\mathrm{H}^1(K_\ell, A_{f,n} / F^+_\ell A_{f,n}) \to \mathrm{H}^1(L_\ell, A_{f,n} / F^+_\ell A_{f,n})$ is injective for $\ell \mid p\Delta$; and
\item $\mathrm{H}^1(K^{\mathrm{ur}}_\ell, A_{f,n}) \to \mathrm{H}^1(L^{\mathrm{ur}}_\ell, A_{f,n})$ is injective for $\ell \nmid p\Delta$.
\end{enumerate}
Part (1) for $\ell \mid \Delta$ follows from $K_\lambda = L_\lambda$ for any prime $\lambda$ of $L$ above $\ell$ which is inert in $K/\mathbb{Q}$. Part (1) for $\ell = p$ comes from Lemma \ref{lem:local_condition_at_p}.

Since $L/K$ is unramified outside $m = m'p^r$ and anticyclotomic, we have $K^{\mathrm{ur}}_\lambda = L^{\mathrm{ur}}_\lambda$ for each place $\lambda$ of $L$ above $\ell$ not dividing $p m'$.

For a place $\lambda$ of $L$ dividing $m'$,
the kernel of the map is $\mathrm{H}^1 ( L^\mathrm{ur}_\lambda / K^\mathrm{ur}_\lambda, (A_{f,n})^{G_{L^\mathrm{ur}_\lambda}} )$.
Since we have $L^\mathrm{ur}_\lambda = \left( L \cap K(\ell^r) \right)^\mathrm{ur}_\lambda$ where $\lambda$ divides a prime $\ell$ with $\ell^r \Vert m'$,
$\mathrm{Gal}(L^\mathrm{ur}_\lambda / K^\mathrm{ur}_\lambda)$ is an $\ell$-group, which is prime to $p$. By \cite[Corollary 1, Page 130]{serre-local-fields} and the identification with Tate cohomology, the kernel vanishes.
\end{proof}

\begin{lem} \label{lem:control_mod_pn}
Assume that the residual representation $A_{f,1}$ of $G_\mathbb{Q}$ is absolutely irreducible.
Then
$$\mathrm{H}^{1}(L, A_{f,n}) = \mathrm{H}^{1}(L, A_{f})[\varpi^n].$$
\end{lem}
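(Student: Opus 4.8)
The plan is to extract the identification from the tautological short exact sequence of $G_{\mathbb{Q}}$-modules
\[
0 \longrightarrow A_{f,n} \longrightarrow A_f \overset{\varpi^n}{\longrightarrow} A_f \longrightarrow 0,
\]
which is exact because $A_f = V_f/T_f$ is divisible by $\varpi^n$ and $A_{f,n} = A_f[\varpi^n]$ by definition. Passing to the long exact sequence of $G_L$-cohomology then produces
\[
0 \longrightarrow A_f^{G_L}/\varpi^n A_f^{G_L} \longrightarrow \mathrm{H}^1(L, A_{f,n}) \longrightarrow \mathrm{H}^1(L, A_f)[\varpi^n] \longrightarrow 0 ,
\]
so the whole statement reduces to showing that $A_f^{G_L} = 0$.

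To see the vanishing, first note that $L$ is a subextension of $K(m'p^\infty)$, hence $L \subseteq K(m'p^r)$ for some $r$ and therefore $G_{K(m'p^r)} \subseteq G_L$. As already recorded in the proof of Lemma \ref{lem:control_galois}, the absolute irreducibility of $A_{f,1}$ together with the fact that $K(m'p^r)/\mathbb{Q}$ is Galois gives $A_{f,n}^{G_{K(m'p^r)}} = 0$ for every $n$: the fixed submodule is $G_{\mathbb{Q}}$-stable by normality of $G_{K(m'p^r)}$, so by irreducibility it is either $0$ or all of $A_{f,n}$, and the latter is excluded. Consequently $A_{f,n}^{G_L} = 0$ for all $n$. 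Since any nonzero element of $A_f$ is annihilated by some power of $\varpi$, a nonzero element of $A_f^{G_L}$ would lie in $A_{f,n}^{G_L}$ for suitable $n$; hence $A_f^{G_L} = 0$, the first term of the displayed sequence is trivial, and $\mathrm{H}^1(L, A_{f,n}) \xrightarrow{\ \sim\ } \mathrm{H}^1(L, A_f)[\varpi^n]$, as desired.

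I expect no real obstacle here: the argument is purely formal once the vanishing $A_f^{G_L} = 0$ is in hand, and that vanishing is precisely the input already used for Lemma \ref{lem:control_galois}, invoked here over a subfield $L$ of $K(m'p^r)$. The only point I would state with care is the reduction from $A_f$ to its $\varpi$-torsion, so that absolute irreducibility of the residual representation can legitimately be applied.
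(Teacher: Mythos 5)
Your argument is correct and is exactly the standard d\'evissage that the paper delegates to the citation of \cite[Proposition 1.9]{chida-hsieh-main-conj}: the long exact cohomology sequence of $0 \to A_{f,n} \to A_f \xrightarrow{\varpi^n} A_f \to 0$ reduces the claim to $A_f^{G_L} = 0$, which comes from the same irreducibility-plus-normality input used for Lemma \ref{lem:control_galois}. One caution: the lemma is invoked in $\S$\ref{subsec:minimal_selmer} with $L = K(m'p^\infty)$, an infinite extension, so your assertion that $L \subseteq K(m'p^r)$ for some $r$ does not literally cover that case. The repair is one line: $A_f$ is a discrete $G_K$-module, so the stabilizer of any element is open and $A_f^{G_L} = \bigcup_{L'} A_f^{G_{L'}}$, the union over finite subextensions $L'$ of $L/K$, each term of which vanishes by your argument. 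With that addendum, and with the reduction from $A_{f,n}$ to its $\varpi$-torsion $A_{f,1}$ made explicit as you yourself flag at the end (irreducibility applies only to $A_{f,1}$, since $A_{f,n}$ has the proper submodule $A_{f,n}[\varpi]$ for $n>1$), the proof is complete.
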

\begin{proof}
See \cite[Proposition 1.9]{chida-hsieh-main-conj}.
\end{proof}

\begin{lem}[Exact control theorem of reduction mod $\varpi^n$] \label{lem:control_mod_pn_selmer}
Assume that
\begin{itemize}
\item the residual representation $A_{f,1}$ of $G_{\mathbb{Q}}$ is absolutely irreducible,
\item $(\overline{\rho}, \Delta)$ satisfies Condition CR,
\item $f$ satisfies Condition PO, and
\item $f$ is $N^+$-minimal.
\end{itemize}
Then
$$\mathrm{Sel}^{S}_{\Delta}(L, A_{f,n}) = \mathrm{Sel}^{S}_{\Delta}(L, A_{f})[\varpi^n].$$
\end{lem}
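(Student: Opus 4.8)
The plan is to place both sides inside the common group $\mathrm{H}^1(L, A_{f,n}) = \mathrm{H}^1(L, A_f)[\varpi^n]$ furnished by Lemma \ref{lem:control_mod_pn}, and then to check that, under this identification, the local conditions defining $\mathrm{Sel}^S_\Delta(L, A_{f,n})$ coincide with those defining $\mathrm{Sel}^S_\Delta(L, A_f)[\varpi^n]$. The basic device is the short exact sequence of $G_\mathbb{Q}$-modules
\[
0 \to A_{f,n} \xrightarrow{\beta} A_f \xrightarrow{\varpi^n} A_f \to 0 ,
\]
together with the identity $\beta(F^+_\ell A_{f,n}) = (F^+_\ell A_f)[\varpi^n]$, which is immediate from $F^+_\ell A_{f,n} = \beta^{-1}(F^+_\ell A_f)$ and gives $A_{f,n}/F^+_\ell A_{f,n} = (A_f/F^+_\ell A_f)[\varpi^n]$. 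One inclusion is formal: if $s_n \in \mathrm{Sel}^S_\Delta(L, A_{f,n})$, then by functoriality of $\partial_\ell$ and of $A_{f,n}/F^+_\ell A_{f,n} \to A_f/F^+_\ell A_f$ the class $\beta_*(s_n)$ satisfies every local condition of $\mathrm{Sel}^S_\Delta(L, A_f)$, so $\mathrm{Sel}^S_\Delta(L, A_{f,n}) \subseteq \mathrm{Sel}^S_\Delta(L, A_f)[\varpi^n]$.

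For the reverse inclusion I would take $s \in \mathrm{Sel}^S_\Delta(L, A_f)[\varpi^n]$, let $s_n \in \mathrm{H}^1(L, A_{f,n})$ be the class with $\beta_*(s_n) = s$, and verify the local conditions of $\mathrm{Sel}^S_\Delta(L, A_{f,n})$ for $s_n$ place by place. At $q \mid S$ there is nothing to check. At a place $q \nmid p\Delta S$ the condition is $\partial_q(s_n) = 0$; since the image of $\mathrm{res}_q(s_n)$ in $\mathrm{H}^1(I_{L_q}, A_f)$ equals $\mathrm{res}_q(s)|_{I_{L_q}} = 0$, it is enough that $\mathrm{H}^1(I_{L_q}, A_{f,n}) \to \mathrm{H}^1(I_{L_q}, A_f)$ be injective, i.e.\ that $A_f^{I_{L_q}}$ be $\varpi$-divisible. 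This holds when $q \nmid N$, since then $A_f$ is unramified and divisible --- in particular for every $q \mid m'$, because $(m',N)=1$ --- and it holds for $q \mid N^+$ because $L/K$ is unramified at such $q$ (so $I_{L_q} = I_{\mathbb{Q}_q}$), while $N^+$-minimality makes $\overline{\rho}$ ramified at $q$, which together with the Steinberg-type shape of $\rho_f|_{G_{\mathbb{Q}_q}}$ recorded in $\S$\ref{subsec:galois_representations} forces $A_f^{I_q} \simeq F^+_q V_f / F^+_q T_f \simeq E/\mathcal{O}$; the primes dividing $N^-$ divide $\Delta$ and are treated next. At a place $q \mid p\Delta$ the condition is $\mathrm{res}_q(s_n) \in \mathrm{H}^1_{\mathrm{ord}}(L_q, A_{f,n})$, that is, vanishing of the image of $\mathrm{res}_q(s_n)$ in $\mathrm{H}^1(L_q, A_{f,n}/F^+_q A_{f,n}) = \mathrm{H}^1(L_q, (A_f/F^+_q A_f)[\varpi^n])$; since $\mathrm{res}_q(s) \in \mathrm{H}^1_{\mathrm{ord}}(L_q, A_f)$, it suffices that $(A_f/F^+_q A_f)^{G_{L_q}}$ be $\varpi$-divisible. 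For $q = p$ this follows from Lemma \ref{lem:local_condition_at_p} (hence from Condition PO): it gives $\mathrm{H}^0(L_p, A_{f,n}/F^+_p A_{f,n}) = 0$, which is the $\varpi^n$-torsion of the cofinitely generated $\mathcal{O}$-module $\mathrm{H}^0(L_p, A_f/F^+_p A_f)$, so the latter is $0$. For $q \mid N^-$ the module $A_f/F^+_q A_f$ is an unramified twist of $E/\mathcal{O}$ (again by $\S$\ref{subsec:galois_representations}), so its $G_{L_q}$-invariants are divisible, Condition CR being what guarantees the local picture of Proposition \ref{prop:local_dualities}. Finally, for an $n$-admissible $q \mid \Delta/N^-$, $A_f$ is unramified and $q$ splits completely in $K(m'p^\infty)/K$ (as $q\mathcal{O}_K$ is principal), so $L_q = K_q$, $\mathrm{Frob}_q^2$ acts trivially on $A_f/F^+_q A_f \simeq E/\mathcal{O}$, and the $G_{L_q}$-invariants are all of it, hence divisible. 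This exhausts the places, giving $s_n \in \mathrm{Sel}^S_\Delta(L, A_{f,n})$.

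I expect the main obstacle to be concentrated at the places where $A_f$ is genuinely ramified, namely $q \mid N$ and $q = p$: these are exactly where $A_f^{I_q}$ --- respectively the $F^-$-invariants at $p$ --- can fail to be $\varpi$-divisible, and this is where the hypotheses are forced to intervene, with $N^+$-minimality handling $q \mid N^+$, Condition PO (via Lemma \ref{lem:local_condition_at_p}) handling $q = p$, and Condition CR (via Proposition \ref{prop:local_dualities}) handling $q \mid N^-$. The passage from $K$ to a general intermediate field $L$ of $K(m'p^\infty)/K$ should introduce no genuinely new difficulty beyond the $m'=1$ case of \cite[Proposition 1.9]{chida-hsieh-main-conj} and \cite[Lemma 4.1.2]{epw}, since at the additional ramified primes $q \mid m'$ of $L/K$ one has $q \nmid N$, so $A_f$ is already unramified there and the easy case applies.
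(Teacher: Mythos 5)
Your overall strategy---identify $\mathrm{H}^1(L,A_{f,n})$ with $\mathrm{H}^1(L,A_f)[\varpi^n]$ via Lemma \ref{lem:control_mod_pn} and then reduce the comparison of local conditions to the $\varpi$-divisibility of $A_f^{I_{L_q}}$ (resp.\ of $(A_f/F^+_qA_f)^{G_{L_q}}$ at the ordinary places)---is the standard one, and your verifications at $q\nmid Np\Delta$, at $q\mid N^+$ (via $N^+$-minimality), at $q=p$ (via Lemma \ref{lem:local_condition_at_p}) and at $q\mid N^-$ are essentially sound. (The paper gives no argument of its own here, only a citation of \cite[Proposition 1.9]{chida-hsieh-main-conj}, so there is nothing in-paper to compare against. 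A small imprecision: for $q^2\mid N^+$ the local shape is not the Steinberg one of $\S$\ref{subsec:galois_representations}(3); there one must instead compare conductor exponents of $\rho$ and $\overline{\rho}$ at $q$ to get divisibility of $A_f^{I_q}$, which $N^+$-minimality still provides.)

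There is, however, a genuine gap at the $n$-admissible primes $q\mid \Delta/N^-$, which is exactly where the statement stops being formal. You assert that $\mathrm{Frob}_q^2$ acts trivially on $A_f/F^+_qA_f\simeq E/\mathcal{O}$. It does not: $\mathrm{Frob}_q$ acts there through the Frobenius eigenvalue $\alpha_q$ with $\alpha_q\equiv \epsilon_q \pmod{\varpi^n}$, and the Weil bound gives $|\iota_\infty(\alpha_q)|=q^{1/2}$, so $\alpha_q^2\neq 1$; $n$-admissibility (Definition \ref{defn:n-admissible-primes}) only yields the congruence $\alpha_q^2\equiv 1\pmod{\varpi^n}$. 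Consequently $(A_f/F^+_qA_f)^{G_{L_\lambda}}=(E/\mathcal{O})[\alpha_q^2-1]\simeq \mathcal{O}/\varpi^{v}$ with $v=\mathrm{ord}_\varpi(\alpha_q^2-1)\geq n$: finite, nonzero, and not $\varpi$-divisible. The kernel of $\mathrm{H}^1(L_\lambda, A_{f,n}/F^+_qA_{f,n})\to \mathrm{H}^1(L_\lambda, A_f/F^+_qA_f)$ is then $\simeq\mathcal{O}/\varpi^{n}$ at each $\lambda\mid q$, so your divisibility criterion gives nothing, and the reverse inclusion is not established at these places. This is precisely why the local condition at admissible primes is treated in the literature through the explicit splitting $\mathrm{H}^1=\mathrm{H}^1_{\mathrm{fin}}\oplus\mathrm{H}^1_{\mathrm{ord}}$ and the freeness computations of Proposition \ref{prop:local_computation}(3)--(5), rather than through an $\mathrm{H}^0$-divisibility argument; note also that $F^+_qA_f$ (hence $\mathrm{Sel}^S_\Delta(L,A_f)$ itself) is not defined in the paper at admissible $q$, so the right-hand side of the lemma must first be pinned down---compatibly with that splitting---before the comparison at these places can be completed.
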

\begin{proof}
See \cite[Proposition 1.9]{chida-hsieh-main-conj}.
\end{proof}
The following corollary is the combination of Lemma \ref{lem:control_galois} and Lemma \ref{lem:control_mod_pn}.
\begin{cor} \label{cor:control_galois_mod_pn}
Assume that the residual representation $A_{f,1}$ of $G_\mathbb{Q}$ is absolutely irreducible.
Let $\mathfrak{m}_{L,n} = (\varpi, \gamma_L - 1)$ be a maximal ideal of $\mathcal{O}_n \llbracket \mathrm{Gal}(L/K)\rrbracket$ where $\gamma_L$ is a generator of $\mathrm{Gal}(L/K)$.
Then the restriction map
$$\mathrm{H}^1(K, A_{f,1}) \to \mathrm{H}^1(L, A_{f,n})[\mathfrak{m}_{L,n}]$$
is an isomorphism.
\end{cor}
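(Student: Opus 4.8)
The plan is to deduce the isomorphism by combining the two preceding lemmas with a purely formal manipulation of $\mathfrak{m}_{L,n}$-torsion, so that the actual content is entirely carried by Lemma \ref{lem:control_galois} and Lemma \ref{lem:control_mod_pn}. First I would record the elementary observation that, since $\gamma_L$ topologically generates $\mathrm{Gal}(L/K)$ (here one uses that $\mathrm{Gal}(L/K)$ is a quotient of $\mathrm{Gal}(K(m'p^\infty)/K) \simeq \mathbb{Z}/m'\mathbb{Z} \times \mathbb{Z}_p$, which is procyclic because $(m',p)=1$) and $\mathfrak{m}_{L,n} = (\varpi, \gamma_L - 1)$, for any $\mathcal{O}_n\llbracket\mathrm{Gal}(L/K)\rrbracket$-module $M$ one has
$$M[\mathfrak{m}_{L,n}] = \bigl( M^{\mathrm{Gal}(L/K)} \bigr)[\varpi],$$
because being killed by $\gamma_L - 1$ is precisely being $\mathrm{Gal}(L/K)$-invariant, being killed by $\varpi$ is precisely being $\varpi$-torsion, and these two conditions are imposed independently.

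Next I would apply this with $M = \mathrm{H}^1(L, A_{f,n})$. By Lemma \ref{lem:control_galois} the restriction map identifies $\mathrm{H}^1(K, A_{f,n})$ with $\mathrm{H}^1(L, A_{f,n})^{\mathrm{Gal}(L/K)}$; since this identification is $\mathcal{O}$-linear it carries $\varpi$-torsion onto $\varpi$-torsion, and hence restriction induces an isomorphism
$$\mathrm{H}^1(K, A_{f,n})[\varpi] \ \xrightarrow{\ \sim\ }\ \mathrm{H}^1(L, A_{f,n})[\mathfrak{m}_{L,n}].$$
It then remains to identify the source with $\mathrm{H}^1(K, A_{f,1})$ compatibly with the coefficient inclusion $A_{f,1} = A_{f,n}[\varpi] \hookrightarrow A_{f,n}$. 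For this I would invoke Lemma \ref{lem:control_mod_pn} over $K$ in levels $n$ and $1$, which gives $\mathrm{H}^1(K, A_{f,n}) = \mathrm{H}^1(K, A_f)[\varpi^n]$ and $\mathrm{H}^1(K, A_{f,1}) = \mathrm{H}^1(K, A_f)[\varpi]$, whence $\mathrm{H}^1(K, A_{f,n})[\varpi] = \mathrm{H}^1(K, A_f)[\varpi] = \mathrm{H}^1(K, A_{f,1})$, the map being the natural one. Equivalently, one may read this off the long exact sequence attached to $0 \to A_{f,1} \to A_{f,n} \xrightarrow{\varpi} A_{f,n}$ together with $\mathrm{H}^0(K, A_{f,n}) = (A_{f,n})^{G_K} = 0$, which holds since $(T_{f,n})^{G_{K(m'p^r)}} = 0$ by absolute irreducibility of $\overline{\rho}$, exactly as in the proof of Lemma \ref{lem:control_galois}.

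Finally I would check that the composite isomorphism $\mathrm{H}^1(K, A_{f,1}) \xrightarrow{\sim} \mathrm{H}^1(L, A_{f,n})[\mathfrak{m}_{L,n}]$ produced this way is precisely the restriction map of the statement; this is immediate from the compatibility of restriction with the coefficient inclusion $A_{f,1} \hookrightarrow A_{f,n}$ and with passage to invariants. There is no genuine obstacle here: the only point that is not completely formal is the procyclicity of $\mathrm{Gal}(L/K)$ used to write $\mathfrak{m}_{L,n}$-torsion as iterated invariants-then-$\varpi$-torsion, and, for infinite $L$, the fact that $\mathrm{H}^1(L,-)$ and the invariants are to be computed as the appropriate (co)limits — but the latter is already built into the statements of Lemma \ref{lem:control_galois} and Lemma \ref{lem:control_mod_pn}, so nothing further is needed.
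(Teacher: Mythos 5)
Your proposal is correct and is exactly the argument the paper intends: the paper simply declares the corollary to be "the combination of Lemma \ref{lem:control_galois} and Lemma \ref{lem:control_mod_pn}", and your write-up spells out precisely that combination (cyclicity of $\mathrm{Gal}(L/K)$ to unwind $\mathfrak{m}_{L,n}$-torsion as invariants followed by $\varpi$-torsion, then the two lemmas). No gaps.
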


\subsection{The stability of $n$-admissible sets and the Poitou-Tate sequence}
In this subsection, the assumptions of Lemma \ref{lem:control_galois_selmer} are in force.

Let $L/K$ be a subextension of $K(m)$ and $\mathfrak{S}$ an $n$-admissible set for $f$ and $m'$.
Let $L(m') := L \cap K(m')$. 

The following lemma shows that the the concept of $n$-admissible sets is stable under base change of $p$-power degree.
\begin{lem}\label{lem:injectivity}
The natural map
$$\mathrm{Sel}_{\Delta}(L, A_{f,n}) \to \bigoplus_{\ell \in \mathfrak{S}} \mathrm{H}^1_{\mathrm{fin}}(L_\ell, A_{f,n})$$
is injective.
\end{lem}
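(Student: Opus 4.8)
The plan is to reduce the injectivity over $L$ to the injectivity over $L(m')=L\cap K(m')$, which is exactly the defining property (2) of an $n$-admissible set for $f$ and $m'$ (Definition \ref{defn:n-admissible-set}), and then exploit that $L/L(m')$ has $p$-power degree. First I would apply the control theorem for $\Delta$-ordinary Selmer groups (Lemma \ref{lem:control_galois_selmer}, or rather its analogue over the intermediate layer $L(m')$) together with Lemma \ref{lem:control_mod_pn_selmer} to identify $\mathrm{Sel}_{\Delta}(L(m'), A_{f,n})$ with $\mathrm{Sel}_{\Delta}(L, A_{f,n})^{\mathrm{Gal}(L/L(m'))}$; since $A_{f,1}$ is absolutely irreducible and $\mathrm{Gal}(L/L(m'))$ is a $p$-group, taking Galois invariants is faithful on nonzero submodules in the following sense: a nonzero $\mathrm{Gal}(L/L(m'))$-submodule of a finite $\mathcal{O}[\mathrm{Gal}(L/L(m'))]$-module of $p$-power order has nonzero invariants (Nakayama over the local ring $\mathcal{O}_n[\mathrm{Gal}(L/L(m'))]$ at its maximal ideal, or the standard fixed-point argument for $p$-groups acting on $p$-groups).

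Concretely, suppose $s\in\mathrm{Sel}_{\Delta}(L,A_{f,n})$ is nonzero and maps to zero in $\bigoplus_{\ell\in\mathfrak{S}}\mathrm{H}^1_{\mathrm{fin}}(L_\ell,A_{f,n})$. The $\mathrm{Gal}(L/L(m'))$-submodule generated by $s$ is nonzero, hence so are its invariants; replacing $s$ by a nonzero invariant element (which still lies in the Selmer group by Galois-equivariance of the local conditions, and still localizes to zero at every $\ell\in\mathfrak{S}$ because the local restriction maps $\mathrm{H}^1_{\mathrm{fin}}(L(m')_\ell,A_{f,n})\hookrightarrow\mathrm{H}^1_{\mathrm{fin}}(L_\ell,A_{f,n})$ are injective — the primes in $\mathfrak{S}$ are $n$-admissible, hence inert in $K$ and split completely in the $p$-part of the extension, so $L_\ell$ is an unramified $p$-extension of $L(m')_\ell$ and the finite parts inject by the argument in Part (2) of the proof of Proposition \ref{prop:local_properties}), I get a nonzero element of $\mathrm{Sel}_{\Delta}(L,A_{f,n})^{\mathrm{Gal}(L/L(m'))}=\mathrm{Sel}_{\Delta}(L(m'),A_{f,n})$ that dies in $\bigoplus_{\ell\in\mathfrak{S}}\mathrm{H}^1_{\mathrm{fin}}(L(m')_\ell,A_{f,n})$. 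Since $\mathrm{Sel}_{\Delta}(L(m'),A_{f,n})\hookrightarrow\mathrm{Sel}_{\Delta}(K(m'),A_{f,n})$ by Lemma \ref{lem:control_galois_selmer} applied to the $p$-power extension $K(m')\supseteq L(m')$ — wait, $L(m')\subseteq K(m')$, so one instead uses that $\mathrm{Sel}_{\Delta}(L(m'),A_{f,n})$ injects into its restriction over $K(m')$, again via Lemma \ref{lem:control_galois} — I reach a nonzero element of $\mathrm{Sel}_{\Delta}(K(m'),A_{f,n})$ localizing to zero at all $\ell\in\mathfrak{S}$, contradicting condition (2) of Definition \ref{defn:n-admissible-set}.

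The main obstacle I anticipate is bookkeeping the compatibility of local conditions along $L\supseteq L(m')\supseteq K(m')$: one must check that the finite/singular/ordinary local conditions defining $\mathrm{Sel}_{\Delta}$ are preserved under both restriction and corestriction (equivalently, under taking $\mathrm{Gal}(L/L(m'))$-invariants), and that at the auxiliary primes $\ell\in\mathfrak{S}$ the finite parts genuinely inject — this is where inertness of $\ell$ in $K$ and total ramification of the tame part of $m$ (Assumption \ref{assu:exact_degree}) enter, guaranteeing $\ell$ splits completely in $L(m')/K(m')$'s relevant subextensions so that $L(m')_\ell$ is an unramified extension of $K(m')_\ell$. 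Everything else is a standard $p$-group cohomology argument combined with the control theorems already established; no new input beyond Lemma \ref{lem:control_galois_selmer}, Lemma \ref{lem:control_mod_pn_selmer}, and the $n$-admissibility of $\mathfrak{S}$ is needed.
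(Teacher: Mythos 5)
Your proposal is correct and follows essentially the same route as the paper: take a nonzero element of the kernel, use that $\mathrm{Gal}(L/L(m'))$ is a $p$-group to produce a nonzero invariant element, identify it via the exact control theorem (Lemma \ref{lem:control_galois_selmer}) with an element of $\mathrm{Sel}_{\Delta}(L(m'),A_{f,n})$, and contradict condition (2) of Definition \ref{defn:n-admissible-set}. The extra care you take with the local injectivity at $\ell\in\mathfrak{S}$ (trivial since admissible primes split completely in $K(m)/K$) and with passing from $L(m')$ to $K(m')$ only fills in details the paper leaves implicit.
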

\begin{proof}
Let $C$ be the kernel of the map. If $C$ is non-zero, then there exists a non-trivial element $s$ in $C$ fixed by $\mathrm{Gal}(L/L(m'))$ since $\mathrm{Gal}(L/L(m'))$ is a $p$-group with help of the orbit-stabilizer theorem.
Then $s$ belongs to $\mathrm{Sel}_{\Delta}(L, A_{f,n})^{\mathrm{Gal}(L/L(m'))} = \mathrm{Sel}_{\Delta}(L(m'), A_{f,n})$ due to Lemma \ref{lem:control_galois_selmer} with $S = 1$.
Then it contradicts the definition of $n$-admissible sets for $f$ and $m'$.
\end{proof}

Let $S = \prod_{q \in \mathfrak{S}}q$.
\begin{lem}[Poitou-Tate exact sequences for Selmer groups] \label{lem:poitou-tate}
We have an exact sequence
$$0 \to \mathrm{Sel}_{\Delta}(L, T_{f,n}) \to \mathrm{Sel}^{S}_{\Delta}(L, T_{f,n}) \to \bigoplus_{\ell \in \mathfrak{S}} \mathrm{H}^1_{\mathrm{sing}}(L_\ell, T_{f,n}) \to \mathrm{Sel}_{\Delta}(L, A_{f,n})^{\vee} \to 0$$
where $\mathrm{Sel}_{\Delta}(L, A_{f,n})^{\vee} = \mathrm{Hom}_{\mathcal{O}} \left( \mathrm{Sel}_{\Delta}(L, A_{f,n}), E/\mathcal{O} \right)$.
\end{lem}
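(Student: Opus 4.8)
The plan is to apply the global Poitou–Tate duality nine-term exact sequence to the $G_\mathbb{Q}$-module $T_{f,n}$ over $L$, with respect to two Selmer structures that differ only at the finitely many primes in $\mathfrak{S}$, and then splice in the local computations of $\S$\ref{prop:local_computation}–\ref{prop:local_dualities}. First I would recall the Greenberg–Wiles formula / Poitou–Tate sequence in the form: for a finite set $\Sigma$ of places containing all bad primes and a choice of local conditions $\mathcal{L} = \{\mathcal{L}_v\}$ with orthogonal complement $\mathcal{L}^\perp$, one has an exact sequence relating $\mathrm{Sel}_{\mathcal{L}}(L, T_{f,n})$, $\bigoplus_{v}\mathrm{H}^1(L_v,T_{f,n})/\mathcal{L}_v$, and $\mathrm{Sel}_{\mathcal{L}^\perp}(L, A_{f,n})^\vee$. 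Here I take $\Sigma$ to contain the places dividing $Nm'p\mathfrak{S}\infty$, $\mathcal{L}$ the $\Delta$-ordinary local conditions defining $\mathrm{Sel}_\Delta(L,T_{f,n})$, and I enlarge $\mathcal{L}$ to $\mathcal{L}'$ by imposing the full (relaxed) condition at each $\ell \in \mathfrak{S}$, so that $\mathrm{Sel}_{\mathcal{L}'}(L,T_{f,n}) = \mathrm{Sel}^S_\Delta(L,T_{f,n})$.

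The key steps, in order. (1) Write down the comparison exact sequence
\[
0 \to \mathrm{Sel}_\Delta(L,T_{f,n}) \to \mathrm{Sel}^S_\Delta(L,T_{f,n}) \xrightarrow{\bigoplus \partial_\ell} \bigoplus_{\ell\in\mathfrak S} \mathrm{H}^1_{\mathrm{sing}}(L_\ell,T_{f,n}) \to \mathrm{Sel}_{\Delta}(L,A_{f,n})^\vee
\]
where exactness on the left two terms is tautological and the connecting map into the dual Selmer group is the one furnished by global duality (the image of a singular class pairs, via $\sum_v\langle-,-\rangle_v = 0$, against $\mathrm{Sel}_\Delta(L,A_{f,n})$, using Proposition \ref{prop:global_reciprocity}). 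Here I use Proposition \ref{prop:local_dualities}.(1),(3) to identify $\mathrm{H}^1_{\mathrm{fin}}(L_\ell,T_{f,n})^\perp = \mathrm{H}^1_{\mathrm{ord}}(L_\ell,A_{f,n})$ and $\mathrm{H}^1_{\mathrm{fin}}(L_\ell,A_{f,n})^\perp=\mathrm{H}^1_{\mathrm{ord}}(L_\ell,T_{f,n})$, so that the Selmer structure "$\Delta$-ordinary, relaxed at $\mathfrak S$" for $T_{f,n}$ is dual to "$\Delta$-ordinary, unramified at $\mathfrak S$" $=$ "$\Delta$-ordinary" for $A_{f,n}$ (the classes in $\mathfrak S$ being $n$-admissible and split in $L/K$, so $\mathrm{H}^1_{\mathrm{fin}} = \mathrm{H}^1_{\mathrm{ur}}$ there). (2) Surjectivity onto $\mathrm{Sel}_\Delta(L,A_{f,n})^\vee$ is the content of the Poitou–Tate sequence's terminal portion: the next term is $\mathrm{H}^2$ of the relaxed Selmer complex minus $\mathrm{Sha}^2$, and one argues this cokernel vanishes because $\mathfrak S$ is an $n$-admissible set for $f$ and $m'$ — precisely, Lemma \ref{lem:injectivity} says the dual Selmer map $\mathrm{Sel}_\Delta(L,A_{f,n})\hookrightarrow\bigoplus_{\ell\in\mathfrak S}\mathrm{H}^1_{\mathrm{fin}}(L_\ell,A_{f,n})$ is injective, which by global duality is equivalent to surjectivity of $\bigoplus\partial_\ell$ onto the annihilator, i.e. onto $\mathrm{Sel}_\Delta(L,A_{f,n})^\vee$. (3) Assemble.

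The main obstacle will be step (2): getting the exactness at the right-hand end honestly from Poitou–Tate rather than hand-waving. The cleanest route is to observe that the nine-term sequence gives exactness of
\[
\mathrm{Sel}^S_\Delta(L,T_{f,n}) \to \bigoplus_{\ell\in\mathfrak S}\mathrm{H}^1_{\mathrm{sing}}(L_\ell,T_{f,n}) \to \mathrm{Sel}_\Delta(L,A_{f,n})^\vee \to \mathrm{Sel}^S_\Delta(L,A_{f,n})^\vee \to \cdots,
\]
and then note that the last arrow displayed is the dual of the inclusion $\mathrm{Sel}_\Delta(L,A_{f,n})\hookrightarrow\mathrm{Sel}^S_\Delta(L,A_{f,n})$; surjectivity of our third map into $\mathrm{Sel}_\Delta(L,A_{f,n})^\vee$ is then equivalent to injectivity of $\mathrm{Sel}_\Delta(L,A_{f,n})\to\mathrm{Sel}^S_\Delta(L,A_{f,n})$ composed appropriately — and this reduces exactly to Lemma \ref{lem:injectivity} once one checks that a class in $\mathrm{Sel}_\Delta(L,A_{f,n})$ trivial at all $\ell\in\mathfrak S$ is trivial. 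All the local inputs (finiteness, freeness, the $\mathrm{fin}$/$\mathrm{ord}$ orthogonality, and the vanishing $\mathrm{H}^0(L_p,A_{f,n}/F^+) = 0$ of Lemma \ref{lem:local_condition_at_p}) are already in hand from Propositions \ref{prop:local_computation} and \ref{prop:local_dualities}, so the argument over $K(m)$ proceeds verbatim as in \cite[Theorem 3.2]{bertolini-darmon-imc-2005} and \cite[Proof of Theorem 6.3]{chida-hsieh-main-conj}; the only genuinely new point is that $\mathfrak S$ is now an $n$-admissible set relative to the base $K(m')$, which is exactly what Lemma \ref{lem:injectivity} was set up to handle.
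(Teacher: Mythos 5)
Your proposal is correct and takes essentially the same route as the paper, which simply combines the Poitou--Tate sequence for the two Selmer structures (``$\Delta$-ordinary'' versus ``$\Delta$-ordinary, relaxed at $\mathfrak{S}$'') with Lemma \ref{lem:injectivity}, following \cite[Lemma 6.7]{chida-hsieh-main-conj}. One minor slip: the fifth term of the nine-term sequence is the dual of the \emph{strict}-at-$\mathfrak{S}$ Selmer group of $A_{f,n}$ (the orthogonal complement of the relaxed condition at $\ell\in\mathfrak{S}$ is the zero condition, not the relaxed one, so the term is not $\mathrm{Sel}^S_\Delta(L,A_{f,n})^\vee$), but your final reduction --- surjectivity onto $\mathrm{Sel}_\Delta(L,A_{f,n})^\vee$ is equivalent to the vanishing of classes in $\mathrm{Sel}_\Delta(L,A_{f,n})$ that are locally trivial at every $\ell\in\mathfrak{S}$, which is exactly Lemma \ref{lem:injectivity} --- is the right one.
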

\begin{proof}
The Poitou-Tate sequences for $\Delta$-ordinary Selmer groups and Lemma \ref{lem:injectivity} directly imply the conclusion.
See \cite[Lemma 6.7]{chida-hsieh-main-conj} for detail.
\end{proof}

\subsection{The freeness of compact Selmer groups} \label{subsec:freeness}
In this subsection, we generalize \cite[Proposition 6.8]{chida-hsieh-main-conj}, which is based on \cite[Theorem 3.2]{bertolini-darmon-derived-heights-1994}, to the general cyclic case in detail. The key idea is to apply the enhanced isotypic decomposition in $\S$\ref{subsec:enhanced_decomposition} for the reduction to the $p$-power case.
The assumptions of Lemma \ref{lem:control_galois_selmer} are in force as before.
\subsubsection{Statement}
Let $L/K$ be a subextension of $K(m)$ and $F/K$ be a subextension of $K$ in $L$.
Let $\mathfrak{S}$ be an $n$-admissible set for $f$ and $m'$.
Let $S = \prod_{q \in \mathfrak{S}}q$.
\begin{thm} \label{thm:freeness}
The Selmer group $\mathrm{Sel}^S_{\Delta}(K(m), T_{f,n})$ is free of rank $\#\mathfrak{S}$ over $\mathcal{O}_n[\Gamma_m]$ where $\Gamma_m = \mathrm{Gal}(K(m)/K)$.
\end{thm}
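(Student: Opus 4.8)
The strategy is to reduce the freeness statement over the semi-local ring $\mathcal{O}_n[\Gamma_m]$ to the $p$-power case over $\mathcal{O}_\omega{}_{,n}[\Gamma_{p^r}]$ using the enhanced isotypic decomposition of $\S$\ref{subsec:enhanced_decomposition}, and then to run the Bertolini-Darmon argument (\cite[Theorem 3.2]{bertolini-darmon-derived-heights-1994}, \cite[Proposition 6.8]{chida-hsieh-main-conj}) over each factor $\mathcal{O}_\omega{}_{,n}[\Gamma_{p^r}]$. Writing $m = m'p^r$, the group ring decomposes as $\mathcal{O}_n[\Gamma_m] = \mathcal{O}_n[\Gamma_{m'}][\Gamma_{p^r}] = \bigoplus_\omega \mathcal{O}_\omega{}_{,n}[\Gamma_{p^r}]$, the sum running over $\omega \in \mathrm{Hom}(\Gamma_{m'}, \overline{\mathbb{Q}}^\times_p)$ up to $\mathbb{Q}_p$-conjugacy, because $m'$ is invertible in $\mathcal{O}_n$. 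This is a product of (Artinian) local rings, so by Lemma \ref{lem:kurihara_decomposition_freeness} (and its evident $\mathcal{O}_n$-level analogue) it suffices to prove that $\mathrm{Sel}^S_\Delta(K(m), T_{f,n})_\omega := \mathrm{Sel}^S_\Delta(K(m), T_{f,n}) \otimes_\omega \mathcal{O}_\omega{}_{,n}$ is free of rank $\#\mathfrak{S}$ over $\mathcal{O}_\omega{}_{,n}[\Gamma_{p^r}]$ for each $\omega$.

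\textbf{Key steps.} First I would fix $\omega$ and work over the local ring $R_\omega := \mathcal{O}_\omega{}_{,n}[\Gamma_{p^r}]$ with maximal ideal $\mathfrak{m}_\omega$. Since $R_\omega$ is local Artinian, by Nakayama it is enough to (i) compute $\dim_{\mathbb{F}_\omega} \mathrm{Sel}^S_\Delta(K(m), T_{f,n})_\omega / \mathfrak{m}_\omega = \#\mathfrak{S}$ (giving a minimal generating set), and (ii) show the module has length exactly $\#\mathfrak{S} \cdot \mathrm{length}(R_\omega)$, or equivalently that it is torsion-free in the appropriate sense over $R_\omega$ — concretely, that the natural surjection $R_\omega^{\#\mathfrak{S}} \twoheadrightarrow \mathrm{Sel}^S_\Delta(K(m), T_{f,n})_\omega$ is injective. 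For the generation count (i), I would use the exact control theorem (Lemma \ref{lem:control_galois_selmer}, with the full relaxing set $S$) to identify $\mathrm{Sel}^S_\Delta(K(m), T_{f,n})_\omega[\mathfrak{m}_\omega]$ with an $\omega$-component of $\mathrm{Sel}^S_\Delta(K(m'), A_{f,1})$ (via the self-duality $T_{f,n} \simeq A_{f,n}$ and Corollary \ref{cor:control_galois_mod_pn}), and then the Poitou-Tate exact sequence of Lemma \ref{lem:poitou-tate} over $K(m')$ together with the local computations of Proposition \ref{prop:local_computation}.(3) to see that the $S$-singular part $\bigoplus_{q \in \mathfrak{S}} \mathrm{H}^1_{\mathrm{sing}}(K(m')_q, T_{f,1})$ has $\mathbb{F}_\omega$-dimension (after $\omega$-localization) exactly $\#\mathfrak{S}$, while the minimal dual Selmer group $\mathrm{Sel}_\Delta(K(m'),A_{f,1})^\vee_\omega$ contributes nothing to the rank (it is killed by the $S$-relaxation by the defining property (2) of an $n$-admissible set in Definition \ref{defn:n-admissible-set}, exploited via Lemma \ref{lem:injectivity}). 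For step (ii), the Poitou-Tate sequence of Lemma \ref{lem:poitou-tate} applied over $L = K(m)$ itself gives
\[
0 \to \mathrm{Sel}_\Delta(K(m), T_{f,n}) \to \mathrm{Sel}^S_\Delta(K(m), T_{f,n}) \to \bigoplus_{\ell \in \mathfrak{S}} \mathrm{H}^1_{\mathrm{sing}}(K(m)_\ell, T_{f,n}) \to \mathrm{Sel}_\Delta(K(m), A_{f,n})^\vee \to 0,
\]
and by Proposition \ref{prop:local_computation}.(4) each $\mathrm{H}^1_{\mathrm{sing}}(K(m)_\ell, T_{f,n})$ is free of rank one over $\mathcal{O}_n[\Gamma_m]$, hence $\bigoplus_{\ell\in\mathfrak{S}}\mathrm{H}^1_{\mathrm{sing}}(K(m)_\ell, T_{f,n})_\omega$ is free of rank $\#\mathfrak{S}$ over $R_\omega$; a length count then forces $\mathrm{Sel}^S_\Delta(K(m),T_{f,n})_\omega$ to be free provided the two flanking terms are dual to each other in a length-preserving way, which follows from global duality and the vanishing of $\mathrm{H}^0$ of the residual module (absolute irreducibility of $A_{f,1}$).

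\textbf{Main obstacle.} The delicate point is step (ii): matching up the kernel $\mathrm{Sel}_\Delta(K(m), T_{f,n})$ and the cokernel $\mathrm{Sel}_\Delta(K(m), A_{f,n})^\vee$ of the Poitou-Tate sequence so that their lengths cancel, leaving the middle term free. In the original $p$-power argument this uses a Nakayama/Cassels-Tate duality argument over the \emph{local} Iwasawa algebra, which no longer applies verbatim since $\mathcal{O}_n[\Gamma_m]$ is only semi-local. The enhanced isotypic decomposition is precisely what repairs this: after applying $-\otimes_\omega \mathcal{O}_\omega{}_{,n}$ we are back over the local ring $R_\omega$, and the original Bertolini-Darmon induction on $r$ (raising $K(m'p^{r-1})$ to $K(m'p^r)$) goes through, with the base case $K(m')$ handled by the control theorem Lemma \ref{lem:control_galois_selmer} and the dimension count in step (i). One must also check that the $S$-relaxed Selmer group has no $\omega$-component supported only at primes of $\mathfrak{S}$, i.e.\ that enlarging $\mathfrak{S}$ to an $n$-admissible set for $f$ and $m'$ (Theorem \ref{thm:chebotarev}, Definition \ref{defn:n-admissible-set}) really does kill the minimal Selmer group after $\omega$-localization — this is where the generalized Chebotarev result over $K(m')$ is essential, since the injectivity in Definition \ref{defn:n-admissible-set}.(2) is imposed over $K(m')$ and propagates to $K(m)$ via Lemma \ref{lem:injectivity}.
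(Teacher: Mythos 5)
Your reduction is the same as the paper's: decompose $\mathcal{O}_n[\Gamma_m]=\bigoplus_\omega \mathcal{O}_{\omega,n}[\Gamma_{p^r}]$ via the enhanced isotypic decomposition, and prove freeness of each $\omega$-component over the local ring $R_\omega=\mathcal{O}_{\omega,n}[\Gamma_{p^r}]$ using the exact control theorem, the Poitou--Tate sequence, Proposition \ref{prop:local_computation}.(4), and the propagation of the admissible-set condition (Lemma \ref{lem:injectivity}). Where you diverge is in how you close the local argument: the paper runs a three-step induction (for $r=1,n=1$ it uses that every finitely generated $\mathbb{F}_\omega[\Gamma_p]$-module is a sum of cyclic modules whose number is $\dim M^{\Gamma_p}$; for general $r$ it uses cohomological triviality via Tate cohomology and \cite{atiyah-wall}; for general $n$ it uses the three Bertolini--Darmon lemmas), whereas you propose a single ``generators plus length'' count over $R_\omega$ at the top level $K(m)$ and modulus $\varpi^n$. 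That compression can be made to work, but not quite as you state it.

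The gap is in your step (i). You announce that by Nakayama it suffices to compute $\dim_{\mathbb{F}_\omega} M_\omega/\mathfrak{m}_\omega M_\omega$, but what you then actually compute --- and what the control theorems (Lemma \ref{lem:control_galois_selmer}, Corollary \ref{cor:control_galois_mod_pn}) actually give for the compact Selmer group --- is the \emph{socle} $M_\omega[\mathfrak{m}_\omega]$, i.e.\ the $\Gamma_{p^r}$-invariants killed by $\varpi_\omega$. For a non-free module over $R_\omega$ these two dimensions need not agree, and there is no control theorem for the coinvariants available at this stage: the ``control with respect to quotients'' is Corollary \ref{cor:control}, which the paper deduces \emph{from} Theorem \ref{thm:freeness}, so using it here would be circular. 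The correct deduction from ``$\dim\mathrm{soc}(M_\omega)=\#\mathfrak{S}$ and $\mathrm{length}(M_\omega)=\#\mathfrak{S}\cdot\mathrm{length}(R_\omega)$'' is not Nakayama but its Matlis dual: $R_\omega$ is an Artinian local complete intersection, hence Gorenstein and self-injective with one-dimensional socle, so $M_\omega$ embeds into $R_\omega^{\#\mathfrak{S}}$ and the length equality forces this to be an isomorphism. (This is exactly the content of the paper's cyclic-decomposition count and of \cite[Lemma 3.5]{bertolini-darmon-derived-heights-1994}.) Once phrased this way your argument goes through. One smaller imprecision: the cokernel $\mathrm{Sel}_\Delta(K(m'),A_{f,1})^\vee$ in the Poitou--Tate sequence is not killed by the $S$-relaxation --- the admissible-set condition gives surjectivity of $\bigoplus_{\ell\in\mathfrak{S}}\mathrm{H}^1_{\mathrm{sing}}(K(m')_\ell,T_{f,1})$ onto it, i.e.\ exactness on the right; the term itself only disappears from the length count because it cancels against the kernel $\mathrm{Sel}_\Delta(K(m'),T_{f,1})$ via the self-duality $T_{f,n}\simeq A_{f,n}$.
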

Due to Lemma \ref{lem:kurihara_decomposition_freeness}, it suffices to prove the following statement.
\begin{prop} \label{prop:freeness_p-part}
Let $\omega: \mathrm{Gal}(K(m')/K) \to \mathcal{O}_\omega$ be a character.
Then the Selmer group 
$$\mathrm{Sel}^S_{\Delta}(K(m), T_{f,n}) \otimes_\omega \mathcal{O}_\omega$$
 is free of rank $\#\mathfrak{S}$ over $\mathcal{O}_{\omega,n}[\Gamma_{p^r}]$.
\end{prop}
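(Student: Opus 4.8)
The strategy is to reduce the freeness of $\mathrm{Sel}^S_{\Delta}(K(m),T_{f,n})\otimes_\omega\mathcal{O}_\omega$ over $\mathcal{O}_{\omega,n}[\Gamma_{p^r}]$ to the same freeness statement over $K(p^r)$ that was proved in \cite{bertolini-darmon-derived-heights-1994} and \cite{chida-hsieh-main-conj}, but now ``twisted by $\omega$''. First I would record that, because $m'$ is invertible in $\mathcal{O}_n$, applying $\otimes_\omega\mathcal{O}_\omega$ commutes with forming Galois cohomology and with forming Selmer groups (the local conditions at $p\Delta S$ split off along the idempotent decomposition of $\mathcal{O}_n[\Gamma_{m'}]$), so that
$$\mathrm{Sel}^S_{\Delta}(K(m),T_{f,n})\otimes_\omega\mathcal{O}_\omega$$
is itself the $\omega$-component of a Selmer group which is a module over $\mathcal{O}_{\omega,n}[\Gamma_{p^r}]$. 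The target ring $\mathcal{O}_{\omega,n}[\Gamma_{p^r}]=\mathcal{O}_{\omega,n}[T]/((1+T)^{p^r}-1)$ is a local Artinian ring with residue field $\mathbb{F}'_\omega$, so by Nakayama's lemma it suffices to (a) show the module is generated by $\#\mathfrak{S}$ elements, i.e. $\dim_{\mathbb{F}'_\omega}\bigl(M/\mathfrak{m}M\bigr)\le\#\mathfrak{S}$, and (b) produce a surjection from a free module of rank $\#\mathfrak{S}$, i.e. a lower bound on the size of $M$ matching $(\#\mathcal{O}_{\omega,n}[\Gamma_{p^r}])^{\#\mathfrak{S}}$.

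For step (a) I would use the control theorems: by Corollary \ref{cor:control_galois_mod_pn} (applied with $L=K(m)$, using that $\mathrm{Gal}(K(m)/K(m'))\cong\Gamma_{p^r}$), the residue $M/\mathfrak{m}_{K(m),n}M$ is computed by
$$\mathrm{Sel}^S_\Delta(K(m'),T_{f,1})\otimes_\omega\mathbb{F}'_\omega = \mathrm{Sel}^S_\Delta(K(m),T_{f,n})[\mathfrak{m}]/\dots,$$
and then the Poitou–Tate sequence of Lemma \ref{lem:poitou-tate} over $K(m')$ together with the injectivity furnished by the $n$-admissible set condition (Definition \ref{defn:n-admissible-set}) bounds this by $\sum_{\ell\in\mathfrak{S}}\dim\mathrm{H}^1_{\mathrm{sing}}(K(m')_\ell,T_{f,1})_\omega$, which is $\#\mathfrak{S}$ by Proposition \ref{prop:local_computation}.(3)–(4). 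For step (b) I would run the Poitou–Tate/global Euler characteristic computation over $K(m)$ directly: Lemma \ref{lem:poitou-tate} gives
$$0\to\mathrm{Sel}_\Delta(K(m),T_{f,n})\to\mathrm{Sel}^S_\Delta(K(m),T_{f,n})\to\bigoplus_{\ell\in\mathfrak{S}}\mathrm{H}^1_{\mathrm{sing}}(K(m)_\ell,T_{f,n})\to\mathrm{Sel}_\Delta(K(m),A_{f,n})^\vee\to0,$$
and after $\otimes_\omega\mathcal{O}_\omega$ the two outer terms are dual to each other (self-duality of $T_{f,n}$ and the orthogonality of Proposition \ref{prop:local_dualities}.(2)), so their contributions cancel in the length count; the middle term is free of rank $\#\mathfrak{S}$ over $\mathcal{O}_{\omega,n}[\Gamma_{p^r}]$ by Proposition \ref{prop:local_computation}.(4). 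This forces $M\otimes_\omega\mathcal{O}_\omega$ to have the right length, and combined with (a) and Nakayama's lemma over the local Artinian ring $\mathcal{O}_{\omega,n}[\Gamma_{p^r}]$ this gives freeness of rank $\#\mathfrak{S}$. Finally, Theorem \ref{thm:freeness} follows by reassembling over all $\omega$ via Lemma \ref{lem:kurihara_decomposition_freeness}.

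**Main obstacle.** The delicate point is that the classical argument of \cite{bertolini-darmon-derived-heights-1994} is a genuine \emph{local ring} argument — it uses Nakayama over $\mathcal{O}[\![\Gamma_{p^\infty}]\!]$ and the non-existence of finite submodules — whereas here one must make everything work $\omega$-component by $\omega$-component without ever descending all the way to $K$ as the base. The real work is checking that the exact control statement (Lemma \ref{lem:control_galois_selmer}) and the stability of $n$-admissible sets (Lemma \ref{lem:injectivity}) really do go through for the intermediate base field $K(m')$: one needs that the kernels of the restriction maps on local cohomology at primes dividing $m'$ vanish, which is exactly where the ramification/tame-degree analysis in the proof of Lemma \ref{lem:control_galois_selmer} (via \cite[Corollary 1, p.~130]{serre-local-fields} and Tate cohomology of the prime-to-$p$ group $\mathrm{Gal}(L^{\mathrm{ur}}_\lambda/K^{\mathrm{ur}}_\lambda)$) is essential. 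Once those inputs are in hand, the counting argument is formal; the conceptual content is entirely in having built the $\omega$-twisted versions of the Chebotarev supply ($\S$\ref{sec:chebotarev}) and the control theorems so that the original Bertolini–Darmon length bookkeeping can be transplanted verbatim onto each $\mathcal{O}_{\omega,n}[\Gamma_{p^r}]$.
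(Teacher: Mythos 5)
Your overall skeleton (bound the module at the closed point, match the total length against $\bigoplus_{\ell\in\mathfrak{S}}\mathrm{H}^1_{\mathrm{sing}}$, and conclude over the local Artinian ring $\mathcal{O}_{\omega,n}[\Gamma_{p^r}]$) is the right one, and your length count in step (b) — flatness of $\mathcal{O}_\omega$ over $\mathcal{O}_n[\Gamma_{m'}]$, exactness of the $\omega$-component of the Poitou--Tate sequence, cancellation of the outer terms, and Proposition \ref{prop:local_computation}.(4) for the middle term — is exactly what the paper does. The identification of the essential new inputs (exact control over the intermediate field $K(m')$ and stability of $n$-admissible sets) is also correct.

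The gap is in step (a). You claim $\dim_{\mathbb{F}'_\omega}(M/\mathfrak{m}M)\le\#\mathfrak{S}$ and then invoke Nakayama, but every control statement available at this stage of the paper computes \emph{invariants}, not coinvariants: Lemma \ref{lem:control_galois_selmer} gives $M^{\Gamma_{p^r}}$, Lemma \ref{lem:control_mod_pn_selmer} gives $M[\varpi]$, and Corollary \ref{cor:control_galois_mod_pn} gives $M[\mathfrak{m}]$. (The quotient-type control, Corollary \ref{cor:control}, is \emph{deduced from} the freeness theorem, so using it here would be circular.) Thus what you actually control is the socle $M[\mathfrak{m}]$, not the cosocle $M/\mathfrak{m}M$, and for a module over $\mathcal{O}_{\omega,n}[\Gamma_{p^r}]$ with $n\ge 2$ these genuinely differ (e.g.\ $M=\mathfrak{m}$ has one-dimensional socle but two-dimensional cosocle), so the surjection $R^{\#\mathfrak{S}}\twoheadrightarrow M$ you need is not produced. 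The argument can be repaired without changing its shape: since $\mathcal{O}_{\omega,n}[\Gamma_{p^r}]$ is a Gorenstein Artinian local ring, $\dim\mathrm{soc}(M)=\#\mathfrak{S}$ gives an embedding $M\hookrightarrow R^{\#\mathfrak{S}}$ into the injective hull, and your length computation then forces equality, hence freeness. The paper instead avoids the issue by a staged argument: it first treats $n=1$, where $\mathbb{F}_\omega[\Gamma_{p^r}]$ is a principal Artinian local ring so that socle-counting directly yields the decomposition into cyclic summands (with an induction on $r$ via Tate cohomology and \cite[Theorem 6, p.~112]{atiyah-wall}), and then bootstraps to general $n$ using the three structure lemmas of Bertolini--Darmon (Lemma \ref{lem:modp-freeness-selmer} and the two that follow). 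Either you should switch to the socle/injective-hull formulation, or you should restrict the one-shot argument to $n=1$ and add the bootstrapping step in $n$.
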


We prove Proposition \ref{prop:freeness_p-part} in the next subsections and describe the consequences in the the last subsection.
\subsubsection{Step 1: Proof of the case for $r=1$ and $n=1$}
Let $\Gamma_p = \mathrm{Gal}(K(m'p)/K(m'))$.
Since $\Gamma_p$ trivially acts on $\mathcal{O}_\omega$, we have
$$\left( \mathrm{Sel}^S_{\Delta}(K(m'p), T_{f,1}) \otimes_\omega  \mathcal{O}_\omega \right)^{\Gamma_p}
= \mathrm{Sel}^S_{\Delta}(K(m'p), T_{f,1})^{\Gamma_p} \otimes_\omega  \mathcal{O}_\omega.$$
By the control theorem (Lemma \ref{lem:control_galois_selmer} with $S$), it coincides with
$$\mathrm{Sel}^S_{\Delta}(K(m'), T_{f,1})\otimes_\omega  \mathcal{O}_\omega.$$
Due to the Poitou-Tate sequence (Lemma \ref{lem:poitou-tate} with $L = K(m')$),
$\mathrm{Sel}^S_{\Delta}(K(m'), T_{f,1})$ and $\bigoplus_{\ell \in \mathfrak{S}} \mathrm{H}^1_{ \mathrm{sing} } (K(m')_{\ell}, T_{f,1})$ have the same cardinality.

From now on, we focus on the vector space structure on them after tensoring $\mathcal{O}_\omega$ via $\omega$. 
Due to Proposition \ref{prop:local_computation}.(4), we have
$$\bigoplus_{\ell \in \mathfrak{S}} \mathrm{H}^1_{ \mathrm{sing} } (K(m')_{\ell}, T_{f,1}) \simeq \bigoplus_{\ell \in \mathfrak{S}} \mathbb{F}[\Gamma_{m'}].$$
Consider the following vertically exact commutative digram induced from the Poitou-Tate sequence (Lemma \ref{lem:poitou-tate}), Proposition \ref{prop:local_computation}.(4), and the enhanced isotypic decomposition (Lemma \ref{lem:kurihara_decomposition_freeness}):
{\tiny
\[
\xymatrix{
0 \ar[d] & & 0 \ar[d] & 0 \ar@{-->}[d]^-{(?)}\\
\mathrm{Sel}_{\Delta}(K(m'), T_{f,n}) \ar[d] \ar[rr]^-{\simeq} & & \bigoplus_\omega \mathrm{Sel}_{\Delta}(K(m'), T_{f,n}) \otimes_\omega \mathcal{O}_\omega \ar[d] \ar[r]^-{\mathrm{proj}_\omega} & \mathrm{Sel}_{\Delta}(K(m'), T_{f,n}) \otimes_\omega \mathcal{O}_\omega \ar[d]\\
\mathrm{Sel}^{S}_{\Delta}(K(m'), T_{f,n}) \ar[d] \ar[rr]^-{\simeq} & & \bigoplus_\omega \mathrm{Sel}^{S}_{\Delta}(K(m'), T_{f,n}) \otimes_\omega \mathcal{O}_\omega \ar[d] \ar[r]^-{\mathrm{proj}_\omega} & \mathrm{Sel}^{S}_{\Delta}(K(m'), T_{f,n}) \otimes_\omega \mathcal{O}_\omega \ar[d]\\
\bigoplus_{\ell \in \mathfrak{S}} \mathrm{H}^1_{\mathrm{sing}}(K(m')_\ell, T_{f,n}) \ar[d] \ar[r]^-{\simeq} & \bigoplus_{\ell \in \mathfrak{S}} \mathbb{F}[\Gamma_{m'}] \ar[r]^-{\simeq} & \bigoplus_\omega \bigoplus_{\ell \in \mathfrak{S}} \mathbb{F}[\Gamma_{m'}] \otimes_\omega \mathcal{O}_\omega \ar[d] \ar[r]^-{\mathrm{proj}_\omega}  & \bigoplus_{\ell \in \mathfrak{S}} \mathbb{F}[\Gamma_{m'}] \otimes_\omega \mathcal{O}_\omega \ar[d]\\
\mathrm{Sel}_{\Delta}(K(m'), A_{f,n})^{\vee}  \ar[d] \ar[rr]^-{\simeq} & & \bigoplus_\omega \mathrm{Sel}_{\Delta}(K(m'), A_{f,n})^{\vee} \otimes_\omega \mathcal{O}_\omega \ar[d] \ar[r]^-{\mathrm{proj}_\omega}  &\mathrm{Sel}_{\Delta}(K(m'), A_{f,n})^{\vee} \otimes_\omega \mathcal{O}_\omega \ar[d]\\
0 & & 0 & 0\\
}
\]
}
where $\mathrm{proj}_\omega$ is the projection to the $\omega$-component. By the cardinality consideration, 
$$\mathrm{Sel}_{\Delta}(K(m'), T_{f,n}) \otimes_\omega \mathcal{O}_\omega \to \mathrm{Sel}^{S}_{\Delta}(K(m'), T_{f,n}) \otimes_\omega \mathcal{O}_\omega$$ is injective (for any $\omega$), so the (?)-part in the diagram becomes completed. Thus, we obtain
\begin{align*}
\dim_{\mathbb{F}_\omega} \left( \mathrm{Sel}^S_{\Delta}(K(m'), T_{f,1}) \otimes_\omega  \mathcal{O}_\omega \right) & = \dim_{\mathbb{F}_\omega} \left( \bigoplus_{\ell \in \mathfrak{S}} \mathbb{F}[\Gamma_{m'}] \otimes_\omega  \mathcal{O}_\omega \right) \\
& = \#\mathfrak{S}
\end{align*}
where $\mathbb{F}_\omega$ is the residue field of $\mathcal{O}_\omega$.

The group ring $\mathbb{F}_\omega[\Gamma_p]$ is isomorphic to the local ring $\mathbb{F}_\omega[\varepsilon]/(\varepsilon^p)$ via map $\gamma_p \mapsto \varepsilon +1$ where $\gamma_p$ is a generator of $\Gamma_p$.

Every finitely generated $\mathbb{F}_\omega[\Gamma_p]$-module $M$ can be written as a direct sum of cyclic $\mathbb{F}_\omega[\Gamma_p]$-modules and the number of summands is $\dim_{\mathbb{F}_\omega} \left( M^{\Gamma_p} \right)$. Hence, we can write
$$\mathrm{Sel}^S_{\Delta}(K(m'p), T_{f,1})^{\Gamma_p} \otimes_{\omega}\mathcal{O}_\omega = V_1 \oplus \cdots \oplus V_{\#\mathfrak{S}}$$ 
where each $V_i$ is a cyclic module over $\mathbb{F}_\omega[\Gamma_p]$. Let $n_i = \mathrm{dim}_{\mathbb{F}_\omega}( V_i)$.

Applying the same argument with $L = K(m'p)$, we have
$$p \cdot \#\mathfrak{S} = \mathrm{dim}_{\mathbb{F}_\omega} \left( \bigoplus_{\ell \in \mathfrak{S}} \mathrm{H}^1_{ \mathrm{sing} } (K(m'p)_{\ell}, T_{f,1}) \otimes_\omega \mathcal{O}_\omega  \right) =   \mathrm{dim}_{\mathbb{F}_\omega} \left(  \mathrm{Sel}^S_{\Delta}(K(m'p), T_{f,1}) \otimes_\omega \mathcal{O}_\omega \right) = \sum^{\#\mathfrak{S}}_{i=1} n_i .$$
Since $n_i \leq p$ for all $i$, we have $n_i = p$ for all $i$. It immediately shows that $\mathrm{Sel}^S_{\Delta}(K(m'p), T_{f,1})\otimes_\omega \mathcal{O}_\omega$ is free of rank $\#\mathfrak{S}$ over $\mathbb{F}_\omega[\Gamma_p]$.

\subsubsection{Step 2: Proof of the case for arbitrary $r$ and $n=1$}
By the argument of Step 1, we still have
$$\dim_{\mathbb{F}_\omega} \left( \mathrm{Sel}^S_{\Delta}(K(m'p^r), T_{f,1})^{\Gamma_{p^r}} \otimes_\omega \mathcal{O}_\omega\right) = 
\dim_{\mathbb{F}_\omega} \left( \mathrm{Sel}^S_{\Delta}(K(m'), T_{f,1})\otimes_\omega \mathcal{O}_\omega\right) = \#\mathfrak{S}.$$ 
Thus, it suffices to check that $\mathrm{Sel}^S_{\Delta}(K(m'p^r), T_{f,1}) \otimes_\omega \mathcal{O}_\omega$ is a free $\mathbb{F}[\Gamma_{p^r}]$-module.
By \cite[Theorem 6, Page 112]{atiyah-wall}, it suffices to prove 
$$\mathrm{H}^0_{\mathrm{Tate}}\left(\Gamma_{p^r}, \mathrm{Sel}^S_{\Delta}(K(m'p^r), T_{f,1}) \otimes_\omega \mathcal{O}_\omega\right) = 0$$
where $\mathrm{H}^q_{\mathrm{Tate}}(G,M)$ is the Tate cohomology group.
We use the induction on $r$.
Consider a subextension $K(m'p^{r-1})/K(m')$ with $$\Gamma_{p} = \mathrm{Gal}(K(m'p^r)/K(m'p^{r-1})) \simeq \mathbb{Z}/p\mathbb{Z}.$$
By Lemma \ref{lem:injectivity}, the set of places of $K(m'p^{r-1})$ lying above the places of $K(m')$ in $\mathfrak{S}$
is also an $n$-admissible set for $f$ and $m'p^{r-1}$. Thus, the same argument used in Step 1 shows that
$$\mathrm{H}^0_{\mathrm{Tate}}\left(\Gamma_{p}, \mathrm{Sel}^S_{\Delta}(K(m'p^r), T_{f,1}) \otimes_\omega \mathcal{O}_\omega\right) = 0 .$$
Here we have that $\mathrm{Sel}^S_{\Delta}(K(m'p^r), T_{f,1}) \otimes_\omega \mathcal{O}_\omega$ is free of rank $p^{r-1} \cdot \#\mathfrak{S}$ over $\mathbb{F}_\omega[\mathrm{Gal}(K(m'p^r)/K(m'p^{r-1}))]$.
By the induction hypothesis, we have
$$\mathrm{H}^0_{\mathrm{Tate}}\left(\mathrm{Gal}(K(m'p^{r-1})/K(m')), \mathrm{Sel}^S_{\Delta}(K(m'p^{r-1}), T_{f,1}) \otimes_\omega \mathcal{O}_\omega\right) = 0 .$$
Since the norm map is transitive, it completes a proof of Step 2.


\subsubsection{Step 3: Proof of the general case}
The following three lemmas immediately imply the general case.
\begin{lem}[{\cite[Lemma 3.3]{bertolini-darmon-derived-heights-1994}}] \label{lem:modp-freeness-selmer}
Let $\left(\mathrm{Sel}^S_{\Delta}(K(m), T_{f,n}) \otimes_{\omega} \mathcal{O}_\omega \right) [\varpi_\omega]$ be the $\varpi_\omega$-torsion submodule of $\mathrm{Sel}^S_{\Delta}(K(m), T_{f,n}) \otimes_{\omega} \mathcal{O}_\omega$. Then
$$\left(\mathrm{Sel}^S_{\Delta}(K(m), T_{f,n}) \otimes_{\omega} \mathcal{O}_\omega \right) [\varpi_\omega] \simeq \left( \mathbb{F}[\Gamma_m] \otimes_{\omega} \mathcal{O}_\omega \right)^{\#\mathfrak{S}} \simeq \left( \mathbb{F}_\omega[\Gamma_{p^r}]\right)^{\#\mathfrak{S}} .$$
\end{lem}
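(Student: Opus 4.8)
\textbf{Proof proposal for Lemma \ref{lem:modp-freeness-selmer}.}

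The plan is to reduce the statement to the already-established mod $\varpi$ freeness in Steps 1 and 2. First I would observe that the exact sequence $0 \to T_{f,1} \to T_{f,n} \xrightarrow{\varpi} T_{f,n} \to 0$ of $G_\mathbb{Q}$-modules induces, upon taking Galois cohomology over $K(m)$ and intersecting with the Selmer conditions, a map relating $\mathrm{Sel}^S_\Delta(K(m),T_{f,1})$ to the $\varpi$-torsion of $\mathrm{Sel}^S_\Delta(K(m),T_{f,n})$. Concretely, Lemma \ref{lem:control_mod_pn_selmer} (applied with $L = K(m)$, using absolute irreducibility of $A_{f,1}$, Condition CR, Condition PO, and $N^+$-minimality, all in force) gives $\mathrm{Sel}^S_\Delta(K(m), T_{f,1}) \simeq \mathrm{Sel}^S_\Delta(K(m), T_f)[\varpi]$ and, more to the point, identifies $\mathrm{Sel}^S_\Delta(K(m),T_{f,n})[\varpi]$ with $\mathrm{Sel}^S_\Delta(K(m),T_{f,1})$; here I would need to be careful that the local conditions defining the Selmer group (ordinary at $p\Delta$, singular-relaxed at $S$, finite elsewhere) are compatible with the $\varpi$-power torsion filtration, which is exactly what the local duality and local computation results of $\S$\ref{subsec:selmer} together with Condition CR/PO guarantee. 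Since tensoring with $\mathcal{O}_\omega$ over $\mathcal{O}[\Gamma_{m'}]$ is flat (as $\mathcal{O}_\omega$ is projective over $\mathcal{O}[\Gamma_{m'}]$, $m'$ being prime to $p$), the functor $-\otimes_\omega \mathcal{O}_\omega$ commutes with taking $\varpi$-torsion, so
$$\left(\mathrm{Sel}^S_{\Delta}(K(m), T_{f,n}) \otimes_{\omega} \mathcal{O}_\omega \right) [\varpi_\omega] \simeq \mathrm{Sel}^S_\Delta(K(m), T_{f,1}) \otimes_\omega \mathcal{O}_\omega.$$

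Next I would invoke Step 2 (the case $n=1$, arbitrary $r$) of the proof of Proposition \ref{prop:freeness_p-part}, which shows precisely that $\mathrm{Sel}^S_\Delta(K(m'p^r),T_{f,1}) \otimes_\omega \mathcal{O}_\omega$ is free of rank $\#\mathfrak{S}$ over $\mathbb{F}_\omega[\Gamma_{p^r}]$. Combined with the isomorphism $\mathrm{H}^1_{\mathrm{sing}}(K(m')_\ell, T_{f,1}) \simeq \mathbb{F}[\Gamma_{m'}]$ from Proposition \ref{prop:local_computation}.(4) and the enhanced isotypic decomposition $\mathbb{F}[\Gamma_{m'}] = \oplus_\omega \mathbb{F}[\Gamma_{m'}] \otimes_\omega \mathcal{O}_\omega$, one has $\mathbb{F}[\Gamma_m] \otimes_\omega \mathcal{O}_\omega \simeq \mathbb{F}_\omega[\Gamma_{p^r}]$ as $\mathbb{F}_\omega[\Gamma_{p^r}]$-modules (the $\omega$-component of the group ring $\mathbb{F}[\Gamma_m] = \mathbb{F}[\Gamma_{m'}][\Gamma_{p^r}]$). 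Chaining these identifications yields all three modules in the claimed display, completing the proof.

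The main obstacle I anticipate is the first step: establishing cleanly that $\varpi$-torsion of the mod-$\varpi^n$ Selmer group is literally the mod-$\varpi$ Selmer group, compatibly with $-\otimes_\omega\mathcal{O}_\omega$. The subtlety is that this is a statement about Selmer groups (cut out by local conditions), not merely about ambient Galois cohomology, so one must check that the snake-lemma connecting map into $\mathrm{H}^1(K(m),T_{f,1})$ lands in and surjects onto the Selmer subspace — i.e. that the local conditions at $p$, at $\Delta$, and the finite conditions behave well under multiplication by $\varpi$. All the needed inputs are available (Lemma \ref{lem:control_mod_pn}, Lemma \ref{lem:control_mod_pn_selmer}, Proposition \ref{prop:local_dualities}, and the $\mathrm{H}^0$-vanishing of Lemma \ref{lem:local_condition_at_p} forcing $\mathrm{H}^1_{\mathrm{ord}}$ to be $\varpi$-saturated), but assembling them requires some care. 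Once that is in hand, the flatness of $-\otimes_\omega\mathcal{O}_\omega$ and the Step 2 freeness result make the rest formal.
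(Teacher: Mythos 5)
Your proposal is correct and follows essentially the same route as the paper: identify the $\varpi_\omega$-torsion of $\mathrm{Sel}^S_{\Delta}(K(m), T_{f,n}) \otimes_{\omega} \mathcal{O}_\omega$ with $\mathrm{Sel}^S_{\Delta}(K(m), T_{f,1}) \otimes_{\omega} \mathcal{O}_\omega$ via Lemma \ref{lem:control_mod_pn_selmer} together with the flatness/exactness of the enhanced isotypic decomposition, then apply Step 2. (One cosmetic slip: the sequence $0 \to T_{f,1} \to T_{f,n} \xrightarrow{\varpi} T_{f,n} \to 0$ is not exact on the right — the image of multiplication by $\varpi$ is $T_{f,n-1}$ — but this remark is not load-bearing since you immediately rely on Lemma \ref{lem:control_mod_pn_selmer} instead.)
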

\begin{proof}
Using Lemma \ref{lem:control_mod_pn_selmer} and the enhanced isotypic decomposition, 
we have
$$\mathrm{Sel}^S_{\Delta}(K(m), T_{f,1})\otimes_{\omega} \mathcal{O}_\omega = \left(\mathrm{Sel}^S_{\Delta}(K(m), T_{f,n})\otimes_{\omega} \mathcal{O}_\omega \right)[\varpi_\omega].$$ 
Applying Step 2 to $\mathrm{Sel}^S_{\Delta}(K(m), T_{f,1}) \otimes_{\omega} \mathcal{O}_\omega$, we obtain the conclusion.
\end{proof}

\begin{lem}[{\cite[Lemma 3.4]{bertolini-darmon-derived-heights-1994}}]
The module $\mathrm{Sel}^S_{\Delta}(K(m), T_{f,n}) \otimes_\omega \mathcal{O}_\omega$ is a free module over $\mathcal{O}_{\omega,n} = \mathcal{O}_\omega/\varpi^n_\omega \mathcal{O}_\omega$.
\end{lem}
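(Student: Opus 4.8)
Set $M := \mathrm{Sel}^S_{\Delta}(K(m), T_{f,n}) \otimes_\omega \mathcal{O}_\omega$. Since $\omega$ factors through $\Gamma_{m'}$ and $m'$ is prime to $p$, the extension $\mathcal{O}_\omega/\mathcal{O}$ is unramified, so $\mathcal{O}_{\omega,n} = \mathcal{O}_\omega \otimes_{\mathcal{O}} \mathcal{O}_n$ is a finite chain ring with maximal ideal $\varpi\mathcal{O}_{\omega,n}$ and residue field $\mathbb{F}_\omega$, and $M$ is a finitely generated $\mathcal{O}_{\omega,n}$-module. By the elementary divisor theorem over the discrete valuation ring $\mathcal{O}_\omega$, we may write $M \simeq \bigoplus_{i=1}^{t} \mathcal{O}_\omega/\varpi^{a_i}\mathcal{O}_\omega$ with $1 \le a_i \le n$ and $t = \dim_{\mathbb{F}_\omega}(M[\varpi]) = \dim_{\mathbb{F}_\omega}(M/\varpi M)$; thus $M$ is free over $\mathcal{O}_{\omega,n}$ if and only if $\sum_i a_i = n\,t$, i.e. if and only if $\mathrm{length}_{\mathcal{O}_\omega}(M) = n\,t$.

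First, the integer $t$ is already determined: the previous lemma (the analogue of \cite[Lemma 3.3]{bertolini-darmon-derived-heights-1994}) identifies $M[\varpi]$ with $\mathrm{Sel}^S_{\Delta}(K(m), T_{f,1}) \otimes_\omega \mathcal{O}_\omega \simeq (\mathbb{F}_\omega[\Gamma_{p^r}])^{\#\mathfrak{S}}$, so $t = p^r \cdot \#\mathfrak{S}$. It remains to prove $\mathrm{length}_{\mathcal{O}_\omega}(M) = n\,p^r\,\#\mathfrak{S}$. For this I would apply $- \otimes_{\mathcal{O}[\Gamma_{m'}]} \mathcal{O}_\omega$ to the Poitou--Tate sequence of Lemma \ref{lem:poitou-tate} with $L = K(m)$; exactness is preserved since $\mathcal{O}_\omega$ is $\mathcal{O}[\Gamma_{m'}]$-projective, giving
\[
0 \to \mathrm{Sel}_{\Delta}(K(m), T_{f,n})_\omega \to M \to \bigoplus_{\ell \in \mathfrak{S}} \mathrm{H}^1_{\mathrm{sing}}(K(m)_\ell, T_{f,n})_\omega \to \left( \mathrm{Sel}_{\Delta}(K(m), A_{f,n})^\vee \right)_\omega \to 0 .
\]
By Proposition \ref{prop:local_computation}.(4) together with the enhanced isotypic decomposition, each $\mathrm{H}^1_{\mathrm{sing}}(K(m)_\ell, T_{f,n})_\omega$ is free of rank one over $\mathcal{O}_{\omega,n}[\Gamma_{p^r}]$, hence of $\mathcal{O}_\omega$-length $n\,p^r$, so the middle term has length $n\,p^r\,\#\mathfrak{S} = n\,t$. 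Additivity of length then yields $\mathrm{length}_{\mathcal{O}_\omega}(M) = n\,t$ as soon as the two outer terms have equal $\mathcal{O}_\omega$-length, and I would deduce this from the self-duality $A_{f,n} \simeq T_{f,n}$ (which carries the $\Delta$-ordinary and unramified local conditions to each other by Proposition \ref{prop:local_dualities}, hence induces an isomorphism $\mathrm{Sel}_{\Delta}(K(m), T_{f,n}) \simeq \mathrm{Sel}_{\Delta}(K(m), A_{f,n})$ of $\Gamma_m$-modules), from the fact that Pontryagin duality preserves cardinality, and from the $\tau$-symmetry $\omega \leftrightarrow \omega^{-1}$ coming from the action of complex conjugation on $\mathrm{Sel}_{\Delta}(K(m), A_{f,n})$ and on $\Gamma_m$. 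Together these give $\mathrm{length}_{\mathcal{O}_\omega}\big(\mathrm{Sel}_{\Delta}(K(m), A_{f,n})^\vee_\omega\big) = \mathrm{length}_{\mathcal{O}_\omega}\big(\mathrm{Sel}_{\Delta}(K(m), T_{f,n})_\omega\big)$, forcing $a_i = n$ for all $i$, i.e. $M$ is $\mathcal{O}_{\omega,n}$-free.

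The main obstacle is precisely this last length comparison on the dual term of the Poitou--Tate sequence: one must verify carefully that the isomorphism $A_{f,n} \simeq T_{f,n}$ respects both the $\Delta$-ordinary local conditions and the tautological $\Gamma_m$-action, and that specialization at $\omega$ of a Pontryagin dual is the Pontryagin dual of the specialization at $\omega^{-1}$, so that no hidden local correction factor of Greenberg--Wiles type survives. An alternative that sidesteps the duality bookkeeping, following \cite[Lemma 3.4]{bertolini-darmon-derived-heights-1994} more literally, is to use the short exact sequence $0 \to T_{f,1} \to T_{f,n} \to T_{f,n-1} \to 0$ and residual irreducibility (so that $\mathrm{H}^0(K(m), T_{f,\bullet}) = 0$ and the relevant long exact sequences are short), together with the exact control theorem in the $\varpi$-direction, to identify $\varpi^{n-1}M$ with $\mathrm{Sel}^S_{\Delta}(K(m), T_{f,1})_\omega$, which again has $\mathbb{F}_\omega$-dimension $t$; this shows $\#\{i : a_i = n\} = t$ and hence freeness. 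In that approach the delicate point is instead the Selmer-compatibility (surjectivity) when passing between the Selmer groups of $T_{f,n}$ and $T_{f,1}$, which is where the $n$-admissible set and Lemma \ref{lem:injectivity} enter.
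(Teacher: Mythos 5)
Your primary argument is essentially the paper's own proof: Lemma \ref{lem:modp-freeness-selmer} pins down the number of cyclic summands as $p^r\cdot\#\mathfrak{S}$, and the $\omega$-specialized Poitou--Tate sequence of Lemma \ref{lem:poitou-tate} (with the self-duality $A_{f,n}\simeq T_{f,n}$ cancelling the two outer terms --- exactly the ``argument of Step 1'' the paper invokes) forces the total $\mathcal{O}_\omega$-length to be $n\cdot p^r\cdot\#\mathfrak{S}$, hence all exponents equal $n$. The duality bookkeeping you single out as the main obstacle is precisely what the paper absorbs into its citation of Step 1, so your write-up is, if anything, more explicit about the same point.
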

\begin{proof}
Let $m = [K(m):K]$. 
By Lemma \ref{lem:modp-freeness-selmer}, we have
$$\mathrm{Sel}^S_{\Delta}(K(m), T_{f,n}) \otimes_\omega \mathcal{O}_\omega \simeq \bigoplus_{i=1}^{p^r \cdot \#\mathfrak{S}} \mathcal{O}_{\omega, n_i}$$
with $n_i \leq n$.
By Lemma \ref{lem:poitou-tate} and the argument of Step 1, we have
$$\# \left( \mathrm{Sel}^S_{\Delta}(K(m), T_{f,n}) \otimes_\omega \mathcal{O}_\omega \right) = \# \left( \bigoplus_{\ell \in \mathfrak{S}} \mathbb{F}[\Gamma_{m}] \otimes_\omega \mathcal{O}_\omega \right) = \mathrm{Nm}(\varpi_\omega)^{p^r \cdot n \cdot \#\mathfrak{S}} .$$
Thus, we have $n_i = n$ for all $i$.
\end{proof}

\begin{lem}[{\cite[Lemma 3.5]{bertolini-darmon-derived-heights-1994}}]
Let $M$ be an $\mathcal{O}_n[\Gamma_{p^r}]$-module such that
\begin{enumerate}
\item $M[\varpi] \simeq \left(\mathbb{F}[\Gamma_{p^r}]\right)^{\#\mathfrak{S}}$, and
\item $M$ is $\mathcal{O}_n$-free.
\end{enumerate}
Then $M \simeq \left(\mathcal{O}_n[\Gamma_{p^r}]\right)^{\#\mathfrak{S}}$.
\end{lem}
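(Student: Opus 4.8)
The plan is to produce a surjection $\left(\mathcal{O}_n[\Gamma_{p^r}]\right)^{\#\mathfrak{S}} \twoheadrightarrow M$ via Nakayama's lemma and then promote it to an isomorphism by a cardinality count. Write $R := \mathcal{O}_n[\Gamma_{p^r}]$ and $s := \#\mathfrak{S}$, and fix a generator $\gamma$ of $\Gamma_{p^r}$. First I note that $R$ is a local ring: since $\Gamma_{p^r}$ is a $p$-group and $\mathbb{F}$ has characteristic $p$, the group ring $\mathbb{F}[\Gamma_{p^r}]$ is local with maximal ideal the augmentation ideal, whence $R$ is local with maximal ideal $\mathfrak{m} = (\varpi, \gamma-1)$ and residue field $\mathbb{F}$.

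Next I would identify $M/\varpi M$ as an $R$-module. Because $M$ is free over $\mathcal{O}_n = \mathcal{O}/\varpi^n$, multiplication by $\varpi^{n-1}$ is an $R$-linear endomorphism of $M$ (the scalar $\varpi$ being central) with kernel exactly $\varpi M$ and image exactly $\varpi^{n-1}M = M[\varpi]$; hence $M/\varpi M \simeq M[\varpi] \simeq \left(\mathbb{F}[\Gamma_{p^r}]\right)^{s}$ as $R$-modules by hypothesis (1). In particular $\dim_{\mathbb{F}}(M/\varpi M) = p^r s < \infty$, so $M$ is finitely generated over $\mathcal{O}_n$, a fortiori over $R$. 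Reducing modulo $\gamma - 1$ and using $\mathbb{F}[\Gamma_{p^r}]/(\gamma-1) \simeq \mathbb{F}$ gives $M/\mathfrak{m}M \simeq \mathbb{F}^{s}$, so Nakayama's lemma yields a surjection $\pi : R^{s} \twoheadrightarrow M$.

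It remains to see that $\pi$ is injective, for which I count. By hypothesis (2) together with $\dim_{\mathbb{F}}(M/\varpi M) = p^r s$, we have $M \simeq \mathcal{O}_n^{\,p^r s}$ as $\mathcal{O}_n$-modules, so $\#M = (\#\mathcal{O}_n)^{p^r s} = (\#\mathbb{F})^{n p^r s}$; on the other hand $\#R^{s} = (\#\mathcal{O}_n)^{p^r s} = (\#\mathbb{F})^{n p^r s}$ as well. Thus $\pi$ is a surjection of finite sets of equal cardinality, hence bijective, hence an isomorphism $M \simeq R^{s} = \left(\mathcal{O}_n[\Gamma_{p^r}]\right)^{\#\mathfrak{S}}$.

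No step here is a serious obstacle; the argument is purely formal commutative algebra. The only points needing a moment's care are the centrality of $\varpi$ (which makes multiplication by $\varpi^{n-1}$ an $R$-module map, so that $M/\varpi M \simeq M[\varpi]$ respects the $\Gamma_{p^r}$-action) and the locality of $R$, which is what licenses Nakayama's lemma. Finally, applying this lemma with $\mathcal{O}$, $\varpi$, $\mathbb{F}$, $\mathcal{O}_n$ replaced throughout by $\mathcal{O}_\omega$, $\varpi_\omega$, $\mathbb{F}_\omega$, $\mathcal{O}_{\omega,n}$ and with $M = \mathrm{Sel}^S_{\Delta}(K(m), T_{f,n}) \otimes_\omega \mathcal{O}_\omega$ — whose hypotheses (1) and (2) are exactly Lemma \ref{lem:modp-freeness-selmer} and the $\mathcal{O}_{\omega,n}$-freeness established just above — gives Proposition \ref{prop:freeness_p-part}, and Lemma \ref{lem:kurihara_decomposition_freeness} then completes the proof of Theorem \ref{thm:freeness}.
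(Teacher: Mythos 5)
Your proof is correct: the paper states this lemma without proof, deferring to the cited \cite[Lemma 3.5]{bertolini-darmon-derived-heights-1994}, and your argument (locality of $\mathcal{O}_n[\Gamma_{p^r}]$, the $R$-linear identification $M/\varpi M \simeq M[\varpi]$ via multiplication by $\varpi^{n-1}$, Nakayama to get a surjection $R^{\#\mathfrak{S}} \twoheadrightarrow M$, and the order count to upgrade it to an isomorphism) is precisely the standard proof of that cited result. The two points you flag — centrality of $\varpi$ and locality of the group ring — are indeed the only places requiring care, and you handle both correctly.
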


\subsubsection{Consequences}

\begin{cor} \label{cor:surjectivity_of_cores}
Under the same assumptions of Theorem \ref{thm:freeness}, the natural map
$$\mathrm{Sel}^S_\Delta (K(m), T_{f,n}) \to \mathrm{Sel}^S_\Delta (K(m''), T_{f,n})$$
with $m'' \vert m$ is surjective.
\end{cor}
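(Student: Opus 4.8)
The plan is to deduce Corollary \ref{cor:surjectivity_of_cores} directly from the freeness theorem (Theorem \ref{thm:freeness}) together with the exact control theorem (Lemma \ref{lem:control_galois_selmer}). Write $\Gamma_m = \mathrm{Gal}(K(m)/K)$ and $\Gamma_{m''} = \mathrm{Gal}(K(m'')/K)$, and let $H = \mathrm{Gal}(K(m)/K(m''))$, so that $\Gamma_{m''} \simeq \Gamma_m/H$. First I would observe that by Theorem \ref{thm:freeness}, $M := \mathrm{Sel}^S_\Delta(K(m), T_{f,n})$ is free of rank $\#\mathfrak{S}$ over $\mathcal{O}_n[\Gamma_m]$. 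Since $\mathcal{O}_n[\Gamma_{m''}] = \mathcal{O}_n[\Gamma_m] \otimes_{\mathcal{O}_n[\Gamma_m]} \mathcal{O}_n[\Gamma_m/H]$ and $M$ is free, the module of $H$-coinvariants $M_H = M \otimes_{\mathcal{O}_n[\Gamma_m]} \mathcal{O}_n[\Gamma_{m''}]$ is free of rank $\#\mathfrak{S}$ over $\mathcal{O}_n[\Gamma_{m''}]$.

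Next I would identify $M_H$ with $\mathrm{Sel}^S_\Delta(K(m''), T_{f,n})$. The natural map in the statement is $\mathrm{cores}^{K(m)}_{K(m'')}$; dually, on the discrete side Lemma \ref{lem:control_galois_selmer} (applied with the pair $(K(m''), K(m))$ in place of $(K,L)$, which is legitimate since $K(m'')$ is a subextension of $K(m)$ and the hypotheses of Lemma \ref{lem:control_galois_selmer} are in force) gives
$$\mathrm{Sel}^S_\Delta(K(m), A_{f,n})^H \simeq \mathrm{Sel}^S_\Delta(K(m''), A_{f,n}).$$
Taking Pontryagin duals turns the $H$-invariants of the discrete Selmer group into the $H$-coinvariants of its dual, and by the local duality/Poitou--Tate identifications (Proposition \ref{prop:local_dualities}, Lemma \ref{lem:poitou-tate}) one matches $\mathrm{Sel}^S_\Delta(K(m), T_{f,n})_H$ with $\mathrm{Sel}^S_\Delta(K(m''), T_{f,n})$ compatibly with the corestriction map; alternatively, and perhaps more cleanly, one notes that the corestriction map $M \to \mathrm{Sel}^S_\Delta(K(m''), T_{f,n})$ factors through $M_H$ and that the induced map $M_H \to \mathrm{Sel}^S_\Delta(K(m''), T_{f,n})$ is an isomorphism because both sides are free of the same rank $\#\mathfrak{S}$ over $\mathcal{O}_n[\Gamma_{m''}]$ (the target by Theorem \ref{thm:freeness} applied over $K(m'')$, using that $\mathfrak{S}$ remains $n$-admissible for $f$ and the prime-to-$p$ part of $m''$ by Lemma \ref{lem:injectivity}) and the map is surjective after reduction modulo the maximal ideal, hence surjective by Nakayama — but here one must be careful since $\mathcal{O}_n[\Gamma_{m''}]$ is only semilocal, so I would instead argue component-by-component after the enhanced isotypic decomposition of $\S\ref{subsec:enhanced_decomposition}$, where each factor $\mathcal{O}_{\omega,n}[\Gamma_{p^{r''}}]$ is local.

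Once $M_H \simeq \mathrm{Sel}^S_\Delta(K(m''), T_{f,n})$ is established, the conclusion is immediate: the corestriction map $M \to \mathrm{Sel}^S_\Delta(K(m''), T_{f,n})$ is the composite of the quotient map $M \twoheadrightarrow M_H$ with this isomorphism, hence is surjective.

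The main obstacle I expect is the bookkeeping in the second step: pinning down precisely that the corestriction map on compact Selmer groups corresponds, under the freeness identifications, to the coinvariants quotient map, and handling the semilocal nature of $\mathcal{O}_n[\Gamma_{m''}]$ so that a Nakayama-type surjectivity argument is valid. This is where the enhanced isotypic decomposition is needed: after tensoring with $\mathcal{O}_\omega$ via each $\omega$, one works over the local ring $\mathcal{O}_{\omega,n}[\Gamma_{p^{r}}]$, applies Lemma \ref{lem:control_galois_selmer} and Proposition \ref{prop:freeness_p-part} there, concludes freeness and surjectivity $\omega$-component by $\omega$-component, and then reassembles via Lemma \ref{lem:kurihara_decomposition_freeness}. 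The remaining ingredients — compatibility of corestriction with the control isomorphism and with the local duality pairings — are standard and follow the pattern already used in $\S\ref{sec:control_freeness}$.
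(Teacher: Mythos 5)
Your overall strategy is the one the paper uses: factor the corestriction through the $H$-coinvariants $M_H$ of the free module $M=\mathrm{Sel}^S_\Delta(K(m),T_{f,n})$, observe that $M_H$ and $\mathrm{Sel}^S_\Delta(K(m''),T_{f,n})$ are both free of rank $\#\mathfrak{S}$ over $\mathcal{O}_n[\Gamma_{m''}]$ (the latter because $\mathfrak{S}$ remains admissible for subextensions, cf.\ Lemma \ref{lem:injectivity}), and conclude. The paper's proof is literally ``the definition of the corestriction map, Theorem \ref{thm:freeness}, and the cardinality consideration,'' so you have assembled the right ingredients.

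The gap is in how you close the argument. Your Nakayama route assumes that the induced map $M_H\to\mathrm{Sel}^S_\Delta(K(m''),T_{f,n})$ is surjective after reduction modulo the maximal ideal of each local factor, but you give no reason for this, and none is available off the shelf: that mod-$\mathfrak{m}$ statement is essentially Corollary \ref{cor:control}, which the paper deduces \emph{from} the present corollary, so invoking it here would be circular. Your alternative route through Lemma \ref{lem:control_galois_selmer} and Pontryagin duality identifies the coinvariants of the dual \emph{discrete} Selmer group, not of the compact one; transporting that across the four-term Poitou--Tate sequence of Lemma \ref{lem:poitou-tate} compatibly with corestriction is a nontrivial diagram chase that you do not carry out. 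The intended ``cardinality consideration'' goes in the opposite direction: prove that the induced map on coinvariants is \emph{injective} and then count. Indeed $\mathrm{res}\circ\mathrm{cores}=N_H=\sum_{\sigma\in H}\sigma$, the restriction $\mathrm{H}^1(K(m''),T_{f,n})\to\mathrm{H}^1(K(m),T_{f,n})$ is injective by inflation--restriction (Lemma \ref{lem:control_galois}), and on a free $\mathcal{O}_n[\Gamma_m]$-module the kernel of $N_H$ is exactly $I_H M$, since the annihilator of $N_H$ in the group ring is the augmentation ideal of $H$. Hence $M_H\hookrightarrow\mathrm{Sel}^S_\Delta(K(m''),T_{f,n})$, and since both sides are finite of the same cardinality $(\#\mathcal{O}_n)^{m''\cdot\#\mathfrak{S}}$ by the two applications of Theorem \ref{thm:freeness}, the map is a bijection and the corestriction is surjective. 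With that replacement your argument is complete, and the isotypic decomposition is not even needed for this step.
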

\begin{proof}
This is a direct consequence of the definition of the corestriction map, Theorem \ref{thm:freeness}, and the cardinality consideration.
\end{proof}
Taking projective limit with respect to $r$, we also have the following statement.
\begin{cor} \label{cor:Lambda-freeness}
Under the same assumptions of Theorem \ref{thm:freeness}, $\mathrm{Sel}^S_\Delta (K(m'p^\infty), T_{f,n})$ is free of rank $\#\mathfrak{S}$ over $\mathcal{O}_n\llbracket\Gamma_{m'p^\infty}\rrbracket$.
\end{cor}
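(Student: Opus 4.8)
The plan is to realize the compact Selmer group over $K(m'p^\infty)$ as the inverse limit $\varprojlim_r X_r$ of the finite-level compact Selmer groups $X_r := \mathrm{Sel}^S_\Delta(K(m'p^r), T_{f,n})$ along the corestriction maps $c_r \colon X_{r+1} \to X_r$ (the ``natural maps'' of Corollary \ref{cor:surjectivity_of_cores}), and then to run the standard ``compatible bases in a tower'' argument, with the substance supplied by Theorem \ref{thm:freeness} and Corollary \ref{cor:surjectivity_of_cores}. Write $d := \#\mathfrak{S}$, $\Lambda_r := \mathcal{O}_n[\Gamma_{m'p^r}]$, and $\Lambda := \mathcal{O}_n\llbracket\Gamma_{m'p^\infty}\rrbracket \cong \varprojlim_r \Lambda_r$; each $c_r$ is $\Lambda_{r+1}$-linear, where $\Lambda_{r+1}$ acts on $X_r$ through the projection $\Lambda_{r+1} \twoheadrightarrow \Lambda_r$ with kernel $I_{r+1,r}$ generated by $\gamma_{p^{r+1}}^{p^r} - 1$.

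First I would record, for every $r$: (a) $X_r$ is free of rank $d$ over $\Lambda_r$ (Theorem \ref{thm:freeness}); (b) $c_r$ is surjective (Corollary \ref{cor:surjectivity_of_cores} with $m = m'p^{r+1}$, $m'' = m'p^r$); and (c) $c_r$ induces an isomorphism $X_{r+1} \otimes_{\Lambda_{r+1}} \Lambda_r \xrightarrow{\ \sim\ } X_r$ — both sides are free of rank $d$ over $\Lambda_r$ by (a), have the same finite cardinality, and the map is onto by (b), hence is bijective. Using (c) I would then construct a compatible family of bases by induction: fix a $\Lambda_0$-basis of $X_0$; given a $\Lambda_r$-basis $e^{(r)}_1, \dots, e^{(r)}_d$ of $X_r$, lift each $e^{(r)}_i$ along $c_r$ to some $e^{(r+1)}_i \in X_{r+1}$, and note that the $e^{(r+1)}_i$ reduce to a basis of $X_{r+1} \otimes_{\Lambda_{r+1}} \Lambda_r \cong X_{r+1}/I_{r+1,r}X_{r+1}$. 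Since $I_{r+1,r}$ lies in the Jacobson radical of the semilocal ring $\Lambda_{r+1}$ (immediate from the enhanced isotypic decomposition of $\S$\ref{subsec:enhanced_decomposition}), Nakayama's lemma shows the $e^{(r+1)}_i$ generate $X_{r+1}$; as $X_{r+1}$ is free of rank $d$, the induced surjection $\Lambda_{r+1}^{\,d} \twoheadrightarrow X_{r+1}$ is an isomorphism, so the $e^{(r+1)}_i$ form a $\Lambda_{r+1}$-basis lifting the given basis of $X_r$.

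Finally I would set $e_i := (e^{(r)}_i)_r \in \varprojlim_r X_r$ and check that $\Lambda^{\,d} \to \mathrm{Sel}^S_\Delta(K(m'p^\infty), T_{f,n})$, standard basis $\mapsto (e_i)$, is an isomorphism: surjectivity by expanding any $x = (x_r)_r$ in the bases $e^{(r)}_\bullet$ with coefficients in $\Lambda_r$ that are automatically compatible under $\Lambda_{r+1} \to \Lambda_r$ by uniqueness, hence defining an element of $\varprojlim_r \Lambda_r = \Lambda$; injectivity because any relation $\sum_i a_i e_i = 0$ reduces at each level to $\sum_i a^{(r)}_i e^{(r)}_i = 0$, forcing every $a^{(r)}_i = 0$ and so $a_i = 0$. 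The only genuinely substantive inputs are Theorem \ref{thm:freeness} and its corollary; the sole point meriting care is step (c) — that corestriction realizes $X_r$ as the $\Lambda_r$-coinvariants of $X_{r+1}$ — which is handled by the cardinality count above, and I expect that to be the (mild) crux rather than a deep obstacle.
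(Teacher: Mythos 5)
Your proposal is correct and is essentially the paper's own argument: the paper obtains Corollary \ref{cor:Lambda-freeness} simply by ``taking the projective limit with respect to $r$'' of Theorem \ref{thm:freeness}, using Corollary \ref{cor:surjectivity_of_cores} implicitly, and your write-up just supplies the standard details (identification of corestriction with passage to $I_{r+1,r}$-coinvariants, Nakayama over the semilocal ring $\Lambda_{r+1}$, and the compatible-bases construction). No gaps.
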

\begin{cor}[Control theorem with respect to quotients] \label{cor:control}
Under the same assumptions of Theorem \ref{thm:freeness}, we have isomorphisms
\begin{align*}
\mathrm{Sel}^S_\Delta (K(m'p^\infty), T_{f,n}) / \mathfrak{m}_{p^\infty,n} & \simeq \mathrm{Sel}^S_\Delta (K(m'), T_{f,1}) ,\\
\mathrm{Sel}^S_\Delta (K(m'p^\infty), T_{f,n}) / \mathfrak{m}_{m'p^\infty,n} & \simeq \mathrm{Sel}^S_\Delta (K, T_{f,1}) ,\\
\mathrm{Sel}^S_\Delta (K(m'p^\infty), T_{f,n}) / \varpi & \simeq \mathrm{Sel}^S_\Delta (K(m'p^\infty), T_{f,1})
\end{align*}
where $\mathfrak{m}_{p^\infty,n} = ( \gamma_{p^\infty} - 1, \varpi) \subset \mathcal{O}_n\llbracket\Gamma_{m'p^\infty}\rrbracket$ and $\mathfrak{m}_{p^\infty,n} = ( \gamma_{m'p^\infty} - 1, \varpi) \subset \mathcal{O}_n\llbracket\Gamma_{m'p^\infty}\rrbracket$.
\end{cor}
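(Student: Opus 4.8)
The plan is to derive all three isomorphisms from the freeness of the compact Selmer group over the generalized Iwasawa algebra (Corollary~\ref{cor:Lambda-freeness}) together with the observation that a quotient of a free module by an ideal is free of the same rank over the quotient ring; the only genuine content is to match these abstract quotients with Selmer groups at the bottom of the tower, which is precisely what the exact control theorems (Lemma~\ref{lem:control_galois_selmer}, Lemma~\ref{lem:control_mod_pn_selmer}) and the surjectivity of corestriction (Corollary~\ref{cor:surjectivity_of_cores}) supply.

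Write $X := \mathrm{Sel}^S_\Delta(K(m'p^\infty), T_{f,n})$ and $\Lambda_n := \mathcal{O}_n\llbracket\Gamma_{m'p^\infty}\rrbracket$. By Corollary~\ref{cor:Lambda-freeness}, $X$ is free of rank $d := \#\mathfrak{S}$ over $\Lambda_n$, so $X/\mathfrak{a}X$ is free of rank $d$ over $\Lambda_n/\mathfrak{a}$ for every ideal $\mathfrak{a}$. Since $\Gamma_{m'p^\infty}\simeq\mathbb{Z}_p\times\mathbb{Z}/m'\mathbb{Z}$ is procyclic, one computes $\Lambda_n/\mathfrak{m}_{p^\infty,n}\simeq\mathbb{F}[\Gamma_{m'}]$, $\Lambda_n/\mathfrak{m}_{m'p^\infty,n}\simeq\mathbb{F}$, and $\Lambda_n/\varpi\simeq\mathcal{O}_1\llbracket\Gamma_{m'p^\infty}\rrbracket$. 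Thus all three left-hand sides are free of rank $d$ over exactly the rings over which Theorem~\ref{thm:freeness} (with $m=m'$, resp. $m=1$, and $n=1$) and Corollary~\ref{cor:Lambda-freeness} (with $n=1$) make the right-hand sides free of rank $d$; it remains to produce the canonical comparison maps and check they are isomorphisms.

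Viewing $X=\varprojlim_r \mathrm{Sel}^S_\Delta(K(m'p^r), T_{f,n})$ along corestriction, the projection of $X$ onto the layer $K(m')$ (resp. onto $K$) is $\Lambda_n$-linear, kills $(\gamma_{p^\infty}-1)X$ (resp. $(\gamma_{m'p^\infty}-1)X$), and is surjective by Corollary~\ref{cor:surjectivity_of_cores} (a surjective tower of modules has surjective projections onto each term). Hence the induced maps $X/(\gamma_{p^\infty}-1)X\to\mathrm{Sel}^S_\Delta(K(m'), T_{f,n})$ and $X/(\gamma_{m'p^\infty}-1)X\to\mathrm{Sel}^S_\Delta(K, T_{f,n})$ are surjections of free modules of rank $d$ over the finite rings $\mathcal{O}_n[\Gamma_{m'}]$ and $\mathcal{O}_n$, so are isomorphisms by a cardinality count. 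It then remains to reduce modulo $\varpi$: the reduction $T_{f,n}\twoheadrightarrow T_{f,1}$ should induce $\mathrm{Sel}^S_\Delta(L, T_{f,n})/\varpi\simeq\mathrm{Sel}^S_\Delta(L, T_{f,1})$ for $L\in\{K,\ K(m'),\ K(m'p^r)\}$. Here one does not argue with the reduction map directly—the long exact cohomology sequence only bounds its cokernel inside an $\mathrm{H}^2$—but instead factors multiplication by $\varpi^{n-1}$ on $\mathrm{Sel}^S_\Delta(L, T_{f,n})$ through $\mathrm{Sel}^S_\Delta(L, T_{f,1})$: by freeness over $\mathcal{O}_n[\mathrm{Gal}(L/K)]$ its image is the $\varpi$-torsion submodule, which equals $\mathrm{Sel}^S_\Delta(L, T_{f,1})$ by Lemma~\ref{lem:control_mod_pn_selmer} (using $\mathrm{H}^0(L, T_{f,n-1})=0$ from absolute irreducibility of $\overline{\rho}$), and a cardinality count then forces the reduction map itself to be an isomorphism. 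Combining this with the preceding isomorphisms, $X/\mathfrak{m}_{p^\infty,n}X=(X/(\gamma_{p^\infty}-1)X)/\varpi\simeq\mathrm{Sel}^S_\Delta(K(m'), T_{f,1})$ and similarly $X/\mathfrak{m}_{m'p^\infty,n}X\simeq\mathrm{Sel}^S_\Delta(K, T_{f,1})$, giving the first two isomorphisms; for the third, $\Lambda_n$-freeness gives $X/\varpi X=(\Lambda_n/\varpi)^d=\varprojlim_r(\mathrm{Sel}^S_\Delta(K(m'p^r), T_{f,n})/\varpi)\simeq\varprojlim_r\mathrm{Sel}^S_\Delta(K(m'p^r), T_{f,1})=\mathrm{Sel}^S_\Delta(K(m'p^\infty), T_{f,1})$, the middle identification being the mod-$\varpi$ isomorphism just established, compatibly with corestriction.

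I expect the only place requiring care to be this mod-$\varpi$ identification of Selmer groups (equivalently, surjectivity of reduction on Selmer classes), which must be routed through the $\varpi$-torsion submodule rather than handled head-on; the rest is base change for free modules and cardinality bookkeeping, so no new idea beyond the earlier results should be needed.
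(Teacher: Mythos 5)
Your proposal is correct and follows essentially the same route as the paper, whose proof of this corollary simply cites Theorem \ref{thm:freeness} and Corollary \ref{cor:surjectivity_of_cores}: you combine freeness over $\mathcal{O}_n\llbracket\Gamma_{m'p^\infty}\rrbracket$ with the surjectivity of corestriction and a cardinality count, and your extra care with the mod-$\varpi$ identification via Lemma \ref{lem:control_mod_pn_selmer} is exactly the ingredient the paper already uses inside the proof of Theorem \ref{thm:freeness} (Lemma \ref{lem:modp-freeness-selmer}). No gap; your write-up just makes explicit the bookkeeping the paper leaves implicit.
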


\begin{proof}
This is a direct consequence of Theorem \ref{thm:freeness} and Corollary \ref{cor:surjectivity_of_cores}.
\end{proof}
\section{The Euler system argument: character by character induction} \label{sec:euler_systems}
This section is devoted to proving the character-evaluated version of the main theorem (Theorem \ref{thm:character-by-character}) under the assumptions given in the main theorem.

\subsection{Specialization via characters} 
We recall some material of $\S$\ref{subsec:convention_characters} for $\varphi$.
Let $\varphi : \mathcal{O}\llbracket\Gamma_{m'p^\infty}\rrbracket \to \mathcal{O}_\varphi$ be an $\mathcal{O}$-algebra homomorphism where $\mathcal{O}_\varphi$ is a discrete valuation ring of characteristic zero (depending on $\varphi$).
Let $\varpi_\varphi$ be an uniformizer of $\mathcal{O}_\varphi$ and $\mathfrak{m}_\varphi$ be the maximal ideal of $\mathcal{O}_\varphi$. Let $\mathbb{F}_{\varphi}:=\mathcal{O}_\varphi / \varpi_\varphi$ and
$\mathcal{O}_{\varphi, n}:=\mathcal{O}_\varphi / \varpi^n_\varphi$.
Let $M$ be a finitely generated $\mathcal{O}_\varphi$-module.
For $x \in M$, we define
$$\mathrm{ord}_{\varphi} \left( x \right) := \mathrm{sup}\left\lbrace t \in \mathbb{Z}_{\geq 0} : x \in \varpi^t_\varphi M  \right\rbrace .$$
Then $x = 0$ if and only if $\mathrm{ord}_{\varphi} \left( x \right) = \infty$.

For a $\mathcal{O}\llbracket\Gamma_{m'p^\infty}\rrbracket$-module $M$, $M \otimes_{\varphi} \mathcal{O}_\varphi := M \otimes_{\mathcal{O}\llbracket\Gamma_{m'p^\infty}\rrbracket, \varphi} \mathcal{O}_\varphi$ is called the \textbf{specialization of $M$ via $\varphi$}, which is an $\mathcal{O}_\varphi$-module.

\subsection{Setup for induction}
Let $n$ be a positive integer and $\Delta > 1$ be a square-free product of an odd number of primes satisfying
\begin{itemize}
\item $N^- \vert \Delta$, and
\item $\Delta / N^-$ is a product of $n$-admissible primes.
\end{itemize}
For each $n$-admissible form $\mathcal{D} = (\Delta, g_n)$ as in Definition \ref{defn:n-admissible-forms},
we define a non-negative integer
$$t_\mathcal{D} := \mathrm{ord}_{\varpi_\varphi} \left( \varphi \left( \theta_{{m'p^\infty}} \left(\mathcal{D} \right) \right) \right)$$
where $\varphi \left( \theta_{m'p^\infty} \left(\mathcal{D} \right) \right) \in \mathcal{O}_n\llbracket\Gamma_{m'p^\infty}\rrbracket \otimes_{\varphi} \mathcal{O}_\varphi = \mathcal{O}_{\varphi, n}$.
\begin{rem}
For the $m = p^\infty$ case, the parameter $t_{\mathcal{D}}$ is directly affected by analytic Iwasawa $\mu$-and $\lambda$-invariants. Indeed, the parameter should come from the $\lambda$-invariants only due to vanishing of $\mu$-invariants \`{a} la Vatsal.
For the $m = p^r$ case, the parameter $t_{\mathcal{D}}$ can be regarded the mixture of finite analytic Iwasawa $\mu$-and $\lambda$-invariants. See \cite[Lemma 4.5]{pollack-algebraic} for the explicit formula and \cite[$\S$4]{pollack-algebraic} for the context.
\end{rem}
 
Let $t^* \leq n$ be a non-negative integer.
Let $\mathcal{D}_{n+t^*} := (\Delta, g_{n+t^*})$ be an $(n+t^*)$-admissible form. Then $\mathcal{D}_n := \mathcal{D}_{n+t^*} \Mod{\varpi^n} = (\Delta, g_{n+t^*} \Mod{\varpi^n})$ is also an $n$-admissible form. Suppose that $t_{\mathcal{D}_n} \leq t^*$.
\begin{rem} \label{rem:induction_parameter}
We proceed the induction on $t_{\mathcal{D}_n}$.
\end{rem}
If $t_{\mathcal{D}_n}$ is zero or $\mathrm{Sel}_\Delta(K(m'p^\infty), A_{f,n})^\vee \otimes_{\varphi} \mathcal{O}_\varphi$ is trivial, then we have nothing to prove.
Thus we assume that $t_{\mathcal{D}_n} >0$ and $\mathrm{Sel}_\Delta(K(m'p^\infty), A_{f,n})^\vee \otimes_{\varphi} \mathcal{O}_\varphi$ is non-trivial.
Consider the $(n+t_{\mathcal{D}_n})$-admissible form
$$\mathcal{D} = \mathcal{D}_{n+t_{\mathcal{D}_n}} : = (\Delta, f_{n+t^*} \Mod{\varpi^{ n+t_{\mathcal{D}_n} }}) .$$

Let $\ell$ be an $(n+t_{\mathcal{D}_n})$-admissible prime with $\ell \nmid \Delta$. Enlarge $\lbrace \ell \rbrace $ to an $(n+t_{\mathcal{D}_n})$-admissible set $\mathfrak{S}$ such that $(S, \Delta)=1$ where $S = \prod_{q \in \mathfrak{S}}q$.
Recall the cohomology class
$$\kappa_\mathcal{D}(\ell)_{m'p^\infty} \in \mathrm{Sel}_{\Delta\ell}(K(m'p^\infty), T_{f,n+t_{\mathcal{D}_n}}) \subset \mathrm{Sel}^S_{\Delta}(K(m'p^\infty), T_{f,n+t_{\mathcal{D}_n}})$$
attached to $\mathcal{D}$ and $\ell$ as in $\S$\ref{sec:construction_cohomology}.
By the freeness theorem (Theorem \ref{thm:freeness}), the module $\mathcal{M}_n := \mathrm{Sel}^S_\Delta (K(m'p^\infty), T_{f,n}) \otimes_{\varphi} \mathcal{O}_\varphi$ is free of rank $\#\mathfrak{S}$ over $\mathcal{O}_\varphi / \varphi(\varpi^n)$ for all $n$.
Define the \textbf{specialized cohomology class} $$\kappa_{\mathcal{D}, \varphi}(\ell)_{m'p^\infty} \in \mathcal{M}_{n+t_{\mathcal{D}_n}}$$ by the image of $\kappa_{\mathcal{D}}(\ell)_{m'p^\infty}$ under the specialization
$$\mathrm{Sel}^S_\Delta (K(m'p^\infty), T_{f,n+t_{\mathcal{D}_n}}) \to
\mathcal{M}_{n+t_{\mathcal{D}_n}} = \mathrm{Sel}^S_\Delta (K(m'p^\infty), T_{f,n+t_{\mathcal{D}_n}}) \otimes_{\varphi} \mathcal{O}_\varphi.$$
Define
$$e_\mathcal{D}(\ell) := \mathrm{ord}_{\varpi_\varphi} \left( \kappa_{\mathcal{D}, \varphi}(\ell)_{m'p^\infty} \right)$$
which may also depend on the choice of $\mathfrak{S}$.

We have the following (in)equalities:
\begin{align*}
e_\mathcal{D}(\ell) & \leq  \mathrm{ord}_{\varpi_\varphi} \left( \partial_\ell \kappa_{\mathcal{D}, \varphi}(\ell)_{m'p^\infty} \right) & \partial_\ell \textrm{ is a homomorphism}.\\
& = \mathrm{ord}_{\varpi_\varphi} \left( \varphi \left( \theta_{m'p^\infty}(\mathcal{D}) \right) \right) & \textrm{The first explicit reciprocity law (Theorem \ref{thm:first_explicit_reciprocity})}\\
& \leq \mathrm{ord}_{\varpi_\varphi} \left( \varphi \left( \theta_{m'p^\infty}(\mathcal{D}_n) \right) \right) & \textrm{Reduction modulo $\varpi^n$ is a homomorphism}.\\
& = t_{\mathcal{D}_n}
\end{align*}
In fact, the last inequality is equality since $\mathcal{M}_{n+t_{\mathcal{D}_n}} / \varpi_\varphi \mathcal{M}_{n+t_{\mathcal{D}_n}} \simeq \mathcal{M}_{n} / \varpi_\varphi \mathcal{M}_{n}$. See also the proof of \cite[Lemma 6.16]{chida-hsieh-main-conj}.
\subsection{Reduction of the specialized cohomology classes}
Choose an element $\widetilde{\kappa}_{\mathcal{D}, \varphi}(\ell)_{m'p^\infty} \in \mathcal{M}_{n+t_{\mathcal{D}_n}}$ which satisfies
$$\varpi^{ e_\mathcal{D}(\ell) } \cdot \widetilde{\kappa}_{\mathcal{D}, \varphi} (\ell)_{m'p^\infty} = \kappa_{\mathcal{D}, \varphi} (\ell)_{m'p^\infty} .$$
Then $\widetilde{\kappa}_{\mathcal{D}, \varphi}(\ell)_{m'p^\infty}$ is well-defined modulo the $\varpi^{ e_\mathcal{D}(\ell) }$-torsion subgroup of $\mathcal{M}_{n+t_{\mathcal{D}_n}}$. Since the torsion subgroup is contained in the kernel of $\varpi^{n}$ reduction map $\mathcal{M}_{n+t_{\mathcal{D}_n}} \to \mathcal{M}_{n}$ with $e_\mathcal{D}(\ell) \leq t_{\mathcal{D}_n}$, 
the \textbf{reduced cohomology class} $\kappa'_{\mathcal{D}, \varphi}(\ell)_{m'p^\infty}$ defined by the image under the reduction map
\[
\xymatrix@R=0em{
\mathrm{Sel}^S_\Delta (K(m'p^\infty), T_{f,n+t_{\mathcal{D}_n}}) \otimes_{\varphi} \mathcal{O}_\varphi \ar[r] & \mathrm{Sel}^S_\Delta (K(m'p^\infty), T_{f,n}) \otimes_{\varphi} \mathcal{O}_\varphi \\
\widetilde{\kappa}_{\mathcal{D}, \varphi}(\ell)_{m'p^\infty} \ar@{|->}[r] & \kappa'_{\mathcal{D}, \varphi}(\ell)_{m'p^\infty}
}
\]
is well-defined in $\mathcal{M}_n$.
\begin{prop}[Local properties of the reduced cohomology classes; {\cite[Lemma 6.16]{chida-hsieh-main-conj}}] \label{lem:local_properties_reduced_classes}
The reduced cohomology class $\kappa'_{\mathcal{D}, \varphi}(\ell)_{m'p^\infty} \in \mathrm{Sel}^S_\Delta (K(m'p^\infty), T_{f,n}) \otimes_{\varphi} \mathcal{O}_\varphi$ satisfies the following properties:
\begin{enumerate}
\item $\mathrm{ord}_{\varpi_\varphi} \left( \kappa'_{\mathcal{D}, \varphi}(\ell)_{m'p^\infty} \right) = 0$;
\item $\mathrm{ord}_{\varpi_\varphi} \left( \partial_\ell \left( \kappa'_{\mathcal{D}, \varphi}(\ell)_{m'p^\infty} \right) \right) = t_{\mathcal{D}_n} - e_{\mathcal{D}}(\ell)$;
\item $\partial_q \left( \kappa'_{\mathcal{D}, \varphi}(\ell)_{m'p^\infty} \right) = 0$ for all $q \nmid \Delta\ell$;
\item $\mathrm{res}_q \left( \kappa'_{\mathcal{D}, \varphi}(\ell)_{m'p^\infty} \right) \in \mathrm{H}^1_{\mathrm{ord}}(K(m'p^\infty)_q, T_{f,n})$ for all $q \mid \Delta\ell$.
\end{enumerate}
\end{prop}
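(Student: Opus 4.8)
\textbf{Proof proposal for Proposition~\ref{lem:local_properties_reduced_classes}.}
The plan is to derive each of the four assertions about $\kappa'_{\mathcal{D},\varphi}(\ell)_{m'p^\infty}$ from the corresponding facts about the unreduced class $\kappa_{\mathcal{D}}(\ell)_{m'p^\infty}$ together with the freeness of $\mathcal{M}_\bullet$ and the first explicit reciprocity law, being careful about the $\varpi^{e_{\mathcal{D}}(\ell)}$-divisibility that defines $\widetilde{\kappa}_{\mathcal{D},\varphi}(\ell)_{m'p^\infty}$. First I would record the key structural input: since $\mathcal{M}_{n+t_{\mathcal{D}_n}} = \mathrm{Sel}^S_\Delta(K(m'p^\infty),T_{f,n+t_{\mathcal{D}_n}})\otimes_\varphi\mathcal{O}_\varphi$ is free of rank $\#\mathfrak{S}$ over $\mathcal{O}_\varphi/\varphi(\varpi^{n+t_{\mathcal{D}_n}})$ by Theorem~\ref{thm:freeness}, the $\varpi_\varphi^{e_{\mathcal{D}}(\ell)}$-torsion submodule of $\mathcal{M}_{n+t_{\mathcal{D}_n}}$ is exactly $\varphi(\varpi)^{\,n+t_{\mathcal{D}_n}-e_{\mathcal{D}}(\ell)}\mathcal{M}_{n+t_{\mathcal{D}_n}}$, and since $e_{\mathcal{D}}(\ell)\le t_{\mathcal{D}_n}$ this lies in the kernel of reduction to $\mathcal{M}_n$; this is the point that makes $\kappa'_{\mathcal{D},\varphi}(\ell)_{m'p^\infty}$ well defined and is the mechanism behind all four parts.

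For part (1), by construction $\widetilde{\kappa}_{\mathcal{D},\varphi}(\ell)_{m'p^\infty}$ satisfies $\varpi^{e_{\mathcal{D}}(\ell)}\widetilde{\kappa}_{\mathcal{D},\varphi}(\ell)_{m'p^\infty}=\kappa_{\mathcal{D},\varphi}(\ell)_{m'p^\infty}$ where the right side has $\mathrm{ord}_{\varpi_\varphi}$ exactly $e_{\mathcal{D}}(\ell)$; in the free module $\mathcal{M}_{n+t_{\mathcal{D}_n}}$ this forces $\mathrm{ord}_{\varpi_\varphi}(\widetilde{\kappa}_{\mathcal{D},\varphi}(\ell)_{m'p^\infty})=0$, and since reduction $\mathcal{M}_{n+t_{\mathcal{D}_n}}\to\mathcal{M}_n$ is a surjection of free modules compatible with the $\varpi_\varphi$-filtration (as both are free over truncated DVRs), the image $\kappa'_{\mathcal{D},\varphi}(\ell)_{m'p^\infty}$ still has $\mathrm{ord}_{\varpi_\varphi}=0$ in $\mathcal{M}_n$. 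For part (2), apply the residue map $\partial_\ell$, which is a homomorphism, to $\varpi^{e_{\mathcal{D}}(\ell)}\widetilde{\kappa}=\kappa$: this gives $\varpi^{e_{\mathcal{D}}(\ell)}\partial_\ell\widetilde{\kappa}_{\mathcal{D},\varphi}(\ell)_{m'p^\infty}=\partial_\ell\kappa_{\mathcal{D},\varphi}(\ell)_{m'p^\infty}$, and by the first explicit reciprocity law (Theorem~\ref{thm:first_explicit_reciprocity}) together with the chain of (in)equalities already established, $\mathrm{ord}_{\varpi_\varphi}(\partial_\ell\kappa_{\mathcal{D},\varphi}(\ell)_{m'p^\infty})=\mathrm{ord}_{\varpi_\varphi}(\varphi(\theta_{m'p^\infty}(\mathcal{D})))=t_{\mathcal{D}_n}$; hence $\mathrm{ord}_{\varpi_\varphi}(\partial_\ell\widetilde{\kappa}_{\mathcal{D},\varphi}(\ell)_{m'p^\infty})=t_{\mathcal{D}_n}-e_{\mathcal{D}}(\ell)$, and one checks this order is preserved under reduction to $\mathcal{M}_n$ because $t_{\mathcal{D}_n}-e_{\mathcal{D}}(\ell)<n$ (using $e_{\mathcal{D}}(\ell)\le t_{\mathcal{D}_n}$ and, in the nontrivial case, $t_{\mathcal{D}_n}\le t^*\le n$ with room to spare, or more carefully the freeness of $\mathrm{H}^1_{\mathrm{sing}}$-modules from Proposition~\ref{prop:local_computation}).

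Parts (3) and (4) are the ``soft'' part and follow by functoriality of the local conditions: the unreduced class lies in $\mathrm{Sel}_{\Delta\ell}(K(m'p^\infty),T_{f,n+t_{\mathcal{D}_n}})\subset\mathrm{Sel}^S_\Delta(K(m'p^\infty),T_{f,n+t_{\mathcal{D}_n}})$ by Proposition~\ref{prop:local_properties}, so $\partial_q\kappa_{\mathcal{D},\varphi}(\ell)_{m'p^\infty}=0$ for all $q\nmid\Delta\ell$ and $\mathrm{res}_q\kappa_{\mathcal{D},\varphi}(\ell)_{m'p^\infty}\in\mathrm{H}^1_{\mathrm{ord}}$ for $q\mid\Delta\ell$; since $\partial_q$ and $\mathrm{res}_q$ are homomorphisms, the same vanishing and ordinarity hold for $\widetilde{\kappa}_{\mathcal{D},\varphi}(\ell)_{m'p^\infty}$ once we know these local groups are torsion-free in the relevant range, i.e.\ that dividing by $\varpi^{e_{\mathcal{D}}(\ell)}$ cannot create a nonzero residue at $q\nmid\Delta\ell$ — which again is the content of Proposition~\ref{prop:local_computation} on the structure of $\mathrm{H}^1_{\mathrm{sing}}$ and $\mathrm{H}^1_{\mathrm{ord}}$ over $\mathcal{O}_n[\Gamma_m]$ — and then pass to $\mathcal{M}_n$ by naturality of restriction maps in the coefficients. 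The main obstacle I anticipate is the bookkeeping in parts (1) and (2): one must be genuinely careful that ``$\mathrm{ord}_{\varpi_\varphi}$'' is computed in the correct truncated module and that the passage $\mathcal{M}_{n+t_{\mathcal{D}_n}}\to\mathcal{M}_n$ neither loses nor creates $\varpi_\varphi$-divisibility below level $n$; this is exactly where the freeness theorem (Theorem~\ref{thm:freeness}) and the inequality $e_{\mathcal{D}}(\ell)\le t_{\mathcal{D}_n}$ are indispensable, and it is the same delicate point handled in the proof of \cite[Lemma 6.16]{chida-hsieh-main-conj}, which I would follow closely.
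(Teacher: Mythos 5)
Your proposal is correct and follows essentially the same route as the paper, which establishes (1) via the freeness/control isomorphism $\mathcal{M}_{n+t_{\mathcal{D}_n}}/\varpi_\varphi \simeq \mathcal{M}_n/\varpi_\varphi$ (Corollary \ref{cor:control}, itself a consequence of Theorem \ref{thm:freeness}), (2) via the first explicit reciprocity law, and (3)--(4) via Proposition \ref{prop:local_properties} together with the freeness of the local modules from Proposition \ref{prop:local_computation}.(5), the common mechanism being that the $\varpi_\varphi^{e_{\mathcal{D}}(\ell)}$-torsion ambiguity dies under reduction mod $\varpi^n$ since $e_{\mathcal{D}}(\ell)\le t_{\mathcal{D}_n}$. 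The only caveat is phrasing in (3)--(4): the vanishing of $\partial_q$ need not hold for $\widetilde{\kappa}_{\mathcal{D},\varphi}(\ell)_{m'p^\infty}$ itself at level $n+t_{\mathcal{D}_n}$, only for its image in $\mathcal{M}_n$, but your argument already contains the correct reason.
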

\begin{rem}
\begin{enumerate}
\item Property (1) in the proposition comes from the isomorphism
$$\mathcal{M}_{n+t_{\mathcal{D}_n}} /\varpi_\varphi \mathcal{M}_{n+t_{\mathcal{D}_n}} \simeq \mathcal{M}_{n} /\varpi_\varphi \mathcal{M}_{n}$$
 obtained by Corollary \ref{cor:control}.
\item Property (2) is a direct consequence of the first explicit reciprocity law (Theorem \ref{thm:first_explicit_reciprocity}).
\item Property (3) and (4) come from Proposition \ref{prop:local_properties} and the freeness of $\mathrm{H}^1_{\mathrm{ord}}(K(m'p^\infty)_q, T_{f,n++t_{\mathcal{D}_n}})$ by Proposition \ref{prop:local_computation}.(5).
\end{enumerate}
\end{rem}
\begin{rem}
It seems difficult to obtain such reduced cohomology classes without the specialization via $\varphi$ since the reduction without the specialization may harm the freeness of compact Selmer groups.
\end{rem}

\subsection{The first step of induction} \label{subsec:first_step}
We define a natural homomorphism $\eta_\ell$ by
\begin{align*}
\eta_\ell: \mathrm{H}^1_{\mathrm{sing}}(K(m'p^\infty)_{\ell}, T_{f,n}) & \to \mathrm{Sel}_{\Delta}(K(m'p^\infty), A_{f,n})^\vee\\
\kappa & \mapsto \left( \eta_{\ell} (\kappa) : s \mapsto \langle \kappa, v_\ell(s) \rangle_{\ell} \right) 
\end{align*}
and its specialization $\eta^\varphi_\ell$ via $\varphi$
$$\eta^\varphi_\ell: \mathrm{H}^1_{\mathrm{sing}}(K(m'p^\infty)_{\ell}, T_{f,n}) \otimes_{\varphi} \mathcal{O}_\varphi \to \mathrm{Sel}_{\Delta}(K(m'p^\infty), A_{f,n})^\vee \otimes_{\varphi} \mathcal{O}_\varphi .$$
We need a lemma, which uses the global property of the reduced cohomology classes. It is a variant of \cite[Lemma 6.17]{chida-hsieh-main-conj} over $K(m'p^\infty)$.
\begin{lem} \label{lem:eta_ell}
The map $\eta^\varphi_\ell \left( \partial_{\ell} ( \kappa'_{\mathcal{D}, \varphi}(\ell)_{m'p^\infty}  ) \right): \mathrm{Sel}_{\Delta}(K(m'p^\infty), A_{f,n})^\vee \otimes_{\varphi} \mathcal{O}_\varphi \to E/\mathcal{O}$ is the zero map. 
\end{lem}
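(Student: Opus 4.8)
The plan is to derive the vanishing of $\eta^\varphi_\ell\bigl(\partial_\ell(\kappa'_{\mathcal{D},\varphi}(\ell)_{m'p^\infty})\bigr)$ from global reciprocity (Proposition \ref{prop:global_reciprocity}), exactly mirroring the proof of \cite[Lemma 6.17]{chida-hsieh-main-conj} but now over $K(m'p^\infty)$. First I would unwind the definition: for $s \in \mathrm{Sel}_\Delta(K(m'p^\infty), A_{f,n})$, the value $\eta^\varphi_\ell(\partial_\ell(\kappa'_{\mathcal{D},\varphi}(\ell)_{m'p^\infty}))(s)$ is obtained by pairing the specialized singular class $\partial_\ell(\kappa'_{\mathcal{D},\varphi}(\ell)_{m'p^\infty})$ with $v_\ell(s)$ under the local Tate pairing $\langle -,-\rangle_\ell$ over $K(m'p^\infty)_\ell$. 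The point is that this local pairing at $\ell$ is, up to the sum of the remaining local terms, the only surviving contribution in the global reciprocity formula $\sum_q \langle \mathrm{res}_q(\kappa), \mathrm{res}_q(s)\rangle_q = 0$.

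The key steps, in order, are as follows. (1) Lift: choose a genuine cohomology class $\kappa \in \mathrm{Sel}^S_\Delta(K(m'p^\infty), T_{f,n})$ (or a finite-layer analogue, then pass to the limit) representing $\kappa'_{\mathcal{D},\varphi}(\ell)_{m'p^\infty}$ after specialization, using the freeness of the compact Selmer group (Theorem \ref{thm:freeness}, Corollary \ref{cor:Lambda-freeness}) so that the specialization map is well-behaved. By Proposition \ref{lem:local_properties_reduced_classes}, this $\kappa$ has trivial residues at all $q \nmid \Delta\ell$ and ordinary restrictions at $q \mid \Delta\ell$; in particular, $\kappa \in \mathrm{Sel}^{\{\ell\}}_\Delta(K(m'p^\infty), T_{f,n})$ in the relevant sense, so $\mathfrak{S}$ is only really contributing at $\ell$. (2) Apply global reciprocity: for $s \in \mathrm{Sel}_\Delta(K(m'p^\infty), A_{f,n})$, Proposition \ref{prop:global_reciprocity} (with $S = \ell$, after restricting attention to the $\ell$-relaxed condition) gives $\langle \partial_\ell(\kappa), v_\ell(s)\rangle_\ell = 0$, since all local conditions at primes other than $\ell$ are orthogonal complements under local Tate duality (here one invokes Proposition \ref{prop:local_dualities}, using Condition CR at $\ell \mid N^-$ and Condition PO at $p$, together with the fact that $s$ is ordinary and $\kappa$ is ordinary at $\Delta$ and $p$). (3) Specialize: the pairing commutes with the specialization via $\varphi$ (the tensor $\otimes_\varphi \mathcal{O}_\varphi$ is compatible with the $\mathcal{O}[\Gamma_{m'p^\infty}]$-equivariant local duality of Section \ref{sec:selmer}), so the same identity holds for $\partial_\ell(\kappa'_{\mathcal{D},\varphi}(\ell)_{m'p^\infty})$ and $v_\ell(s)$, i.e.\ $\eta^\varphi_\ell(\partial_\ell(\kappa'_{\mathcal{D},\varphi}(\ell)_{m'p^\infty}))(s) = 0$ for every $s$. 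Since $s$ was arbitrary, the map is zero.

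The main obstacle I anticipate is bookkeeping around the infinite extension: I must be careful that the global reciprocity law, stated in Proposition \ref{prop:global_reciprocity} at the finite layers $K(m)$, passes to the limit $K(m'p^\infty)$ correctly, and that the ``$S$-relaxed'' local condition at the auxiliary primes in $\mathfrak{S} \setminus \{\ell\}$ is genuinely harmless — this is where I would use that $\kappa'_{\mathcal{D},\varphi}(\ell)_{m'p^\infty}$ actually lies in $\mathrm{Sel}_{\Delta\ell}$ (Proposition \ref{prop:local_properties}, part (3)), not merely $\mathrm{Sel}^S_\Delta$, so that only the single prime $\ell$ contributes. A secondary subtlety is checking that the specialization $\otimes_\varphi \mathcal{O}_\varphi$ of the local duality pairing at $\ell$ is still perfect, or at least that it sends the singular part to the dual of the finite part compatibly; this follows from Proposition \ref{prop:local_computation}.(4) (freeness of the local cohomology over $\mathcal{O}_n[\Gamma_m]$) and base change of the duality, but it should be stated carefully. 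Apart from these points, the argument is a direct transcription of the Chida--Hsieh computation.
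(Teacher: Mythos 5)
Your proposal is correct and follows essentially the same route as the paper: the paper's proof also evaluates $\eta^\varphi_\ell(\partial_\ell(\kappa'_{\mathcal{D},\varphi}(\ell)_{m'p^\infty}))$ on elements $s$ of $\mathrm{Sel}_\Delta(K(m'p^\infty),A_{f,n})[\ker\varphi]$, kills all local terms at $q\neq\ell$ via Proposition~\ref{lem:local_properties_reduced_classes}.(3),(4), and concludes by global reciprocity (Proposition~\ref{prop:global_reciprocity}). Your extra care about lifting the reduced class and about compatibility of the local pairing with specialization via $\varphi$ is sensible bookkeeping that the paper's two-line proof leaves implicit, but it is not a different argument.
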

\begin{proof}
Let $s \in \mathrm{Sel}_{\Delta}(K(m'p^\infty), A_{f,n})[\ker \varphi]$. By the local properties of the reduced class $\kappa'_{\mathcal{D}}(\ell)_{m'p^\infty}$ (Lemma \ref{lem:local_properties_reduced_classes}.(3), (4)), we see that
$$\langle \partial_{q} ( \kappa'_{\mathcal{D}, \varphi}(\ell)_{m'p^\infty}  ), v_q(s) \rangle_{q} = 0$$ for all $q \neq \ell$.
Then by global reciprocity (Proposition \ref{prop:global_reciprocity}), the lemma follows.
\end{proof}

We generalize \cite[Proposition 4.7]{bertolini-darmon-imc-2005} to the general cyclic case.
\begin{prop}[The first step of induction] \label{prop:first_step}
If $t_{\mathcal{D}_n} = 0$, then $\mathrm{Sel}_{\Delta}(K(m'p^\infty), A_{f,n})^\vee \otimes_\varphi \mathcal{O}_\varphi$ is trivial.
\end{prop}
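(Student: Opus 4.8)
The plan is to run the Euler system argument of \cite[Proposition 4.7]{bertolini-darmon-imc-2005} in its Poitou--Tate form, but over the base $K(m'p^\infty)$ and after specialization via $\varphi$. Since $t_{\mathcal{D}_n}=0$ we have $\mathcal{D}=\mathcal{D}_n$, so all the ``$n+t_{\mathcal{D}_n}$''-decorations collapse to ``$n$''. The backbone is the Poitou--Tate exact sequence of Lemma \ref{lem:poitou-tate}; passing to the inverse limit over the layers $K(m'p^r)$ (legitimate because each term is finite and the transition maps are surjective by Corollary \ref{cor:surjectivity_of_cores}), its relevant portion reads
\[
\mathrm{Sel}^{S}_{\Delta}(K(m'p^\infty), T_{f,n}) \longrightarrow \bigoplus_{\ell\in\mathfrak{S}} \mathrm{H}^1_{\mathrm{sing}}(K(m'p^\infty)_\ell, T_{f,n}) \overset{\psi}{\longrightarrow} \mathrm{Sel}_{\Delta}(K(m'p^\infty), A_{f,n})^\vee \longrightarrow 0,
\]
where $\mathfrak{S}$ is a fixed $n$-admissible set with $(S,\Delta)=1$ and $S=\prod_{q\in\mathfrak{S}}q$ (such a set exists by Theorem \ref{thm:chebotarev}), and where, by global reciprocity and the local dualities of Proposition \ref{prop:local_dualities}, the surjection $\psi$ is $(c_\ell)_\ell\mapsto\sum_{\ell\in\mathfrak{S}}\eta_\ell(c_\ell)$ with $\eta_\ell$ as in $\S$\ref{subsec:first_step}. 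Applying the right-exact functor $-\otimes_\varphi\mathcal{O}_\varphi$, it suffices to prove that $\sum_{\ell\in\mathfrak{S}}\eta^{\varphi}_\ell$ vanishes; since $\mathrm{H}^1_{\mathrm{sing}}(K(m'p^\infty)_\ell, T_{f,n})\otimes_\varphi\mathcal{O}_\varphi\simeq\mathcal{O}_{\varphi,n}$ is free of rank one by Proposition \ref{prop:local_computation}.(4), this reduces to showing that $\eta^{\varphi}_\ell$ kills a generator for every $\ell\in\mathfrak{S}$.

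To that end I would fix $\ell\in\mathfrak{S}$ and form the Heegner cohomology class $\kappa_{\mathcal{D}}(\ell)_{m'p^\infty}\in\mathrm{Sel}_{\Delta\ell}(K(m'p^\infty), T_{f,n})\subset\mathrm{Sel}^{S}_{\Delta}(K(m'p^\infty), T_{f,n})$ of $\S$\ref{sec:construction_cohomology}; its specialization $\kappa_{\mathcal{D},\varphi}(\ell)_{m'p^\infty}$ lies in $\mathcal{M}_n=\mathrm{Sel}^{S}_{\Delta}(K(m'p^\infty), T_{f,n})\otimes_\varphi\mathcal{O}_\varphi$, which is free of rank $\#\mathfrak{S}$ over $\mathcal{O}_{\varphi,n}$ by the freeness theorem (Theorem \ref{thm:freeness}) --- this is precisely where the generalization of the compact Selmer groups to $K(m)$ enters. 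By the chain of (in)equalities of $\S$\ref{subsec:first_step}, the first explicit reciprocity law (Theorem \ref{thm:first_explicit_reciprocity}), and the hypothesis $t_{\mathcal{D}_n}=0$, one gets $e_{\mathcal{D}}(\ell)=0$, so the reduced class $\kappa'_{\mathcal{D},\varphi}(\ell)_{m'p^\infty}$ coincides with $\kappa_{\mathcal{D},\varphi}(\ell)_{m'p^\infty}$ and $\mathrm{ord}_{\varpi_\varphi}\!\bigl(\partial_\ell\kappa'_{\mathcal{D},\varphi}(\ell)_{m'p^\infty}\bigr)=t_{\mathcal{D}_n}-e_{\mathcal{D}}(\ell)=0$ by Proposition \ref{lem:local_properties_reduced_classes}.(2). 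Hence $\partial_\ell\kappa'_{\mathcal{D},\varphi}(\ell)_{m'p^\infty}$ is a generator of $\mathrm{H}^1_{\mathrm{sing}}(K(m'p^\infty)_\ell, T_{f,n})\otimes_\varphi\mathcal{O}_\varphi$. On the other hand, Proposition \ref{lem:local_properties_reduced_classes}.(3),(4) guarantee $\partial_q\kappa'_{\mathcal{D},\varphi}(\ell)_{m'p^\infty}=0$ for $q\neq\ell$ and ordinary local conditions at $q\mid\Delta\ell$, so global reciprocity (Proposition \ref{prop:global_reciprocity}) --- that is, Lemma \ref{lem:eta_ell} --- gives $\eta^{\varphi}_\ell\!\bigl(\partial_\ell\kappa'_{\mathcal{D},\varphi}(\ell)_{m'p^\infty}\bigr)=0$. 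A generator of the free rank-one module is thus annihilated, so $\eta^{\varphi}_\ell=0$. Running this over all $\ell\in\mathfrak{S}$ and feeding it into the specialized Poitou--Tate surjection yields $\mathrm{Sel}_{\Delta}(K(m'p^\infty), A_{f,n})^\vee\otimes_\varphi\mathcal{O}_\varphi=0$, which is the assertion.

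Rather than a single hard step, the delicate points are compatibility and bookkeeping. First, one must verify that the sequence of Lemma \ref{lem:poitou-tate}, together with the identification of its connecting map with $\bigoplus_\ell\eta_\ell$, survives both the passage to the inverse limit over the cyclic $p$-tower and the base change $-\otimes_\varphi\mathcal{O}_\varphi$; here the surjectivity of corestriction (Corollary \ref{cor:surjectivity_of_cores}) and the control theorem with respect to quotients (Corollary \ref{cor:control}) do the work. Second, one must keep the indices $n$, $n+t_{\mathcal{D}_n}$ and the various $\varpi_\varphi$-adic valuations straight; everything collapses cleanly once $t_{\mathcal{D}_n}=0$, but the reduced cohomology classes of $\S$\ref{subsec:first_step} are what make this collapse precise. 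The genuinely new ingredients relative to the classical $K(p^\infty)$ argument are already absorbed into the ambient objects used above, namely the freeness over $K(m)$ (Theorem \ref{thm:freeness}, via the enhanced isotypic decomposition of $\S$\ref{subsec:enhanced_decomposition}) and the supply of $n$-admissible primes over $K(m')$ (Theorem \ref{thm:chebotarev}); granting those, the vanishing proof is formally the same as \cite[Proposition 4.7]{bertolini-darmon-imc-2005}.
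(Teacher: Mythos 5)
Your argument is correct, and its core step is identical to the paper's: when $t_{\mathcal{D}_n}=0$, the first explicit reciprocity law (Theorem \ref{thm:first_explicit_reciprocity}) makes $\partial_\ell\kappa_{\mathcal{D},\varphi}(\ell)_{m'p^\infty}$ a generator of $\mathrm{H}^1_{\mathrm{sing}}(K(m'p^\infty)_\ell,T_{f,n})\otimes_\varphi\mathcal{O}_\varphi\simeq\mathcal{O}_{\varphi,n}$, and Lemma \ref{lem:eta_ell} then forces $\eta^\varphi_\ell=0$ for every admissible $\ell$. Where you genuinely diverge is in how the vanishing of the Selmer group is extracted from this. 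You argue directly: fix an $n$-admissible set $\mathfrak{S}$, pass Lemma \ref{lem:poitou-tate} to the limit over the $p$-tower, tensor with $\mathcal{O}_\varphi$, and note that the resulting surjection $\bigoplus_{\ell\in\mathfrak{S}}\mathcal{O}_{\varphi,n}\twoheadrightarrow\mathrm{Sel}_\Delta(K(m'p^\infty),A_{f,n})^\vee\otimes_\varphi\mathcal{O}_\varphi$ is the zero map. The paper instead argues by contradiction: if the target were nonzero, Nakayama's lemma and the control theorem (Corollary \ref{cor:control_galois_mod_pn}) would produce a nonzero $s\in\mathrm{Sel}_\Delta(K(m'),A_{f,1})$, and a \emph{fresh} application of Theorem \ref{thm:chebotarev} to that particular $s$ yields an $n$-admissible $\ell$ with $v_\ell(s)\neq 0$, whence $\eta^\varphi_\ell\bmod\mathfrak{m}_\varphi\neq 0$, contradicting $\eta^\varphi_\ell=0$. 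The two routes use the same inputs --- the Chebotarev theorem over $K(m')$ enters your version through the existence of the $n$-admissible set and Lemma \ref{lem:injectivity} rather than through a pointwise application --- but yours buys a non-contradictory, more structural statement at the cost of two points you should make explicit: Lemma \ref{lem:poitou-tate} is only stated for finite subextensions of $K(m)$, so the exactness over $K(m'p^\infty)$ and the identification of the connecting map with $\sum_{\ell}\eta_\ell$ require the (routine) Mittag--Leffler and compatibility checks you sketch (the paper itself uses such limit sequences in $\S$\ref{subsec:reduction_for_induction}, so this is consistent with its conventions); and your route works with the full $A_{f,n}$-coefficient sequence over the tower, whereas the paper only needs information modulo $\mathfrak{m}_\varphi$, i.e.\ at the level of $A_{f,1}$ over $K(m')$. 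Neither point is a gap.
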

Note that $t_{\mathcal{D}_n} = 0$ is not equivalent to $\theta_{m'p^\infty}(\mathcal{D}_n) \in \mathcal{O}_n\llbracket\Gamma_{m'p^\infty}\rrbracket^\times$ anymore since $\mathcal{O}_n\llbracket\Gamma_{m'p^\infty}\rrbracket$ is not a local ring.
\begin{proof}
We will see $\eta^\varphi_\ell = 0$ but $\eta^\varphi_\ell \Mod{\mathfrak{m}_\varphi} \neq 0$ under the assumption $\mathrm{Sel}_{\Delta}(K(m'p^\infty), A_{f,n})^\vee \otimes_{\varphi} \mathcal{O}_{\varphi} \neq \lbrace 0 \rbrace$ in the following commutative diagram
\[
\xymatrix{
\mathrm{H}^1_{\mathrm{sing}}(K(m'p^\infty)_{\ell}, T_{f,n}) \ar[rr]^-{\eta_\ell} \ar[d]^-{\otimes_{\varphi} \mathcal{O}_\varphi} & & \mathrm{Sel}_{\Delta}(K(m'p^\infty), A_{f,n})^\vee \ar[d]^-{\otimes_{\varphi} \mathcal{O}_\varphi}\\
\mathrm{H}^1_{\mathrm{sing}}(K(m'p^\infty)_{\ell}, T_{f,n}) \otimes_{\varphi} \mathcal{O}_\varphi \ar[rr]^-{\eta^\varphi_\ell}_{(=0)} \ar@{->>}[d]^-{\Mod{\mathfrak{m}_{\varphi}}} & & \mathrm{Sel}_{\Delta}(K(m'p^\infty), A_{f,n})^\vee \otimes_{\varphi} \mathcal{O}_\varphi \ar@{->>}[d]^-{\Mod{\mathfrak{m}_{\varphi}}} \\
\left(\mathrm{H}^1_{\mathrm{sing}}(K(m'p^\infty)_{\ell}, T_{f,n}) \otimes_{\varphi} \mathcal{O}_\varphi \right) / \mathfrak{m}_{\varphi} \ar[rr]^-{\eta^\varphi_\ell \Mod{\mathfrak{m}_\varphi}}_{(\neq 0 \ \Rightarrow\Leftarrow)} & & \left( \mathrm{Sel}_{\Delta}(K(m'p^\infty), A_{f,n})^\vee \otimes_{\varphi} \mathcal{O}_\varphi \right) / \mathfrak{m}_{\varphi} .
}
\]

Since the following diagram 
\[
\xymatrix{
\mathrm{Sel}^S_{\Delta}(K(m'p^\infty), T_{f,n}) \ar[r]^-{\otimes_{\varphi} \mathcal{O}_\varphi} \ar[d]^-{\partial_\ell} & \mathrm{Sel}^S_{\Delta}(K(m'p^\infty), T_{f,n}) \otimes_{\varphi} \mathcal{O}_\varphi \ar[d]^-{\partial_\ell} \\
\mathrm{H}^1_{\mathrm{sing}}(K(m'p^\infty)_{\ell}, T_{f,n}) \ar[r]^-{\otimes_{\varphi} \mathcal{O}_\varphi}  & \mathrm{H}^1_{\mathrm{sing}}(K(m'p^\infty)_{\ell}, T_{f,n}) \otimes_{\varphi} \mathcal{O}_\varphi 
}
\]
commutes, we have
$$\partial_\ell \left( \kappa_{\mathcal{D},\varphi}(\ell)_{m'p^\infty} \right) = \varphi \left( \partial_\ell \left( \kappa_{\mathcal{D}}(\ell)_{m'p^\infty} \right) \right)$$
under the identification $\mathrm{H}^1_{\mathrm{sing}}(K(m'p^\infty)_{\ell}, T_{f,n}) \otimes_{\varphi} \mathcal{O}_\varphi \simeq
\mathcal{O}_n\llbracket\Gamma_{m'p^\infty}\rrbracket \otimes_{\varphi} \mathcal{O}_\varphi \simeq \mathcal{O}_{\varphi, n}$.

If $\varphi( \theta_{m'p^\infty}(\mathcal{D}_n) )$ is a unit in $\mathcal{O}_{\varphi,n}$, then the class $\partial_\ell \left( \kappa_{\mathcal{D},\varphi}(\ell)_{m'p^\infty} \right)$ generates $\mathrm{H}^1_{\mathrm{sing}} (K(m'p^\infty)_{\ell}, T_{f,n}) \otimes_{\varphi} \mathcal{O}_\varphi \simeq \mathcal{O}_{\varphi, n}$ for any $n$-admissible prime $\ell$ via the first explicit reciprocity law (Theorem \ref{thm:first_explicit_reciprocity}).
Thus, $\eta^{\varphi}_\ell$ is trivial for any $n$-admissible prime due to Lemma \ref{lem:eta_ell} with $\kappa_{\mathcal{D},\varphi}(\ell)_{m'p^\infty} = \kappa'_{\mathcal{D},\varphi}(\ell)_{m'p^\infty}$ under $t_{\mathcal{D}_n} = 0$.
Note that $\mathcal{D}_n$ and $\mathcal{D}$ are congruent modulo $\varpi^n$.

Suppose that $$\mathrm{Sel}_{\Delta}(K(m'p^\infty), A_{f,n})^\vee \otimes_{\varphi} \mathcal{O}_{\varphi}$$ is non-trivial.
Then Nakayama's lemma with $\mathcal{O}_{\varphi}$ implies that
$$\left( \mathrm{Sel}_{\Delta}(K(m'p^\infty), A_{f,n})^\vee \otimes_{\varphi} \mathcal{O}_{\varphi} \right) / \mathfrak{m}_{\varphi} $$
is also non-trivial.

Let $\gamma_{p^r} = (\gamma_m)^{m'}$. 
Since $\varphi(\gamma_{p^r}) - 1$ is divisible by $\varpi_\varphi$, we have
\begin{align*}
\left( \mathrm{Sel}_{\Delta}(K(m'p^\infty), A_{f,n})^\vee \otimes_{\varphi} \mathcal{O}_{\varphi} \right) / \mathfrak{m}_{\varphi} & = \left(\mathrm{Sel}_{\Delta}(K(m'p^\infty), A_{f,n}) \right)^\vee  / \mathfrak{m}_{p^\infty,n} \otimes_{\varphi} \mathcal{O}_{\varphi} / \mathfrak{m}_{\varphi} &\\
 & = \left(\mathrm{Sel}_{\Delta}(K(m'p^\infty), A_{f,n})[\mathfrak{m}_{p^\infty,n}] \right)^\vee  \otimes_{\varphi} \mathcal{O}_{\varphi} / \mathfrak{m}_{\varphi} & \\
 & = \left(\mathrm{Sel}_{\Delta}(K(m'), A_{f,1}) \right)^\vee  \otimes_{\varphi} \mathcal{O}_{\varphi} / \mathfrak{m}_{\varphi} & \textrm{Corollary } \ref{cor:control_galois_mod_pn}
\end{align*}
where $\mathfrak{m}_{p^\infty,n} = (\gamma_{p^\infty} - 1, \varpi) \subset \mathcal{O}_n\llbracket\Gamma_{m'p^\infty}\rrbracket$.

Let $s \in \mathrm{Sel}_{\Delta}(K(m'), A_{f,1}) \subseteq \mathrm{H}^1(K(m'), A_{f,1})$ be a non-trivial element. By Theorem \ref{thm:chebotarev}, we can choose an $n$-admissible prime $\ell$ not dividing $m'p\Delta N^+$ such that $\partial_\ell(s) = 0$ and $v_\ell(s) \neq 0 \in \mathrm{H}^1_{\mathrm{fin}}(K(m')_\ell, A_{f,1})$.
For such $s$ and $\ell$,
$\kappa \mapsto \langle \kappa, v_\ell(s) \rangle_\ell$ cannot be identically zero on $\kappa \in \mathrm{H}^1_{\mathrm{sing}}(K(m'), T_{f,1})$
since the local Tate pairing (in $\mathrm{H}^1_{\mathrm{fin}}(K(m')_\ell, A_{f,1})$) at any prime $\ell$ is perfect.
This implies that $\eta^{\varphi}_\ell \Mod{\mathfrak{m}_\varphi}$ cannot be trivial for such an $\ell$. A contradiction follows considering the commutative diagram given first.   
\end{proof} 

\subsection{Appealing to the second explicit reciprocity law} \label{subsec:appeal_to_second}
Due to Proposition \ref{prop:first_step}, we may assume $t_{\mathcal{D}_n} > 0$.
Let $\Pi$ be the set of $(n+t^*)$-admissible primes $\ell$ satisfying
\begin{enumerate}
\item $\ell \nmid m'p\Delta  N^+$,
\item the integer $e_{\mathcal{D}}(\ell) = \mathrm{ord}_{\varpi_\varphi} \left( \kappa_{\mathcal{D},\varphi}(\ell)_{m'p^\infty} \right)$ is minimal among $\ell$ satisfying the condition (1).
\end{enumerate}
Then $\Pi$ is non-empty by Theorem \ref{thm:chebotarev}. Let $e_{\mathrm{min}} := e_\mathcal{D}(\ell)$ for any $\ell \in \Pi$.

\begin{lem}[{\cite[Lemma 6.19]{chida-hsieh-main-conj}}] \label{lem:strict-inequality}
The strict inequality $e_{\mathrm{min}} < t_{\mathcal{D}_n}$ holds.
\end{lem}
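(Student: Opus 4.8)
The plan is to argue by contradiction, exploiting that the chain of (in)equalities established just above already yields $e_{\mathrm{min}} \leq t_{\mathcal{D}_n}$, so that only the equality $e_{\mathrm{min}} = t_{\mathcal{D}_n}$ needs to be excluded. Assume this equality. By the very definition of $\Pi$ it then holds that $e_{\mathcal{D}}(\ell) = t_{\mathcal{D}_n}$ for \emph{every} $(n+t^*)$-admissible prime $\ell \nmid m'p\Delta N^+$, and hence, by Proposition \ref{lem:local_properties_reduced_classes}.(2), the reduced class satisfies $\mathrm{ord}_{\varpi_\varphi}\big(\partial_\ell(\kappa'_{\mathcal{D},\varphi}(\ell)_{m'p^\infty})\big) = t_{\mathcal{D}_n} - e_{\mathcal{D}}(\ell) = 0$; in other words $\partial_\ell(\kappa'_{\mathcal{D},\varphi}(\ell)_{m'p^\infty})$ generates the cyclic $\mathcal{O}_\varphi$-module $\mathrm{H}^1_{\mathrm{sing}}(K(m'p^\infty)_\ell, T_{f,n}) \otimes_\varphi \mathcal{O}_\varphi \simeq \mathcal{O}_{\varphi,n}$. (The reduced classes are legitimately defined in $\mathcal{M}_n$ even in this boundary case, since $e_{\mathcal{D}}(\ell) \leq t_{\mathcal{D}_n}$, the freeness theorem (Theorem \ref{thm:freeness}), and the control isomorphism $\mathcal{M}_{n+t_{\mathcal{D}_n}}/\varpi_\varphi \simeq \mathcal{M}_n/\varpi_\varphi$ of Corollary \ref{cor:control}.)

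The heart of the proof is then simply to re-run the argument of Proposition \ref{prop:first_step}, with the reduced classes $\kappa'_{\mathcal{D},\varphi}(\ell)_{m'p^\infty}$ playing the role that $\kappa_{\mathcal{D},\varphi}(\ell)_{m'p^\infty}$ had there. On the one hand, since $\partial_\ell(\kappa'_{\mathcal{D},\varphi}(\ell)_{m'p^\infty})$ generates $\mathrm{H}^1_{\mathrm{sing}}(K(m'p^\infty)_\ell, T_{f,n}) \otimes_\varphi \mathcal{O}_\varphi$ and $\eta^\varphi_\ell$ is $\mathcal{O}_\varphi$-linear, Lemma \ref{lem:eta_ell} — whose hypotheses are precisely Proposition \ref{lem:local_properties_reduced_classes}.(3),(4) — forces $\eta^\varphi_\ell$ to be identically zero for every such $\ell$. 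On the other hand, the standing assumption that $\mathrm{Sel}_{\Delta}(K(m'p^\infty), A_{f,n})^\vee \otimes_\varphi \mathcal{O}_\varphi$ is non-trivial, combined with Nakayama's lemma and the control isomorphism of Corollary \ref{cor:control_galois_mod_pn}, produces a non-zero element $s \in \mathrm{Sel}_{\Delta}(K(m'), A_{f,1})$; applying Theorem \ref{thm:chebotarev} to $s$ yields an $(n+t^*)$-admissible prime $\ell \nmid m'p\Delta N^+$ with $\partial_\ell(s) = 0$ and $v_\ell(s) \neq 0$, and for this $\ell$ the perfectness of the local Tate pairing makes $\eta^\varphi_\ell \bmod \mathfrak{m}_\varphi$ nonzero. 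This contradiction rules out $e_{\mathrm{min}} = t_{\mathcal{D}_n}$ and gives the strict inequality.

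I expect the only genuinely delicate point to be the bookkeeping ensuring this contradiction is available \emph{uniformly}: the hypothesis $e_{\mathrm{min}} = t_{\mathcal{D}_n}$ is exactly what guarantees that $e_{\mathcal{D}}(\ell)$ is maximal simultaneously for all admissible $\ell$ — in particular for the Chebotarev prime attached to $s$ — which is what allows the conclusion that $\partial_\ell(\kappa'_{\mathcal{D},\varphi}(\ell)_{m'p^\infty})$ is a unit there, and hence the verbatim transfer of the Proposition \ref{prop:first_step} argument. I do not anticipate needing the second explicit reciprocity law for this lemma; it is invoked only afterwards in \S\ref{subsec:appeal_to_second}, where the strict inequality $e_{\mathrm{min}} < t_{\mathcal{D}_n}$ is used to construct a level-raised $n$-admissible form whose associated invariant has strictly dropped, thereby powering the induction on $t_{\mathcal{D}_n}$.
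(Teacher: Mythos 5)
Your proposal is correct and follows essentially the same route as the paper: assume equality, deduce $e_{\mathcal{D}}(\ell)=t_{\mathcal{D}_n}$ for all admissible $\ell$ so that $\partial_\ell(\kappa'_{\mathcal{D},\varphi}(\ell)_{m'p^\infty})$ is a unit by Proposition \ref{lem:local_properties_reduced_classes}.(2), then combine Lemma \ref{lem:eta_ell}, the descent via Corollary \ref{cor:control_galois_mod_pn}, Theorem \ref{thm:chebotarev}, and the perfectness of the local Tate pairing to reach a contradiction. The only (immaterial) difference is that you descend to $K(m')$ as in Proposition \ref{prop:first_step}, whereas the paper descends all the way to $K$ and applies Theorem \ref{thm:chebotarev} with $m'=1$.
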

\begin{proof}
Suppose that $e_{\mathrm{min}} = t_{\mathcal{D}_n}$.
Then $e_{\mathcal{D}}(\ell) = t_{\mathcal{D}_n}$ for all $(n+t^*)$-admissible primes $\ell$ due to inequality $e_{\mathcal{D}}(\ell) \leq  t_{\mathcal{D}_n}$.
By Corollary \ref{cor:control_galois_mod_pn}, we have $\mathrm{H}^1(K, A_{f,1}) \simeq \mathrm{H}^1(K({m'p^\infty}), A_{f,n})[\mathfrak{m}_{m'p^\infty,n}]$. Then there exists a non-zero element $s \in \mathrm{Sel}_{\Delta}(K({m'p^\infty}), A_{f,n})[\mathfrak{m}_{m'p^\infty,n}] \subseteq\mathrm{H}^1(K, A_{f,1}) \otimes_\varphi \mathcal{O}_\varphi$.
By Theorem \ref{thm:chebotarev} with $m'=1$, there exists an $(n+t^*)$-admissible prime $\ell$ such that $v_\ell(s)$ is non-zero in $\mathrm{H}^1_{\mathrm{fin}} (K_{\ell}, A_{f,1}) \otimes_\varphi \mathcal{O}_\varphi$.
Also, by Lemma \ref{lem:local_properties_reduced_classes}.(2), the image of $\partial_\ell (\kappa'_{\mathcal{D}, \varphi}(\ell))$ in $\mathrm{H}^1_{\mathrm{sing}}(K_\ell, T_{f,1})  \otimes_\varphi \mathcal{O}_\varphi$ is non-zero.
By Lemma \ref{lem:eta_ell}, the image of $\partial_\ell (\kappa'_{\mathcal{D}, \varphi}(\ell))$ in $\mathrm{H}^1_{\mathrm{sing}}(K_\ell, T_{f,1})  \otimes_\varphi \mathcal{O}_\varphi$ and $v_\ell(s)$ are orthogonal to each other with respect to the local Tate pairing.
Since the local Tate pairing
$\mathrm{H}^1_{\mathrm{fin}} (K_{\ell}, A_{f,1}) \otimes_\varphi \mathcal{O}_\varphi \times \mathrm{H}^1_{\mathrm{sing}}(K_\ell, T_{f,1})  \otimes_\varphi \mathcal{O}_\varphi \to \mathcal{O}_\varphi / \varpi\mathcal{O}_\varphi$ is perfect, and
$\mathrm{H}^1_{\mathrm{fin}} (K_{\ell}, A_{f,1}) \otimes_\varphi \mathcal{O}_\varphi$ and $\mathrm{H}^1_{\mathrm{sing}}(K_\ell, T_{f,1})  \otimes_\varphi \mathcal{O}_\varphi$ are one-dimensional over $\mathcal{O}_\varphi / \varpi\mathcal{O}_\varphi$, we have a contradiction.
\end{proof}

Let $\ell_1 \in \Pi$ and $\mathfrak{S}$ be an $(n+t^*)$-admissible set with $S = \prod_{q\in \mathfrak{S}}q$. Let $\kappa_{m'} \in \mathrm{H}^1(K(m'),T_{f,1}) \otimes_\varphi  \mathcal{O}_\varphi$ be the image of $\kappa'_{\mathcal{D}, \varphi}(\ell_1)_{m'p^\infty}$ under the map
\[
\xymatrix@R=0em{
\mathrm{Sel}^S_{\Delta}(K({m'p^\infty}), T_{f,n}) \otimes_\varphi \left( \mathcal{O}_\varphi /\varpi_\varphi \mathcal{O}_\varphi \right) \ar[r]^-{\simeq} & \left( \mathrm{Sel}^S_{\Delta}(K({m'p^\infty}), T_{f,n}) / \mathfrak{m}_{p^\infty,n} \right) \otimes_\varphi \mathcal{O}_\varphi \ar@{^{(}->}[r]^-{\mathrm{cores}} & \mathrm{H}^1(K(m'),T_{f,1}) \otimes_\varphi  \mathcal{O}_\varphi \\
\kappa'_{\mathcal{D}, \varphi}(\ell_1)_{m'p^\infty} \ar@{|->}[rr] & & \kappa_{m'} .
}
\]
The last inclusion comes from Corollary \ref{cor:control}, which comes from Theorem \ref{thm:freeness}.

Therefore, $\kappa_{m'}$ is a non-zero element in $\mathrm{H}^1(K(m'),T_{f,1}) \otimes_\varphi  \mathcal{O}_\varphi$.
By Theorem \ref{thm:chebotarev}, there exists an $(n+t^*)$-admissible prime $\ell_2$ not dividing $m'p\Delta N^+$ such that
\begin{itemize}
\item $\partial_{\ell_2} (\kappa_{m'}) = 0$, and
\item $v_{\ell_2}(\kappa_{m'}) \neq 0$ in $\mathrm{H}^1(K(m')_{\ell_2}, T_{f,1}) \otimes_\varphi \mathcal{O}_\varphi$.
\end{itemize}
Then we have $v_{\ell_2}(\kappa_{m'}) \neq 0$. The following commutative diagram
\[
{\scriptsize
\xymatrix{
\mathrm{Sel}^S_{\Delta}(K(m'p^\infty), T_{f,n}) \otimes_\varphi \mathcal{O}_\varphi /\varpi_\varphi \mathcal{O}_\varphi \ar@{^{(}->}[r]^-{\mathrm{cores}} \ar[d]^-{v_{\ell_2}} & \mathrm{H}^1(K(m'),T_{f,1}) \otimes_\varphi  \mathcal{O}_\varphi \ar[d]^-{v_{\ell_2}} & \kappa'_{\mathcal{D}, \varphi}(\ell_1)_{m'p^\infty} \ar@{|->}[r] \ar@{|->}[d] & \kappa_{m'} = \mathrm{cores}\left( \kappa'_{\mathcal{D}, \varphi}(\ell_1)_{m'p^\infty} \right) \ar@{|->}[d]\\
 \mathrm{H}^1_{\mathrm{fin}} (K(m'p^\infty)_{\ell_2}, T_{f,n}) \otimes_\varphi \mathcal{O}_\varphi /\varpi_\varphi \mathcal{O}_\varphi \ar[r]^-{\simeq} &  \mathrm{H}^1_{\mathrm{fin}} (K(m')_{\ell_2}, T_{f,1}) \otimes_{\varphi} \mathcal{O}_\varphi & v_{\ell_2} \left( \kappa'_{\mathcal{D}, \varphi}(\ell_1)_{m'p^\infty} \right) \ar@{|->}[r] & v_{\ell_2} \left( \kappa_{m'}\right) \neq 0
}
}
\]
shows that $\mathrm{ord}_{\varpi_\varphi} \left(v_{\ell_2} \left( \kappa_{\mathcal{D},\varphi}(\ell_1)_{m'p^\infty} \right)\right)
= \mathrm{ord}_{\varpi_\varphi} \left(\kappa_{\mathcal{D},\varphi}(\ell_1)_{m'p^\infty} \right)$. In other words, the homomorphism $v_{\ell_2}$ does not increase the valuation considering Lemma \ref{lem:local_properties_reduced_classes}.(1).
Also, the minimality of $e_{\mathrm{min}} = e_{\mathcal{D}}(\ell_1) = \mathrm{ord}_{\varpi_\varphi} \left(\kappa_{\mathcal{D},\varphi}(\ell_1)_{m'p^\infty} \right)$ imply that
\begin{align} \label{eqn:inequality}
\mathrm{ord}_{\varpi_\varphi} \left(v_{\ell_2} \left( \kappa_{\mathcal{D},\varphi}(\ell_1)_{m'p^\infty} \right)\right) & = 
\mathrm{ord}_{\varpi_\varphi} \left(\kappa_{\mathcal{D},\varphi}(\ell_1)_{m'p^\infty} \right) & \textrm{the property of } v_{\ell_2} \nonumber \\
& \leq \mathrm{ord}_{\varpi_\varphi} \left(\kappa_{\mathcal{D},\varphi}(\ell_2)_{m'p^\infty} \right) & \textrm{the minimality of } e_{\mathrm{min}} \\
& \leq \mathrm{ord}_{\varpi_\varphi} \left( v_{\ell_1} \left( \kappa_{\mathcal{D},\varphi}(\ell_2)_{m'p^\infty} \right) \right) & v_{\ell_1} \textrm{ is a homomorphism} .\nonumber 
\end{align} 
By the second explicit reciprocity law (Theorem \ref{thm:second_explicit_reciprocity}), there exists an $(n + t^*)$-admissible form
$\mathcal{D}^{\ell_1\ell_2}_{n+t^*} = (\Delta\ell_1\ell_2, g_{n+t^*})$
such that
$$v_{\ell_2} (\kappa_\mathcal{D}(\ell_1)_{m'p^\infty} )
= v_{\ell_1} (\kappa_\mathcal{D}(\ell_2)_{m'p^\infty} )
= \theta_{m'p^\infty}(\mathcal{D}^{\ell_1\ell_2}_{n+t_{\mathcal{D}_n}})$$
where $\mathcal{D}^{\ell_1\ell_2}_{n+t_{\mathcal{D}_n}} = (\Delta\ell_1\ell_2, g_{n+t^*} \Mod{\varpi^{n+_{\mathcal{D}_n}}})$.
This directly implies that
$\mathrm{ord}_{\varpi_\varphi} \left( v_{\ell_1} (\kappa_{\mathcal{D}, \varphi}(\ell_2)_{m'p^\infty}) \right) =
\mathrm{ord}_{\varpi_\varphi} \left( v_{\ell_2} (\kappa_{\mathcal{D}, \varphi}(\ell_1)_{m'p^\infty}) \right)$.
Therefore, we have
$\mathrm{ord}_{\varpi_\varphi} \left( v_{\ell_1} (\kappa_{\mathcal{D}, \varphi}(\ell_2)_{m'p^\infty}) \right) =
\mathrm{ord}_{\varpi_\varphi} \left( v_{\ell_2} (\kappa_{\mathcal{D}, \varphi}(\ell_1)_{m'p^\infty}) \right)$.
Then all the inequalities (\ref{eqn:inequality}) become equalities and we have 
$$\mathrm{ord}_{\varpi_\varphi} \left( v_{\ell_2} (\kappa_{\mathcal{D}, \varphi}(\ell_1)_{m'p^\infty}) \right) = e_{\mathcal{D}}(\ell_1) =  e_{\mathcal{D}}(\ell_2) = e_{\mathrm{min}}$$ and $\ell_2 \in \Pi$.

Let $\mathcal{D}^{\ell_1\ell_2}_{n} := (\Delta\ell_1\ell_2, g_{n+t^*} \Mod{\varpi^n})$.
Then we have
$$t_{\mathcal{D}^{\ell_1\ell_2}_{n}} = \mathrm{ord}_{\varpi_\varphi} \left(  \theta_{m'p^\infty} ( \mathcal{D}^{\ell_1\ell_2}_{n} ) \right) = e_\mathrm{min}
< t_{\mathcal{D}_n}
\leq t^* $$ due to Lemma \ref{lem:strict-inequality}.

Thus, we can apply the induction hypothesis to $\mathcal{D}^{\ell_1\ell_2}_{n+t^*}$ reminding Remark \ref{rem:induction_parameter}. In other words, we may assume to have the following statement.
\begin{assu}[Induction hypothesis] \label{assu:induction}
$$\varphi( L_p(K(m'p^\infty) , \mathcal{D}^{\ell_1\ell_2}_{n+t^*}  )  ) \in \mathrm{Fitt}_{\mathcal{O}_{\varphi,n+t^*}} (\mathrm{Sel}_{\Delta\ell_1\ell_2}(K(m'p^\infty), A_{f,n+t^*})^\vee \otimes_\varphi \mathcal{O}_\varphi).$$
\end{assu}
\subsection{Reduction for induction} \label{subsec:reduction_for_induction}
Let $S_{[\ell_1 \ell_2]} \subseteq \mathrm{Sel}_{\Delta}(K({m'p^\infty}), A_{f,n})$ be the subgroup consisting of classes which are locally trivial at the primes dividing $\ell_1$ and $\ell_2$.
Then we have two exact sequences of $\mathcal{O}\llbracket\Gamma_{m'p^\infty}\rrbracket$-modules
{\scriptsize
\[
\xymatrix@R=0em{
\mathrm{H}^1_{\mathrm{sing}}(K(m'p^\infty)_{\ell_1}, T_{f,n}) \oplus \mathrm{H}^1_{\mathrm{sing}}(K(m'p^\infty)_{\ell_2}, T_{f,n}) \ar[r]^-{\eta_s} & \mathrm{Sel}_{\Delta}(K(m'p^\infty), A_{f,n})^\vee \ar[r] & S^{\vee}_{[\ell_1 \ell_2]} \ar[r] & 0 \\
\mathrm{H}^1_{\mathrm{fin}}(K(m'p^\infty)_{\ell_1}, T_{f,n}) \oplus \mathrm{H}^1_{\mathrm{fin}}(K(m'p^\infty)_{\ell_2}, T_{f,n}) \ar[r]^-{\eta_f} & \mathrm{Sel}_{\Delta \ell_1 \ell_2}(K(m'p^\infty), A_{f,n})^\vee \ar[r] & S^{\vee}_{[\ell_1 \ell_2]} \ar[r] & 0
}
\]
}
where $\eta_s$ and $\eta_f$ are induced by the local paring $\langle -,- \rangle_{\ell_1} \oplus \langle -,- \rangle_{\ell_2}$.
Fix isomorphisms
\begin{align*}
\mathrm{H}^1_{\mathrm{sing}}(K({m'p^\infty})_{\ell_1}, T_{f,n}) \oplus \mathrm{H}^1_{\mathrm{sing}}(K({m'p^\infty})_{\ell_2}, T_{f,n}) & \simeq
\mathcal{O}_n\llbracket\Gamma_{m'p^\infty}\rrbracket \oplus \mathcal{O}_n\llbracket\Gamma_{m'p^\infty}\rrbracket \\
\mathrm{H}^1_{\mathrm{fin}}(K(m'p^\infty)_{\ell_1}, T_{f,n}) \oplus \mathrm{H}^1_{\mathrm{fin}}(K(m'p^\infty)_{\ell_2}, T_{f,n}) & \simeq
\mathcal{O}_n\llbracket\Gamma_{m'p^\infty}\rrbracket \oplus \mathcal{O}_n\llbracket\Gamma_{m'p^\infty}\rrbracket .
\end{align*}
Tensoring with $\mathcal{O}_\varphi$ via $\varphi$, two sequences become
{\scriptsize
\[
\xymatrix@R=0em{
 \mathcal{O}_{\varphi,n} \oplus  \mathcal{O}_{\varphi,n} \ar[r]^-{\eta^{\varphi}_s} & \mathrm{Sel}_{\Delta}(K(m'p^\infty), A_{f,n})^\vee \otimes_\varphi \mathcal{O}_\varphi \ar[r] & S^{\vee}_{[\ell_1 \ell_2]} \otimes_\varphi \mathcal{O}_\varphi \ar[r] & 0 \\
 \mathcal{O}_{\varphi,n} \oplus  \mathcal{O}_{\varphi,n}  \ar[r]^-{\eta^{\varphi}_f} & \mathrm{Sel}_{\Delta \ell_1 \ell_2}(K(m'p^\infty), A_{f,n})^\vee \otimes_\varphi \mathcal{O}_\varphi \ar[r] & S^{\vee}_{[\ell_1 \ell_2]} \otimes_\varphi \mathcal{O}_\varphi \ar[r] & 0
}
\]
}
where $\eta^\varphi_s$ and $\eta^\varphi_f$ are induced from $\eta_s$ and $\eta_f$, respectively, as in $\S$\ref{subsec:first_step}.

By Lemma \ref{lem:eta_ell}, we know that
$\eta^\varphi_s$ factors through
$$\mathcal{O}_{\varphi,n} / \left( \partial_{\ell_1} (\kappa'_{\mathcal{D}, \varphi} (\ell_1)_{m'p^\infty}) \right) \oplus \mathcal{O}_{\varphi,n} / \left( \partial_{\ell_2} (\kappa'_{\mathcal{D}, \varphi} (\ell_2)_{m'p^\infty}) \right) .$$
In other words, the first sequence becomes
{\scriptsize
\[
\xymatrix{
\mathcal{O}_{\varphi,n} / \left( \partial_{\ell_1} (\kappa'_{\mathcal{D}, \varphi} (\ell_1)_{m'p^\infty}) \right) \oplus \mathcal{O}_{\varphi,n} / \left( \partial_{\ell_2} (\kappa'_{\mathcal{D}, \varphi} (\ell_2)_{m'p^\infty}) \right) \ar[r]^-{\eta^\varphi_s} & \mathrm{Sel}_{\Delta}(K(m'p^\infty), A_{f,n})^\vee \otimes_\varphi \mathcal{O}_{\varphi} \ar[r] & S^{\vee}_{[\ell_1 \ell_2]} \otimes_\varphi \mathcal{O}_{\varphi} \ar[r] & 0 .
}
\]
}
By Lemma \ref{lem:local_properties_reduced_classes}.(2), we have
$$t_{\mathcal{D}_n} - t_{\mathcal{D}^{\ell_1\ell_2}_{n}} = \mathrm{ord}_{\varpi_{\varphi}} \left( \partial_{\ell_1} \kappa'_{\mathcal{D}, \varphi}(\ell_1)_{m'p^\infty}  \right)  
 = \mathrm{ord}_{\varpi_{\varphi}} \left( \partial_{\ell_2} \kappa'_{\mathcal{D}, \varphi}(\ell_2)_{m'p^\infty}  \right).$$
The first sequence can be restated as
\[
\xymatrix{
\left( \mathcal{O}_{\varphi,n} / \varpi^{t_{\mathcal{D}_n} - t_{\mathcal{D}^{\ell_1\ell_2}_{n}}}_\varphi \right)^2 \ar[r]^-{\eta^\varphi_s} & \mathrm{Sel}_{\Delta}(K(m'p^\infty), A_{f,n})^\vee \otimes_\varphi \mathcal{O}_\varphi \ar[r] & S^{\vee}_{[\ell_1 \ell_2]} \otimes_\varphi \mathcal{O}_\varphi \ar[r] & 0 .
}
\]
In order to deal with the second sequence, we need the following lemma.
\begin{lem}
The kernel of $\eta^\varphi_f$ contains $\left( 0, v_{\ell_2}( \kappa'_{\mathcal{D},\varphi}(\ell_1)_{m'p^\infty}  )  \right)$ and $\left( v_{\ell_1}( \kappa'_{\mathcal{D},\varphi}(\ell_2)_{m'p^\infty}  ), 0  \right)$.
\end{lem}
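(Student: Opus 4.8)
The plan is to deduce both memberships from global reciprocity, in exact analogy with the proof of Lemma~\ref{lem:eta_ell}. Write $\kappa := \kappa'_{\mathcal{D}, \varphi}(\ell_1)_{m'p^\infty}$; the assertion about $\left(v_{\ell_1}(\kappa'_{\mathcal{D},\varphi}(\ell_2)_{m'p^\infty}), 0\right)$ will follow by the symmetric argument with the roles of $\ell_1$ and $\ell_2$ exchanged. To prove $\left(0, v_{\ell_2}(\kappa)\right) \in \ker \eta^\varphi_f$ it suffices, by the way $\eta_f$ is built out of the local pairings $\langle-,-\rangle_{\ell_1} \oplus \langle-,-\rangle_{\ell_2}$ together with Proposition~\ref{prop:local_dualities}.(3), to show that $\langle v_{\ell_2}(\kappa), \mathrm{res}_{\ell_2}(s)\rangle_{\ell_2} = 0$ for every $s \in \mathrm{Sel}_{\Delta\ell_1\ell_2}(K(m'p^\infty), A_{f,n})[\ker\varphi]$.

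First I would record the local shapes of the two classes. By Proposition~\ref{lem:local_properties_reduced_classes} we have $\partial_q(\kappa) = 0$ for all $q \nmid \Delta\ell_1$ (in particular $\partial_{\ell_2}(\kappa) = 0$, so $\mathrm{res}_{\ell_2}(\kappa) = v_{\ell_2}(\kappa) \in \mathrm{H}^1_{\mathrm{fin}}(K(m'p^\infty)_{\ell_2}, T_{f,n})$) and $\mathrm{res}_q(\kappa) \in \mathrm{H}^1_{\mathrm{ord}}(K(m'p^\infty)_q, T_{f,n})$ for $q \mid \Delta\ell_1$, while $\mathrm{res}_p(\kappa)$ is ordinary at $p$ as in Proposition~\ref{prop:local_properties} (the ordinary condition at $p$ being preserved under the mod $\varpi^n$ reduction and the specialization along $\varphi$, both functorial in the coefficients and hence respecting $F^+_p$). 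On the other side, $s \in \mathrm{Sel}_{\Delta\ell_1\ell_2}(K(m'p^\infty), A_{f,n})$ has $\partial_q(s) = 0$ for $q \nmid p\Delta\ell_1\ell_2$ and $\mathrm{res}_q(s) \in \mathrm{H}^1_{\mathrm{ord}}(K(m'p^\infty)_q, A_{f,n})$ for $q \mid p\Delta\ell_1\ell_2$.

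Next I would invoke the global reciprocity identity $\sum_q \langle \mathrm{res}_q(\kappa), \mathrm{res}_q(s)\rangle_q = 0$ of global class field theory recalled before Proposition~\ref{prop:global_reciprocity}, and kill all local terms but the one at $\ell_2$ using Proposition~\ref{prop:local_dualities}: for $q \nmid p\Delta\ell_1\ell_2$ both restrictions lie in the finite parts, which are orthogonal complements (part (1), valid since $q \neq p$); for $q \mid p\Delta\ell_1$ both restrictions lie in the ordinary parts, which are orthogonal complements (part (2), using Condition CR for $q \mid N^-$ and Condition PO for $q = p$); and at $q \mid \ell_2$ the term equals $\langle v_{\ell_2}(\kappa), \mathrm{res}_{\ell_2}(s)\rangle_{\ell_2}$ by the previous paragraph. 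Hence this last term vanishes, which is exactly the required identity, and letting $s$ vary gives $\left(0, v_{\ell_2}(\kappa)\right) \in \ker\eta^\varphi_f$.

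The one point requiring care is compatibility with the specialization functor $(-)\otimes_\varphi\mathcal{O}_\varphi$: global reciprocity is naturally a statement about genuine classes over $\mathcal{O}\llbracket\Gamma_{m'p^\infty}\rrbracket$ (or $\mathcal{O}_n\llbracket\Gamma_{m'p^\infty}\rrbracket$), whereas the reduced class $\kappa$ exists only after applying $\varphi$ (cf. the remark following Proposition~\ref{lem:local_properties_reduced_classes}). I would dispose of this by lifting $\kappa$ along the surjection $\mathrm{Sel}^S_\Delta(K(m'p^\infty), T_{f,n}) \twoheadrightarrow \mathrm{Sel}^S_\Delta(K(m'p^\infty), T_{f,n}) \otimes_\varphi \mathcal{O}_\varphi$ and pairing it with $s$, which is killed by $\ker\varphi$, using that the Tate pairings $\langle-,-\rangle_q$ are $\Gamma_{m'p^\infty}$-equivariant so that the identity descends; the local inputs above are insensitive to the choice of lift since they only involve the local conditions, which are satisfied by every lift. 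This bookkeeping, rather than any new idea, is the main obstacle; the rest is the same term-by-term cancellation as in Lemma~\ref{lem:eta_ell}.
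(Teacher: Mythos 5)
Your argument is correct and is exactly the paper's intended proof: the paper simply says the lemma is proved as in Lemma \ref{lem:eta_ell} (citing \cite[Lemma 6.20]{chida-hsieh-main-conj}), namely by pairing the reduced class against $s \in \mathrm{Sel}_{\Delta\ell_1\ell_2}(K(m'p^\infty), A_{f,n})[\ker\varphi]$, killing every local term except the one at $\ell_2$ (resp.\ $\ell_1$) via the orthogonality statements of Proposition \ref{prop:local_dualities}, and invoking global reciprocity. Your extra paragraph on lifting along the specialization $\otimes_\varphi\mathcal{O}_\varphi$ is a legitimate piece of bookkeeping that the paper leaves implicit, not a deviation in method.
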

\begin{proof}
The proof is similar to the one of Lemma \ref{lem:eta_ell}. See \cite[Lemma 6.20]{chida-hsieh-main-conj} for detail.
\end{proof}
The second sequence becomes
{\scriptsize
\[
\xymatrix{
 \mathcal{O}_{\varphi,n} / \left( v_{\ell_1}( \kappa'_{\mathcal{D},\varphi}(\ell_2)_{m'p^\infty}  ) \right) \oplus  \mathcal{O}_{\varphi,n} /\left( v_{\ell_2}( \kappa'_{\mathcal{D},\varphi}(\ell_1)_{m'p^\infty}  ) \right) \ar[r]^-{\eta^{\varphi}_f} & \mathrm{Sel}_{\Delta \ell_1 \ell_2}(K(m'p^\infty), A_{f,n})^\vee \otimes_\varphi \mathcal{O}_\varphi \ar[r] & S^{\vee}_{[\ell_1 \ell_2]} \otimes_\varphi \mathcal{O}_\varphi \ar[r] & 0 .
}
\]
}
Note that we have
$$\mathrm{ord}_{\varpi_\varphi} (v_{\ell_2} \kappa'_{\mathcal{D}, \varphi} (\ell_1)_{m'p^\infty} )
=\mathrm{ord}_{\varpi_\varphi} (v_{\ell_1} \kappa'_{\mathcal{D}, \varphi} (\ell_2)_{m'p^\infty} )
=t_{\mathcal{D}^{\ell_1\ell_2}_n} - e_\mathrm{min} = 0.$$
This shows that
$$\mathrm{Sel}_{\Delta \ell_1 \ell_2}(K(m'p^\infty), A_{f,n})^\vee \otimes_\varphi \mathcal{O}_\varphi \simeq S^{\vee}_{[\ell_1 \ell_2]} \otimes_\varphi \mathcal{O}_\varphi .$$

Plugging this isomorphism into the first sequence, we have
\[
\xymatrix{
\left( \mathcal{O}_{\varphi,n} / \varpi^{t_{\mathcal{D}_n} - t_{\mathcal{D}^{\ell_1\ell_2}_{n}}}_\varphi \right)^2 \ar[r]^-{\eta^\varphi_s} & \mathrm{Sel}_{\Delta}(K(m), A_{f,n})^\vee \otimes_\varphi \mathcal{O}_\varphi \ar[r] & \mathrm{Sel}_{\Delta \ell_1 \ell_2}(K(m), A_{f,n})^\vee \otimes_\varphi \mathcal{O}_\varphi \ar[r] & 0 .
}
\]
Let $\mathcal{S}$ be the exact kernel of the map $\mathrm{Sel}_{\Delta}(K(m), A_{f,n})^\vee \otimes_\varphi \mathcal{O}_\varphi \to \mathrm{Sel}_{\Delta \ell_1 \ell_2}(K(m), A_{f,n})^\vee \otimes_\varphi \mathcal{O}_\varphi$ in the sequence. Then we have
\begin{align*}
\varphi ( L_p(K(m), \mathcal{D}_n)  ) & = \varpi^{2 t_{\mathcal{D}_n}}_\varphi \\
& = \varpi^{2 \left( t_{\mathcal{D}_n} - t_{\mathcal{D}^{\ell_1\ell_2}_{n}} \right)}_\varphi  \cdot \varpi^{2t_{\mathcal{D}^{\ell_1\ell_2}_{n}}}_\varphi  \\
& \in \mathrm{Fitt}_{\mathcal{O}_{\varphi,n}}( \mathcal{S} )  \cdot \mathrm{Fitt}_{\mathcal{O}_{\varphi,n}}( \mathrm{Sel}_{\Delta \ell_1 \ell_2}(K(m), A_{f,n})^\vee \otimes_\varphi \mathcal{O}_\varphi ) & \textrm{Assumption \ref{assu:induction}} \\
& = \mathrm{Fitt}_{\mathcal{O}_{\varphi,n}}( \mathcal{S} )  \cdot \mathrm{Fitt}_{\mathcal{O}_{\varphi,n}}( S^{\vee}_{[\ell_1 \ell_2]} \otimes_\varphi \mathcal{O}_\varphi ) \\
& \subseteq \mathrm{Fitt}_{\mathcal{O}_{\varphi,n}}( \mathrm{Sel}_{\Delta}(K(m), A_{f,n})^\vee \otimes_\varphi \mathcal{O}_\varphi ) . & \textrm{Lemma \ref{lem:fitting_exact}}
\end{align*}
\begin{rem}
More precisely, the induction hypothesis (Assumption \ref{assu:induction}) is used modulo $\varpi^n$ with the Selmer group argument given in $\S$\ref{sec:completion_proof}.
\end{rem}

To sum up, we obtain the following theorem with the first step of induction (Proposition \ref{prop:first_step}).
\begin{thm} \label{thm:character-by-character}
Assume the following conditions:
\begin{enumerate}
\item $(\overline{\rho}, \Delta)$ satisfies Condition CR;
\item $f$ satisfies Condition PO;
\item $f$ is $N^+$-minimal;
\item $\overline{\rho}$ has big image.
\end{enumerate}
For any $n$-admissible form $\mathcal{D}_n = (\Delta, g_n)$ and any homomorphism $\varphi : \mathcal{O}[\Gamma_m] \to \mathcal{O}_\varphi$ where $\mathcal{O}_\varphi$ is a discrete valuation ring of characteristic zero,
we have
$$\varphi ( L_p(K(m'p^\infty),\mathcal{D}_n)  ) \in \mathrm{Fitt}_{\mathcal{O}_{\varphi,n}}( \mathrm{Sel}_{\Delta}(K(m'p^\infty), A_{f,n})^\vee \otimes_\varphi \mathcal{O}_\varphi ) .$$
\end{thm}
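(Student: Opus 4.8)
The plan is an induction on the invariant $t_{\mathcal{D}_n} = \mathrm{ord}_{\varpi_\varphi}\bigl(\varphi(\theta_{m'p^\infty}(\mathcal{D}_n))\bigr)$, carried out, as in the setup of $\S\ref{sec:euler_systems}$, by fixing an $(n+t^*)$-admissible lift $\mathcal{D}_{n+t^*}$ of $\mathcal{D}_n$ with $t_{\mathcal{D}_n}\le t^*$, so that there is room to specialize via $\varphi$ and to reduce modulo powers of $\varpi$ while keeping the relevant compact Selmer modules free. First I would dispose of the trivial cases: if $\mathrm{Sel}_\Delta(K(m'p^\infty), A_{f,n})^\vee \otimes_\varphi \mathcal{O}_\varphi$ vanishes there is nothing to prove, and if $t_{\mathcal{D}_n} = 0$ then Proposition \ref{prop:first_step} shows this module does vanish; this is the base case. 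The base case itself runs through the map $\eta^\varphi_\ell$: the first explicit reciprocity law (Theorem \ref{thm:first_explicit_reciprocity}) makes $\partial_\ell \kappa_{\mathcal{D},\varphi}(\ell)_{m'p^\infty}$ a generator of $\mathrm{H}^1_{\mathrm{sing}}(K(m'p^\infty)_\ell, T_{f,n}) \otimes_\varphi \mathcal{O}_\varphi$, so $\eta^\varphi_\ell = 0$ by Lemma \ref{lem:eta_ell}, whereas a nonzero Selmer class combined with Corollary \ref{cor:control_galois_mod_pn} and the upgraded Chebotarev theorem (Theorem \ref{thm:chebotarev}) over $K(m')$ would force $\eta^\varphi_\ell \bmod \mathfrak{m}_\varphi \neq 0$, a contradiction.

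For the inductive step I assume $t_{\mathcal{D}_n} > 0$. I would let $\Pi$ be the set of $(n+t^*)$-admissible primes prime to $m'p\Delta N^+$ minimizing $e_{\mathcal{D}}(\ell) = \mathrm{ord}_{\varpi_\varphi}\bigl(\kappa_{\mathcal{D},\varphi}(\ell)_{m'p^\infty}\bigr)$, which is nonempty by Theorem \ref{thm:chebotarev}, with minimal value $e_{\mathrm{min}}$; Lemma \ref{lem:strict-inequality} gives $e_{\mathrm{min}} < t_{\mathcal{D}_n}$. Picking $\ell_1 \in \Pi$, forming the reduced class $\kappa'_{\mathcal{D},\varphi}(\ell_1)_{m'p^\infty}$, and corestricting it via Corollary \ref{cor:control} (which rests on the freeness Theorem \ref{thm:freeness}, proved over the semilocal algebra $\mathcal{O}_n\llbracket\Gamma_{m'p^\infty}\rrbracket$ through the enhanced isotypic decomposition) to a nonzero class $\kappa_{m'} \in \mathrm{H}^1(K(m'),T_{f,1}) \otimes_\varphi \mathcal{O}_\varphi$, I would apply Theorem \ref{thm:chebotarev} once more to find an $(n+t^*)$-admissible prime $\ell_2$ with $\partial_{\ell_2}(\kappa_{m'}) = 0$ and $v_{\ell_2}(\kappa_{m'}) \neq 0$. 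Chasing the commutative diagram relating $v_{\ell_2}$ and corestriction, together with the minimality of $e_{\mathrm{min}}$ and the second explicit reciprocity law (Theorem \ref{thm:second_explicit_reciprocity}, where Condition Ihara enters), shows $\ell_2 \in \Pi$ and that the level-raised form $\mathcal{D}^{\ell_1\ell_2}_{n+t^*}$ satisfies $t_{\mathcal{D}^{\ell_1\ell_2}_{n}} = e_{\mathrm{min}} < t_{\mathcal{D}_n} \le t^*$, so the induction hypothesis applies to $\mathcal{D}^{\ell_1\ell_2}_{n+t^*}$, giving the membership of $\varphi(L_p(K(m'p^\infty), \mathcal{D}^{\ell_1\ell_2}_{n+t^*}))$ in the Fitting ideal of $\mathrm{Sel}_{\Delta\ell_1\ell_2}(K(m'p^\infty), A_{f,n+t^*})^\vee \otimes_\varphi \mathcal{O}_\varphi$.

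Finally I would transfer this back to $\Delta$. The two Poitou-Tate sequences of Lemma \ref{lem:poitou-tate} relate $\mathrm{Sel}_\Delta^\vee$ and $\mathrm{Sel}_{\Delta\ell_1\ell_2}^\vee$ through the common quotient $S^\vee_{[\ell_1 \ell_2]}$; after $\otimes_\varphi \mathcal{O}_\varphi$ they become three-term sequences of finite $\mathcal{O}_{\varphi,n}$-modules whose connecting maps factor through cyclic quotients governed by $\partial_{\ell_i}(\kappa'_{\mathcal{D},\varphi}(\ell_i))$ and $v_{\ell_j}(\kappa'_{\mathcal{D},\varphi}(\ell_i))$, with orders read off from Lemma \ref{lem:local_properties_reduced_classes}. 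Since $\mathrm{ord}_{\varpi_\varphi}$ of the relevant finite-part class equals $t_{\mathcal{D}^{\ell_1\ell_2}_{n}} - e_{\mathrm{min}} = 0$, the second sequence degenerates to an isomorphism $\mathrm{Sel}_{\Delta\ell_1\ell_2}^\vee \otimes_\varphi \mathcal{O}_\varphi \simeq S^\vee_{[\ell_1 \ell_2]} \otimes_\varphi \mathcal{O}_\varphi$; feeding this into the first sequence and using multiplicativity of Fitting ideals in short exact sequences (Lemma \ref{lem:fitting_exact}) together with the factorization $\varphi(L_p(K(m'p^\infty),\mathcal{D}_n)) = \varpi_\varphi^{2(t_{\mathcal{D}_n} - t_{\mathcal{D}^{\ell_1\ell_2}_{n}})} \cdot \varpi_\varphi^{2 t_{\mathcal{D}^{\ell_1\ell_2}_{n}}}$ and the induction hypothesis yields the claimed membership. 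The main obstacle I expect is the bookkeeping that keeps everything free over the merely semilocal algebra $\mathcal{O}_n\llbracket\Gamma_{m'p^\infty}\rrbracket$ before any specialization — this is exactly why the freeness and control theorems must be established via the enhanced isotypic decomposition and why the Chebotarev argument had to be generalized to the base field $K(m')$ — while simultaneously ensuring that the inductive parameter $t_{\mathcal{D}_n}$ strictly decreases at each step.
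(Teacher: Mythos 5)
Your proposal is correct and follows essentially the same route as the paper: induction on $t_{\mathcal{D}_n}$ with the base case handled by Proposition \ref{prop:first_step}, the inductive step via $\Pi$, $e_{\mathrm{min}}$, the two explicit reciprocity laws, and the final transfer through the two Poitou--Tate sequences and Lemma \ref{lem:fitting_exact}, exactly as in $\S$\ref{sec:euler_systems}. No substantive differences from the paper's argument.
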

\section{Completion of proof: lifting to characteristic zero} \label{sec:completion_proof}
In this section, we complete the proof of the main theorem (Theorem \ref{thm:main_theorem}) with help of Proposition \ref{prop:generalized_divisibility}. Of course, all the conditions in the main theorems are assumed in this section.
We deal with the case $\mathcal{D}^f_n = (\Delta, g_n) = (N^-, f_{\alpha,n})$.

First, we observe that the equality of ideals
 $$\mathrm{Fitt}_{\mathcal{O}_{\varphi,n}}(\mathrm{Sel}_{N^-}(K(m'p^\infty), A_{f,n})^\vee \otimes_{\varphi} \mathcal{O}_\varphi) = \mathrm{Fitt}_{\mathcal{O}_\varphi}(\mathrm{Sel}_{N^-}(K(m'p^\infty), A_{f,n})^\vee \otimes_{\varphi} \mathcal{O}_\varphi) \Mod{\varpi^n_\varphi}$$
holds in $\mathcal{O}_{\varphi,n}$ due to Lemma \ref{lem:fitting-ideals-base-change}.(2).

Consider the following diagram
\[
\xymatrix{
\left( \varphi (L_p( K(m'p^\infty) , f) ) \right) \ar@{-->}[rr]^-{\subseteq\textbf{?}} \ar[d]^-{\Mod{\varpi^{n}_\varphi}}_-{\textrm{Proposition \ref{prop:n-admissible-theta}}} & & \mathrm{Fitt}_{\mathcal{O}_{\varphi}}( \mathrm{Sel}_{N^-}(K(m'p^\infty), A_{f,n})^\vee \otimes_\varphi \mathcal{O}_\varphi ) \ar[r]^-{\subseteq} \ar[d]^-{\Mod{\varpi^{n}_\varphi}} & \mathcal{O}_{\varphi} \ar[d]^-{\Mod{\varpi^{n}_\varphi}}\\
\left( \varphi (L_p( K(m'p^\infty) , \mathcal{D}^f_{n})  ) \right) \ar[rr]^-{\subseteq}_-{\textrm{Theorem \ref{thm:character-by-character}}} & & \mathrm{Fitt}_{\mathcal{O}_{\varphi,n}}( \mathrm{Sel}_{N^-}(K(m'p^\infty), A_{f,n})^\vee \otimes_\varphi \mathcal{O}_\varphi ) \ar[r]^-{\subseteq} & \mathcal{O}_{\varphi,n}  .
}
\]
Since $\mathcal{O}_\varphi$ is a discrete valuation ring, all the ideals of $\mathcal{O}_\varphi$ are filtered. Thus, the inclusion in characteristic zero follows.
\begin{rem}
If we work with the $p$-isolated condition and the rigid pairs as in \cite{bertolini-darmon-imc-2005}, then we bypass this argument since every $n$-admissible form under the $p$-isolated condition uniquely lifts to a characteristic zero form. See
\cite[Proposition 3.12 and Theorem 9.3]{bertolini-darmon-imc-2005} for detail.
\end{rem}
Thus, we have
$$ \varphi (L_p( K(m'p^\infty) , f) ) \in \mathrm{Fitt}_{\mathcal{O}_{\varphi}}( \mathrm{Sel}_{N^-}(K(m'p^\infty), A_{f,n})^\vee \otimes_\varphi \mathcal{O}_\varphi )$$
for all $n \geq 1$.
By Lemma \ref{lem:fitting_lim}, we know
$$\mathrm{Fitt}_{\mathcal{O}_{\varphi}}( \mathrm{Sel}_{N^-}(K(m), A_{f})^\vee \otimes_\varphi \mathcal{O}_\varphi ) = \bigcap^{\infty}_{n=1}\mathrm{Fitt}_{\mathcal{O}_{\varphi}}( \mathrm{Sel}_{N^-}(K(m), A_{f,n})^\vee \otimes_\varphi \mathcal{O}_\varphi ).$$
Thus, we have inclusion
$\varphi (L_p( K(m) , f) ) \in \mathrm{Fitt}_{\mathcal{O}_{\varphi}}( \mathrm{Sel}_{N^-}(K(m), A_{f})^\vee \otimes_\varphi \mathcal{O}_\varphi )$. 
Applying Proposition \ref{prop:generalized_divisibility} to this inclusion, Theorem \ref{thm:main_theorem} follows.

\section{A more global application with anticyclotomic ``partially adelic" $L$-functions} \label{sec:speculation}
In this section, we investigate a more global aspect of the Mazur-Tate type conjecture by gluing the main theorem (Theorem \ref{thm:main_theorem}) varying $p$. The exposition of this section is rather brief.

For the notational convenience, we focus on the case of elliptic curves over $\mathbb{Q}$.

Let $E$ be an elliptic curve over $\mathbb{Q}$ of conductor $N$.
Let $K$ be an imaginary quadratic field with the same assumption with $N$ as before.
Let $m$ be an integer with $(m,N) = 1$.
Let $M = \prod_i p^{r_i}_i$ be an integer where each $p_i >3$ is a prime such that
\begin{enumerate}
\item each $p_i$ is good ordinary for $E$;
\item each residual representation $\overline{\rho}_{E, p_i}$ is surjective;
\item condition PO holds for $(E, K(m'_i)/K, p_i)$ for each $p_i$ where $m'_i$ is the prime-to-$p_i$ part of $m$;
\item $N = N(\overline{\rho}_{E, p_i}) $ for each $p_i$.
\end{enumerate} 
Then, by Theorem \ref{thm:main_theorem}, we know
$$L_p(K(m),E) \Mod{p^r} \in \mathrm{Fitt}_{(\mathbb{Z}/p^r\mathbb{Z})[\Gamma_m]}(\mathrm{Sel}(K(m),E[p^r])^\vee) $$
for any $p^r \mid M$.
Let $f$ be the newform corresponding to $E$.
\begin{assu} \label{assu:normalization}
Suppose that we are able to regard $f$ as a $\mathbb{Z}$-valued function
$$f: B^\times \backslash \widehat{B}^\times  / \widehat{R}^\times \to \mathbb{Z}$$
with integral normalization $f \Mod{p_i} \neq 0$ for each prime factor $p_i$ of $M$.
\end{assu}
Fixing an embedding $\iota_{p_i} : \overline{\mathbb{Q}} \hookrightarrow \overline{\mathbb{Q}}_{p_i}$,
the composition $\iota_{p_i} \circ f$ becomes a $\mathbb{Z}_{p_i}$-valued quaternionic modular form defined in $\S$\ref{subsec:modular_forms}.
\begin{rem}
Due to Remark \ref{rem:normalization_modular_forms}, $f$ may not be able to be integrally normalized when $p_i = 2$ or 3; thus, we exclude 2 and 3. For each prime $p_i$, the $p_i$-integral normalization of quaternionic modular forms is related to the ``$N^-$-new congruence number" defined in \cite[$\S$6.6]{pw-mu}.
 \end{rem}
Applying the same construction of the Bertolini-Darmon element as in $\S$\ref{subsec:theta_elements} to $f$, we can define an analogous element $L_M(K(m),f) \in \mathbb{Z}[\Gamma_m]$.
Then it is not difficult to see
$$L_M(K(m),f) \Mod{p^{r_i}_i} = L_{p_i}(K(m), \iota_{p_i} \circ f ) \Mod{p^{r_i}_i}$$
in $\mathbb{Z}/p^{r_i}_i\mathbb{Z}[\Gamma_{m}]$
for $p^{r_i}_i \mid M$ under Assumption \ref{assu:normalization}.
By Chinese remainder theorem with $E[M]$ and $E(K(m))/M \cdot E(K(m))$, we have
$$\mathrm{Sel}(K(m),E[M])[p^{r_i}_i] \simeq \mathrm{Sel}(K(m),E[p^{r_i}_i])$$
where $M = \prod_i p^{r_i}_i$ as we defined.
With Lemma \ref{lem:control_mod_pn_selmer} and Condition (4) for $M$, we also have
$$\mathrm{Sel}(K(m),E[M])[p^r] \simeq \mathrm{Sel}(K(m),E[p^r])$$
for any $p^r \mid M$.
 
%
\begin{thm}
Under Assumption \ref{assu:normalization}, we have
$$L_M(K(m), E) \in \mathrm{Fitt}_{\mathbb{Z}/M\mathbb{Z}[\Gamma_m]} \left(\mathrm{Sel}(K(m),E[M])^\vee \right)$$
in $\mathbb{Z}/M\mathbb{Z}[\Gamma_m]$ where $(-)^\vee = \mathrm{Hom}(-, \mathbb{Q}/\mathbb{Z})$.
\end{thm}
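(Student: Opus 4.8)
The plan is to deduce the statement from the prime-power case (Theorem \ref{thm:main_theorem}) by a Chinese Remainder argument, using that both the partially adelic $L$-function and the $M$-Selmer group decompose along the prime factors of $M$. Write $M = \prod_i p_i^{r_i}$. First I would record the ring decomposition $(\mathbb{Z}/M\mathbb{Z})[\Gamma_m] \simeq \prod_i (\mathbb{Z}/p_i^{r_i}\mathbb{Z})[\Gamma_m]$ coming from CRT, and observe that for a finitely generated module $X$ over such a product ring one has $\mathrm{Fitt}(X) = \prod_i \mathrm{Fitt}\left(X \otimes (\mathbb{Z}/p_i^{r_i}\mathbb{Z})[\Gamma_m]\right)$ (this follows from Lemma \ref{lem:fitting-ideals-base-change}.(1) applied to each idempotent projection), so that an element of $(\mathbb{Z}/M\mathbb{Z})[\Gamma_m]$ lies in $\mathrm{Fitt}(X)$ precisely when each of its components does. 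Applying this to $X = \mathrm{Sel}(K(m),E[M])^\vee$, whose $p_i$-primary component is $\mathrm{Sel}(K(m),E[p_i^{r_i}])^\vee$ via the isomorphism $\mathrm{Sel}(K(m),E[M])[p_i^{r_i}] \simeq \mathrm{Sel}(K(m),E[p_i^{r_i}])$ recorded above, reduces the theorem to showing, for each $i$, that $L_M(K(m),E) \bmod p_i^{r_i}$ lies in $\mathrm{Fitt}_{(\mathbb{Z}/p_i^{r_i}\mathbb{Z})[\Gamma_m]}\left(\mathrm{Sel}(K(m),E[p_i^{r_i}])^\vee\right)$.

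Next, for a fixed $i$, I would invoke Assumption \ref{assu:normalization}: the $p_i$-integral normalization of $f$ (together with $f \bmod p_i \neq 0$) lets us identify $L_M(K(m),f) \bmod p_i^{r_i}$ with $L_{p_i}(K(m), \iota_{p_i}\circ f) \bmod p_i^{r_i}$ in $(\mathbb{Z}/p_i^{r_i}\mathbb{Z})[\Gamma_m]$, i.e.\ the Bertolini--Darmon construction of $\S$\ref{subsec:theta_elements} commutes with the change of coefficients $\mathbb{Z} \to \mathbb{Z}_{p_i}$. Then I would apply Theorem \ref{thm:main_theorem} to the pair $(E, p_i)$: conditions (1)--(4) in the setup of $\S$\ref{sec:speculation} guarantee that $\overline{\rho}_{E,p_i}$ is absolutely irreducible with big image, that $(\overline{\rho}_{E,p_i}, N^-)$ satisfies Condition CR (automatic from $N = N(\overline{\rho}_{E,p_i})$, since then every prime dividing $N^-$ is ramified for $\overline{\rho}_{E,p_i}$), that Condition PO holds, and that $f$ is $N$-minimal (again from $N = N(\overline{\rho}_{E,p_i})$). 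Hence $L_{p_i}(K(m),f) \in \mathrm{Fitt}_{\mathbb{Z}_{p_i}[\Gamma_m]}\left(\mathrm{Sel}_{N^-}(K(m),A_f)^\vee\right)$, and by $N$-minimality $\mathrm{Sel}_{N^-}(K(m),A_f) = \mathrm{Sel}(K(m),A_f)$ is the classical Selmer group of $E$ over $K(m)$.

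Finally I would push this inclusion down modulo $p_i^{r_i}$: by Lemma \ref{lem:fitting-ideals-base-change}.(2) the ideal $\mathrm{Fitt}_{\mathbb{Z}_{p_i}[\Gamma_m]}(\mathrm{Sel}(K(m),A_f)^\vee)$ reduces mod $p_i^{r_i}$ to $\mathrm{Fitt}_{(\mathbb{Z}/p_i^{r_i}\mathbb{Z})[\Gamma_m]}(\mathrm{Sel}(K(m),A_f)^\vee/p_i^{r_i})$, and $\mathrm{Sel}(K(m),A_f)[p_i^{r_i}] = \mathrm{Sel}(K(m),E[p_i^{r_i}])$ by Lemma \ref{lem:control_mod_pn_selmer} (using condition (4)), whose dual is the required module; combining with the second paragraph gives $L_{p_i}(K(m),\iota_{p_i}\circ f) \bmod p_i^{r_i} \in \mathrm{Fitt}_{(\mathbb{Z}/p_i^{r_i}\mathbb{Z})[\Gamma_m]}\left(\mathrm{Sel}(K(m),E[p_i^{r_i}])^\vee\right)$, and then the first paragraph assembles these into the global statement. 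The main obstacle — indeed the only non-formal point — is the bookkeeping of normalizations in the second paragraph: one must verify that the single $\mathbb{Z}$-valued form $f$ of Assumption \ref{assu:normalization} produces, after $\iota_{p_i}$, exactly the quaternionic eigenform used to build $L_{p_i}(K(m),E)$, with no unit discrepancy (the ``$N^-$-new congruence number'' ambiguity of Remark \ref{rem:normalization_modular_forms}) that is invisible $p_i$-adically but would obstruct the gluing; Assumption \ref{assu:normalization} is imposed precisely to rule this out, so once it is granted the remaining steps are routine.
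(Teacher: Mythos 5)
Your proposal is correct and follows essentially the same route as the paper: reduce to each prime power factor via the CRT decomposition of $(\mathbb{Z}/M\mathbb{Z})[\Gamma_m]$ and of $\mathrm{Sel}(K(m),E[M])$, match $L_M(K(m),f) \bmod p_i^{r_i}$ with $L_{p_i}(K(m),\iota_{p_i}\circ f)$ via Assumption \ref{assu:normalization}, apply Theorem \ref{thm:main_theorem} componentwise, and reassemble using Lemma \ref{lem:fitting-ideals-base-change}. Your write-up is somewhat more explicit than the paper's (in checking the hypotheses of Theorem \ref{thm:main_theorem} and in the idempotent decomposition of the Fitting ideal), but the argument is the same.
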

\begin{proof}
Let $M = \prod_i p^{r_i}_i$.
By Theorem \ref{thm:main_theorem}, we have
$$L_{M}(K(m), E) \Mod{p^{r_i}_i} \in \mathrm{Fitt}_{\mathbb{Z}/p^{r_i}_i\mathbb{Z}[\Gamma_m]} \left(\mathrm{Sel}(K(m),E[p^{r_i}_i])^\vee \right)$$
for each prime divisor $p_i$ of $M$.
By Lemma \ref{lem:fitting-ideals-base-change}.(2), we have
$$\mathrm{Fitt}_{\mathbb{Z}/M\mathbb{Z}[\Gamma_m]} \left(\mathrm{Sel}(K(m),E[M])^\vee \right) \Mod{p^{r_i}_i} = \mathrm{Fitt}_{\mathbb{Z}/p^{r_i}_i\mathbb{Z}[\Gamma_m]} \left(\mathrm{Sel}(K(m),E[p^{r_i}_i])^\vee \right).$$
By Chinese remainder theorem, we have
$$\mathbb{Z}/M\mathbb{Z}[\Gamma_m] \simeq \left(\prod_{i}\mathbb{Z}/p^{r_i}_i\mathbb{Z}\right)[\Gamma_m] .$$
Thus, we obtain the conclusion.
\end{proof}
In the same manner as in Corollary \ref{cor:vanishing_order_zero}, we obtain
\begin{cor} \label{cor:global_consequence}
Suppose that Assumption \ref{assu:normalization} holds.
Let $\chi : \Gamma_m \to \mathbb{Z}^{\times}_\chi$ be a character and assume that $\chi(L_M(K(m), f)) \neq 0$.
Then
$$\sum_{p \vert M}  \left( \mathrm{ord}_{p} \left( \# \vert E(K(m))_\chi \vert \right) + \mathrm{ord}_{p} \left( \# \vert \textrm{{\cyr SH}}(E/K(m))[M]_\chi \vert \right) \right) \leq \sum_{p \vert M} \mathrm{ord}_{p} \left( \chi \left( L_M(K(m), f) \right) \right) $$
where $\textrm{{\cyr SH}}(E/K(m))[M]_\chi$ is the $\chi$-isotypic quotient of the $M$-torsion subgroup of the Shafarevich-Tate group of $E$ over $K(m)$.
\end{cor}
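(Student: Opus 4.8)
The plan is to follow \cite[Proposition 3]{mazur-tate}, exactly as in the proof of Corollary \ref{cor:vanishing_order_zero}, and reduce the statement prime-by-prime to that $p$-power case via the Chinese remainder theorem; the only new global input is the $M$-Selmer theorem just proved. Concretely, I would first apply that theorem and Lemma \ref{lem:fitting-ideals-base-change}.(1) to the character $\chi$, obtaining $\chi(L_M(K(m),f))\in\mathrm{Fitt}_{\mathbb{Z}_\chi/M\mathbb{Z}_\chi}\big(\mathrm{Sel}(K(m),E[M])^\vee\otimes_\chi(\mathbb{Z}_\chi/M\mathbb{Z}_\chi)\big)$. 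By the Chinese remainder theorem $\mathbb{Z}_\chi/M\mathbb{Z}_\chi\simeq\prod_{p\mid M}\mathbb{Z}_\chi/p^{r_p}\mathbb{Z}_\chi$ (where $p^{r_p}\Vert M$), and this decomposition is compatible both with the Fitting ideal and, using the isomorphisms $\mathrm{Sel}(K(m),E[M])[p^{r_p}]\simeq\mathrm{Sel}(K(m),E[p^{r_p}])$ recorded before the theorem, with the Selmer module; equivalently one may simply run Theorem \ref{thm:main_theorem} in the $N$-minimal form separately for each $p\mid M$ (legitimate here by the running hypotheses and Condition (4) for $M$, with $\mathcal{O}=\mathbb{Z}_p$ since the Hecke field is $\mathbb{Q}$), giving $\chi(L_p(K(m),f))\in\mathrm{Fitt}_{\mathbb{Z}_{p,\chi}}\big(\mathrm{Sel}(K(m),A_f)^\vee\otimes_\chi\mathbb{Z}_{p,\chi}\big)$. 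Under Assumption \ref{assu:normalization} one has $L_M(K(m),f)\equiv L_{p}(K(m),\iota_{p}\circ f)\ (\mathrm{mod}\ p^{r_p})$, so $\chi(L_M(K(m),f))\neq0$ forces $\chi(L_p(K(m),f))\neq0$ and pins down the relevant $p$-adic valuations; hence $\mathrm{Sel}(K(m),A_f)^\vee\otimes_\chi\mathbb{Z}_{p,\chi}$ is a finitely generated torsion, therefore finite, module over the discrete valuation ring $\mathbb{Z}_{p,\chi}$, and $\mathrm{ord}_p\big(\#\,\mathrm{Sel}(K(m),A_f)^\vee\otimes_\chi\mathbb{Z}_{p,\chi}\big)\le\mathrm{ord}_p\big(\chi(L_p(K(m),f))\big)=\mathrm{ord}_p\big(\chi(L_M(K(m),f))\big)$ with the normalizations of \cite[Proposition 3]{mazur-tate}.

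Next I would feed in descent. For each $p\mid M$, take $\chi$-isotypic quotients ($-\otimes_{\mathbb{Z}_p[\Gamma_m],\chi}\mathbb{Z}_{p,\chi}$) of the Pontryagin dual of the exact sequence $0\to E(K(m))\otimes\mathbb{Q}_p/\mathbb{Z}_p\to\mathrm{Sel}(K(m),A_f)\to\textrm{{\cyr SH}}(E/K(m))[p^\infty]\to0$, using that under $N$-minimality the minimal Selmer group of $f$ is the classical $p^\infty$-Selmer group of $E$. Since Pontryagin duality is exact while $-\otimes_\chi\mathbb{Z}_{p,\chi}$ is only right exact, passing to $\chi$-quotients can only decrease length, which yields the inequality $\mathrm{ord}_p(\#|E(K(m))_\chi|)+\mathrm{ord}_p(\#|\textrm{{\cyr SH}}(E/K(m))[p^\infty]_\chi|)\le\mathrm{ord}_p\big(\#\,\mathrm{Sel}(K(m),A_f)^\vee\otimes_\chi\mathbb{Z}_{p,\chi}\big)$; here $E(K(m))_\chi$ is finite because $\chi(L_M)\neq0$ makes the vanishing order at $\chi$ zero, so the weak vanishing conjecture (the corollary to Theorem \ref{thm:main_theorem} above) forces $\mathrm{rank}_{\mathbb{Z}_{p,\chi}}E(K(m))_\chi=0$. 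Finally $\textrm{{\cyr SH}}(E/K(m))[M]_\chi$ and $\bigoplus_{p\mid M}\textrm{{\cyr SH}}(E/K(m))[p^\infty]_\chi$ have matching $p$-parts by the Chinese-remainder comparison already used for $E[M]$, and likewise $\#|E(K(m))_\chi|=\prod_{p\mid M}p^{\mathrm{ord}_p\#|E(K(m))_\chi|}$. Combining the displayed inequalities at each $p$ and summing over $p\mid M$ gives the corollary.

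The genuinely delicate points are two. First, one must check $\chi(L_p(K(m),f))\neq0$ and that its $p$-adic valuation agrees with $\mathrm{ord}_p\big(\chi(L_M(K(m),f))\big)$: this is where Assumption \ref{assu:normalization} and the congruence $L_M\equiv L_p\ (\mathrm{mod}\ p^{r_p})$ enter, together with care about the normalization of $\mathrm{ord}_p$ on the possibly ramified ring $\mathbb{Z}_{p,\chi}$ and on $\mathbb{Z}_{p,\chi}$-modules (so that the factors attached to ramification and residue degree cancel consistently, as in \cite{mazur-tate}). Second, one must justify that passing to $\chi$-isotypic quotients of the descent sequence only loses length in the favorable direction, i.e.\ produces the claimed inequality rather than an equality. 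Everything else — finiteness of the modules once $\chi(L_M)\neq0$, the identification $\#\,\mathbb{Z}_{p,\chi}/\mathrm{Fitt}(X)=\#X$ for finite $X$ over a discrete valuation ring, the identification of Selmer groups under $N$-minimality, and the Chinese remainder bookkeeping — is routine and already packaged in the references to \cite[Proposition 3]{mazur-tate} and Corollary \ref{cor:vanishing_order_zero}.
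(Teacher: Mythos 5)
Your proposal follows the paper's own route exactly: the paper's entire proof of this corollary is the phrase ``In the same manner as in Corollary \ref{cor:vanishing_order_zero}, we obtain\ldots'', which in turn rests only on the citation of \cite[Proposition 3]{mazur-tate}, so your prime-by-prime specialization at $\chi$, the Chinese-remainder bookkeeping against the $M$-Selmer theorem, and the descent through the Selmer exact sequence constitute precisely the intended (but unwritten) argument, spelled out in more detail than the paper itself provides. The one step you assert too strongly is that $\chi(L_M(K(m),f))\neq 0$ ``forces'' $\chi(L_p(K(m),f))\neq 0$ with matching valuation: the congruence $L_M\equiv L_p \pmod{p^{r_p}}$ (where $p^{r_p}$ exactly divides $M$) yields this only when $\mathrm{ord}_p\left(\chi(L_M(K(m),f))\right)<r_p$, and when the valuation is at least $r_p$ the mod-$M$ Fitting-ideal statement becomes vacuous at $p$, so the bound at that prime needs a separate argument --- a caveat that applies equally to the paper's one-line derivation and which you partly acknowledge among your ``delicate points.''
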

\begin{rem}
Corollary \ref{cor:global_consequence} gives a partial affirmative answer to the question raised in \cite[Remark 1 to Corollary 4, Introduction]{bertolini-darmon-imc-2005} on the size of $\textrm{{\cyr SH}}(E_f/K(m))[M]$ for not necessarily prime power $M$.
\end{rem}
\section*{Acknowledgement}
The author thanks to Robert Pollack for suggesting the problem and correcting many errors in the earlier version. In fact, it was the author's very original thesis problem, but the author could not see how to attack at the moment. More than three years later, the author started to reconsider the problem after discussion with Cristian Popescu at Banach International Center, B\c{e}dlewo, Poland.
The author deeply thanks to Masato Kurihara for generously sharing many of his ideas and his countless encouragement. 
The author learned the argument in $\S$\ref{sec:reduction_to_main_theorem} and $\S$\ref{sec:tame_exceptional_zeros} from him. The author also thanks to Peter Jaehyun Cho, Suh Hyun Choi, Henri Darmon, Ick Sun Eum, Ming-Lun Hsieh, Byungheup Jun, Keunyoung Jeong, Myoungil Kim, Taekyung Kim, Takahiro Kitajima, Kazuto Ota, Hae-Sang Sun, Donggeon Yhee, and Myungjun Yu for helpful discussion and encouragement.
The author thanks to Ulsan National Institute of Science and Technology (UNIST) and Keio University for their generous hospitality during a visit.

The author deeply appreciates the careful and detailed comments of the referees, which correct many mathematical and grammatical inaccuracies in the earlier version.

This research was partially supported by ``Overseas Research Program for Young Scientists" through Korea Institute for Advanced Study (KIAS) and Basic Science Research Program through the National Research Foundation of Korea (NRF-2018R1C1B6007009).

\bibliographystyle{amsalpha}
\bibliography{library}

\end{document}